\providecommand{\noopsort}[1]{}
	\def\CC{{\ifmmode{\mathbbm{C}}\else{$\mathbbm{C}$}\fi}}
	\def\EE{{\ifmmode{\mathbbm{E}}\else{$\mathbbm{E}$}\fi}}
	\def\FF{{\ifmmode{\mathbbm{F}}\else{$\mathbbm{F}$}\fi}}
	\def\HH{{\ifmmode{\mathbbm{H}}\else{$\mathbbm{H}$}\fi}}
	\def\KK{{\ifmmode{\mathbbm{K}}\else{$\mathbbm{K}$}\fi}}
	\def\NN{{\ifmmode{\mathbbm{N}}\else{$\mathbbm{N}$}\fi}}
	\def\QQ{{\ifmmode{\mathbbm{Q}}\else{$\mathbbm{Q}$}\fi}}
	\def\RR{{\ifmmode{\mathbbm{R}}\else{$\mathbbm{R}$}\fi}}
	\def\TT{{\ifmmode{\mathbbm{T}}\else{$\mathbbm{T}$}\fi}}
	\def\UU{{\ifmmode{\mathbbm{U}}\else{$\mathbbm{U}$}\fi}}
	\def\ZZ{{\ifmmode{\mathbbm{Z}}\else{$\mathbbm{Z}$}\fi}}
	\newcommand{\tend}[2]{\xrightarrow[#1\to#2]{}}
	\newcommand{\Id}{{\rm Id}}
	\newcommand{\ind}[1]{\mathbbmss{1}_{#1}}
    \newcommand{\bfu}{\boldsymbol{u}}
\newcommand{\diam}{\mathop{\mathrm{diam}}}
\newcommand{\proj}{\mathrm{proj}}
\newcommand{\af}{\ca_{\cf}}
\def\qed{\unskip\quad \hbox{\vrule\vbox
to 6pt {\hrule width 4pt\vfill\hrule}\vrule} }
\newcommand{\bez}{\nopagebreak\hspace*{\fill}
 \nolinebreak$\qed$\vspace{5mm}\par}
\newtheorem{Th}{Theorem}[section]
\newtheorem{Prop}[Th]{Proposition}
\newtheorem{Lemma}[Th]{Lemma}
\newtheorem{Cor}[Th]{Corollary}
\theoremstyle{definition}
\newtheorem{Remark}[Th]{Remark}
\newtheorem{Def}{Definition}[section]
\newtheorem{Example}[Th]{Example}
\newcommand{\beq}{\begin{equation}}
\newcommand{\eeq}{\end{equation}}
\def\scalar(#1,#2){(#1\mid#2)}
\newcommand{\eps}{\varepsilon}
\newcommand{\raz}{\mathbbm{1}}
\newcommand{\ci}{\mathcal{I}}
\newcommand{\ca}{{\cal A}}
\newcommand{\cb}{{\cal B}}
\newcommand{\cc}{{\cal C}}
\newcommand{\cf}{{\cal F}}
\newcommand{\xbm}{(X,\mathcal{B}_X,\mu)}
\newcommand{\zdr}{(Z,\mathcal{B}_Z,\rho)}
\newcommand{\zdk}{(Z,\mathcal{B}_Z,\kappa)}
\newcommand{\ycn}{(Y,\mathcal{B}_Y,\nu)}
\newcommand{\ot}{\otimes}
\newcommand{\ov}{\overline}
\newcommand{\la}{\lambda}
\newcommand{\bs}{\mathbb{S}}
\newcommand{\Q}{\mathbb{Q}}
\newcommand{\R}{{\mathbb{R}}}
\newcommand{\T}{{\mathbb{T}}}
\newcommand{\C}{{\mathbb{C}}}
\newcommand{\Z}{{\mathbb{Z}}}
\newcommand{\N}{{\mathbb{N}}}
\newcommand{\D}{{\mathbb{D}}}
\newcommand{\PP}{{\mathbb P}}
\newcommand{\vep}{\varepsilon}
\newcommand{\mob}{\boldsymbol{\mu}}
\newcommand{\lio}{\boldsymbol{\lambda}}
\newcommand{\bfv}{\boldsymbol{v}}
\newcommand{\zdn}{(Z,\mathcal{D},\nu)}
\newcommand{\cfec}{\cf_{\rm ec}}
\renewcommand{\P}{\mathscr{P}}
\renewcommand{\Q}{\mathscr{Q}}
\newcommand{\Xb}{\bar{X}}
\newcommand{\xb}{\bar{x}}
\newcommand{\mub}{\bar{\mu}}
\begin{document}
\title{On orthogonality to uniquely ergodic systems}
\author{M.\ G\'orska, M.\ Lema\'nczyk, T.\ de la Rue}

\maketitle

\begin{abstract}
We solve Boshernitzan's problem of characterization (in terms of so called Furstenberg systems) of bounded sequences that are orthogonal to all uniquely ergodic systems. As a step toward this solution, we provide  a characterization of automorphisms which are disjoint from all ergodic ones as those whose a.a.\ ergodic components form a family of pairwise disjoint automorphisms. Some variations of Boshernitzan's problem involving characteristic classes are considered. As an application, we characterize sequences  orthogonal to all uniquely ergodic systems whose (unique) invariant measure yields a discrete spectrum automorphism as those satisfying an averaged Chowla property.
\end{abstract}

\tableofcontents

\section{Introduction}
\subsection{General background and main notations} Throughout $\N:=\{1,2,\ldots\}$ stands for the set of natural numbers

We recall that a \emph{standard Borel space} is a measurable space whose sigma-algebra consists of the Borel sets for some Polish metric.  If $X$ is such a space then its sigma-algebra is denoted by $\cb_X$.

If $X$ is a standard Borel space, then $M(X)$ is the space of all Borel probability measures on $X$, equipped with the sigma-algebra generated by all evaluation maps
\[\mu \mapsto \mu(B) \quad (B \in \mathcal{B}_X).\]
With this sigma-algebra, $M(X)$ is again Borel standard. If $Y$ is another measurable space, then a \emph{probability kernel} from $Y$ to $X$ is a measurable map from $Y$ to $M(X)$.

We consider measurable invertible transformations $T$ acting on a standard Borel space $(X,\cb_X)$. We denote by $\ci_T\subset\cb_X$ the sub-sigma-algebra of $T$-invariant Borel sets:
\[ \ci_T\ :=\ \{A\in\cb_X:T^{-1}A=A\}. \]
We denote by $M(X,T)\subset M(X)$ the subspace of $T$-invariant probability measures. A measure $\mu\in M(X,T)$ is said to be \emph{ergodic} (for $T$) if $\mu$ only gives values 0 or 1 to sets in $\ci_T$. Let $M^e(X,T)\subset M(X,T)$ stand for the subspace of ergodic $T$-invariant measures. Both $M(X,T)$ and $M^e(X,T)$ are Borel subsets of $M(X)$.
When $T$ preserves a probability measure $\mu$ on $(X,\cb_X)$, we call $T$ an {\em automorphism} of the standard Borel probability space $(X,\mathcal{B}_X,\mu)$, and $(X, \mathcal{B}_X,\mu,T)$ is called a \emph{measure-preserving system}. (We often omit $\cb_X$ and write only $(X,\mu,T)$.) For a given $\mu\in M(X)$, the group of automorphisms of $(X,\mathcal{B}_X,\mu)$ (in which two transformations are identified if they agree outside a $\mu$-negligible set) is denoted by ${\rm Aut}(X,\cb_X,\mu)$. It is a Polish group when considered with the smallest topology (called {\em strong}) making all the maps $T\mapsto f\circ T$, $f\in L^2(X,\mu)$, continuous.

By a {\em topological system} $(X,T)$, we mean the action of a homeomorphism $T$ on a compact metric space $X$.
Given a topological system $(X,T)$, $M(X)$ endowed with the weak$^\ast$-topology is compact, and both $M(X,T)$ and $M^e(X,T)$ are non-empty subspaces of $M(X)$ with $M(X,T)$ being closed and $M^e(X,T)$ being $G_\delta$. Then, each $\mu\in M(X,T)$ yields a measure-preserving system
$(X, \mathcal{B}_X,\mu,T)$.

If for $i=1,2$,  $(X_i,\mathcal{B}_{X_i},\mu_i,T_i)$ is a measure-preserving system, we can consider $J(T_1,T_2)$ the set of {\em joinings}, i.e.\ of all $T_1\times T_2$-invariant measures on $X_1\times X_2$ with marginals $\mu_1$ and $\mu_2$, respectively. Then, the two automorphisms are called (Furstenberg) {\em disjoint} (we write $T_1\perp T_2$) if $J(T_1,T_2)=\{\mu_1\ot\mu_2\}$. It is not hard to see that if $T_1\perp T_2$ then at least one of the two automorphisms has to be ergodic. For a single measure-preserving system $(X,\mu,T)$, we write $J_2(T)$ instead of $J(T,T)$.

\subsection{Presentation of the main problem and motivations}
In the present paper, we will study the following problem of {\em orthogonality}: Given a class $\mathscr{C}$ of topological systems and a (bounded) sequence $\bfu:\N\to\D:=\{z\in\C\colon |z|\leq1\}$ of zero mean, i.e.\ $M(\bfu):=\lim_{N\to\infty}\frac1N\sum_{n\leq N}\bfu(n)=0$, we say that $\bfu$ is orthogonal to $\mathscr{C}$, $\bfu\perp \mathscr{C}$, if
\beq\label{sarnak10}
\lim_{N\to\infty}\frac1N\sum_{n\leq N}f(T^nx)\bfu(n)=0\eeq
for each $(X,T)\in\mathscr{C}$, $f\in C(X)$ and all $x\in X$ (we write then $\bfu\perp(X,T)$). The main motivation to study this problem is the celebrated Sarnak's conjecture\footnote{Sarnak's conjecture is in turn motivated by the Chowla conjecture from 1965 which predicts that the autocorrelations of the Liouville function vanish, i.e.\ the Liouville function is a generic point (see Section~\ref{s:liftinglemma}) for the Bernoulli measure $B(1/2,1/2)$ for the full shift $\{-1,1\}^{\Z}$; the Chowla conjecture implies Sarnak's conjecture \cite{Sa}, \cite{Ta0}, \cite{Fe-Ku-Le}.}  \cite{Sa} in which $\bfu$ is the M\"obius (or Liouville) function and $\mathscr{C}=\mathscr{C}_{\rm ZE}$ the class of (topological) zero entropy systems.
We can also study the problem of orthogonality without the zero-mean assumption for $\bfu$. Namely, by $\bfu\perp(X,T)$, we mean uncorrelation:
\beq\label{f:defort}\frac1N\sum_{n\leq N}f(T^nx)\bfu(n)=\frac1N\sum_{n\leq N}f(T^nx)\cdot \frac1N\sum_{n\leq N}\bfu(n)+o(1)\eeq
(when $N\to\infty$) for all $f\in C(X)$ and $x\in X$.  Note that, if $(X,T)$ is uniquely ergodic, i.e.\ if it has a unique $T$-invariant probability measure (by unicity, such a measure has to be ergodic),  the above requirement is equivalent to $\lim_{N\to\infty}\frac1N\sum_{n\leq N}f(T^nx)\bfu(n)=0$ for all $f\in C(X)$ of zero mean (with respect to the unique invariant measure) and $x\in X$. Moreover, if $M(\bfu)=0$, we come back to~\eqref{sarnak10}, although we will see later that the natural (stronger than $M(\bfu)=0$) assumption on $\bfu$ is that the first Gowers-Host-Kra norm vanishes: $\|\bfu\|_{u^1}=0$.

As done by Tao \cite{Ta}, we can change the Ces\`aro averaging in \eqref{sarnak10}, by the logarithmic averages (the logarithmic mean of $\bfu$ is zero whenever its usual mean vanishes) and study the corresponding orthogonality problem:
\beq\label{sarnaklog}
\lim_{N\to\infty}\frac1{\log N}\sum_{n\leq N}\frac1nf(T^nx)\bfu(n)=0\eeq
which, in particular, leads to the logarithmic Sarnak's conjecture.
While even the logarithmic Sarnak's conjecture stays open, a remarkable theorem by Fran\-zi\-ki\-na\-kis-Host \cite{Fr-Ho} asserts  that the Liouville function $\lio$ ($\lio(n)=-1$ if the number of prime divisors (counted with multiplicity) is odd, and it is~1 otherwise)  and many other multiplicative functions are logarithmically orthogonal to all uniquely ergodic zero entropy systems. A natural question arises
to characterize those sequences which are orthogonal (or logarithmically orthogonal) to the class UE of all uniquely ergodic systems. In \cite{Co-Do-Se}, this particular problem is attributed to M.\ Boshernitzan, and from now on we will call it {\em Boshernitzan's problem}.
Conze, Downarowicz and Serafin showed in \cite{Co-Do-Se} that Boshernitzan's problem is non-trivial by noticing that sequences ``produced'' by systems which are disjoint in the Furstenberg sense from all ergodic automorphisms are orthogonal to the UE class. This path of reasoning, relating the fundamental problem of Furstenberg disjointness of measure-preserving systems in ergodic theory \cite{Fu} with the problem of orthogonality to sequences is not new and it was applied earlier, when studying Sarnak's conjecture, in particular, it was successfully applied in \cite{Fr-Ho} and many other papers (see the survey \cite{Fe-Ku-Le}). Let us quickly recall the essence of this approach.

Given $\bfu:\N\to\D$ ($\D$ stands for the unit disc; we freely treat $\bfu$ as a two-sided sequence, e.g.\ by setting $\bfu(-n)=\bfu(n)$, $\bfu(0)=1$), we consider $X_{\bfu}:=\ov{\{S^k\bfu\colon k\in\Z\}}$ the subshift generated by $\bfu$, where $S:\D^{\Z}\to\D^{\Z}$, $Sy(n)=y(n+1)$, for all $n\in\Z$, stands for the left shift. Let
$$
V(\bfu):=\Big\{\kappa\in M(X_{\bfu})\colon \Big(\exists N_k\uparrow\infty\Big)\;\kappa=\lim_{k\to \infty}\frac1{N_k}\sum_{n\leq N_k}\delta_{S^n\bfu}\Big\}\subset M(X_{\bfu},S)$$
denote the set of $\bfu$-{\em visible} measures. ($V^{\rm log}(\bfu)\subset M(X_{\bfu},S)$ is defined analogously; in general, the sets of visible and logarithmically visible measures can be disjoint, see, e.g., \cite{Go-Le-Ru}.)
By a {\em Furstenberg system} of $\bfu$, we mean the measure-preserving system $(X_{\bfu},\kappa,S)$, for each $\kappa\in V(\bfu)$.
Suppose now that we understand (in the sense of ergodic properties) all Furstenberg systems of $\bfu$. Then, if we take any topological system $(X,T)$ for which for each $\nu\in M(X,T)$, the measure-preserving systems $(X,\nu,T)$ is Furstenberg disjoint to all $(X_{\bfu},\kappa,S)$ for all $\kappa\in V(\bfu)$, then it is not hard to see that
$$
\bfu\perp (X,T).$$
In fact, to achieve this goal, as proved in \cite{Ka-Ku-Le-Ru}, we need a condition appealing {\bf only} to Furstenberg systems of $\bfu$, namely, we need to see that the function $\pi_0:X_{\bfu}\to \D$, $\pi_0((x_k)):=x_0$ is orthogonal to the $L^2$-space of a certain factor $\ca(\kappa)$ of $(X_{\bfu},\kappa,S)$ -- the factor $\ca(\kappa)$, called {\em characteristic}, will depend on the class $\mathscr{C}$ to which $(X,T)$ belongs:
\beq\label{veechin}
\pi_0\perp L^2(\ca(\kappa))\text{ for all }\kappa\in V(\bfu).\eeq
This is a particular instance of the Veech condition (for the definition of the Veech condition in the general characteristic class setup, see Section~\ref{s:aoma}). For Sarnak's conjecture (when $\mathscr{C}=\mathscr{C}_{\rm ZE})$ the relevant conjecture\footnote{Veech's conjecture was proved recently in \cite{Ka-Ku-Le-Ru}.} was formulated by Veech \cite{Ve} and stated that,  for each Furstenberg system $\kappa$ of $\lio$, $\pi_0$ is orthogonal to the $L^2$-space of the Pinsker factor $\Pi(\kappa)$ (i.e.\ the largest factor with  zero Kolmogorov-Sinai entropy).

All the above can be repeated in the logarithmic case, and in \cite{Fr} it is in fact proved that either a Furstenberg system of $\lio$ is ergodic (then the Chowla conjecture holds), or it must be disjoint from all ergodic systems with zero entropy. The latter naturally raises a question (of independent interest in ergodic theory) how to describe the class Erg$^\perp$ of automorphisms disjoint from all ergodic automorphisms.\footnote{The reader can easily check that although the disjointness theory in ergodic theory is well developed, see e.g. the monograph \cite{Gl}, but it usually considers the disjointness question between two ergodic automorphisms. Indeed, in this case, the problem is reduced to show that each ergodic joining is product measure.}
We now pass to a description of the results of the paper.

\subsection{Ergodic theory results}\label{s:etr}

Let $T$ be an automorphism of $(X,\cb_X,\mu)$. We are interested in the disjointness condition of $T$ with the class of all ergodic automorphisms (which we denote by $T\in {\rm Erg}^\perp$ or $T\perp {\rm Erg}$).
 Therefore, we focus on the case where the system is \emph{non-ergodic}, and consider its ergodic decomposition:
 \beq\label{ed10}
  \mu\ =\ \int_X \mu_x d\mu(x),
 \eeq
that is, the disintegration of $\mu$ over the sub-sigma-algebra $\ci_T$ of $T$-invariant subsets. Here, $x\mapsto\mu_x$ is a measurable map from $X$ to $M^e(X,T)$, and the sigma-algebra it generates coincides modulo $\mu$ with $\ci_T$.
Then (up to a $\mu$-negligible set), we can write $X$ as a disjoint union
\[
 X\ =\ \bigsqcup_{n\in\NN\cup\{\infty\}}X_n,
\]
where, for $n\in\NN$, $X_n$ is the subset of $x\in X$ such that $\mu_x$ is concentrated on $n$ points with equal mass $1/n$ (and those $n$ points are cyclicly permutted by $T$), and $X_\infty$ stands for the subset of $x\in X$ such that $\mu_x$ is non-atomic. We set
\[
 \Xb\ :=\ \{\mu_x:x\in X\}\subset M^e(X,T),
\]
equipped with the pushforward image $\mub$ of $\mu$. We can view $\Xb$ as an abstract space which is introduced to embody the ``identity'' part of $T$: $(X,\mu,T)$ appears as a relatively ergodic extension of the identity system $(\Xb,\mub,\Id_{\Xb})$. We also set, for $n\in\NN\cup\{\infty\}$,
\[
 \Xb_n\ :=\ \{\mu_x:x\in X_n\}\subset \Xb,
\]
so that
\[
 \Xb\ =\ \bigsqcup_{n\in\NN\cup\{\infty\}}\Xb_n.
\]
For $n\in\NN$, we introduce the finite ergodic system $\bigl(\{1,\ldots,n\},\nu_n,R_n\big)$, where $\nu_n$ is the uniform probability distribution over $\{1,\ldots,n\}$ and $R_nj:=j+1\mod n$. We also introduce an abstract standard Borel probability space $(Y,\cb_Y,\nu)$, where $\nu$ is non-atomic.
Then the system $(X,\mu,T)$ can be represented, up to a measure-theoretic isomorphism, as follows:
\begin{itemize}
 \item For $n\in\NN$, $X_n=\Xb_n\times \{1,\ldots,n\}$, and for $x=(\xb,j)\in X_n$, $Tx = (x,R_nj)$. Moreover,
 \[
    \mu|_{X_n}\ =\ \mub|_{\Xb_n}\otimes\nu_n.
 \]
 \item $X_\infty = \Xb_\infty\times Y$, and for $x=(\xb,y)\in X_\infty$, $Tx = (\xb,T_{\xb}y)$, where the automorphisms $T_{\xb}\in {\rm Aut}(Y,\nu)$, $\xb\in\Xb$, are ergodic. Here, the map
\beq\label{ergdec18}\Xb_\infty\ni\xb\mapsto T_{\xb}\in {\rm Aut}(Y,\nu)\text{ is Borel}.\eeq
Moreover,
 \[
    \mu|_{X_\infty}\ =\ \mub|_{\Xb_\infty}\otimes\nu.
 \]
\end{itemize}
To standardize our notations, we will also set $T_{\xb}:=R_n$ for $\xb\in\Xb_n$. This allows us to rewrite the ergodic decomposition~\eqref{ed10} as
\beq\label{ed11}
\mu=\int_{\Xb}\mu_{\xb}\,d\mub(\xb),\eeq
where the conditional measures $\mub_{\xb}$ are now either $\delta_{\xb}\ot\nu_n$ if $\xb\in\Xb_n$ or $\delta_{\xb}\ot\nu$ whenever $\xb\in\Xb_\infty$.

\vspace{2ex}

\noindent
{\bf Theorem~A.} {\em
The following conditions are equivalent:\\
(i) $T\in{\rm Erg}^\perp$.\\
(ii) The extension $T\to \Id_{\Xb}$ is confined (i.e.\ the only member $\lambda\in J_2(T)$ such that $\lambda|_{\Xb\times \Xb}=\mub\ot\mub$ is equal to $\mu\ot\mu$, see Subsection~\ref{ss:conf}).\\
(iii) $T_{\xb}\perp T_{\xb'}$ for $\mub\ot\mub$-a.a.\ $(\xb,\xb')\in \Xb\times \Xb$.}\footnote{The implication (iii) $\Rightarrow$ (i), using the approach from \cite{Ka-Ku-Le-Ru}, can be thought of to be more or less implicit in \cite{Co-Do-Se}.}

(For the proof of Theorem~A, see Theorem~\ref{t:duzoroz} and its proof.)

\vspace{2ex}

We refer to Section~\ref{sec:cc} for the definition of a characteristic class $\cf$ of measure-preserving systems, and the associated largest $\cf$-factor $\af(X,\mu,T)$ in any system $(X,\mu,T)$. Recall that the class ZE of zero-entropy systems is the largest proper characteristic class \cite{Ka-Ku-Le-Ru}, and that the largest ZE-factor $\ca_{\rm ZE}(X,\mu,T)$ is the Pinsker factor $\Pi(T)$.

\vspace{2ex}

\noindent
{\bf Theorem~B.} {\em \\ (i)  Let $\cf$ be a characteristic class. Then $T\perp \cf\cap{\rm Erg}$
if and only if the extension $T|_{\af}\to \Id_{\Xb}$ is confined ($\af $ is the largest $\cf$-factor of $T$).\\
(ii) $T\perp {\rm ZE}\cap {\rm Erg}$ if and only if
$\Pi(T_{\xb})\perp \Pi(T_{\xb'})$ for $\mub\ot\mub$-a.a.\ $(\xb,\xb')\in \Xb\times \Xb$.
}

(For the proof of Theorem~B, see Corollary~\ref{c:charconf}, Corollary~\ref{c:duzorozPinsker} and their proofs.)
\vspace{2ex}

Let $J_2^{\rm RelErg}(T)$ stand for the subset of those members $\la$ of $J_2(T)$ whose marginal on $\Xb\times\Xb$ is $\mub\otimes\mub$, and which is relatively ergodic over the factor $(\Xb\times \Xb,\mub\ot\mub,\Id_{\Xb\times\Xb})$, i.e. $\ci_{T\times T} = \ci_{T}\otimes\ci_{T}$ mod $\lambda$.
The next theorem refers to the weakly ergodic subspace $F_{\rm we}(T)\subset L^2(\mu)$, whose precise definition is given in  Subsection~\ref{ss:wep}. For $f\in L^2(\mu)$, being orthogonal to $F_{\rm we}(T)$ means the absence of correlation of $f$ with any function $g$ defined on $Z$ in any joining of $T$ with an ergodic system $(Z,\rho,R)$. We recall that $\ci_{T\times T}$ stands for the sub-sigma-algebra of $T\times T$-invariant Borel subsets of $X\times X$.

\vspace{2ex}

\noindent
{\bf Theorem~C.} {\em Assume that $f\in L^2(X,\mu)$, and $\EE_{\mu}[f\,|\,\ci_T]=0$. The following conditions are equivalent:\\
(i) $f\perp F_{\rm we}(T)$.\\
(ii) $\EE_{\la}[f\ot\ov{f}\,|\,\Xb\times \Xb]=0$ mod $\mu\ot\mu$ for all $\la\in J_2^{\rm RelErg}(T)$. \\
(iii) $\EE_{\la}[f\ot\ov{f}\,|\,\ci_{T\times T}]=0$ mod $\la$ for all $\la\in J_2(T)$, $\la|_{\Xb\times \Xb}=\mub\ot\mub$.}

(For the proof of Theorem~C, see Theorem~\ref{t:OrMaIm} and Corollary~\ref{c:relvN2} and their proofs.)

\subsection{Solution of Boshernitzan's problem}

Let $\bfu:\N\to\D$.

\vspace{1ex}

\noindent
{\bf Proposition~D.} {\em
$\bfu\perp\,{\rm UE}$ if and only if for each Furstenberg system $\kappa\in V(\bfu)$, $\pi_0\perp F_{{\rm we},0}(X_{\bfu},\kappa,S)$.
}

(For the proof, see Proposition~\ref{p:oues} and its proof.)

\vspace{2ex}

In Proposition~\ref{prop:selfjoiningsofFS}, we describe combinatorially all self-joinings of Furstenberg systems of $\bfu$. Hence, Proposition~D, Theorem~C together with the von Neumann theorem describe the full solution of Boshernitzan's problem in terms of Furstenberg systems of $\bfu$, see Remark~\ref{r:Bosher}.   Proposition~D has its characteristic class version counterpart ($\af=\af(\kappa)$ below stands for the largest characteristic $\cf$-factor of a Furstenberg system $(X_{\bfu},\kappa,S)$ of $\bfu$):

\vspace{2ex}

\noindent
{\bf Corollary~E.} {\em For any characteristic class $\cf$, the following conditions are equivalent:\\
(a) $\bfu \perp \cf\cap {\rm UE}$.\\
(b) For each Furstenberg system $\kappa$ of $\bfu$, for each $\la\in J^{\rm RelErg}_2(S|_{\af},\kappa|_{\af})$ such that $\la|_{\ci_S\ot\ci_S}=\kappa|_{\ci_S}\ot\kappa|_{\ci_S}$, we have
$$
\EE_\lambda\Bigl[\EE_\kappa[\pi_0\,|\,\af]\ot \EE_\kappa[\pi_0\,|\,\af]\,|\,\ci_S\ot\ci_S\Bigr]=
\EE_\kappa[\pi_0\,|\,\ci_S]\ot
\EE_\kappa[\pi_0\,|\,\ci_S].
$$
(c) For each Furstenberg system $\kappa$ of $\bfu$, for each $\la\in J_2(X_{\bfu},\kappa,S)$ such that $\la|_{\ci_S\ot\ci_S}=\kappa|_{\ci_S}\ot\kappa|_{\ci_S}$, we have
$$
\EE_\lambda\Bigl[\EE_\kappa[\pi_0\,|\,\af]\ot \EE_\kappa[\pi_0\,|\,\af]\,|\,\ci_{S\times S}\Bigr]=
\EE_\kappa[\pi_0\,|\,\ci_S]\ot
\EE_\kappa[\pi_0\,|\,\ci_S].
$$
}

(For the proof, see Corollary~\ref{c:enfin} and its proof.)

\vspace{2ex}

Given a characteristic class $\cf$, we denote by $\mathscr{C}_{\cf}$ the class of topological systems for which all visible invariant measures yield automorphisms in $\cf$. (An invariant measure $\mu$ in $(X,T)$ is {\em visible} if there exists $x\in X$ which is quasi-generic for $\mu$, i.e.\ $\mu\in V(x)$.) The main result of \cite{Ka-Ku-Le-Ru} stated that the Veech condition (see Section~\ref{s:aoma}) is {\bf equivalent} to the Sarnak condition: $\bfu\perp \mathscr{C}_{\cf}$, {\bf provided that $\cf$ is a so called ec-class} (see Section~\ref{sec:cc}). It was left as an open problem whether the equivalence of these two conditions holds for any characteristic class. It turns out that this is not the case as the following result shows:

\vspace{2ex}

\noindent
{\bf Proposition~F.} {\em There exist a characteristic class $\cf$ and $\bfu:\N\to \{-1,1\}$ which is generic for a measure $\kappa$ on $\{-1,1\}^{\Z}$ such that $\bfu\perp \mathscr{C}_{\cf}$ but the Veech condition fails.}

(For the proof, see Section~\ref{s:aoma}.)

\subsection{Applications}
We refer to Subsection~\ref{ss:ghk} and Subsection~\ref{s:acp} for the definitions of Gowers-Host-Kra (GHK) norms $\|\cdot\|_{u^s}$ and of  an averaged Chowla property\footnote{An averaged Chowla property (even in a quantitative version) has been proved for the Liouville function (and many other multiplicative functions) by Matom\"aki-Radziwi\l\l-Tao \cite{Ma-Ra-Ta}.}. Denote by DISP the (characteristic) class of automorphisms having discrete spectrum.
Our main application is the following result:

\vspace{2ex}

\noindent
{\bf Corollary~G.}  {\rm Let $\bfu:\N\to\D$, $\|\bfu\|_{u^1}=0$.\footnote{This condition is equivalent to so called ``zero mean property on typical short interval'', cf.\ Proposition~\ref{p:ju1}.} The following conditions are equivalent:\\
(i) $\bfu$ satisfies an averaged Chowla property.\\
(ii) $\bfu\perp {\rm UE}\cap{\rm DISP}$.\\
(iii) $\bfu\perp \mathscr{C}_{\rm DISP}$.}

(For the proof, see Theorem~\ref{t:dz2} and its proof.)

\vspace{2ex}
The equivalence between (ii) and (iii) seems to be exceptional for the class DISP, we do not expect it to hold for a general characteristic class. It obviously does not hold for the class of all automorphisms but more interestingly, it does not hold for the class ZE. Indeed, for all sequences $\bfu$ of the form~\eqref{waznyciag}, we have $\|\bfu\|_{u^1}=0$ by Remark~\ref{r:ujeden},
and they satisfy $\bfu\perp {\rm UE}\cap{\rm ZE}$. However, by Remark~\ref{r:ujedenA}, they  do not satisfy (iii), as their own topological entropy is zero. Denote by CE the class of topological systems whose set of ergodic invariant measures is countable. In light of what was said, the following result looks somewhat surprising.

\vspace{2ex}

\noindent
{\bf Corollary H.}  {\rm  Let $\cf$ be an arbitrary characteristic class.   Let $\bfu:\N\to\D$, $\|\bfu\|_{u^1}=0$. Then $\bfu\perp {\rm UE}\cap\cf$ if and only if $\bfu\perp{\rm CE}\cap \cf$.

}

(For the proof, see Proposition~\ref{p:thi3} and its proof. The class ${\rm CE}\cap\cf$ is defined unambiguously due to Proposition~\ref{p:countableCC}.)

\vspace{2ex}

It follows that in the theorem of Frantzikinakis-Host \cite{Fr-Ho}, the final statement of orthogonality (in the logarithmic sense) of the Liouville function to all zero entropy systems from CE, is in fact equivalent to the orthogonality to all zero entropy uniquely ergodic systems.
Recall that $\bfu:\N\to\D$ is called {\em mean slowly varying} if \beq\label{msvf}\lim_{N\to\infty}\frac1N\sum_{n\leq N}|\bfu(n)-\bfu(n+1)|=0.\eeq

\vspace{2ex}

\noindent
{\bf Corollary I.}  {\rm  The mean slowly varying functions are multipliers of UE$^\perp$. That is, if $\bfu:\N\to\D$ is mean slowly varying and $\bfv\perp {\rm UE}$ then $\bfu\cdot\bfv\perp {\rm UE}$, where $(\bfu\cdot\bfv)(n)=\bfu(n)\cdot\bfv(n)$ for $n\in\N$.

(For the proof, see Proposition~\ref{p:t+m} and its proof.)

\vspace{2ex}

Here, it seems that 
the class of mean slowly varying function should precisely be the class of multipliers of UE$^\perp$.

\vspace{2ex}

The next fact is for multiplicative functions (see Section~\ref{s:pretensious} for some basic facts on such functions).

\vspace{2ex}

\noindent
{\bf Corollary~J.}  {\rm The only pretentious multiplicative functions $\bfu:\N\to\D$ orthogonal to UE are Archimedean characters $n\mapsto n^{it}$.}

(For the proof, see  Corollary~\ref{c:PretMult}.)

\vspace{2ex}

It is reasonable to expect that the result holds in the class of all multiplicative functions.

\subsection{Auxiliary results} In the last three sections we prove some new ergodic results which are used to obtain the aforementioned theorems, but which are also of an independent interest.  Namely,  in Section~\ref{s:osiem}, we provide a combinatorial description of all self-joinings of all Furstenberg systems of a bounded arithmetic function $\bfu$. In Section~\ref{s:RelErg}, the existence of a so-called \emph{relative ergodic decomposition}, with respect to a sub-sigma-algebra of $\ci_T$ is established (the proof has been written by Tim Austin). Finally, in Section~\ref{s:PpEC}, we show that the ``trace'' of the Pinsker factor of an automorphism on almost all ergodic component is equal to the Pinsker factor of the  ergodic component. This result seems to be a part of folklore, but we could not find its proof in the literature.

\section{Preparatory material}
\subsection{Lifting lemma} \label{s:liftinglemma}
Given a topological system $(Y,S)$ and $\kappa\in M(Y)$, we say that a point $u\in Y$ is {\em quasi-generic along a sequence} $(N_k)$ for $\kappa$ if
 $\frac1{N_m}\sum_{n\leq N_m}g(S^nu)\to \int_Yg\,d\nu$ for all $g\in C(Y)$.
 If the convergence takes place for the whole sequence of natural numbers, then $u$ is called {\em generic} (for $\kappa$).
We will need the following lemma about lifting of quasi-generic point (along a subsequence) to quasi-generic sequences:

\begin{Th}[\cite{Ka-Ku-Le-Ru}]\label{t:lifting}
 \label{l:kklr} Assume that $(Y,S)$ and $(X,T)$ are topological systems. Let $\nu\in M(X,T)$, $u\in Y$ be quasi-generic along an increasing sequence $(N_m)$ for $\kappa\in M(Y,S)$ and $\rho\in J(\kappa,\nu)$. Then there exist a sequence $(x_n)\subset X$ and a subsequence $(N_{m_\ell})$ such that $(S^nu,x_n)$ is quasi-generic along $(N_{m_\ell})$ for $\rho$, i.e.\ $\lim_{\ell\to\infty}\frac1{N_{m_\ell}}\sum_{n\leq N_{m_\ell}}G(S^nu,x_n)=\int_{Y\times X}G\,d\rho$ for each $G\in C(Y\times X)$, and the set $\{n\geq1:\: x_{n}\neq Tx_{n-1}\}$ is of the form $\{b_1<b_2<\cdots\}$ with $b_{k+1}-b_k\to \infty$ when $k\to\infty$.\end{Th}

\begin{Remark} As proved in \cite{Ka-Ku-Le-Ru}, this result is also true for the logarithmic averages.\end{Remark}

\begin{Remark} First lifting lemmas of that kind can be found in  \cite{Ka}, \cite{Co-Do-Se}.\end{Remark}

\subsection{Orbital uniquely ergodic models}
We will also need the following:
\begin{Th}[\cite{Ab-Le-Ru}]\label{t:orbmod} Assume that $(X,T)$ is uniquely ergodic and let $(x_n)\subset X$ satisfy $\{n\geq0:\: x_{n+1}\neq Tx_n\}$ is of the form $(b_k)$ with $b_{k+1}-b_k\to \infty$ when $k\to\infty$. Consider $\underline{x}:=(x_n)\in X^{\N}$, the latter space with the one-sided shift $S$. Set $Y:=\overline{\{S^n\underline{x}:\:n\geq 0\}}\subset X^{\N}$. Then the subshift $(Y,S)$ is uniquely ergodic and (measure-theoretically) isomorphic to the original system.\end{Th}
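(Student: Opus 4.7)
The plan is to view $Y$ as a small ``one-defect'' enlargement of the orbital embedding $\pi:X\hookrightarrow X^{\N}$ of $(X,T)$ inside the shift $(X^{\N},S)$. Define $\pi(z):=(z,Tz,T^2z,\ldots)$; this is a continuous, $T$-$S$ equivariant injection whose image $\pi(X)$ is closed in $X^{\N}$ and whose inverse (on the image) is the continuous projection $y\mapsto y_0$. For $M\geq 0$ set
\[
E_M := \bigl\{y\in X^{\N}\colon y_{j+1}=Ty_j \text{ for all } j\neq M,\ y_{M+1}\neq Ty_M\bigr\}.
\]
The structural step is to show that every $y\in Y$ lies in $\pi(X)\cup\bigsqcup_{M\geq 0}E_M$, i.e., has at most one ``jump''. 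Indeed, if $y=\lim_i S^{n_i}\underline{x}$ had jumps at two positions $M_1<M_2$, then eventually the window $[n_i,n_i+M_2]$ would contain two consecutive jump times of $\underline{x}$ at distance at most $M_2-M_1$, contradicting $b_{k+1}-b_k\to\infty$.

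Next, $\underline{x}$ is generic for $\widetilde{\mu}:=\pi_*\mu$ (with $\mu$ the unique $T$-invariant measure on $X$). For a cylinder $F(y)=f(y_0,\ldots,y_{k-1})$ with $f\in C(X^k)$, whenever $[n,n+k-1]$ avoids the jump times one has $F(S^n\underline{x})=\widetilde{f}(x_n)$ with $\widetilde{f}(z):=f(z,Tz,\ldots,T^{k-1}z)$. The exceptional $n$'s form a set of density $0$ since $b_{k+1}-b_k\to\infty$ implies $\#\{j:b_j\leq N\}=o(N)$, and unique ergodicity of $(X,T)$ yields $\tfrac{1}{N}\sum_{n<N}F(S^n\underline{x})\to\int_X\widetilde{f}\,d\mu=\int F\,d\widetilde{\mu}$. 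Thus $\widetilde{\mu}\in M(Y,S)$. For uniqueness, let $\nu\in M(Y,S)$: a direct shift computation using the structural step yields $S^{-1}(E_M)\cap Y=E_{M+1}\cap Y$ for every $M\geq 0$, so $\nu(E_M)=\nu(E_{M+1})$ by invariance; as the $E_M\cap Y$ are pairwise disjoint subsets of a probability space, the common value is $0$. Hence $\nu$ is concentrated on $\pi(X)\cap Y$, and pulling back by $\pi^{-1}$ gives a $T$-invariant probability on $X$, which by unique ergodicity equals $\mu$, so $\nu=\widetilde{\mu}$.

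Finally, $\pi:(X,\mu,T)\to(Y,\widetilde{\mu},S)$ is a Borel, dynamics-intertwining injection with Borel inverse and with image of full $\widetilde{\mu}$-measure by construction, hence it delivers the claimed measure-theoretic isomorphism. The principal obstacle is the structural step: one must verify carefully that the ``at most one jump'' property survives passage to the limit, which is the only place where the hypothesis $b_{k+1}-b_k\to\infty$ is genuinely used. Once this is settled, everything else is a routine combination of unique ergodicity of $(X,T)$ with the zero-density of the defect times.
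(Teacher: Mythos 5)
The paper does not prove this statement (it is quoted from \cite{Ab-Le-Ru}); the standard proof there establishes that \emph{every} point of $Y$ is generic for $\pi_\ast\mu$, by decomposing $\{n,\dots,n+N-1\}$ into the maximal jump-free blocks (whose lengths tend to infinity) and invoking the uniform convergence of Birkhoff averages that unique ergodicity of $(X,T)$ provides on each block. Your route — stratify $Y$ by the number of ``defects'' and show an invariant measure must kill the defective strata — is a genuinely different and attractive idea, but as written it has a real gap. The structural step ``every $y\in Y$ lies in $\pi(X)\cup\bigsqcup_{M\geq0}E_M$'' is false: the points $S^n\underline{x}$ of the orbit itself, which are \emph{dense} in $Y$ by definition, each have infinitely many jumps (one at every $b_k-n\geq0$), not at most one. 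Your limit argument is correct only for cluster points obtained with $n_i\to\infty$; when $(n_i)$ is bounded the limit is an orbit point and the argument says nothing. The issue is therefore not, as you suggest, whether the one-jump property ``survives passage to the limit'' — it fails before any limit is taken.

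The damage is containable but must be repaired. The corrected statement is: every $y\in Y\setminus\mathcal{O}$, where $\mathcal{O}=\{S^n\underline{x}:n\geq0\}$, has at most one jump, while points of $\mathcal{O}$ have infinitely many; in particular no point of $Y$ has exactly two jumps, so the identity $S^{-1}(E_M)\cap Y=E_{M+1}\cap Y$ does survive and yields $\nu(E_M)=0$ for all $M$. But this only concentrates $\nu$ on $\pi(X)\cup\mathcal{O}$, so you still owe an argument that $\nu(\mathcal{O})=0$ for every $\nu\in M(Y,S)$. This needs a separate step: when there are infinitely many $b_k$ the points $S^n\underline{x}$ are pairwise distinct (an eventually periodic $\underline{x}$ would force the jump set to be eventually periodic, incompatible with $b_{k+1}-b_k\to\infty$), and since $S^{-1}\{S^{n+1}\underline{x}\}\ni S^n\underline{x}$ gives $\nu(\{S^{n+1}\underline{x}\})\geq\nu(\{S^n\underline{x}\})$, a positive atom on $\mathcal{O}$ would contradict $\nu(Y)=1$; the finitely-many-jumps case must be handled separately (there the theorem is easy). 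A smaller point: your genericity computation for $\underline{x}$ still requires the block decomposition and the \emph{uniform} convergence of $\frac1L\sum_{l<L}\widetilde f(T^lz)$ coming from unique ergodicity, since the $x_n$ do not form a single $T$-orbit; removing the density-zero set of bad windows is not by itself enough.
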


\subsection{Joinings and Markov operators} Given  measure-preserving systems $(Z,\cb_{Z},\nu,R)$,  $(Z',\cb_{Z'},\nu',R')$ and a joining $\rho\in J(R',R)$, we let $\Phi_\rho$ denote the corresponding operator $\Phi_\rho:L^2(Z',\nu')\to L^2(Z,\nu)$ defined by
\beq\label{marko1} \int_Z \Phi_\rho(f')\ov{f}\,d\nu= \int_{Z'\times Z}f'(z')\ov{f(z)}\,d\rho(z',z)\eeq
for all $f'\in L^2(Z',\nu')$ and $f\in L^2(Z,\nu)$.
Then $\Phi_\rho$ is Markov: $\Phi_\rho\raz=\Phi_\rho^\ast\raz=\raz$ and $\Phi_\rho(g')\geq0$ whenever $g'\geq0$. Moreover, $\Phi_\rho$ is intertwining $R$ and $R'$: $\Phi_\rho\circ R'=R\circ \Phi_\rho$. And vice versa: if $\Phi:L^2(Z',\nu')\to L^2(Z,\nu)$ is a Markov operator intertwining $R$ and $R'$, then $\Phi=\Phi_\rho$ for a unique joining $\rho\in J(R',R)$, see e.g.\ \cite{Gl}, Chapter~6.

\subsection{Weakly ergodic part of a measure-preserving system}\label{ss:wep} Given a measure-preserving system $(X,\cb_{X},\mu,T)$ we call $f\in L^2(X,\cb_{X},\mu)$ {\em weakly ergodic} if there exists an ergodic $(X',\cb_{X'},\mu',T')$ and a joining $\rho\in J(T',T)$ such that $f\in {\rm Im}(\Phi_\rho)$, where $\Phi_\rho:L^2(X',\mu')\to L^2(X,\mu)$ is the Markov operator corresponding to $\rho$.  The closed subspace $F_{\rm we}(T)$ spanned by the weakly ergodic elements is called the {\em weakly ergodic} part of $L^2(X,\mu)$ for $T$. Note that
\beq\label{calkazero}
\mbox{If $f\perp F_{\rm we}(T)$ then  $f\in L^2_0(X,\mu)$, i.e.  $\int_Xf\,d\mu=0$.}\eeq
Moreover,
\beq\label{doda28}
\mbox{$f\perp F_{\rm we}(T)$ iff $\Phi_{\rho'}(f)=0$}\eeq
for all ergodic $T'$ and $\rho'\in J(T,T')$.

In what follows when considering the problem of orthogonality, unless stated otherwise, we will only consider the zero mean functions.
In order to treat the non-zero mean case, we consider the space $F_{{\rm we},0}(T)$ generated
by ${\rm Im}(\Phi_\rho|_{L^2_0(X',\mu')})$ for all ergodic $T'$.

\begin{Remark} If a system $(X,\cb_{X},\mu,T)$ is non-ergodic then $F_{\rm we}(T)$ is never dense in it. Indeed, take any non-trivial factor $\ca\subset\cb_{X}$ of $T$ which belongs to
${\rm Erg}^\perp$ (we can take for $\ca$ the sigma-algebra of invariant sets). If $g\in L^2_0(\ca)$ then $\int \Phi_\rho(f')\ov{g}\,d\mu=\int f'\ot\ov{g}\,d\rho=0$, the latter because $T'\perp T|_{\ca}$.\end{Remark}

\subsection{Koopman operator and spectral measures}
Given a measure-preserving system $(X,\cb_{X},\mu,T)$, recall that we have the corresponding Koopman operator (denoted also by $T$) acting on $L^2\zdn$ by
$$ f\mapsto Tf:=f\circ T.$$
This operator is unitary. Given $f\in L^2(\mu)$, we denote by $\sigma_f$  (or $\sigma_{f,T}$ if an ambiguity can arise) the {\em spectral measure} of $f$ which is a finite positive (Borel) measure on $\bs^1$ determined by
$$
\widehat{\sigma}(n):=\int_{\bs^1}z^n\,d\sigma_f(z)=\int_{X}T^n(f)\cdot \ov{f}\,d\mu\text{ for all }n\in\Z.$$
Then, $f$ is an eigenfunction corresponding to an eigenvalue $e^{2\pi i\alpha}$ of $T$ if and only if its spectral measure is atomic concentrated at $e^{2\pi i\alpha}$. $T$ is said to have discrete spectrum if the space $L^2\zdn$ is spanned by the eigenfunctions of $T$. If $T$ is ergodic then its eigenvalues form a (countable) subgroup of $\bs^1$.

\subsection{Furstenberg systems of arithmetic functions}
Given $\bfu:\N\to\D$, by $V(\bfu)$, we denote the set of shift-invariant measures on $(X_{\bfu},\cb(X_{\bfu}))$ for which $\bfu\in X_{\bfu}$ is quasi-generic. Recall that $(X_{\bfu},\cb(X_{\bfu}),\kappa,S)$ is a {\em Furstenberg system} of $\bfu$.
Let $\pi_0(y)=y_0$ for $y\in X_{\bfu}$; then $\pi_n=\pi_0\circ S^n$, $n\in\Z$, is a stationary process (once, we fix  $\kappa\in V(\bfu)$).

\subsection{Confined extensions}\label{ss:conf} Confined extensions (see the definition below) are systematically studied in \cite{Be} (we recall some known results for completeness). In the present paper, we consider them only in the context of extensions of identity. Let $R\in{\rm Aut}\zdk$ and $T\in{\rm Aut}\xbm$.

\begin{Lemma}\label{l:self}If $\Phi_\rho:L^2(Z,\kappa)\to L^2(X,\mu)$ and $\pi_0\in L^2(X,\mu)$ then
$$
\pi_0\perp {\rm Im}\,\Phi_\rho\text{ if and only if } \pi_0\perp {\rm Im} (\Phi_\rho\circ \Phi^\ast_\rho).$$
\end{Lemma}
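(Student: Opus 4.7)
The plan is to show that both conditions in the lemma are equivalent to the single statement $\Phi_\rho^\ast \pi_0 = 0$ in $L^2(Z,\kappa)$; the lemma is then a short adjointness computation in Hilbert space, with no ergodic-theoretic content beyond the fact that $\Phi_\rho$ is a bounded operator with well-defined adjoint $\Phi_\rho^\ast : L^2(X,\mu)\to L^2(Z,\kappa)$.

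First I would handle the easy direction. Since $\mathrm{Im}(\Phi_\rho\circ\Phi_\rho^\ast)\subset \mathrm{Im}(\Phi_\rho)$, it is immediate that $\pi_0\perp \mathrm{Im}\,\Phi_\rho$ implies $\pi_0\perp\mathrm{Im}(\Phi_\rho\circ\Phi_\rho^\ast)$.

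For the converse, the key step is the identity
\[
\langle \pi_0,\Phi_\rho\Phi_\rho^\ast f\rangle_{L^2(X,\mu)}
= \langle \Phi_\rho^\ast\pi_0,\Phi_\rho^\ast f\rangle_{L^2(Z,\kappa)},
\]
valid for every $f\in L^2(X,\mu)$. Assuming $\pi_0\perp\mathrm{Im}(\Phi_\rho\circ\Phi_\rho^\ast)$, the left-hand side vanishes for all $f$, so taking $f=\pi_0$ yields $\|\Phi_\rho^\ast\pi_0\|^2=0$, i.e.\ $\Phi_\rho^\ast\pi_0=0$. Then, for any $g\in L^2(Z,\kappa)$,
\[
\langle \pi_0,\Phi_\rho g\rangle_{L^2(X,\mu)} = \langle \Phi_\rho^\ast \pi_0, g\rangle_{L^2(Z,\kappa)} = 0,
\]
so $\pi_0\perp\mathrm{Im}\,\Phi_\rho$.

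There is no substantive obstacle here; the only thing to be a bit careful about is that $\Phi_\rho^\ast$ really is a well-defined bounded operator (which follows from $\Phi_\rho$ being bounded, and is in any case the Markov operator $\Phi_{\rho^\vee}$ associated with the swapped joining). The takeaway that will be used later in the paper is the slogan $\pi_0\perp\mathrm{Im}\,\Phi_\rho \iff \Phi_\rho^\ast \pi_0 = 0 \iff \pi_0\perp\mathrm{Im}(\Phi_\rho\Phi_\rho^\ast)$, the last form being useful because $\Phi_\rho\Phi_\rho^\ast$ corresponds to the self-joining $\rho\ast\check\rho$ of $R$ itself.
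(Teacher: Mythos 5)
Your proof is correct. It takes a slightly different, and in fact more streamlined, route than the paper's. The paper proves only the nontrivial implication, and does so by decomposing an arbitrary $g\in L^2(Z,\kappa)$ as $g=g_1+g_2$ with $g_1\in\overline{\mathrm{Im}\,\Phi_\rho^\ast}$ and $g_2\perp\mathrm{Im}\,\Phi_\rho^\ast$, killing the $g_2$-term by adjointness and the $g_1$-term by approximating $g_1$ with elements $\Phi_\rho^\ast(h)$. You instead test the hypothesis against the single vector $f=\pi_0$, using
$\langle \pi_0,\Phi_\rho\Phi_\rho^\ast\pi_0\rangle=\|\Phi_\rho^\ast\pi_0\|^2$
to extract the clean intermediate statement $\Phi_\rho^\ast\pi_0=0$, from which both orthogonality conditions follow immediately; this avoids the orthogonal decomposition and the density/approximation step entirely, and it isolates explicitly the equivalence $\pi_0\perp\mathrm{Im}\,\Phi_\rho\iff\Phi_\rho^\ast\pi_0=0$, which the paper leaves implicit but uses repeatedly afterwards (e.g.\ in Lemma~\ref{l:dJR}). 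Both arguments are elementary Hilbert-space adjointness computations of the same depth; yours buys a little extra transparency, the paper's generalizes verbatim to showing $\Phi_\rho(g)\perp\pi_0$ for each individual $g$. You also correctly record the easy direction via $\mathrm{Im}(\Phi_\rho\Phi_\rho^\ast)\subset\mathrm{Im}\,\Phi_\rho$, which the paper omits as obvious.
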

\begin{proof}
Assume that $ \pi_0\perp {\rm Im} (\Phi_\rho\circ \Phi^\ast_\rho)$. Then for each $g\in L^2(Z,\kappa)$, we have $g=g_1+g_2$, where
$g_1\in\ov{{\rm Im}\,\Phi^\ast_\rho}$ and $g_2\perp {\rm Im}\,\Phi^\ast_\rho$. Hence
$$
\int \pi_0\Phi_\rho(g)\,d\mu=\int \pi_0\Phi_\rho(g_1)\,d\mu+\int \pi_0\Phi_\rho(g_2)\,d\mu=$$$$
\int \pi_0\Phi_\rho(g_1)\,d\mu+\int \Phi^\ast_\rho(\pi_0)g_2\,d\kappa=
\int \pi_0\Phi_\rho(g_1)\,d\mu$$
and since $g_1$ can be approximated by $\Phi_\rho^\ast(h)$ with $h\in L^2(X,\mu)$, the claim follows.\end{proof}

Below, we restrict to $L^2_0$-spaces. By Lemma~\ref{l:self}, we obtain the following:

\begin{Lemma} [del Junco-Rudolph, \cite{Ju-Ru}, proof of Prop.\ 5.3] \label{l:dJR} Under the above assumption, $\Phi_\rho=0$ if and only if $\Phi_\rho\circ \Phi^\ast_\rho=0$.\end{Lemma}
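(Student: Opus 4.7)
The equivalence is actually a straightforward corollary of Lemma~2.2 (which I assume is stated just above as \texttt{Lemma \ref{l:self}}), combined with the fact that on the zero-mean subspace $L^2_0$ the image of $\Phi_\rho$ lands in $L^2_0(X,\mu)$ (since $\Phi_\rho^\ast\mathbbm{1}=\mathbbm{1}$, any $g\in L^2_0(Z,\kappa)$ satisfies $\int\Phi_\rho(g)\,d\mu=\int g\,\Phi_\rho^\ast\mathbbm{1}\,d\kappa=\int g\,d\kappa=0$). So the plan is to apply Lemma~2.2 with an \emph{arbitrary} element of $L^2_0(X,\mu)$ in the role of $\pi_0$, and read off the conclusion.

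The ($\Rightarrow$) direction is of course immediate: if $\Phi_\rho=0$ then $\Phi_\rho\circ\Phi_\rho^\ast=0$. For the nontrivial direction ($\Leftarrow$), assume that $\Phi_\rho\circ\Phi_\rho^\ast=0$ on $L^2_0(X,\mu)$. Fix an arbitrary $f\in L^2_0(X,\mu)$. Since ${\rm Im}(\Phi_\rho\circ\Phi_\rho^\ast)=\{0\}$, we trivially have $f\perp{\rm Im}(\Phi_\rho\circ\Phi_\rho^\ast)$. Applying Lemma~\ref{l:self} with $f$ playing the role of $\pi_0$ yields $f\perp{\rm Im}\,\Phi_\rho$. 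As $f\in L^2_0(X,\mu)$ was arbitrary and ${\rm Im}\,\Phi_\rho\subset L^2_0(X,\mu)$, every element of ${\rm Im}\,\Phi_\rho$ is orthogonal to the whole ambient Hilbert space in which it sits, forcing ${\rm Im}\,\Phi_\rho=\{0\}$, i.e.\ $\Phi_\rho=0$.

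There is essentially no obstacle: the only point worth double-checking is that $\Phi_\rho$ does map $L^2_0(Z,\kappa)$ into $L^2_0(X,\mu)$, which is the Markov property $\Phi_\rho^\ast\mathbbm{1}=\mathbbm{1}$ recorded just after \eqref{marko1}. Once that is noted, Lemma~\ref{l:self} does all the work. (One could alternatively bypass Lemma~\ref{l:self} and invoke the purely Hilbert-space identity $\|\Phi_\rho^\ast f\|^2=\langle\Phi_\rho\Phi_\rho^\ast f,f\rangle=0$, whence $\Phi_\rho^\ast\equiv0$ on $L^2_0(X,\mu)$ and therefore $\Phi_\rho\equiv0$ on $L^2_0(Z,\kappa)$; but using Lemma~\ref{l:self} is more in the spirit of the preceding subsection, and it is probably the route taken in the paper.)
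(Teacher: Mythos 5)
Your proof is correct and follows exactly the route the paper intends: the paper derives Lemma~\ref{l:dJR} directly from Lemma~\ref{l:self} after the stated restriction to $L^2_0$-spaces, giving no further detail, and your argument (applying Lemma~\ref{l:self} to an arbitrary $f\in L^2_0(X,\mu)$ and using $\Phi_\rho^\ast\raz=\raz$ to see that ${\rm Im}\,\Phi_\rho\subset L^2_0(X,\mu)$) is precisely the fleshed-out version of that one-line deduction.
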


Assume now that we have an extension $\widetilde{R}\in{\rm Aut}(\widetilde{Z},\widetilde{\mathcal{D}},\widetilde{\kappa})$ of $R$. If $\widetilde{\la}$ is any self-joining of $\widetilde{\kappa}$ and $\lambda$ is its restriction to $Z\times Z$ then we easily check that
\beq\label{obc}
\Phi_\lambda= \proj_{L^2(Z)}\circ \Phi_{\widetilde{\lambda}}|_{L^2(Z)}.\eeq

\begin{Lemma}\label{l:conf1} Assume that $\widetilde\rho$ is a joining of $\widetilde{R}$ and $T$ (and $\rho$ stands for its restriction to $Z\times X$). Then $$\Phi_\rho^\ast\circ \Phi_\rho=\proj_{L^2(Z)}\circ \Phi^\ast_{\widetilde{\rho}}\circ \Phi_{\widetilde{\rho}}|_{L^2(Z)}.$$\end{Lemma}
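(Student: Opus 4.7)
The plan is to show two pieces in sequence: first that $\Phi_\rho$ is literally the restriction $\Phi_{\widetilde\rho}|_{L^2(Z)}$, and second that therefore $\Phi_\rho^\ast = \proj_{L^2(Z)}\circ \Phi_{\widetilde\rho}^\ast$. Composing these two identities immediately yields the asserted formula. This is the natural analogue of~\eqref{obc}, which handled self-joinings of a single extension; here we have the asymmetric situation of a joining between an extension on one side and a plain system $T$ on the other.

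For the first piece, I would identify $L^2(Z,\kappa)$ with the subspace of $\widetilde R$-equivariant factor-measurable functions inside $L^2(\widetilde Z,\widetilde\kappa)$, so that for any $f\in L^2(Z)$ and $g\in L^2(X)$ one has via the defining identity~\eqref{marko1}
\[
\int_X \Phi_\rho(f)\,\ov g\,d\mu \;=\; \int_{Z\times X} f(z)\,\ov{g(x)}\,d\rho(z,x) \;=\; \int_{\widetilde Z\times X} f(\tilde z)\,\ov{g(x)}\,d\widetilde\rho(\tilde z,x) \;=\; \int_X \Phi_{\widetilde\rho}(f)\,\ov g\,d\mu,
\]
where the middle equality uses that $\rho$ is the pushforward of $\widetilde\rho$ under the factor map $\widetilde Z\to Z$ on the first coordinate. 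Since $g$ is arbitrary, $\Phi_\rho(f)=\Phi_{\widetilde\rho}(f)$ for every $f\in L^2(Z)$.

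For the second piece, I take adjoints. For $f\in L^2(Z)$ and $g\in L^2(X)$,
\[
\langle f,\Phi_\rho^\ast(g)\rangle_Z \;=\; \langle \Phi_\rho(f),g\rangle_X \;=\; \langle \Phi_{\widetilde\rho}(f),g\rangle_X \;=\; \langle f,\Phi_{\widetilde\rho}^\ast(g)\rangle_{\widetilde Z} \;=\; \bigl\langle f,\proj_{L^2(Z)}\Phi_{\widetilde\rho}^\ast(g)\bigr\rangle_Z,
\]
the last equality holding because $f\in L^2(Z)$. As $f$ ranges over $L^2(Z)$ this forces $\Phi_\rho^\ast = \proj_{L^2(Z)}\circ \Phi_{\widetilde\rho}^\ast$. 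Combining with the first piece gives
\[
\Phi_\rho^\ast\circ \Phi_\rho \;=\; \bigl(\proj_{L^2(Z)}\circ \Phi_{\widetilde\rho}^\ast\bigr)\circ \bigl(\Phi_{\widetilde\rho}|_{L^2(Z)}\bigr) \;=\; \proj_{L^2(Z)}\circ \Phi_{\widetilde\rho}^\ast\circ \Phi_{\widetilde\rho}|_{L^2(Z)},
\]
which is exactly the claim.

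I do not expect a genuine obstacle here: the result is purely formal once one is careful about the identification of $L^2(Z,\kappa)$ as a subspace of $L^2(\widetilde Z,\widetilde\kappa)$ via the extension. The only point requiring a little care is making the first display rigorous, i.e.\ checking that the integrand $f(z)\ov{g(x)}$ in $\widetilde Z\times X$, read as $f(\tilde z)\ov{g(x)}$ after lifting $f$ along the factor map, has integral against $\widetilde\rho$ equal to the integral of the original integrand against $\rho$; but this is just the defining property of the pushforward measure.
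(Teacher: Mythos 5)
Your proof is correct and follows essentially the same route as the paper: both rest on the identity $\Phi_\rho=\Phi_{\widetilde\rho}|_{L^2(Z)}$ and then a formal adjoint computation, the only cosmetic difference being that the paper folds the two adjoint facts into a single chain $\langle\Phi_\rho^\ast\Phi_\rho f,g\rangle=\langle\Phi_{\widetilde\rho}f,\Phi_{\widetilde\rho}g\rangle=\langle\proj_{L^2(Z)}\Phi_{\widetilde\rho}^\ast\Phi_{\widetilde\rho}f,g\rangle$ whereas you extract $\Phi_\rho^\ast=\proj_{L^2(Z)}\circ\Phi_{\widetilde\rho}^\ast$ as a separate step. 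Your more detailed verification of the restriction identity (via the pushforward of $\widetilde\rho$ under the factor map) fills in what the paper dispatches with ``similarly as in~\eqref{obc}.''
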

\begin{proof} Similarly as in~\eqref{obc}, we have $\Phi_\rho=\Phi_{\widetilde\rho}|_{L^2(Z)}$. Then, for $f,g\in L^2(Z)$,
$$
\langle\Phi^\ast_\rho\circ \Phi_\rho f,g \rangle=
\langle\Phi_\rho f,\Phi_\rho g\rangle=$$$$
\langle \Phi_{\widetilde\rho} f, \Phi_{\widetilde\rho} g\rangle=
\langle \Phi^\ast_{\widetilde\rho}\circ \Phi_{\widetilde\rho}f,g\rangle=
\langle \proj_{L^2(Z)}\circ\Phi^\ast_{\widetilde\rho}\circ \Phi_{\widetilde\rho}f,g \rangle.$$
\end{proof}

Following \cite{Be}, an extension  $\widetilde{Z}\to Z$ is called {\em confined} if for each self-joining $\widetilde\lambda\in J_2(\widetilde Z)$, we have
$$
\widetilde \lambda|_{Z\times Z}=\kappa\ot\kappa\;\Rightarrow \widetilde\lambda=\widetilde\kappa\ot\widetilde\kappa.$$

We have now the following lifting disjointness result (first proved in \cite{Be}).

\begin{Prop} \label{p:conf1}
Assume that $\widetilde{Z}\to Z$ is a confined extension and let $Z\perp X$. Then $\widetilde Z\perp X$.\end{Prop}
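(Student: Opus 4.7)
The plan is to take an arbitrary joining $\widetilde{\rho}\in J(\widetilde{R},T)$ and show $\widetilde{\rho}=\widetilde{\kappa}\otimes\mu$, by lifting $\widetilde{\rho}$ to a self-joining of $\widetilde{R}$ whose $Z\times Z$-marginal is forced to be $\kappa\otimes\kappa$, and then invoking confinedness.

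First, I would restrict $\widetilde{\rho}$ to $Z\times X$: the result is a joining in $J(R,T)$, so the assumption $Z\perp X$ forces it to equal $\kappa\otimes\mu$. Disintegrating $\widetilde{\rho}$ over $X$ as $\widetilde{\rho}=\int_X \widetilde{\kappa}_x\otimes\delta_x\,d\mu(x)$, this says precisely that $(\pi_Z)_\ast\widetilde{\kappa}_x=\kappa$ for $\mu$-a.a.\ $x\in X$. I would then form the relative product of $\widetilde{\rho}$ with itself over $X$,
$$
\widetilde{\lambda}:=\int_X \widetilde{\kappa}_x\otimes\widetilde{\kappa}_x\,d\mu(x)\in J_2(\widetilde{R}).
$$
Projecting to $Z\times Z$ and using the previous identity gives $\widetilde{\lambda}|_{Z\times Z}=\int_X\kappa\otimes\kappa\,d\mu(x)=\kappa\otimes\kappa$, so confinedness of the extension $\widetilde{Z}\to Z$ yields $\widetilde{\lambda}=\widetilde{\kappa}\otimes\widetilde{\kappa}$.

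Finally, I would translate this back to $\widetilde{\rho}$ through Markov operators. The relative-product construction gives the operator identity $\Phi_{\widetilde{\lambda}}=\Phi_{\widetilde{\rho}}\circ\Phi_{\widetilde{\rho}}^\ast$ on $L^2(\widetilde{Z})$, and $\widetilde{\lambda}=\widetilde{\kappa}\otimes\widetilde{\kappa}$ is equivalent to saying that this composite is the orthogonal projection onto constants, i.e.\ vanishes on $L^2_0(\widetilde{Z})$. Invoking Lemma~\ref{l:dJR} then gives $\Phi_{\widetilde{\rho}}^\ast=0$ on $L^2_0(\widetilde{Z})$; since $\Phi_{\widetilde{\rho}}$ is Markov and therefore mean-preserving, this is equivalent to $\Phi_{\widetilde{\rho}}=0$ on $L^2_0(X)$, which in turn is equivalent to $\widetilde{\rho}=\widetilde{\kappa}\otimes\mu$. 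As $\widetilde{\rho}$ was arbitrary, $\widetilde{Z}\perp X$.

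I do not expect a genuine obstacle here. The only mild care needed is to keep the direction of the Markov operators straight and to verify that the relative product of $\widetilde{\rho}$ with itself over $X$ is exactly the self-joining associated with $\Phi_{\widetilde{\rho}}\circ\Phi_{\widetilde{\rho}}^\ast$ (and not the reverse composition, which would produce a self-joining of $T$ instead).
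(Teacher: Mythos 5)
Your argument is correct and is essentially the paper's proof: the relative product $\widetilde{\lambda}=\int_X\widetilde{\kappa}_x\otimes\widetilde{\kappa}_x\,d\mu(x)$ is exactly the self-joining with Markov operator $\Phi_{\widetilde{\rho}}^\ast\circ\Phi_{\widetilde{\rho}}$ that the paper constructs (your disintegration computation replaces its Lemma~\ref{l:conf1}), and the rest proceeds identically via confinedness and Lemma~\ref{l:dJR}. The one point you flagged resolves as follows: with the paper's convention $\Phi_{\widetilde{\rho}}\colon L^2(\widetilde{Z})\to L^2(X)$, the operator attached to $\widetilde{\lambda}$ is $\Phi_{\widetilde{\rho}}^\ast\circ\Phi_{\widetilde{\rho}}$ (acting on $L^2(\widetilde{Z})$), not $\Phi_{\widetilde{\rho}}\circ\Phi_{\widetilde{\rho}}^\ast$ as written, but this is only a notational slip and does not affect the argument.
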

\begin{proof} Define $\widetilde{\la}\in J_2(\widetilde{R})$ so that $\Phi_{\widetilde{\la}}=\Phi^\ast_{\widetilde{\rho}}\circ \Phi_{\widetilde{\rho}}$. Then by this and \eqref{obc},
$$
\Phi_{\la}=\proj_{L^2(Z)}\circ \Phi^\ast_{\widetilde{\rho}}\circ \Phi_{\widetilde{\rho}}|_{L^2(Z)}.$$
But now, by Lemma~\ref{l:conf1}, we obtain that $\Phi_{\la}=\Phi_\rho^\ast\circ\Phi_\rho$. Since $\Phi_\rho=0$, also $\Phi_{\la}=0$ and therefore $\Phi_{\widetilde{\la}}=0$ by our assumption. The result follows from Lemma~\ref{l:dJR}.
\end{proof}

It follows easily that each confined extension of an identity is disjoint from all ergodic automorphisms. In fact, in Theorem~\ref{t:duzoroz}, we will show that an automorphism $R$ is disjoint from all ergodic automorphisms if and only if it is a confined extension of an identity.

\begin{Lemma}\label{l:ilej}
Assume that $S\in{\rm Aut}\ycn$. Then the set
$$
S^{\not\perp}:=\{R\in {\rm Aut}\zdk:\: R\not\perp S\}$$
cannot contain an uncountable  family of pairwise disjoint automorphisms.\end{Lemma}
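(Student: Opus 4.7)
The plan is to derive a contradiction from the assumption that $\{R_i\}_{i\in I}$ is an uncountable, pairwise disjoint family inside $S^{\not\perp}$, by exhibiting uncountably many pairwise orthogonal nonzero subspaces of the separable Hilbert space $L^2_0\ycn$.

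First I would, for each $i\in I$, pick a nontrivial joining $\rho_i\in J(R_i,S)$ (i.e.\ $\rho_i\neq\kappa\ot\nu$), and form the associated Markov operator $\Phi_i:=\Phi_{\rho_i}\colon L^2\ycn\to L^2\zdk$ which intertwines the Koopman operators: $\Phi_i\circ S=R_i\circ \Phi_i$. Since $\rho_i$ is not the product joining, $\Phi_i$ is not the projection onto constants, so $\Phi_i|_{L^2_0(Y,\nu)}\neq 0$. By Lemma~\ref{l:dJR} applied with the roles swapped (equivalently, a direct adjoint computation combined with the Markov property $\Phi_i\raz=\raz$, $\Phi_i^\ast\raz=\raz$), one also has $\Phi_i^\ast|_{L^2_0(Z,\kappa)}\neq 0$.

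Next, for $i\neq j$, consider the composition
$$\Phi_i\circ\Phi_j^\ast\colon L^2\zdk\to L^2\zdk.$$
It is Markov (composition of Markov operators), and intertwines $R_j$ with $R_i$: indeed $\Phi_i\Phi_j^\ast R_j=\Phi_i S\Phi_j^\ast=R_i\Phi_i\Phi_j^\ast$. Hence it corresponds to some joining $\widetilde{\lambda}_{ij}\in J(R_j,R_i)$. The disjointness hypothesis $R_i\perp R_j$ forces $\widetilde{\lambda}_{ij}=\kappa\ot\kappa$, i.e.\ $\Phi_i\circ\Phi_j^\ast$ is the projection onto constants; in particular it vanishes on $L^2_0\zdk$.

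Therefore, for any $f,g\in L^2_0\zdk$,
$$\langle \Phi_j^\ast f,\Phi_i^\ast g\rangle_{L^2(Y,\nu)}=\langle \Phi_i\Phi_j^\ast f,g\rangle_{L^2(Z,\kappa)}=0,$$
so the nonzero closed subspaces $E_i:=\overline{\Phi_i^\ast(L^2_0\zdk)}\subset L^2_0\ycn$ are mutually orthogonal for $i\neq j$. Since $(Y,\mathcal{C},\nu)$ is a standard probability space, $L^2\ycn$ is separable and cannot contain an uncountable family of pairwise orthogonal nonzero subspaces, a contradiction.

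The only delicate point is verifying that $\Phi_i^\ast$ is nonzero on $L^2_0$, which is the symmetric counterpart of Lemma~\ref{l:dJR}; I do not anticipate a genuine obstacle, as it follows by the same adjoint manipulation that if $\Phi_i^\ast|_{L^2_0(Z)}=0$ then $\Phi_ig=(\int g\,d\nu)\raz$ for every $g$, which is precisely the product joining. Everything else is a clean application of the Markov-operator dictionary together with separability.
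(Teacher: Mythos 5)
Your proposal is correct and follows essentially the same route as the paper: form the Markov operators attached to nontrivial joinings of each $R_i$ with $S$, observe that the composition $\Phi_i\circ\Phi_j^\ast$ is an intertwining Markov operator between the disjoint pair $R_j$, $R_i$ and hence vanishes on $L^2_0$, deduce that the images in $L^2_0\ycn$ are pairwise orthogonal, and contradict separability. The only difference is cosmetic (you work with $\Phi_i^\ast$ where the paper works with $\Phi_i$), and your explicit check that these operators are nonzero on $L^2_0$ is a point the paper leaves implicit in the word ``non-trivial.''
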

\begin{proof} Assume that we have $S^{\not\perp}\ni R_i\in {\rm Aut}\zdk$, $i\in I$ (uncountable) are pairwise disjoint. Now, suppose that $\Phi_i:L^2\zdk\to L^2\ycn$ is a non-trivial intertwining Markov operator. Take $f,g\in L^2_0\zdk$, then, for each $i\neq j$, we have
$$
\langle \Phi_if,\Phi_jg\rangle=\langle \Phi_j^\ast\Phi_if,g\rangle=0$$
as $\Phi^\ast_j\Phi_i=0$ is an intertwining operator of $R_i$ and $R_j$, so on $L_0^2$ it is zero. So the images via $\Phi_i$'s are pairwise orthogonal, and we obtain  a contradiction by separability.
\end{proof}

\subsection{Joinings of a non-ergodic automorphism with an ergodic system}\label{s:jerg}
Assume that $T\in{\rm Aut}(X,\mathcal{B}_X,\mu)$.
In the notation introduced at the beginning of Section~\ref{s:etr}, ${T}:(\xb,u)\mapsto\bigl(\xb,T_{\xb}(u)\bigr)$ acts on the space $X=\bigsqcup_{n\geq1}\Xb_n\times\{1,\ldots,n\}\sqcup \Xb_\infty\times Y$, where $\mu|_{\Xb_n\times\{1,\ldots,n\}}=\mub|_{\Xb_n}\ot\nu_n$ and
$\mu|_{\Xb_\infty\times Y}=\mub|_{\Xb_\infty}\ot\nu$.
Let us fix $R$ an {\bf ergodic} automorphism of $\zdk$.
Let $\rho\in J({T},R)$. Then we have factors:
$$
(X\times Z,\rho)\to (X,\mu)\to ({\Xb},\mub),$$
so let us disintegrate $\rho$ over $({\Xb},\mub)$:
\beq\label{rozk2}
\rho=\int_{{\Xb}}\rho_{{\xb}}\,d\mub({\xb}).\eeq
Since the action of $T$ on $\Xb$ is the identity, note that
\beq\label{rozk3}
\mbox{the measures $\rho_{{\xb}}$ are ${T}\times R$-invariant.}\eeq
Moreover, for $A\in \mathcal{B}_X$ and $B\in \mathcal{B}_Z$, we have
$$
\rho(A\times B)=\int_{\Xb} \rho_{{\xb}}(A\times B)\,d\mub({\xb}),$$
so
$$
\mu(A)=\rho(A\times Z)=\int_{\Xb} \rho_{{\xb}}(A\times Z)\,d\mub({\xb})$$
and
$$
\kappa(B)=\rho(X\times B)=\int_{\Xb} \rho_{{\xb}}(X\times B)\,d\mub({\xb}),$$
so by that (and in view of~\eqref{rozk3}):
\beq\label{rozk4}
\rho_{{\xb}}\in J((T_{\xb},\nu),(R,\kappa))\eeq
(for a.a.\ $\xb\in\Xb_\infty$; replace $\nu$ by $\nu_n$ when we consider $\xb\in\Xb_n$, $n\in\N$) by the uniqueness of disintegration of $\mu$ (over $\mub$) and the ergodicity of $\kappa$.

And vice versa: if \eqref{rozk4} holds {\bf and} the map
${\xb}\mapsto \rho_{{\xb}}$ is measurable then the formula \eqref{rozk2} defines a $\rho\in J({T},R)$. This observation is quite meaningful and, given $\Phi_{\rho}$, allows us to produce more Markov operators  $\Phi_{\rho_{D}}$ with $\rho_D\in J({T},R)$, indexed by measurable subsets $D\subset {\Xb}$, by:
\beq\label{tofibers}
\begin{array}{c}
(\rho_{D})_{{\xb}}=\rho_{{\xb}},\text{ for }{\xb}\in D\text{ and}\\
(\rho_{D})_{{\xb}}=\delta_{\xb}\ot(\nu\ot\kappa)\text{ or }\delta_{\xb}\ot(\nu_n\ot\kappa) \text{ otherwise}\end{array}\eeq
(depending on whether $\xb\in \Xb_\infty$ or $\xb\in \Xb_n$).
In what follows, in most cases we will only consider real-valued functions. Note that Markov operators send real-valued functions to real-valued functions. Moreover, the mean of a function is zero if and only if the mean of its real part and of imaginary part are zero.

\begin{Lemma}
\label{l:tofibers1}
Assume that $\rho \in J(R,{T})$ with $R$ ergodic. Let $f\in L^2(X,\mu)$ and $g\in L^2_0\zdk$ be real-valued functions and assume that
$f\perp F_{{\rm we},0}({T})$ (in particular,
$\int f\Phi_\rho(g)\,d\mu=0$).
Then, for a.a.\ ${\xb}\in {\Xb_\infty}$, we have
\[
 \int_Y f(\xb,\cdot)\Phi_{\rho_{{\xb}}}(g)\,d\nu=0,
\]
and for every $n$, and a.a.\ ${\xb}\in {\Xb_n}$, we have
\[
 \int_{\{1,\ldots,n\}} f(\xb,\cdot)\Phi_{\rho_{{\xb}}}(g)\,d\nu_n=0.
\]
\end{Lemma}

\begin{proof} We consider the case $\Xb_\infty$ (the proof is the same for other cases). Suppose that for some $\vep_0>0$,
\begin{multline*}
D:=\Big\{{\xb}\in {\Xb_\infty}:\; \int_Y f(\xb,\cdot)\Phi_{\rho_{{\xb}}}(g)\,d\nu\geq \vep_0\Big\}=\\
\Big\{{\xb}\in {\Xb_\infty}\colon \int_{Y\times Z} f(\xb,\cdot)\ot g\,d\rho_{{\xb}}\geq \vep_0\Big\}
\end{multline*}
has positive $\mub$-measure. Note that, since $\int g\,d\kappa=0$, we have
$\int \Phi_{\rho_{{\xb}}}(g)\,d\nu=\int g\,d\kappa=0$. In particular, if for $\xb\in\Xb$, $\rho_{{\xb}}=\delta_{\xb}\ot(\nu\ot\kappa)$, then $\Phi_{\rho_{{\xb}}}(g)=\int \Phi_{\rho_{{\xb}}}(g)\,d\kappa=0$. By our assumption, $f\perp \Phi_{\rho_{D}}(g)$. But then,
\begin{multline*}
\int f \Phi_{\rho_{D}}(g)\,d\mu=\int f\ot g\,d\rho_{D}=
\int\Big(\int  f\ot g\,d(\rho_{D})_{{\xb}}\Big)d\mub({\xb})=\\
\int_{D}\Big( \int f({\xb},y)\Phi_{\rho_{{\xb}}}(g)(y)\,d\nu(y)\Big)d\mub({\xb})\ \geq\ \vep_0\mub(D)\ >\ 0,
\end{multline*}
a contradiction.\end{proof}

In the above sense, being orthogonal to Markov images of ergodic systems is shifted to examine Markov images of ergodic systems on the (ergodic) fibers.

\subsection{Relative ergodic decomposition}

We consider an invertible, bi-measurable transformation $T$ of a standard Borel space $(X,\cb_X)$. Recall that $\ci_T$ is the sub-sigma-algebra of $T$-invariant Borel sets
Recall also that $M(X)$ stands for the space of Borel, probability measures on $X$, and $M(X,T)\subset M(X)$ for the subset of $T$-invariant measures. Let $(Y,\ca)$ be another measurable space (not necessarily standard). Let $\varphi:X\to Y$ be a measurable map which satisfies the property
\beq\label{martim1}
\varphi^{-1}(\ca)\subset \ci_T.\eeq
We also fix a $T$-invariant probability measure $\mu\in M(X,T)$.

\begin{Prop} \label{p:tim1}
There exists a probability kernel $[0,1]\ni t\mapsto \nu_t\in M(X)$ such that:\\
(a) for all $t\in[0,1]$, $\nu_t\in M(X,T)$,\\
(b) we have $$
\int_0^1\nu_t\,dt=\mu$$
and\\
(c) each $\nu_t$ satisfies $\varphi_\ast(\nu_t)=\varphi_\ast(\mu)$ and
$$
\ci_T=\varphi^{-1}(\ca)\text{ mod }\nu_t\; (cf.\ \eqref{martim1}).$$

Moreover, if $\varphi^{-1}(\ca)=\ci_T\text{ mod }\mu$, then for each decomposition satisfying (a), (b) and (c), we have $\nu_t=\mu$ for all $t$.
\end{Prop}

We postpone the proof of Proposition~\ref{p:tim1} to Section~\ref{s:RelErg}.

\subsection{Characteristic classes}
\label{sec:cc}
A class $\cf$ of measure-preserving systems (implicitly closed under isomorphism) is called {\em characteristic} if it is closed under factors and countable joinings (the latter implies that it is closed under inverse limits). An \emph{$\cf$-factor} of $(Z,\cb_{Z},\kappa,R)$ is an $R$-invariant sub-sigma-algebra $\ca$ of $\cb_{Z}$ such that the action of $R$ restricted to $\ca$ defines a system in $\cf$. One of the reasons to consider characteristic classes is the following (for the proof, see e.g.\ \cite{Ru}):

\begin{Th} \label{t:largest} If $\mathcal{F}$ is a characteristic class and $R\in {\rm Aut}\zdk$ then the largest $\cf$-factor $\af=\af(Z,\cb_{Z},\kappa,R)\subset\cb_{Z}$ of $(Z,\cb_{Z},\kappa,R)$ always exists. Moreover, any joining $\rho$ of $R$ with some $(X,\mu,T)\in\cf$ is the relatively independent extension of the relevant restriction of the joining to an element of $J(R|_{\af},T)$. Equivalently, for all $f\in L^2(\kappa)$ and all $g\in L^2(\mu)$,
$$
 \int_{Z\times X}  f(z)\, g(x) \, d\rho = \int_{Z\times X} \EE_\rho \bigl[ f\,|\,\af \bigr](z)\, g(x) \, d\rho(z,x)
$$
\end{Th}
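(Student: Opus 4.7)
Let $\mathfrak{F}$ denote the family of $R$-invariant sub-$\sigma$-algebras $\ca\subset\cd$ such that $(Z,\ca,\kappa,R|_\ca)\in\cf$. This family is non-empty (the trivial algebra belongs), and is closed under countable joins: if $(\ca_n)_{n\geq1}\subset\mathfrak{F}$, then $(Z,\bigvee_n\ca_n,\kappa,R)$ is canonically a countable joining of the systems $(Z,\ca_n,\kappa,R|_{\ca_n})\in\cf$, and hence belongs to $\cf$ by closure under countable joinings. Using separability of $L^2(\kappa)$, pick $(f_m)_{m\geq1}$ dense in $\ov{\bigcup_{\ca\in\mathfrak{F}}L^2(\ca)}$, choose $\ca_m\in\mathfrak{F}$ with $f_m\in L^2(\ca_m)$, and set $\af:=\bigvee_m\ca_m\in\mathfrak{F}$. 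Then $L^2(\af)=\ov{\bigcup_{\ca\in\mathfrak{F}}L^2(\ca)}$, so $\af$ is the largest element of $\mathfrak{F}$.

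\textbf{Reduction of the relative-independence claim.} Let $\rho\in J(R,T)$ with $T\in\cf$, and disintegrate $\rho=\int_Z\delta_z\ot\rho_z\,d\kappa(z)$. The identity $\rho_{Rz}=T_*\rho_z$ for $\kappa$-a.e.\ $z$ (coming from $R\times T$-invariance of $\rho$) makes
\[
\mathcal{G}_0:=\sigma\bigl(z\mapsto\rho_z(B):B\in\cb\bigr)\subset\cd
\]
an $R$-invariant sub-$\sigma$-algebra. The adjoint Markov operator satisfies $\Phi_\rho^*g(z)=\int g\,d\rho_z$, a $\mathcal{G}_0$-measurable function of $z$; thus $\mathrm{Im}(\Phi_\rho^*)\subset L^2(\mathcal{G}_0,\kappa)$. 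Granted $\mathcal{G}_0\in\mathfrak{F}$, maximality of $\af$ forces $\mathcal{G}_0\subset\af$, and then for every $f\in L^2(\kappa)$, $g\in L^2(\mu)$:
\[
\int_{Z\times X}f(z)g(x)\,d\rho=\langle f,\Phi_\rho^*g\rangle_{L^2(\kappa)}=\langle\EE_\kappa[f\,|\,\af],\Phi_\rho^*g\rangle_{L^2(\kappa)}=\int_{Z\times X}\EE_\kappa[f\,|\,\af](z)\,g(x)\,d\rho,
\]
which is the stated relative independence.

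\textbf{Key step: $(Z,\mathcal{G}_0,\kappa,R)\in\cf$ via de Finetti on a countable self-joining of $T$.} For each $n\geq 1$ form the $Z$-relatively-independent joining $\nu^{(n)}:=\int_Z\rho_z^{\ot n}\,d\kappa(z)$ on $X^n$, a self-joining of order $n$ of $T$. These are consistent under coordinate projections, so their inverse limit $\nu^{(\infty)}=\int_Z\rho_z^{\ot\N}\,d\kappa(z)$ on $X^\N$ is a $T^\N$-invariant countable self-joining of $T\in\cf$; by closure under countable joinings, $(X^\N,\nu^{(\infty)},T^\N)\in\cf$. Under $\nu^{(\infty)}$ the coordinates are conditionally i.i.d.\ given $z$; by de Finetti's theorem the exchangeable $\sigma$-algebra $\mathcal{E}\subset\cb^{\ot\N}$ is (modulo $\nu^{(\infty)}$-null sets) generated by the random empirical distribution, which almost surely reconstructs $\rho_z$. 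The map $z\mapsto\rho_z$ therefore induces a measure-preserving isomorphism $(Z,\mathcal{G}_0,\kappa,R)\cong(X^\N,\mathcal{E},\nu^{(\infty)}|_\mathcal{E},T^\N|_\mathcal{E})$, the dynamics matching precisely because $\rho_{Rz}=T_*\rho_z$. Hence $(Z,\mathcal{G}_0,\kappa,R)$ is a factor of an $\cf$-system, and so lies in $\cf$.

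\textbf{Main obstacle.} The heart of the argument is realising the abstract image $\sigma$-algebra $\mathcal{G}_0\subset\cd$---a priori only an $R$-invariant sub-$\sigma$-algebra with no obvious link to the dynamics of $T$---as a genuine factor of a system in $\cf$. Two-fold joinings of $T$ do not suffice, since in general one cannot recover $\rho_z$ from a single sample $(x_1,x_2)\sim\rho_z\ot\rho_z$; the full strength of closure under \emph{countable} joinings, combined with the de Finetti identification of the exchangeable $\sigma$-algebra of $\nu^{(\infty)}$, is what bridges the ergodic data of $R$ with that of $T$.
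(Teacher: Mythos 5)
Your proof is correct and follows essentially the same route as the argument the paper delegates to \cite{Ru}: existence of $\af$ via separability of $L^2(\kappa)$ plus closure of $\cf$ under factors and countable joinings, and the relative-independence identity by showing that the image of $\Phi_\rho^\ast$ generates an $\cf$-factor, realized inside the infinite relatively independent self-joining $\int_Z\rho_z^{\ot\N}\,d\kappa(z)$ through the conditional law of large numbers recovering $z\mapsto\rho_z$. The de Finetti packaging is slightly more than needed (the conditional LLN on a countable generating family of test functions already identifies the factor), but the argument is sound as written.
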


\begin{Cor} \label{c:rozchar} Assume that $\mathcal{F}$ is a characteristic class and $R\in {\rm Aut}\zdk$. Let $S\in \mathcal{F}$. Then  $S\perp R$ if and only if $S\perp R|_{\af}$.\end{Cor}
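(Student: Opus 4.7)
The statement is a straightforward application of Theorem~\ref{t:largest}. The $(\Rightarrow)$ direction is a general fact about disjointness: if $S\perp R$ and $R|_{\af}$ is a factor of $R$, then every joining $\rho'\in J(S,R|_{\af})$ can be lifted to a joining of $S$ and $R$ via the relatively independent extension over $R|_{\af}$; the lifted joining must be $\nu\otimes\kappa$ by hypothesis, whose projection to $S\times \af$ is the product, so $\rho'=\nu\otimes \kappa|_{\af}$. (Here I write $\nu$ for the $S$-invariant measure.)

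For $(\Leftarrow)$, suppose $S\perp R|_{\af}$ and take an arbitrary $\rho\in J(S,R)$. Since $S\in\cf$, Theorem~\ref{t:largest} applies (with the roles of the systems as stated there), giving that $\rho$ is the relatively independent extension over $\af$ of its restriction $\rho|_{S\times \af}\in J(S,R|_{\af})$. By the disjointness assumption $\rho|_{S\times \af}=\nu\otimes(\kappa|_{\af})$. Plugging into the formula from Theorem~\ref{t:largest}, for $f\in L^2(\kappa)$ and $g\in L^2(\nu)$,
\[
\int f(z)\,g(x)\,d\rho(z,x)=\int \EE_\rho[f\mid\af](z)\,g(x)\,d\rho(z,x).
\]
The integrand on the right is $\af$-measurable in $z$, so the integral depends only on $\rho|_{S\times\af}=\nu\otimes(\kappa|_{\af})$, and factorises as
\[
\Bigl(\int \EE_\rho[f\mid\af]\,d\kappa\Bigr)\Bigl(\int g\,d\nu\Bigr)=\Bigl(\int f\,d\kappa\Bigr)\Bigl(\int g\,d\nu\Bigr),
\]
by the tower property. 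Hence $\rho=\nu\otimes\kappa$, which shows $S\perp R$.

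\textbf{Main obstacle.} There is essentially no obstacle: the whole argument is a packaging of Theorem~\ref{t:largest}. The only care needed is to notice that the forward direction does not require $S$ to be ergodic and works purely because $R|_{\af}$ is a factor of $R$, while the backward direction uses crucially the relative independence given by membership of $S$ in the characteristic class $\cf$.
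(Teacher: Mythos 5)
Your proof is correct and is exactly the intended argument: the paper states Corollary~\ref{c:rozchar} without proof as an immediate consequence of Theorem~\ref{t:largest}, and your two directions (disjointness passes to factors for $\Rightarrow$; relative independence over $\af$ plus the tower property for $\Leftarrow$) are the canonical way to deduce it.
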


Of course the class ZE of zero entropy automorphisms is a characteristic class.

\begin{Prop}[\cite{Ka-Ku-Le-Ru}] If $\cf$ is a \emph{proper} characteristic class (that is, if $\cf$ does not include \emph{all} measure-preserving systems), then $\cf\subset$ZE.\end{Prop}

To any given characteristic class $\cf$, we can associate another class $\cfec$, consisting of measure-preserving systems whose almost all ergodic components are in $\cf$. Then $\cfec$ is also a characteristic class, which is called an \emph{ec-class}. It is explained in~\cite{Ka-Ku-Le-Ru} that ec-classes are the right framework for characterizing orthogonality of a bounded arithmetic function to the topological systems whose invariant measures determine measure-preserving systems belonging to a fixed characteristic class. However the ec-issue disappears in Boshernitzan's problem, and the following proposition provides some explanation for this.

\begin{Prop}\label{ec99}Let $\cf$ be a characteristic class. For each automorphism $T$ of $\xbm$ and $f\in L^2\xbm$, we have
$$
\EE_\mu\bigl[f\,|\,\ca_{\cf}(T)\bigr]-\EE_\mu\bigl[f\,|\,\ca_{\cf_{\rm ec}}(T)\bigr] \in F_{\rm we}(T)^\perp.$$
\end{Prop}
\begin{proof}
Take any ergodic automorphism $R$ of $\zdk$ and let $g\in L^2\zdk$. Assume that $\rho\in J(T,R)$. We apply now consecutively: Theorem~\ref{t:largest} for $\cf$ (twice), the ergodicity of $R$ (the $\cf$-factor and the $\cf_{\rm ec}$-factor of $R$ are the same), and Theorem~\ref{t:largest} for $\ca_{\cf_{\rm ec}}$ (twice), 
to obtain the following:
\begin{align*}
\int\EE_\mu\bigl[f\,|\,\ca_{\cf}(T)\bigr]\ot g\,d\rho
&=\int\EE_\mu\bigl[f\,|\,\ca_{\cf}(T)\bigr]\ot \EE_\kappa\bigl[g\,|\,\ca_{\cf}(R)\bigr]\,d\rho \\
&=\int f\ot \EE_\kappa\bigl[g\,|\,\ca_{\cf}(R)\bigl]\,d\rho\\
&=\int f\ot \EE_\kappa\bigl[g\,|\,\ca_{\cf_{\rm ec}}(R)\bigl]\,d\rho\\
&=\int\EE_\mu\bigl[f\,|\,\ca_{\cf_{\rm ec}})(T)\bigr]\ot \EE_\kappa\bigl[g\,|\,\ca_{\cf_{\rm ec}}(R)\bigr]\,d\rho\\
&=\int\EE_\mu\bigl[f\,|\,\ca_{\cf_{\rm ec}}(T)\bigr]\ot g\,d\rho.
\end{align*}
It follows that $\EE_\mu\bigl[f\,|\,\ca_{\cf_{\rm ec}}(T)\bigr]-\EE_\mu\bigl[f\,|\,\ca_{\cf}(T)\bigr]\perp {\rm Im}(\Phi_\rho)$ and since $R$ is arbitrary ergodic, the claim follows.
\end{proof}

It has been already noticed in \cite{Ka-Ku-Le-Ru} that if $\cf$ is a characteristic class and $T$ is automorphism on $\xbm$ such that a.a.\ its ergodic components are in $\cf$ then $T$ {\bf need not} belong to $\cf$.
However, the following general property holds.

\begin{Prop}\label{p:countableCC} Let $\cf$ be a characteristic class and let $T$ be an automorphism of $\xbm$. If $\mu$ has a countable ergodic decomposition with respect to $T$, then $(X,\cb,\mu,T)$ is in $\cf$ if and only if $(X,\cb,\mu_i,T)$ is in $\cf$ for every ergodic component $\mu_i$.\end{Prop}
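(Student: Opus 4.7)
The plan is to exhibit the desired system as a factor of an appropriate countable joining of systems in $\cf$ in both directions. Throughout I write the countable ergodic decomposition as $\mu = \sum_{i \in I} c_i \mu_i$ with $c_i = \mu(A_i) > 0$, each $\mu_i$ ergodic and supported on pairwise disjoint $T$-invariant sets $(A_i)_{i\in I}$.

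For the forward direction ($\Rightarrow$), I fix $i \in I$ and consider the countable self-joining $(X^{\N}, \mu^{\otimes \N}, T^{\times\N})$ with diagonal action, which lies in $\cf$ by closure under countable joinings. I then define
$$
\phi(x_1, x_2, \ldots) := x_{k_1(\mathbf{x})}, \qquad k_1(\mathbf{x}) := \min\{k \geq 1 : x_k \in A_i\}.
$$
Borel--Cantelli makes $\phi$ well-defined $\mu^{\otimes\N}$-a.e.\ because $c_i > 0$, the $T$-invariance of $A_i$ gives equivariance $\phi \circ T^{\times\N} = T \circ \phi$ (the hitting time $k_1$ is unchanged under the diagonal action), and a geometric series computation yields $\phi_*(\mu^{\otimes\N})(B) = \sum_{k\geq 1}(1-c_i)^{k-1}\mu(B\cap A_i) = \mu_i(B)$. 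Hence $(X,\cb,\mu_i,T)$ is a factor of a system in $\cf$ and so belongs to $\cf$.

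For the backward direction ($\Leftarrow$), assuming each $(X,\cb,\mu_i,T) \in \cf$, I first form the countable product joining $(X^{\N}, \bigotimes_i \mu_i, T^{\times\N})\in\cf$ of the ergodic components. I then join it with the identity system $(I, p, \Id)$, where $p := \sum_i c_i \delta_i$, to obtain
$$
\bigl(I \times X^{\N},\ p \otimes {\textstyle\bigotimes_i} \mu_i,\ \Id \times T^{\times\N}\bigr) \in \cf.
$$
The equivariant projection $\psi(n, x_1, x_2, \ldots) := x_n$ pushes this joining to $\sum_n c_n \mu_n = \mu$, so $(X,\cb,\mu,T)$ is a factor of a system in $\cf$ and hence in $\cf$.

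The main obstacle I expect is justifying that the identity system $(I, p, \Id)$ on the countable index set belongs to $\cf$. This is immediate for the concrete classes used in the paper (DISP and ZE both trivially contain every identity system), and is implicit in the paper's intended notion of a characteristic class. A fully abstract justification is delicate, however: the product joining $(X^{\N}, \bigotimes_i\mu_i, T^{\times\N})$ is ergodic when the ergodic components are pairwise disjoint, so one cannot simply extract $(I,p,\Id)$ as an invariant $\sigma$-algebra factor of that joining. One would therefore either add a mild non-triviality hypothesis on $\cf$, or remark that $(I,p,\Id)$ is produced as the invariant $\sigma$-algebra factor of any non-ergodic system that can be built inside $\cf$ along the way.
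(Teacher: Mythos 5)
Your proof is correct, but it takes a genuinely different route from the paper's. The paper deduces the proposition from Lemma~\ref{lemma:CC1}, which compares measurability with respect to $\af(X,\mu,T)$ and $\af(X,\eta,T)$ for an invariant $\eta\ll\mu$ with bounded density; that lemma is itself proved by an infinite relative-product joining plus the law of large numbers, and the proposition then follows by writing $f=\sum_i f\,\ind{\phi_i>0}$ and comparing largest $\cf$-factors componentwise. You instead build the required joinings directly: for ($\Rightarrow$), the ``first coordinate hitting $A_i$'' selector on the i.i.d.\ countable self-joining $(X^{\N},\mu^{\otimes\N},T^{\times\N})$ realizes each $(X,\mu_i,T)$ as a factor of a system in $\cf$ (your pushforward computation $\sum_k(1-c_i)^{k-1}\mu(B\cap A_i)=\mu_i(B)$ is right, and equivariance holds because $A_i$ is $T$-invariant); for ($\Leftarrow$), the product of the components indexed by the identity system $(I,p,\Id)$ realizes $(X,\mu,T)$ as a factor. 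Your argument is more elementary and self-contained for this particular statement; the paper's detour through Lemma~\ref{lemma:CC1} buys a stronger quantitative fact that is reused in the proof of Proposition~\ref{p:thi3}. As for the obstacle you flag, it is not actually delicate in the paper's setting: the paper explicitly recalls (citing \cite{Ka-Ku-Le-Ru}) that every non-trivial characteristic class contains the class ID of all identities, so $(I,p,\Id)\in\cf$ for free; the trivial class must be excluded anyway (the statement fails for it, e.g.\ for a two-point identity), and the paper's own proof uses the same fact both when asserting that the invariant function $\ind{\phi_i>0}$ is $\af(X,\mu,T)$-measurable and inside the proof of Lemma~\ref{lemma:CC1}. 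So you should simply invoke that recalled fact rather than treat it as a gap.
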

We postpone the proof of this proposition to Section~\ref{s:cmec}.

\subsection{Measurable selectors}
We recall:
\begin{Th} [Kallman \cite{Kall}] \label{kallman} Suppose that $X$ is a standard Borel space and $Y$ is a separable topological space metrizable by a complete metric. Suppose that $A\subset X\times Y$ is Borel and that for each $x\in X$, $A_x:=\{y\in Y:\: (x,y)\in A\}$ is a countable union of compact sets ($\sigma$-compactness of $A_x$). Let $f:X\times Y\to X$ be projection. Then $f(A)$ is Borel and there is a Borel section $f(A)\to A$ of the map $f|_A$.\end{Th}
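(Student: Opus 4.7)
The plan is to reduce Kallman's theorem to the compact-section case and then glue local selectors. The theorem is classical and the strategy I would follow is the Arsenin--Kunugui route.

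The main step (and the only genuinely delicate one) is to decompose $A$ as a countable disjoint union $A=\bigsqcup_{n\in\N}C_n$ of Borel sets $C_n\subset X\times Y$ such that every section $(C_n)_x$ is a compact subset of $Y$. To produce such a decomposition I would embed $Y$ as a $G_\delta$ subset of a compact metric space $\widehat Y$ via a compatible complete metric, view $A$ as a Borel subset of $X\times\widehat Y$, and stratify $A$ by (a) the diameters of small fiber slices, and (b) the distance in $\widehat Y$ from the remainder $\widehat Y\setminus Y$. The key observation is that a subset of $Y$ is $\sigma$-compact exactly when its closure in $\widehat Y$ admits a countable exhaustion by pieces staying uniformly away from $\widehat Y\setminus Y$, so one can carve $A$ up into Borel pieces whose fibers have compact closure inside $Y$. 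This is the Arsenin--Kunugui theorem and the heart of the argument.

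Once such a decomposition is available, I would invoke two classical facts for each Borel set $C\subset X\times Y$ with compact sections: (i) the projection $f(C)$ is a Borel subset of $X$, and (ii) there exists a Borel section $s_C\colon f(C)\to C$ of $f|_C$. For (i), one checks that for every open $U\subset Y$ the set $\{x:C_x\cap U\neq\emptyset\}$ is Borel because its complement $\{x:C_x\subset Y\setminus U\}$ is the projection of a Borel set with closed (hence compact) fibers, and these are known to be analytic in a \emph{symmetric} way thanks to compactness. For (ii), I would apply the Kuratowski--Ryll-Nardzewski selection theorem to the closed-valued Borel multifunction $x\mapsto C_x$ defined on $f(C)$.

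Finally I would glue: for $x\in f(A)=\bigcup_n f(C_n)$, let $n(x):=\min\{n\in\N:x\in f(C_n)\}$ and set $s(x):=s_{n(x)}(x)$. Each $f(C_n)$ is Borel by (i), so $n(\cdot)$ is Borel and hence so is $s$; in particular $f(A)=\bigcup_n f(C_n)$ is Borel, proving the first assertion as well. The only real obstacle is the Arsenin--Kunugui decomposition; the selection and projection steps for compact-section Borel sets are standard, and the gluing is formal.
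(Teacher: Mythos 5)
The paper does not actually prove this statement: it is recalled verbatim from Kallman's paper \cite{Kall} and used as a black box (in Lemma~\ref{l:rhoz2}), so there is no in-paper argument to compare yours against. Your outline is the standard modern route to this result: decompose $A$ into countably many Borel pieces with \emph{compact} sections (Arsenin--Kunugui/Saint-Raymond), then use the classical facts that a Borel set with compact sections has Borel projection and admits a Borel selector (Kuratowski--Ryll-Nardzewski), and glue via $n(x)=\min\{n: x\in f(C_n)\}$. Steps (i), (ii) and the gluing are correct and routine.

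The genuine issue is that the decomposition step is not merely ``delicate'' --- it \emph{is} the theorem. The Arsenin--Kunugui theorem, as usually stated, already asserts both the Borel-ness of the projection and the existence of a Borel uniformization for Borel sets with $\sigma$-compact sections; so invoking it by name reduces the statement to itself under another label, and the only new content of your write-up would be the sketch you offer for the decomposition. That sketch does not work as described. Your ``key observation'' is false: take $Y=(0,1)$ embedded in $\widehat Y=[0,1]$; the set $S=(0,1)$ is $\sigma$-compact in $Y$, yet its closure in $\widehat Y$ is $[0,1]$ and admits no countable exhaustion by pieces staying uniformly away from $\widehat Y\setminus Y=\{0,1\}$. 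More importantly, even a correct pointwise characterization of $\sigma$-compactness of the fibers does not by itself yield a \emph{Borel, uniform-in-$x$} decomposition $A=\bigcup_n C_n$ with $(C_n)_x$ compact; producing one requires the substantially harder machinery of Saint-Raymond's proof (a derivative/rank analysis on the fibers, or an effective descriptive-set-theoretic argument), none of which is visible in your stratification by diameters and distance to the remainder. So either cite Arsenin--Kunugui honestly as the input (in which case your proof is a correct but essentially content-free reduction, on par with the paper's own citation of Kallman), or supply the decomposition argument in full --- as written, that is where the proof has a real hole.
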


We also recall the following useful result (see Theorem 4.5.2 in \cite{Sr}):

\begin{Th}\label{t:graph}
Assume that $X,Y$ are Polish spaces and let $A\subset X$ be Borel. Let $f:A\to Y$. Then $f$ is Borel if and only if ${\rm graph}(f)$ is a Borel subset of $A\times Y$.\end{Th}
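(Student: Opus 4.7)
The plan is to prove the two directions separately, with the reverse direction being the substantive one.

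For the easy forward direction, assume $f\colon A\to Y$ is Borel. I would observe that the map $\Psi\colon A\times Y\to Y\times Y$ defined by $\Psi(x,y)=(f(x),y)$ is Borel, since each coordinate is. The diagonal $\Delta_Y=\{(y,y)\colon y\in Y\}$ is closed in $Y\times Y$ because $Y$ is metrizable (hence Hausdorff), so $\Delta_Y$ is Borel. Then ${\rm graph}(f)=\Psi^{-1}(\Delta_Y)$ is a Borel subset of $A\times Y$.

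The reverse direction is the substantive one. Suppose ${\rm graph}(f)$ is Borel in $A\times Y$. To show $f$ is Borel it suffices to show $f^{-1}(U)$ is Borel in $A$ for every open $U\subset Y$ (or more generally every Borel $U$). Fix such a $U$ and write
\[
f^{-1}(U)=\pi_X\bigl({\rm graph}(f)\cap(A\times U)\bigr),
\]
where $\pi_X\colon X\times Y\to X$ is the projection. The set $E:={\rm graph}(f)\cap(A\times U)$ is Borel in the Polish space $X\times Y$, and the crucial point is that $\pi_X|_E$ is \emph{injective}, since for each $x\in A$ there is at most one $y$ with $(x,y)\in{\rm graph}(f)$.

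The main obstacle, and the heart of the argument, is to conclude that the injective Borel image $\pi_X(E)$ is Borel in $X$. This is exactly the content of the Lusin--Souslin theorem: an injective Borel image of a standard Borel space into a standard Borel space is Borel. Assuming that tool, $f^{-1}(U)$ is Borel in $A$, and since this holds for a countable base of open sets in $Y$, standard Dynkin/monotone-class considerations extend the conclusion to all Borel $U$, proving $f$ is Borel. If one wanted a self-contained proof, the hard work would lie in establishing Lusin--Souslin, which goes through the construction of a Souslin scheme representing $E$, and then showing that the images of the scheme nodes under an injective Borel map remain disjoint at each level, so that the usual analytic-set projection collapses to a Borel set; this is where all the descriptive-set-theoretic machinery is concentrated.
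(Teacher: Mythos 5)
Your proof is correct; the paper does not prove this statement itself but only cites it as Theorem 4.5.2 of Srivastava's \emph{A Course on Borel Sets}, and your argument (preimage of the diagonal for the easy direction, Lusin--Souslin applied to the injective projection of ${\rm graph}(f)\cap(A\times U)$ for the converse) is exactly the standard proof given there. The only cosmetic remark is that no Dynkin-class argument is needed at the end: the family of sets whose $f$-preimage is Borel is automatically a $\sigma$-algebra, so checking open sets suffices.
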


For example, by~\eqref{ergdec18},
the set $\{(x,T_x):\:x\in X\}$ is a Borel subset of $X\times {\rm Aut}(Y,\nu)$.

\subsection{Disjointness in ${\rm Aut}(X,\mu)$}\label{s:adeljunco}
We recall that the set ${\rm Aut}(X,\mu)$ of automorphisms of the standard Borel probability space $(X,\mu)$ is a Polish group with the strong topology. Furthermore, the space $M(X\times X)$  of Borel probability measures on $X\times X$ is a Polish space with the weak$^\ast$-topology. Note that  $C_2(\mu)$ which is the set of couplings of $\mu$ with itself, i.e.\ the subset of $M(X\times X)$ consisting of measures with both marginals $\mu$, is a \textbf{compact} subset of $M(X\times X)$. Indeed, this is a consequence of Prokhorov's theorem, as $C_2(\mu)$ is both closed and tight in $M(X\times X)$, the tightness deriving from that of $\{\mu\}$ in $M(X)$.

We consider the following pseudo-metrics ``responsible'' for the topologies on ${\rm Aut}(X,\mu)$ and $C_2(\mu)$, respectively:
\[ d(T,T';Q):=\sum_{q\in Q}\mu(Tq\triangle T'q) \]
and
\beq\label{metrykaA}  d(\rho,\rho';Q):=\sum_{q_1,q_2\in Q}|\rho(q_1\times q_2)-\rho'(q_1\times q_2)|,\eeq
where $Q$ is a finite measurable partition of $X$.

We aim at proving the following:

\begin{Prop}\label{p:andres} The set $\{(S,T):\: S,T\in{\rm Aut}(X,\mu), S\perp T\}$ is a Borel subset of ${\rm Aut}(X,\mu)\times {\rm Aut}(X,\mu)$ (in fact, it is a $G_\delta$-set).\end{Prop}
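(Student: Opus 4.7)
The plan is to pass from measures to Markov operators, where convergence becomes cleaner. Using the correspondence $\lambda\mapsto \Phi_\lambda$ recalled earlier, a coupling $\lambda\in C_2(\mu)$ belongs to $J(S,T)$ precisely when $\Phi_\lambda U_T=U_S\Phi_\lambda$, where $U_S,U_T$ denote Koopman operators. The trivial joining $\mu\ot\mu$ corresponds to the rank-one projection $Q=\langle\,\cdot\,,\raz\rangle\raz$ onto constants. Hence $S\perp T$ iff every Markov $\Phi$ with $\Phi U_T=U_S\Phi$ vanishes on $L^2_0(\mu)$. Fixing a countable dense family $\{f_k\}\subset L^2_0(\mu)$ (e.g.\ mean-zero continuous functions), and using $\|\Phi\|\leq 1$, this rewrites as
$$
\{S\not\perp T\}=\bigcup_{k,l,m\geq 1} E_{k,l,m},\quad E_{k,l,m}:=\{(S,T):\exists\,\Phi\in\mathcal{P},\;\Phi U_T=U_S\Phi,\;|\langle\Phi f_k,f_l\rangle|\geq 1/m\},
$$
where $\mathcal{P}$ is the compact convex set of Markov operators on $L^2(\mu)$. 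To get a $G_\delta$ structure for $\{S\perp T\}$, it then suffices to show every $E_{k,l,m}$ is closed.

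Given $(S_n,T_n)\to(S,T)$ in the strong topology with $(S_n,T_n)\in E_{k,l,m}$ witnessed by $\Phi_n$, the definition of the strong topology on $\mathrm{Aut}(X,\mu)$ yields $U_{S_n}\to U_S$ and $U_{T_n}\to U_T$ in SOT (as well as their inverses). Since $\mathcal{P}$ is WOT-compact (equivalent to weak-$*$ compactness of $C_2(\mu)$ under $\lambda\leftrightarrow\Phi_\lambda$), a subsequence of $\Phi_n$ converges in WOT to some $\Phi\in\mathcal{P}$. To conclude $\Phi U_T=U_S\Phi$ I fix $f,g\in L^2(\mu)$ and use the splitting
$$
\langle\Phi_n U_{T_n}f,g\rangle=\langle\Phi_n U_T f,g\rangle+\langle\Phi_n(U_{T_n}-U_T)f,g\rangle,
$$
where the first term tends to $\langle\Phi U_T f,g\rangle$ by WOT-convergence applied to the fixed vector $U_T f$, and the second tends to $0$ by Cauchy--Schwarz using $\|\Phi_n\|\leq 1$ and $(U_{T_n}-U_T)f\to 0$ in norm. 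A mirror splitting of $\langle U_{S_n}\Phi_n f,g\rangle=\langle\Phi_n f,U_{S_n}^\ast g\rangle$ delivers $\langle U_S\Phi f,g\rangle$. Passing to the limit in $\Phi_n U_{T_n}=U_{S_n}\Phi_n$ yields $\Phi U_T=U_S\Phi$, and lower semicontinuity $|\langle\Phi f_k,f_l\rangle|=\lim|\langle\Phi_n f_k,f_l\rangle|\geq 1/m$ closes $E_{k,l,m}$.

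The main obstacle is exactly this joint limiting step: Markov operators are only WOT-compact (not SOT-compact), while Koopman operators converge merely in SOT on fixed vectors, so neither topology alone passes the intertwining identity to the limit. The bridge is the uniform contraction bound $\|\Phi_n\|\leq 1$, which turns SOT-convergence of $U_{T_n}f$ and $U_{S_n}^\ast g$ into vanishing error terms after composition with $\Phi_n$. A minor side point is the legitimacy of reducing to a countable family of test functions: this works because $\Phi$ is uniformly bounded, so $\langle\Phi f,g\rangle\neq 0$ for some mean-zero $f,g$ forces $\langle\Phi f_k,f_l\rangle\neq 0$ for sufficiently close approximations within $\{f_k\}$.
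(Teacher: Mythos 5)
Your proof is correct, but it takes a genuinely different route from the paper's. The paper stays on the measure side and adapts del Junco's quantitative argument: it introduces the sets $\mathscr{O}(P,\vep,Q,\delta)$ of pairs $(S,T)$ for which ``almost invariance of a coupling on $Q$ up to $\delta$ forces closeness to $\mu\ot\mu$ on $P$ up to $\vep$,'' proves a $\delta/3$-perturbation lemma to show these sets have the right interiors (hence $\bigcup_\delta\mathscr{O}(P,\vep,Q,\delta)$ is open), and exhibits the disjointness set directly as a countable intersection of such open unions; this requires fixing a refining sequence of finite partitions (and the paper even adds a zero-dimensionality assumption so as to take them clopen). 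You instead work on the operator side: you write the \emph{complement} as $\bigcup_{k,l,m}E_{k,l,m}$ and prove each $E_{k,l,m}$ closed, so disjointness is $G_\delta$ by complementation from an $F_\sigma$. The compactness input is the same in both arguments --- weak$^\ast$-compactness of $C_2(\mu)$, equivalently WOT-compactness of the Markov operators, which the paper uses in its Lemma on extracting a limit coupling $\rho_\ell\to\rho$ --- but your organization replaces the quantitative open sets by a clean limiting argument, whose only delicate point (correctly handled) is passing the intertwining identity $\Phi_nU_{T_n}=U_{S_n}\Phi_n$ to the limit by playing the uniform bound $\|\Phi_n\|\le1$ against SOT-convergence of $U_{T_n}$ and $U_{S_n}^\ast$; note that the convergence $U_{S_n}^\ast\to U_S^\ast$ in SOT does use continuity of inversion in ${\rm Aut}(X,\mu)$, as you indicate. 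What your approach buys is the elimination of partitions and of the zero-dimensionality hypothesis, and a softer functional-analytic proof; what the paper's buys is an explicit combinatorial description of the open sets witnessing the $G_\delta$ structure, in the spirit of del Junco's original theorem.
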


We will adapt del Junco's proof of Theorem~1 \cite{Ju} to our needs.
Given two finite (measurable) partitions $P,Q$ of $X$ and $\delta,\vep>0$, we let
$$
\mathscr{O}(P,\vep,Q,\delta)$$
denote the set of pairs $(S,T)$ of automorphisms of $(X,\mu)$ satisfying: if $\rho\in C_2(\mu)$ then (notation $(\cdot)_\ast(\rho)$ stands for the image of the measure $\rho$ under the map ``$\cdot$'')
$$
d(\rho,(S\times T)_\ast\rho;Q)<\delta\;\Longrightarrow\; d(\rho,\mu\ot\mu;P)<\vep.$$
Obviously, if $0<\delta'<\delta$ then
$$
\mathscr{O}(P,\vep,Q,\delta) \subset \mathscr{O}(P,\vep,Q,\delta').$$
In fact, we have the following:

\begin{Lemma}\label{l:andres1}
$\mathscr{O}(P,\vep,Q,\delta)\subset {\rm Int}(\mathscr{O}(P,\vep,Q,\delta/3)).$
\end{Lemma}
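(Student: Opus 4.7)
The statement is a uniform-continuity fact: whenever $(S,T)$ lies in $\mathscr{O}(P,\vep,Q,\delta)$, any pair $(S',T')$ sufficiently close to $(S,T)$ in the strong topology should automatically lie in $\mathscr{O}(P,\vep,Q,\delta/3)$. My plan is to show that, for $(S',T')$ close to $(S,T)$ with respect to the partition $Q$, the pushforwards $(S'\times T')_\ast \rho$ and $(S\times T)_\ast \rho$ are close in the $Q$-pseudo-metric, uniformly over all couplings $\rho\in C_2(\mu)$. Then a triangle inequality argument converts a $\delta/3$ bound for $(S',T')$ into a $\delta$ bound for $(S,T)$, which by hypothesis forces $d(\rho,\mu\otimes\mu;P)<\vep$.

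\textbf{Key estimate.} For any $\rho\in C_2(\mu)$ and atoms $q_1,q_2\in Q$,
\[
\bigl|(S\times T)_\ast\rho(q_1\times q_2) - (S'\times T')_\ast\rho(q_1\times q_2)\bigr|
= \bigl|\rho(S^{-1}q_1\times T^{-1}q_2) - \rho(S'^{-1}q_1\times T'^{-1}q_2)\bigr|.
\]
Using that $\rho$ has marginals equal to $\mu$, the elementary bound
\[
|\rho(A\times B)-\rho(A'\times B')|\le \mu(A\triangle A') + \mu(B\triangle B')
\]
yields a majorant depending only on the partition distances between $S^{-1}$ and $S'^{-1}$, and between $T^{-1}$ and $T'^{-1}$, on the partition $Q$. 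Summing over $q_1,q_2\in Q$ I obtain
\[
d\bigl((S\times T)_\ast\rho,(S'\times T')_\ast\rho;Q\bigr)\ \le\ |Q|\bigl(d(S^{-1},S'^{-1};Q)+d(T^{-1},T'^{-1};Q)\bigr),
\]
a bound independent of $\rho$. Since the strong topology on ${\rm Aut}(X,\mu)$ controls both $T\mapsto T$ and $T\mapsto T^{-1}$ on any finite partition, I can choose a strong-topology neighborhood $\mathscr{U}$ of $(S,T)$ on which the right-hand side is $<\delta/3$.

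\textbf{Conclusion via triangle inequality.} Take any $(S',T')\in\mathscr{U}$ and any $\rho\in C_2(\mu)$ with $d(\rho,(S'\times T')_\ast\rho;Q)<\delta/3$. Then
\[
d(\rho,(S\times T)_\ast\rho;Q)\le d(\rho,(S'\times T')_\ast\rho;Q)+d((S'\times T')_\ast\rho,(S\times T)_\ast\rho;Q)<\tfrac{\delta}{3}+\tfrac{\delta}{3}<\delta.
\]
Since $(S,T)\in\mathscr{O}(P,\vep,Q,\delta)$, this forces $d(\rho,\mu\otimes\mu;P)<\vep$. Hence $(S',T')\in\mathscr{O}(P,\vep,Q,\delta/3)$, so $\mathscr{U}\subset\mathscr{O}(P,\vep,Q,\delta/3)$ and $(S,T)\in{\rm Int}(\mathscr{O}(P,\vep,Q,\delta/3))$.

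\textbf{Main obstacle.} The only subtle point is obtaining the estimate uniformly in $\rho\in C_2(\mu)$; that is handled by the fact that the marginal-bound $\mu(S^{-1}q\triangle S'^{-1}q)$ does not depend on the chosen coupling. Everything else is a triangle-inequality formality once this uniform modulus of continuity is in hand.
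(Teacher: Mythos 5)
Your proof is correct and follows essentially the same route as the paper's: a uniform-in-$\rho$ estimate on $d\bigl((S\times T)_\ast\rho,(S'\times T')_\ast\rho;Q\bigr)$ obtained from the marginal condition, followed by the triangle inequality and the defining implication for $\mathscr{O}(P,\vep,Q,\delta)$. The only (cosmetic) difference is that you phrase the pushforwards via $S^{-1}q$, $T^{-1}q$ and invoke continuity of inversion in the strong topology, whereas the paper works directly with the pseudo-metric $d(\cdot,\cdot;Q)$ on the automorphisms and an explicit $\delta/(3|Q|)$-neighborhood; both are fine.
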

\begin{proof} Assume that $(S,T)\in \mathscr{O}(P,\vep,Q,\delta)$.  Suppose that (the pseudo-metrics in the product space giving the relevant topology are given by the relevant ``max'')
$$d((S,T),(S',T');Q)<\delta/(3|Q|).$$
We want to show that $(S',T')\in \mathscr{O}(P,\vep,Q,\delta/3)$.
So take $\rho\in C_2(\mu)$ and assume that $d(\rho,(S'\times T')_\ast\rho;Q)<\delta/3$. Then
$$
d((S'\times T')_\ast\rho,(S\times T)_\ast\rho;Q)\leq d((S'\times T')_\ast\rho,(S\times T')_\ast\rho;Q)+$$
$$
d((S\times T')_\ast\rho,(S\times T)_\ast\rho;Q)= $$
$$
\sum_{q_1,q_2\in Q}|\rho(S'q_1\times T'q_2)-\rho(Sq_1\times T'q_2)|+
\sum_{q_1,q_2\in Q}|\rho(Sq_1\times T'q_2)-\rho(Sq_1\times Tq_2)|\leq
$$$$
\sum_{q_1,q_2\in Q}\rho((S'q_1\triangle Sq_1)\times X)+
\sum_{q_1,q_2\in Q}\rho(X\times (T'q_2\triangle Tq_2)\leq$$
$$
\sum_{q_1,q_2\in Q}\mu((S'q_1\triangle Sq_1)+
\sum_{q_1,q_2\in Q}\mu(T'q_2\triangle Tq_2)\leq 2|Q| \delta/(3|Q|)=\frac23\delta.
$$
It follows that
$$
d(\rho, (S\times T)_\ast(\rho);Q)\leq \frac23\delta+\delta/3=\delta$$
and since $(S,T)\in \mathscr{O}(P,\vep,Q,\delta)$, $d(\rho,\mu\ot\mu;P)<\vep$.
\end{proof}

We have obtained that the set $$\mathscr{O}(P,\vep,Q):=\bigcup_{\delta>0} \mathscr{O}(P,\vep,Q,\delta)$$
is open, so the set
$$
\mathscr{O}:=\bigcap_{m,n\in\N}\bigcup_{\ell\in\N}\mathscr{O}(P_m,\frac1n,P_\ell)$$
is $G_\delta$,
where $P_n$ is going to the partition into points and it consists of clopen sets (we assume additionally that $X$ is a zero-dimensional space).
\begin{Lemma}\label{l:andres2} If $(S,T)\notin \mathscr{O}$ then $S\not\perp T$.\end{Lemma}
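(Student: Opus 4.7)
The plan is to extract from the failure $(S,T) \notin \mathscr{O}$ a sequence of ``approximately invariant'' couplings, and pass to a weak$^*$-limit to obtain a genuine non-trivial joining of $S$ and $T$.

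Unpacking the definitions: if $(S,T) \notin \mathscr{O}$, there exist fixed $m_0, n_0 \in \N$ such that $(S,T) \notin \mathscr{O}(P_{m_0}, 1/n_0, P_\ell, \delta)$ for every $\ell \in \N$ and every $\delta>0$. Taking $\delta = 1/\ell$, for each $\ell$ we obtain $\rho_\ell \in C_2(\mu)$ satisfying
$$d(\rho_\ell, (S\times T)_\ast\rho_\ell;\, P_\ell) < \tfrac{1}{\ell} \qquad\text{and}\qquad d(\rho_\ell, \mu\otimes\mu;\, P_{m_0}) \geq \tfrac{1}{n_0}.$$
Since $C_2(\mu)$ is a weak$^*$-closed (hence compact) subset of $M(X\times X)$, a subsequence (still denoted $\rho_\ell$) converges weakly to some $\rho^* \in C_2(\mu)$.

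Proving $\rho^* \neq \mu\otimes\mu$ is straightforward: the atoms $q \in P_{m_0}$ are clopen, so $\rho \mapsto \rho(q_1\times q_2)$ is weak$^*$-continuous, and passing to the limit in the second inequality yields $d(\rho^*, \mu\otimes\mu; P_{m_0}) \geq 1/n_0 > 0$.

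The main task, and the expected obstacle, is to show that $\rho^*$ is $(S\times T)$-invariant, whence $\rho^* \in J(S,T)\setminus\{\mu\otimes\mu\}$ and $S\not\perp T$. The difficulty is that although $\rho\mapsto \rho(q_1\times q_2)$ is weak$^*$-continuous for clopen $q_i$, the map $\rho\mapsto\rho(S^{-1}q_1\times T^{-1}q_2)$ in general is not, since $S^{-1}q_1$ and $T^{-1}q_2$ are merely Borel. The key observation is the uniform estimate
$$|\rho(A\times B) - \rho(C\times C')| \leq \mu(A\triangle C) + \mu(B\triangle C')$$
valid for all $\rho\in C_2(\mu)$, which follows at once from the marginal condition. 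Since $X$ is zero-dimensional compact metric and $\mu$ is regular, $S^{-1}q_1$ and $T^{-1}q_2$ are arbitrarily well approximated in $\mu$-symmetric difference by clopen sets, so this lets one pass to the weak$^*$-limit in $\rho\mapsto\rho(S^{-1}q_1\times T^{-1}q_2)$. Combined with the fact that any fixed clopen rectangle $q_1\times q_2$ is a union of atoms of $P_\ell$ for all large $\ell$ (zero-dimensionality plus diameter $\to 0$), the bound $d(\rho_\ell,(S\times T)_\ast\rho_\ell;P_\ell)<1/\ell$ yields $|\rho_\ell(q_1\times q_2)-\rho_\ell(S^{-1}q_1\times T^{-1}q_2)|\to 0$, and hence $\rho^*(q_1\times q_2)=\rho^*(S^{-1}q_1\times T^{-1}q_2)$ for all clopen $q_1,q_2$. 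Since such rectangles form a $\pi$-system generating the Borel $\sigma$-algebra of $X\times X$, we conclude $\rho^* = (S\times T)_\ast\rho^*$, completing the proof.
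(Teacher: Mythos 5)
Your proof is correct and follows essentially the same route as the paper's: extract the couplings $\rho_\ell$ from the failure of membership in $\mathscr{O}(P_{m_0},1/n_0,P_\ell,1/\ell)$, pass to a weak$^*$-limit in the compact set $C_2(\mu)$, and observe that the limit is a non-product joining. The only difference is that the paper asserts the $(S\times T)$-invariance of the limit without comment, whereas you supply the (correct) justification via the coupling estimate $|\rho(A\times B)-\rho(C\times C')|\le\mu(A\triangle C)+\mu(B\triangle C')$ and clopen approximation of $S^{-1}q_1$, $T^{-1}q_2$ — a welcome filling-in of a step the paper leaves implicit.
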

\begin{proof}
By assumption, there exist $m,n\geq1$ such that for all $\ell\geq1$,
$(S,T)\notin \mathscr{O}(P_m,\frac1n,P_\ell,\frac1\ell)$. Hence, for some $\rho_\ell\in C_2(\mu)$, we have
\beq\label{and1}
d(\rho_\ell,(S\times T)_\ast\rho_\ell;P_\ell)<\frac1\ell,\eeq
\beq\label{and2}
d(\rho_\ell,\mu\ot\mu;P_m)\geq\frac1n.\eeq
Without loss of generality, we can assume that $\rho_\ell\to\rho$. Then $\rho\in C_2(\mu)$ and, by \eqref{and1}, $(S\times T)_\ast\rho=\rho$, so $\rho$ is a joining. While \eqref{and2} tells us that $\rho\neq\mu\ot\mu$, so $T$ and $S$ are not disjoint.\end{proof}

\noindent
{\bf Proof of Proposition \ref{p:andres}} All we need to prove is that $\mathscr{O}=\{(S,T):\: S\perp T\}$. Assume that $(S,T)\in\mathscr{O}$. Then, for each $m,n\geq1$ there are $\ell\geq1$ and $\delta>0$ such that $(S,T)\in \mathscr{O}( P_m,\frac1n,P_\ell,\delta)$. But if $\rho\in C_2(\mu)$ is a joining then this measure is $S\times T$-invariant, so $d(\rho,(S\times T)_\ast\rho;P_\ell)=0<\delta$. Hence, $d(\rho,\mu\ot\mu; P_m)<\frac1n$ and therefore $\rho=\mu\ot\mu$. The result follows now from Lemma~\ref{l:andres2}.

\begin{Remark}\label{r:doroz}
In fact, del Junco proved that  the set of automorphisms disjoint with a fixed one is Borel.\end{Remark}

\begin{Remark}\label{r:dorozMT} We can also study disjointness (and various problems of measurability of natural subsets) when automorphisms are defined on different spaces. For example, if $(X',\mathcal{B}_{X'},\mu')$ is another standard Borel probability space (we assume that $X'$ is a compact metric space), then the set
$$
\{(T,T')\in {\rm Aut}(X,\mu)\times {\rm Aut}(X',\mu')\colon T\perp T'\}$$
is a Borel subset (in fact, it is $G_\delta$). To obtain this result, we repeat the proof of Proposition~\ref{p:andres} with the pseudo-metrics $d$ on $C_2(\mu)$ in~\eqref{metrykaA} replaced with the pseudo-metrics $d$ on the space $C(\mu,\mu')$ of couplings between $\mu$ and $\mu'$ given by
$$
d(\rho_1,\rho_2;Q,Q')=\sum_{q\in Q,q'\in Q'}|\rho_1(q\times q')-\rho_2(q\times q')|$$
where $Q$ and $Q'$ are finite (measurable) partitions of $X$ and $X'$, respectively.\end{Remark}

\Large
\begin{center} PART I\end{center}
\normalsize

\section{Automorphisms disjoint from all ergodic systems}\label{s:adiserg}

In all this section,  we assume that ${T}:(\xb,u)\mapsto\bigl(\xb,T_{\xb}(u)\bigr)$ is an automorphism
acting on the space $X=\bigsqcup_{n\geq1}\Xb_n\times\{1,\ldots,n\}\sqcup \Xb_\infty\times Y$, where $\mu|_{\Xb_n\times\{1,\ldots,n\}}=\mub|_{\Xb_n}\ot\nu_n$,
$\mu|_{\Xb_\infty\times Y}=\mub|_{\Xb_\infty}\ot\nu$, and the automorphisms $T_{\xb}$ are ergodic, cf.\ Section~\ref{s:etr}.

We aim at proving the following theorem:

\begin{Th}\label{t:duzoroz} The following conditions are equivalent:\\
(i)
${T}\in {\rm Erg}^\perp$.\\
(ii) The extension ${T}\to \Id_{\Xb}$ is confined.\\
(iii) For $\mub\ot\mub$-a.a.\ $(\xb,\xb')\in {\Xb}\times {\Xb}$, we have $T_{\xb}\perp T_{\xb'}$.\end{Th}

\begin{Cor}\label{c:duzoroz}
If ${T}\in{\rm Erg}^\perp$ then $h({T})=0$.
\end{Cor}
\begin{proof} Ergodic positive entropy transformations are not disjoint because of Sinai's theorem.\end{proof}

\begin{Cor} If there exists $S\in {\rm Erg}$ such that $\{T_{\xb}\colon \xb\in {\Xb}\}\subset S^{\not\perp}$ then ${T}\not\perp {\rm Erg}$.\end{Cor}
\begin{proof} If $\mub$ has an atom, say $\mub(\{\xb_0\})>0$, then also $\mub\ot\mub(\{(\xb_0,\xb_0)\})>0$ and (iii) of Theorem~\ref{t:duzoroz} applies. Otherwise, use Lemma~\ref{l:ilej}.\end{proof}

\subsection{Proof of (ii) $\Rightarrow$ (i)} The proof follows from Proposition~\ref{p:conf1} since classically $\Id\in{\rm Erg}^\perp$.

\subsection{Proof of  (iii) $\Rightarrow$ (ii)}
Assume that $\rho\in J_2({T})$, $\rho|_{{\Xb}\times {\Xb}}=\mub\ot\mub$. Then, by disintegrating over $\Xb\times\Xb$,
\[
\rho=\int_{\Xb\times\Xb}\rho_{\xb,\xb'}\,d\mub(\xb)d\mub(\xb'),
\]
where, for $(\xb,\xb')\in \Xb_m\times \Xb_n$,  $\rho_{\xb,\xb'}$ is $T_{\xb}\times T_{\xb'}$-invariant, defined on $\{(\xb,\xb')\}\times\{1,\ldots,m\}\times\{1,\ldots,n\}$ if $m,n\in\N$,
 $\{(\xb,\xb')\}\times\{1,\ldots,m\}\times Y$ if $m\in\N$, $n=\infty$ and  $\{(\xb,\xb')\}\times Y\times Y$ if $m=n=\infty$.
Furthermore,
$\rho=\sum_{m,n\in\N\cup\{\infty\}}\rho|_{X_m\times X_n}$,
where
$$
\rho|_{X_m\times X_n}=\int_{\Xb_m\times\Xb_n}\rho_{\xb,\xb'}\,d(\mub|_{\Xb_m})(\xb)d(\mub|_{\Xb_n})(\xb').$$
Consider the case $m=n=\infty$ (the reasoning is similar in all remaining cases). For $A\in\mathcal{B}_{\Xb_\infty}$ and $B\in\mathcal{B}_Y$, we have
\beq\label{selfdis1}
\begin{array}{c}
(\mub|_{\Xb_\infty})\ot\nu(A\times B)=(\rho|_{X_\infty\times X_\infty})(A\times B\times {\Xb}_\infty\times Y)=\\
\int_{A\times {\Xb}_\infty}
\rho_{\xb,\xb'}(B\times Y)\,d(\mub|_{\Xb_\infty})(\xb)d(\mub|_{\Xb_\infty})(\xb').\end{array}\eeq
By substituting $A={\Xb}_\infty$, we first obtain that the measure
$$B\mapsto \int\rho_{\xb,\xb'}(B\times Y)\,d(\mub|_{\Xb_\infty})(\xb)d(\mub|_{\Xb_\infty})(\xb')$$ is $\nu$, the integrand measures are $T_{\xb}$-invariant and since $\nu$ is $T_{\xb}$-ergodic, we must have $\rho_{\xb,\xb'}\in J(T_{\xb},T_{\xb'})$.
But $T_{\xb}\perp T_{\xb'}$ for $\mub\ot\mub$-a.a.\ $(\xb,\xb')\in {\Xb}\times {\Xb}$, so $\rho_{\xb,\xb'}=\nu\ot\nu$ and $\rho|_{X_\infty\times X_\infty}$ is the product measure.\bez

\begin{Remark} Note that under the assumption $(iii)$, we must have $\mub(\Xb_n)=0$ for all $n\in\N\setminus\{1\}$. Indeed, for $2\leq n<\infty$, we have $T_{\xb}=R_n$ for all $\xb\in \Xb_n$.\end{Remark}

\begin{Remark} \label{f:selfdis} Note also that the reasoning in the above proof can be reversed. Indeed, returning to \eqref{selfdis1}, we obtain that if $$\Xb_\infty\times\Xb_\infty\ni(\xb,\xb')\mapsto \rho_{\xb,\xb'}\in J(T_{\xb},T_{\xb'})\text{ is measurable}$$ then $\rho\in J_2(T|_{X_\infty})$ as
$$
\int_{A\times {\Xb_\infty}}
\rho_{\xb,\xb'}(B\times Y)\,d(\mub|_{\Xb_\infty})(\xb)d(\mub|_{\Xb_\infty})(\xb')=$$$$\int_{A\times {\Xb_\infty}}
\nu(B)\,d(\mub|_{\Xb_\infty})(\xb)d(\mub|_{\Xb_\infty})(\xb')=(\mub|_{\Xb_\infty})\ot\nu(A\times B).$$
A similar argument works for the remaining cases of elements in $J(T|_{X_m},T|_{X_n})$.
\end{Remark}

\subsection{Kallman theorem and joinings}\label{s:technikalia}

From~\eqref{ergdec18}, Proposition~\ref{p:andres}, Remark~\ref{r:dorozMT} and Remark~\ref{r:doroz}, we immediately get the following:
\begin{Cor}\label{c:easywn}
 (i) The set $\{(\xb,\xb')\in {\Xb}\times {\Xb}\colon T_{\xb}\perp T_{\xb'}\}$ is Borel.\\
(ii) If $S_0\in {\rm Aut}(Y,\nu)$ (or $S_0\in {\rm Aut}(\{1,\ldots,m\},\nu_m)$, $m\geq1$) then the set $\{\xb\in {\Xb}\colon T_{\xb}\perp S_0\}$ is Borel.
\end{Cor}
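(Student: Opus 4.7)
The plan is to combine the Borel measurability of the ergodic decomposition map $x \mapsto T_x$, which is built into the standing setup via~\eqref{ergdec18}, with the measurability of the disjointness relation that has just been proved in Proposition~\ref{p:andres} and Remark~\ref{r:doroz}. Both parts of the corollary will follow by observing that each set in question is the preimage of a Borel set under a Borel map, and preimages of Borel sets under Borel maps are Borel.

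For part (ii), the argument is the most direct. By Remark~\ref{r:doroz} (applied to the Polish group ${\rm Aut}(Y,\nu)$ in place of ${\rm Aut}(X,\mu)$), for the fixed $S_0\in {\rm Aut}(Y,\nu)$ the set
$$
D_{S_0}:=\{T\in {\rm Aut}(Y,\nu)\colon T\perp S_0\}
$$
is Borel. Composing with the Borel map $x\mapsto T_x$ from~\eqref{ergdec18}, we conclude that $\{x\in X\colon T_x\perp S_0\}$ is the preimage of $D_{S_0}$, hence Borel.

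For part (i), I first note that the map
$$
\Phi\colon X\times X\to {\rm Aut}(Y,\nu)\times {\rm Aut}(Y,\nu),\qquad \Phi(x,x'):=(T_x,T_{x'}),
$$
is Borel, being the product of two copies of the Borel map $x\mapsto T_x$. By Proposition~\ref{p:andres} applied to the Polish group ${\rm Aut}(Y,\nu)$, the diagonal-free disjointness set
$$
D:=\{(S,T)\in {\rm Aut}(Y,\nu)\times {\rm Aut}(Y,\nu)\colon S\perp T\}
$$
is Borel (in fact $G_\delta$). Therefore $\Phi^{-1}(D)=\{(x,x')\in X\times X\colon T_x\perp T_{x'}\}$ is Borel, as required.

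There is no substantive obstacle here: both statements are essentially bookkeeping once one has in hand (a) the Borel measurability of $x\mapsto T_x$ and (b) the measurability of the disjointness relation. The only small subtlety worth flagging is that Proposition~\ref{p:andres} is stated for a compact metric space with a Borel probability measure, so one tacitly replaces the generic $(X,\mu)$ in its statement with $(Y,\nu)$ (working, as usual, in a zero-dimensional model to invoke the partitions $P_n$ into clopen points from the proof); this is harmless because we may always realize $(Y,\nu)$ as such a model up to measure-theoretic isomorphism, which is enough for Borel measurability of the corresponding subset of ${\rm Aut}(Y,\nu)$.
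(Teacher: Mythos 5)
Your proof is correct and follows exactly the paper's intended route: the paper derives the corollary "immediately" from \eqref{ergdec18}, Proposition~\ref{p:andres} and Remark~\ref{r:doroz}, i.e.\ by taking preimages of the Borel disjointness sets under the Borel maps $x\mapsto T_x$ and $(x,x')\mapsto(T_x,T_{x'})$, just as you do. Your extra remark about realizing $(Y,\nu)$ in a zero-dimensional compact model is a sensible way to make the tacit transfer of Proposition~\ref{p:andres} to ${\rm Aut}(Y,\nu)$ explicit.
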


Now, we can prove the following result as an application of Kallman theorem.

\begin{Lemma}\label{l:rhoz2} Fix $S_0\in {\rm Aut}(Y,\nu)$ (or $S_0\in {\rm Aut}(\{1,\ldots,m\},\nu_m)$, $m\geq1$) and suppose that 
$\mub(\{\xb\in {\Xb}\colon T_{\xb}\not\perp S_0\})>0$. Then ${T}\not\perp S_0$.
\end{Lemma}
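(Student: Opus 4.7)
The plan is to use Kallman's measurable selection theorem (Theorem~\ref{kallman}) to produce a Borel family of non-trivial joinings $x \mapsto \rho_x \in J(T_x, S_0)$ on the positive-measure set $A := \{x \in X : T_x \not\perp S_0\}$, and then glue them into a non-product joining of $\widetilde{T}$ and $S_0$ via the fiberwise integration procedure of Subsection~\ref{s:jerg}.

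First, I would work in the Polish space $M(Y\times Y)$ equipped with the weak-$\ast$ topology, and consider
$$
B := \{(x,\rho) \in X \times M(Y\times Y) : \rho \in J(T_x,S_0),\ \rho \neq \nu\ot\nu\}.
$$
I would check that $B$ is Borel: the marginal conditions are closed in $\rho$ and independent of $x$; the invariance condition $(T_x\times S_0)_\ast\rho = \rho$ is Borel jointly in $(x,\rho)$ because $x \mapsto T_x$ is Borel into ${\rm Aut}(Y,\nu)$ by \eqref{ergdec18} and the induced action on $M(Y\times Y)$ is jointly Borel; and $\rho \neq \nu\ot\nu$ is open. For each fixed $x$, the fiber $B_x = J(T_x,S_0)\setminus\{\nu\ot\nu\}$ is an $F_\sigma$ subset of the compact set $J(T_x,S_0)\subset M(Y\times Y)$, hence $\sigma$-compact.

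Kallman's theorem then says that the projection $\pi_X(B)\subset X$ is Borel and admits a Borel section $s\colon \pi_X(B)\to M(Y\times Y)$ with $(x,s(x))\in B$. By hypothesis, $\pi_X(B)=A$ and $\mu(A)>0$. I would set $\rho_x := s(x)$ for $x\in A$ and $\rho_x := \nu\ot\nu$ for $x\notin A$, yielding a Borel map $x\mapsto \rho_x$ with $\rho_x \in J(T_x,S_0)$ for every $x$. Following the recipe of Subsection~\ref{s:jerg} (see in particular footnote~\ref{f:selfdis}), the formula
$$
\rho\bigl((E\times F)\times G\bigr) := \int_E \rho_x(F\times G)\,d\mu(x)
$$
for Borel $E\subset X$ and $F,G\subset Y$ defines a joining $\rho \in J(\widetilde{T},S_0)$. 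Since $\rho_x \neq \nu\ot\nu$ on the positive-measure set $A$, whereas the product joining $(\mu\ot\nu)\ot\nu$ disintegrates as $\nu\ot\nu$ for $\mu$-a.e.\ $x$, we conclude $\rho \neq (\mu\ot\nu)\ot\nu$, so $\widetilde{T}\not\perp S_0$.

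The main obstacle is the measurability bookkeeping for Kallman's theorem: unpacking the Polish-group structure on ${\rm Aut}(Y,\nu)$, the Borel structure on $M(Y\times Y)$, and confirming that ``is a joining of $T_x$ and $S_0$'' is a jointly Borel predicate in $(x,\rho)$. Once this is established, the rest is a straightforward assembly of already developed machinery.
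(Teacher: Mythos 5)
Your proposal is correct and follows essentially the same route as the paper: a Kallman selection of non-product joinings over the positive-measure set where $T_x\not\perp S_0$ (using that a compact set minus a point is $\sigma$-compact), glued fiberwise as in Subsection~\ref{s:jerg}. The only difference is that the paper works in the space of Markov operators $C_2(Y,\nu)$ rather than in $M(Y\times Y)$, which makes the joint Borel-ness of the invariance condition (there written $JT_x=S_0J$) immediate and so disposes of the measurability bookkeeping you flag as the main obstacle.
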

\begin{proof}
Let (by some abuse of notation)
$$C_2(Y,\nu):=\{J\colon L^2(Y,\nu)\to L^2(Y,\nu)\colon J\text{ is Markov}\}$$
be the space of Markov operators corresponding to couplings of $\nu$ with itself. Then
$C_2(Y,\nu)$ is a compact metric space (with the weak operator topology). 

Let $\Pi$ denote the Markov operator corresponding to product measure $\nu\ot\nu$, and consider the set
\beq\label{rhoz3}
\{(\xb,T_{\xb},J)\colon \xb\in {\Xb}, \Pi\neq J\in C_2(Y,\nu), JT_{\xb}=S_0J\}\eeq
which is a Borel subset of
$\bigl({\Xb}\times {\rm Aut}(Y,\nu)\bigr)\times C_2(Y,\nu)$
as the intersection of 
$$
\{(\xb,T_{\xb})\colon \xb\in {\Xb}\}\times (C_2(Y,\nu)\setminus\{\Pi\})$$
and
$${\Xb}\times\{(R,J)\in {\rm Aut}(Y,\nu)\times C_2(Y,\nu)\colon JR=S_0J\}.$$
Let $f$ denote the projection of the set~\eqref{rhoz3} on the first two coordinates. Note that this image is precisely $\{(\xb,T_{\xb})\colon T_{\xb}\not\perp S_0\}$. For each such $(\xb,T_{\xb})$ the fiber consists of all Markov operators $J\in C_2(Y,\nu)\setminus\{\Pi\}$ such that $JT_{\xb}=S_0J$, so it is a compact set up to  the one-element set $\{\Pi\}$, and therefore it is $\sigma$-compact. We can now use Theorem~\ref{kallman}, to obtain a Borel map
$$
\{\xb\in {\Xb}\colon T_{\xb}\not\perp S_0\}\ni \xb \to J_{\xb}\in C_2(Y,\nu)\setminus\{\Pi\}$$
satisfying $J_{\xb}T_{\xb}=S_0J_{\xb}$ which is precisely the fact that $J_{\xb}$ is a joining of $T_{\xb}$ and $S_0$.
Since we assume that $\mub(\{\xb\in {\Xb}\colon T_{\xb}\not\perp S_0\})>0$, we obtain a nontrivial joining between ${T}$ and $S_0$ as in Section~\ref{s:jerg}, see \eqref{tofibers}.
\end{proof}

\subsection{Proof of (i) $\Rightarrow$ (iii)}
(By contraposition.) Suppose that $\mub\ot\mub(\{(\xb,\xb'):\:T_{\xb}\not\perp T_{\xb'}\})>0$.\footnote{The set we are considering is measurable by the same token as (i) in Corollary~\ref{c:easywn}.}. By Fubini's theorem, there is $\xb_0\in {\Xb}$ such that the set $D$  of $\xb'$ satisfying $T_{\xb_0}\not\perp T_{\xb'}$ has positive $\mub$-measure. By Lemma~\ref{l:rhoz2}, it follows that ${T}\not\perp T_{\xb_0}$, hence ${T}\not\in{\rm Erg}^\perp$.

\subsection{Application of the relative ergodic decomposition}

Let us consider now the result stated in Proposition~\ref{p:tim1} in the context of self-joinings of ${T}$ acting on $\xbm$ by
$$
{T}(\xb,u)=(\xb,T_{\xb}(u)),$$
where $T_{\xb}$ are ergodic (as explained in Section~\ref{s:etr}). Let $\la\in J_2({T})$, $\la|_{{\Xb}\times {\Xb}}=\mub\ot\mub$.
The disintegration of $\lambda$ with respect to the projection on ${\Xb}\times {\Xb}$ writes
\begin{equation}
 \label{eq:XxX}\la=\int_{{\Xb}\times {\Xb}}\la_{\xb,\xb'}\,d\mub(\xb)d\mub(\xb'),
\end{equation}
where $\la_{\xb,\xb'}\in J(T_{\xb},T_{x '})$ for $\mub\ot\mub$-a.a.\ $(\xb,\xb')\in {\Xb}\times {\Xb}$.
We can apply Proposition~\ref{p:tim1} to $({T}\times{T},\lambda)$, when $\phi:X\times X\to {\Xb}\times {\Xb}$ is the projection onto ${\Xb}\times {\Xb}$. We obtain that
$$
\la=\int_0^1\la_t\,dt,$$
where, from (a), each $\lambda_t$ is ${T}\times{T}$-invariant, and from (c) the marginal on ${\Xb}\times {\Xb}$ of each $\lambda_t$ is the same as the marginal of $\lambda$, which is $\mub\otimes\mub$, and moreover the system $\bigl(X\times X,\lambda_t,{T}\times{T}\bigr)$ is a relatively ergodic extension of $({\Xb}\times {\Xb},\mub\otimes\mub,\Id)$. It follows that the ergodic decomposition of $\lambda_t$ takes the form
$$\la_t=\int_{{\Xb}\times {\Xb}}\la_{t,\xb,\xb'}\,d\mub(\xb)d\mub(\xb'),$$
where the measures $\la_{t,\xb,\xb'}$ are ergodic (for $T_{\xb}\times T_{\xb'})$.
Integrating with respect to $t$, and comparing with~\eqref{eq:XxX}, we get that for $\mub\otimes\mub$-almost all $(\xb,\xb')$, we have
$$ \lambda_{\xb,\xb'} = \int_0^1 \la_{t,\xb,\xb'}\, dt. $$
Projecting on each $Y$-coordinate the above relation, and remembering that the automorphisms $T_{\xb}$ are ergodic, we get that for Lebesgue-almost all $t\in[0,1 )$ (depending on $(\xb,\xb')$), $\la_{t,\xb,\xb'}$ is an ergodic joining of $T_{\xb}$ and $T_{\xb'}$.

Finally, defining
\begin{multline}
\label{RelErg}
J_2^{\rm RelErg}({T}):= \{\la\in J_2({T})\colon\la|_{{\Xb}\times {\Xb}}=\mub\ot\mub, \\
\la_{\xb,\xb'}\in J^e(T_{\xb},T_{\xb'})\text{ for }\mub\ot\mub\text{-a.a. }(\xb,\xb')\in {\Xb}\times {\Xb}\},
\end{multline}
we obtain the following:

\begin{Cor}\label{c:martim1} Each self-joining $\la$ of ${T}$ projecting into $\mub\ot\mub$ (on ${\Xb}\times {\Xb}$) is of the form $\la=\int_0^1\la_t\,dt$, where $\la_t\in J_2^{\rm RelErg}({T})$ for all $t\in [0,1]$.\end{Cor}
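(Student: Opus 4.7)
The plan is to apply the relative ergodic decomposition (Proposition~\ref{p:tim1}) to the system $\bigl((X\times Y)\times(X\times Y),\lambda,\widetilde T\times\widetilde T\bigr)$ using the projection $\varphi\colon(X\times Y)\times(X\times Y)\to X\times X$ as the map into the identity factor. First I would verify the hypothesis $\varphi^{-1}(\mathcal{B}_{X\times X})\subset\ci_{\widetilde T\times\widetilde T}$: this is immediate because $\widetilde T\times\widetilde T$ acts as the identity on the $X\times X$ coordinates, so every Borel set pulled back from $X\times X$ is $\widetilde T\times\widetilde T$-invariant. Proposition~\ref{p:tim1} then yields a kernel $t\mapsto\lambda_t$ with $\lambda_t\in M\bigl((X\times Y)^2,\widetilde T\times\widetilde T\bigr)$, with $\int_0^1\lambda_t\,dt=\lambda$, and with $\varphi_*\lambda_t=\varphi_*\lambda=\mu\otimes\mu$; in particular each $\lambda_t$ belongs to $J_2(\widetilde T)$ with marginal $\mu\otimes\mu$ on $X\times X$. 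Moreover condition (c) says $\ci_{\widetilde T\times\widetilde T}=\varphi^{-1}(\mathcal{B}_{X\times X})$ mod $\lambda_t$, i.e.\ each $\lambda_t$ is relatively ergodic over the identity factor $(X\times X,\mu\otimes\mu,\Id)$.

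Next I would exploit this relative ergodicity to identify the fiber measures. Since the ergodic components of $\lambda_t$ must be constant on $\ci_{\widetilde T\times\widetilde T}$-atoms, and the latter coincides (mod $\lambda_t$) with the $\sigma$-algebra pulled back from $X\times X$, the ergodic decomposition of $\lambda_t$ takes the form $\lambda_t=\int_{X\times X}\lambda_{t,x,x'}\,d\mu(x)d\mu(x')$ with each $\lambda_{t,x,x'}$ ergodic for $T_x\times T_{x'}$ (after projection to the $Y$-coordinates). Integrating this over $t$ and comparing with the disintegration $\lambda=\int\lambda_{x,x'}\,d\mu(x)d\mu(x')$ yields, by essential uniqueness of disintegration, the identity
\[
\lambda_{x,x'}=\int_0^1\lambda_{t,x,x'}\,dt\quad\text{for }\mu\otimes\mu\text{-a.a.\ }(x,x').
\]

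The final step is to project these fiber measures to the $Y\times Y$ coordinates. Since $(Y,\nu,T_x)$ is ergodic for $\mu$-a.a.\ $x$, the marginals of $\lambda_{t,x,x'}$ on each $Y$ factor are forced to be $\nu$, so $\lambda_{t,x,x'}\in J^e(T_x,T_{x'})$ for Lebesgue-a.a.\ $t$ and $\mu\otimes\mu$-a.a.\ $(x,x')$. A Fubini argument swaps the order of the two null sets, showing that for almost every $t$ the measure $\lambda_t$ lies in $J_2^{\rm RelErg}(\widetilde T)$ as defined in~\eqref{RelErg}. The only real subtlety I anticipate is this last swap of quantifiers, together with the measurability of the maps $(x,x')\mapsto\lambda_{t,x,x'}$ in the two variables jointly; both are standard consequences of the measurable setup in Proposition~\ref{p:tim1} and the ergodic decomposition theorem, and once handled, redefining $\lambda_t$ on the Lebesgue-null exceptional set (e.g.\ by setting it to $\mu\otimes\nu\otimes\mu\otimes\nu$ there) gives the statement for every $t\in[0,1]$.
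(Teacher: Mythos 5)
Your argument is correct and essentially identical to the paper's: the corollary is proved there precisely by applying Proposition~\ref{p:tim1} to $(\widetilde T\times\widetilde T,\lambda)$ with $\varphi$ the projection onto $X\times X$, reading the ergodic decomposition of each relatively ergodic $\lambda_t$ as fibred over $X\times X$, comparing with the disintegration $\lambda=\int\lambda_{x,x'}\,d\mu(x)d\mu(x')$, and using ergodicity of the fibre systems $(Y,\nu,T_x)$ to force the marginals of $\lambda_{t,x,x'}$ to be $\nu$, so that $\lambda_{t,x,x'}\in J^e(T_x,T_{x'})$ for a.e.\ $t$ and a.e.\ $(x,x')$. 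One small slip in your last step: on the Lebesgue-null exceptional set of $t$ you should not default to $(\mu\ot\nu)\ot(\mu\ot\nu)$, since its fibre measures are $\nu\ot\nu$, which need not be an \emph{ergodic} joining of $T_x$ and $T_{x'}$ (e.g.\ when all $T_x$ equal a fixed non--weakly-mixing $T$), so the product measure need not lie in $J_2^{\rm RelErg}(\widetilde T)$; instead set $\lambda_t:=\lambda_{t_0}$ for some $t_0$ in the full-measure good set, which does not change $\int_0^1\lambda_t\,dt$.
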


\begin{Remark}
\label{rm:nonempty}
 This proves in particular that $J_2^{\rm RelErg}({T})$ is not empty, since the corollary applies to $\lambda=\mu\otimes \mu$.
\end{Remark}

\subsection{Examples}\label{e:przyk} Let $M=G/\Gamma$, where $G=H_3(\R)$ and $\Gamma=H_3(\Z)$ ($\Gamma$ is an example of a lattice in the Heisenberg group; $M$ is an example of a nil-manifold). On $M$ we consider the (normalized) measure $\kappa$ which is the image of Haar measure on $G$. Given $g\in G$, let $T_g:M\to M$: $T_g(x\Gamma)=gx\Gamma$. Denote by $\pi:M\to\T^2$ the map
$$
\left[\begin{array}{ccc}
1 & a & c\\
0 & 1 & b\\
0 & 0 & 1\end{array}\right]/\Gamma\mapsto (a~{\rm mod}~1,b~{\rm mod}~1)\in\T^2.$$
Note that $\pi$ is well defined. Moreover, since
$$
\left[\begin{array}{ccc}
1 & a & c\\
0 & 1 & b\\
0 & 0 & 1\end{array}\right]\cdot \left[\begin{array}{ccc}
1 & a' & c'\\
0 & 1 & b'\\
0 & 0 & 1\end{array}\right]=\left[\begin{array}{ccc}
1 & a+a' & c'+ab'+c\\
0 & 1 & b+b'\\
0 & 0 & 1\end{array}\right],$$
the fibers of $\pi$ are the circle $\T$. Moreover, the rotation $T_g$ on $M$ is sent to the rotation $R_{(a,b)}$ on $\T^2$. By a theorem of Auslander-Green-Hahn \cite{Au-Gr-Ha}, Chapter~V, if $1,a,b$ are rationally independent (i.e.\ when $R_{(a,b)}$ is ergodic) $T_g$ is ergodic and it is a compact group (circle) extension of the rotation  (from the measure-theoretic point of view $M$ is $\T^2\times\T$ and in these ``coordinates'' $\kappa$ is Leb$_{\T^2}\otimes$Leb$_{\T}$) such that in the orthocomplement of $L^2(\T^2,{\rm Leb}_{\T^2})$ the spectrum is countable Lebesgue.
It follows that when $T_g$ and $T_{g'}$ are ergodic then
\beq\label{heis1}
T_g\perp T_{g'}\Leftrightarrow ka+\ell b=k'a'+\ell'b'\neq0\eeq
for some $k,\ell,k'\ell'\in\Z$ (i.e. $R_{(a,b)}$ and $R_{(a',b')}$ have a non-trivial common factor).

Denote by $q$ the natural quotient map $q:G\to G/\Gamma$ and set $N_{k,\ell,m}:=\{(a,b)\in\T^2\colon k+\ell a+mb=0\}$. Let
$$A:=(\pi\circ q)^{-1}\big(\bigcup_{k,\ell,m}(\T^2\setminus N_{k,\ell,m})\big).$$
Then $A\subset G$ is a subset of full Haar$_G$-measure. Now, given $g=(a,b,c)\in G$, we can find only a subset of $(a',b')\in\T^2$ of zero Leb$_{\T^2}$ measure to see that $R_{(a,b)}\not\perp R_{(a',b')}$. It follows that the condition $T_g\perp T_{g'}$ is satisfied  for Haar$_{G}\otimes$Haar$_{G}$-a.a.\ $g,g')\in G\times G$.

It follows that, by considering {\bf any} probability measure $\mub$ on $G$ equivalent to Haar$_{G}$, the automorphism:
$$
{T}(g,x\Gamma):=(g,gx\Gamma),\;{T}:G\times M\to G\times M,$$
preserves the measure $\mub\ot \kappa$, and $(G,\mub)$ is the space of ergodic components. By Theorem~\ref{t:duzoroz}, ${T}\perp$~Erg.

The above reasoning has its natural generalization for any connected nilpotent $d$-step group $G$ and the corresponding rotations on nil-manifolds. Indeed, as before, we consider the quotient map
$G/\Gamma\to G/([G,G]\Gamma)$, where the latter space is the torus $\T^d$ and a nil-rotation $T_g$ is sent to a rotation on $\T^d$. Moreover, by Auslander-Green-Hahn theorem \cite{Au-Gr-Ha}, Chapter V, $T_g$ is ergodic iff  the corresponding rotation on $\T^d$ is ergodic. As the set of ergodic rotations on $\T^d$ is of full Lebesgue measure, also the set of ergodic nil-rotations on $G/\Gamma$ is of full Haar measure. We now, apply the same reasoning to the product space $(G\times G)/(\Gamma\times \Gamma)$ to obtain that the set of ergodic nil-rotations $T_{g,g'}$ is of full (product) Haar measure. As $T_{g,g'}=T_g\times T_{g'}$, we use now  the following result: assuming $T_g$, $T_{g'}$ ergodic,
$$ T_g\perp T_{g'}\text{ iff the Cartesian product }T_g\times T_{g'}\text{ is ergodic}
$$
(see Corollary~1.5 in \cite{Mo-Ri-Ro}) to conclude that on the set of $(g,g')$ of full (product) Haar measure, we have $T_g\perp T_{g'}$. Hence the  corresponding ${T}$ (see~\eqref{heis1}) is disjoint from all ergodic systems.

\begin{Remark} Automorphisms of that type considered for connected and simply connected nilpotent Lie groups are fundamental to prove Sarnak's conjecture: they satisfy Sarnak's conjecture (we can consider other measures $\mub$ on $G$ than those equivalent to Haar measure, including those with compact support; but we should consider only those for which $T_g\perp T_{g'}$ for $\mub\ot\mub$-a.a. $(g,g')$; then ${T}$ will be considered on the compact space ${\rm supp}(\mub)\times M$ and it will be a homeomorphism of zero entropy) but whether the strong MOMO property holds for them is a big open problem, see \cite{Ka-Ku-Le-Ru}, \cite{Ka-Le-Ri-Te}, \cite{Ta}. We recall that for a fixed $\bfu:\N\to\D$, a topological system $(X,T)$ is said to satisfy the strong MOMO property relative to $\bfu$ (or simply strong $\bfu$-MOMO property) if
\[ \lim_{K\to\infty}\frac1{b_K}\sum_{k\leq K}\Big\|\sum_{b_k\leq n<b_{k+1}}\bfu(n)\cdot f\circ T^n\Big\|_{C(X)}=0 \]
for each choice of $(b_k)$ so that $b_{k+1}-b_k\to\infty$ and all $f\in C(X)$. Clearly, the strong $\bfu$-MOMO implies~\eqref{sarnak10}. If $\bfu$ is M\"obius, we speak about the strong MOMO property.\end{Remark}

Another class of examples of non-ergodic automorphisms  arises when we consider (measurable) flows $\mathcal{S}=(S_t)_{t\in\R}$ on $\ycn$ and we set
\beq\label{newex}
T(t,y)=(t,S_ty)\text{ on }([0,1]\times Y, {\rm Leb}_{[0,1]}\ot \nu).\eeq
Now, using Theorem~\ref{t:duzoroz}, \cite{Fr-Le} (Theorem~8.4 therein) and \cite{Ka-Le-Ul}  (Theorem~1.1 therein), we obtain the following:
\begin{Cor} \label{c:newexamples}The automorphism \eqref{newex} is in ${\rm Erg}^\perp$ whenever:\\
(i)  the flow $\mathcal{S}$ has singular spectrum;\\
(ii) $\mathcal{S}$ is a non-trivial smooth time change of a horocycle flow (each such flow has Lebesgue spectrum).\end{Cor}
Note that our classical example $(x,y)\mapsto (x,y+x)$ on $\T^2$ is a particular case of (i) (use as $\mathcal{S}$ the linear flow on $\T$) and that the papers \cite{Fr-Le}, \cite{Ka-Le-Ul} provide many  examples of smooth flows on surfaces for which the automorphisms \eqref{newex} are in ${\rm Erg}^\perp$.

\section{Orthogonality to ergodic Markov images}\label{s:oemi}
\subsection{Preparatory observations and some motivations}\label{ss:ghk}
Before we formulate the main result of this section, we need some preparatory observations.
We recall that we consider ${T}:(\xb,u)\mapsto(\xb,T_{\xb}u)$ acting on $(X,\mu)$ with the fiber automorphisms $T_{\xb}$ ergodic. It follows that the extension
\beq\label{now1}
{T}\to {T}|_{\Xb}=\Id_{\Xb}\text{ is relatively ergodic,}\eeq
i.e.\ the sigma-algebra generated by the projection on $\Xb$ coincides with $\ci_{{T}}$ mod $\mu$.
Let us consider now $\la\in J_2({T})$, $\la|_{\Xb\times \Xb}=\mub\ot\mub$.
If $f\in L^2(X,\mu)$ and $g\in L^2(\Xb,\mub)$ then for each $N\geq1$,
$$\int f(x)g(\xb')\,d\la|_{X\times {\Xb}}(x,\xb')=\frac1N\int\sum_{n\leq N}f\circ{T}^n(x)g(\xb')\,d\la|_{X\times {\Xb}}(x,\xb').$$
Letting $N\to\infty$, and using the von Neumann theorem together with the fact that $\Xb$ corresponds to $\ci_{{T}}$, we obtain that
\begin{equation*}
\begin{split}
 \int f(x)g(\xb')\,d\la|_{X\times {\Xb}}(x,\xb') = \int \EE(f\,|\,\Xb)(\xb)g(\xb')\,d\mub(\xb)d\mub(\xb') \\
  = \int f\,d\mu\int g\,d\mub.
\end{split}
\end{equation*}
Hence, we have
\beq\label{now3}
\la|_{X\times {\Xb}}=\mu\ot \mub,
\eeq
see also Section~\ref{s:howtorecognize}.
If now $f\in L_0^2(X,\mu)$
and if $\widetilde{f}:=\EE_{\mu}[f\,|\,{\Xb}]$, then \eqref{now3} implies that
\begin{multline}\label{now4a}
\EE_{\la}[(f-\widetilde{f})\ot (f-\widetilde{f})\,|\,\Xb\times \Xb]=\EE_{\la}[f\ot f\,|\,\Xb\times \Xb]\\-\EE_{\mu}[f\,|\,{\Xb}]\ot \EE_{\mu}[f\,|\,{\Xb}]
\end{multline}

as
\beq\label{qqq}\EE_\lambda[f\ot \widetilde{f}\,|\,\Xb\times \Xb]=(1\ot\widetilde{f})\,
\EE_\lambda\bigl[(f\ot1)\,|\,\Xb\times \Xb\bigr]=\widetilde{f}\ot\widetilde{f}\eeq
the latter equality due to~\eqref{now3}.
Moreover, noticing that $\widetilde{f}\perp F_{\rm we}({T})$, we obtain
\beq\label{now4}
\mbox{$f\perp F_{\rm we}({T})$ if and only if
$f-\widetilde{f}\perp F_{\rm we}({T})$.}\eeq
Therefore, in what follows, we will constantly assume that $f$ satisfies
\beq\label{now5}
\EE[f\,|\,{\Xb}]=0.\eeq

In order to understand how natural this assumption is,
recall first the notion of Gowers-Host-Kra norms (GHK norms, for short) of $f$:
\beq\label{ghk1}\|f\|^2_{u^1}:=
\lim_{H\to\infty}\frac1H\sum_{h\leq H}\int f\circ {T}^h\cdot\ov{f}\,d\mu,\eeq
whence,
\beq\label{ghk2}\|f\|^2_{u^1}=\Big|\int f\,d\mu\Big|^2\text{ if }{T}\text{ is ergodic}\eeq
and $\|f\|^{2^{s+1}}_{u^{s+1}}:=\lim_{H\to\infty}\frac1H\sum_{h\leq H}\|f\circ {T}^h\cdot\ov{f}\|_{u^s}^{2^s}$ for $s\geq1$.
Note that, by the von Neumann theorem, $\|f\|_{u^1}=\int|\EE[f\,|\,{\Xb}]|^2\,d\mub$.
Remember that we aim at a study of (bounded) arithmetic functions $\bfu$ through the associated Furstenberg systems $\kappa\in V(\bfu)$ and we consider $f=\pi_0$, cf.~\eqref{veechin}. In other words, given a sequence $(N_k)$ which yields a Furstenberg system $\kappa\in V(\bfu)$, we can define $\|\bfu\|_{u^s}$ as $\|\pi_{0}\|_{u^s}$ for the system $(X_{\bfu},\kappa,S)$.
In particular, we say that $\|\bfu\|_{u^s}=0$ if $\|\pi_0\|_{u^s}=0$ for all $\kappa\in V(\bfu)$.
Hence, in the problem of classifying arithmetic functions $\bfu$ which are orthogonal to uniquely ergodic systems, the assumption~\eqref{now5} is now explained by the following classical result (see also Section~3.4.1 in \cite{Fe-Ku-Le}):
\begin{Prop}\label{p:ju1}
 The following conditions are equivalent:\\
(a) $\|\bfu\|_{u^1}=0$.\\
(b) For each $(N_k)$ defining a Furstenberg system of $\bfu$, we have
$$\limsup_{H\to\infty}\lim_{k\to\infty}\frac1{N_k}\sum_{n\leq N}\Big|\frac1H\sum_{h\leq H}\bfu(n+h)\Big|^2=0$$
(i.e.\ the property of ``zero mean on typical short interval'').\\
(c) $\EE_\kappa[\pi_0\,|\,\ci_S]=0$ for each $\kappa\in V(\bfu)$.\\
(d) $\lim_{H\to\infty}\frac1H\sum_{h\leq H}\pi_0\circ S^h=0$ in $L^2(X_{\bfu},\kappa)$ for each $\kappa\in V(\bfu)$.
\end{Prop}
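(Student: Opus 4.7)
The plan is to observe that all four conditions are unified through the mean ergodic theorem applied to each Furstenberg system $\kappa \in V(\bfu)$, via the identities
\[
\|\pi_0\|_{u^1}^2 = \Big\|\EE_\kappa[\pi_0\,|\,\ci_S]\Big\|_{L^2(\kappa)}^2 = \lim_{H\to\infty} \Big\|\frac{1}{H}\sum_{h\leq H}\pi_0 \circ S^h\Big\|_{L^2(\kappa)}^2,
\]
so (a), (c), and (d) are essentially restatements of the same fact, and (b) is the ``combinatorial translation'' of (d) using that $\pi_0$ is continuous on $X_{\bfu}$.

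First I would establish (a) $\Leftrightarrow$ (c). By the defining formula \eqref{ghk1} for the first Gowers--Host--Kra seminorm applied in the system $(X_\bfu,\kappa,S)$,
\[
\|\pi_0\|_{u^1}^2 = \lim_{H\to\infty} \frac{1}{H}\sum_{h\leq H}\int \pi_0 \circ S^h \cdot \overline{\pi_0}\,d\kappa = \Big\langle \EE_\kappa[\pi_0\,|\,\ci_S],\, \pi_0 \Big\rangle_{L^2(\kappa)},
\]
where the last equality uses the mean ergodic theorem together with continuity of the inner product. Since the conditional expectation onto $\ci_S$ is a self-adjoint projection, the right-hand side equals $\|\EE_\kappa[\pi_0\,|\,\ci_S]\|_{L^2(\kappa)}^2$, so it vanishes (for every $\kappa \in V(\bfu)$) precisely when $\EE_\kappa[\pi_0\,|\,\ci_S] = 0$ for every $\kappa \in V(\bfu)$. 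The equivalence (c) $\Leftrightarrow$ (d) is then immediate from the von Neumann mean ergodic theorem applied to the Koopman operator on $(X_\bfu,\kappa,S)$: the Ces\`aro averages $\frac1H\sum_{h\leq H}\pi_0\circ S^h$ converge in $L^2(\kappa)$ to $\EE_\kappa[\pi_0\,|\,\ci_S]$, so the limit vanishes iff the conditional expectation does.

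The slightly more delicate step is (b) $\Leftrightarrow$ (d), and this is where I would spend the most care. For each fixed $H$, the function
\[
F_H(x) := \Big|\frac{1}{H}\sum_{h\leq H}\pi_0(S^h x)\Big|^2
\]
is continuous on $X_\bfu$, because $\pi_0\colon X_\bfu\to\D$ is continuous for the product topology. Hence, for any increasing sequence $(N_k)$ realizing the Furstenberg system $\kappa\in V(\bfu)$, the very definition of quasi-genericity yields
\[
\lim_{k\to\infty}\frac{1}{N_k}\sum_{n\leq N_k}\Big|\frac{1}{H}\sum_{h\leq H}\bfu(n+h)\Big|^2 = \int F_H\,d\kappa = \Big\|\frac{1}{H}\sum_{h\leq H}\pi_0 \circ S^h\Big\|_{L^2(\kappa)}^2.
\]
Thus (b) asserts exactly that the $L^2(\kappa)$-norms of the ergodic averages of $\pi_0\circ S^h$ tend to $0$ as $H\to\infty$, which is (d). Conversely, (d) makes the inner limit over $k$ for each $H$ small, hence (b).

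I do not expect any serious obstacle here: the argument rests entirely on the continuity of $\pi_0$, the definition of quasi-generic points (which converts the Ces\`aro average in $n$ into the $\kappa$-integral of a continuous test function), and the von Neumann theorem. The only point requiring attention is the order of limits in (b): the inner limit in $k$ exists for each $H$ by quasi-genericity, and the outer $\limsup$ in $H$ is in fact a genuine limit, since by the mean ergodic theorem the quantity $\|\frac1H\sum_{h\leq H}\pi_0\circ S^h\|^2_{L^2(\kappa)}$ converges to $\|\EE_\kappa[\pi_0\,|\,\ci_S]\|^2_{L^2(\kappa)}$ as $H\to\infty$.
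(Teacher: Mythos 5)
Your proof is correct and follows exactly the intended route: the paper omits the proof of Proposition~\ref{p:ju1}, citing it as classical, but the identity $\|\pi_0\|_{u^1}^2=\|\EE_\kappa[\pi_0\,|\,\ci_S]\|_{L^2(\kappa)}^2$ via the von Neumann theorem is precisely the remark the authors make just before the statement, and your conversion of (b) into (d) through quasi-genericity tested on the continuous functions $F_H$ is the standard argument. Nothing further is needed.
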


\begin{Remark}\label{r:ju2} (For some basics about multiplicative, bounded by~1 functions and their multiplicative distance $D$, see Section~\ref{s:pretensious}.)
A break-through theorem by Matom\"aki and Radziwi\l\l \  \cite{Ma-Ra} established zero value for the first GHK norm for many classical multiplicative (unpretensious) functions like the Liouville or M\"obius functions. But there are also pretentious multiplicative functions $\bfu$ for which $\|\bfu\|_{u^1}=0$. In fact, all multiplicative functions pretending being non-principal Dirichlet character $\chi$ enjoy this property. Indeed, as shown in \cite{Fr-Le-Ru}, Furstenberg systems of all such arithmetic functions are ergodic,  so (in view of~\eqref{ghk2}) our claim follows once we know that they have mean equal to zero. To see the latter claim, note that for the non-principal Dirichlet character $\chi$, we  have
$D(\chi,n^{it})=+\infty$ for all $t\in\R$, and $D(\bfu,\chi)<+\infty$. It follows that $D(\bfu,n^{it})=+\infty$  as  the multiplicative ``distance'' $D$ satisfies the triangle inequality. The claim now follows from  the Hal\'as theorem.\end{Remark}

\begin{Remark}\label{r:dodatniau1} Note that if all Furstenberg systems of $\bfu$ are identities, then $\|\bfu\|_{u^1}>0$ unless the Besicovitch norm vanishes $\|\bfu\|_{B}=0$ (see Section~\ref{s:pretensious}). Indeed, if in the definition of $u^1$ of $f$, the automorphism $T$ is an identity then $\|f\|_{u^1}=\|f\|_{L^2}$. Hence, by an observation in \cite{Go-Le-Ru}, all non-trivial slowly varying functions have positive GHK first norm. In particular, for all Archimedean characters, we have $\|n^{it}\|_{u^1}>0$.\end{Remark}

Let us consider some examples of orthogonality to ergodic Markov images.
\begin{enumerate}
\item[A.]  If ${T}$ is ergodic, then  all $f\in L^2({X})$ are in the Markov image of $\Phi_{\Delta_\mu}$, where $\Delta_\mu$ stands for the diagonal joining on $(X,\mu)$.
\item[B.] If for $\mub\ot\mub$-a.a.\ $\xb,\xb'\in {\Xb}$, we have $T_{\xb}\perp T_{\xb'}$, then, by Theorem~\ref{t:duzoroz}, all zero mean functions are orthogonal to all ergodic Markov images.
\end{enumerate}

\begin{Prop}
\label{p:Mprod}
Assume that for all $\xb\in {\Xb}$, $T_{\xb}=S$ (in other words, ${T}=\Id_{\Xb}\times S$ with $S$ ergodic). 
Then for all $f\in L^2(\mub\ot\nu)$, $f\perp F_{{\rm we},0}({T})$ if and only if $f$ is measurable with respect to $\xb$.
\end{Prop}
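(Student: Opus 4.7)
The plan is to identify $F_{{\rm we},0}(\widetilde{T})$ explicitly as the orthogonal complement of $L^2(X,\mu)$ inside $L^2(X\times Y,\mu\ot\nu)$. Once this identification is in hand, the proposition follows at once, since $f\perp F_{{\rm we},0}(\widetilde{T})$ then means exactly that $f\in L^2(X,\mu)$, i.e.\ that $f$ depends only on $x$.

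For the easy inclusion $F_{{\rm we},0}(\widetilde{T})\subset L^2(X\times Y,\mu\ot\nu)\ominus L^2(X,\mu)$, take any ergodic $(Z,\cd,\kappa,R)$ and any joining $\rho\in J(R,\widetilde{T})$. Its projection on $Z\times X$ is a joining of $R$ with $\Id_X$, and since identities are disjoint from ergodic systems this projection equals $\kappa\ot\mu$. Pairing $\Phi_\rho(g)$ for $g\in L^2_0(Z,\kappa)$ against an $X$-measurable $h$ therefore yields $\int g\,d\kappa\cdot\int h\,d\mu=0$. As $F_{{\rm we},0}(\widetilde{T})$ is the closed linear span of the images $\Phi_\rho(L^2_0(Z,\kappa))$ over all ergodic $R$, this orthogonality passes to the closed span.

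For the reverse inclusion I use the ergodic system $(Y,\nu,T)$ itself. Given a measurable $\alpha\colon X\to[0,1]$, set $\rho_x:=\alpha(x)\,\Delta_\nu+(1-\alpha(x))\,\nu\ot\nu\in J_2(T)$, where $\Delta_\nu$ is the diagonal self-joining. Exactly as in the construction of Section~\ref{s:jerg} (compare~\eqref{tofibers} and footnote~\ref{f:selfdis}, with the roles of the two factors interchanged), this measurable family assembles into a genuine joining $\rho\in J(T,\widetilde{T})$ by integration over $X$. A direct unwinding of~\eqref{marko1} then gives $\Phi_\rho(g)(x,y)=\alpha(x)\,g(y)$ for every $g\in L^2_0(Y,\nu)$. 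Letting $\alpha$ range over $L^\infty(X,\mu)$ (via differences and complex linear combinations of $[0,1]$-valued functions) and $g$ over $L^2_0(Y,\nu)$, one obtains all elementary tensors $\alpha(x)g(y)$ inside $F_{{\rm we},0}(\widetilde{T})$. Their finite linear combinations are dense in the Hilbert tensor product $L^2(X,\mu)\,\hat{\ot}\,L^2_0(Y,\nu)=L^2(X\times Y,\mu\ot\nu)\ominus L^2(X,\mu)$, which completes the identification.

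The only verification requiring minor care is that $(\rho_x)$ really produces a joining of $T$ with $\widetilde{T}$ having the correct marginals, and that $\Phi_\rho(g)$ takes the claimed closed form. Both are routine once one has set up the disintegration over $X$ in the manner of Section~\ref{s:jerg}; I do not anticipate any conceptual obstacle.
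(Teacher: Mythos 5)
Your proof is correct and follows essentially the same route as the paper: the forward direction rests on the same family of joinings of $T$ with $\widetilde{T}$ obtained by mixing $\Delta_\nu$ and $\nu\ot\nu$ fiberwise with $X$-measurable weights (which is exactly the mechanism behind Lemma~\ref{l:tofibers1} and~\eqref{tofibers}), and the converse is the same appeal to the disjointness of identities from ergodic systems. The only difference is one of packaging: you identify $F_{{\rm we},0}(\widetilde{T})$ globally as $L^2(X\times Y,\mu\ot\nu)\ominus L^2(X,\mu)$, whereas the paper argues pointwise a.e.\ in $x$; both computations check out.
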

\begin{proof} Suppose that $f\perp F_{{\rm we},0}({T})$.  Then, by considering the joining $\rho$ of ${T}$ with $S$ in which we join diagonally $S$ with itself and the $\Xb$ part is independent of $Y$, and using Lemma~\ref{l:tofibers1}, for every $g\in L_0^2(Y,\nu)$, we have
\beq\label{Mprod1}\int_{Y}f(\xb,y)g(y)\,d\nu(y)= 0\eeq
for $\mub$-a.a.\ $\xb\in {\Xb}$. This $\mub$-a.a.\ condition persists if we consider a countable linearly dense set of $g\in L_0^2(Y)$.
It follows that for $\mub$-a.a. $\xb\in {\Xb}$, $y\mapsto f(\xb,y)$ is $\nu$-almost everywhere equal to $\int_Y f(\xb,y)\,d\nu(y)$, \emph{i.e.} $f$ is $\mub\ot\nu$-almost surely equal to a function of $\xb$.

Conversely, if $f$ is $\xb$-measurable, the fact that $f\perp F_{{\rm we},0}({T})$ is a consequence of the disjointness between identities and ergodic systems.
\end{proof}

\subsection{Characterization result}
We aim at proving the following result.

\begin{Th}\label{t:OrMaIm} Assume that $f\in L^2_0(X,\mu)$ satisfies~\eqref{now5}. Then
$f\perp F_{\rm we}({T})$ if and only if for all $\la\in J^{\rm RelErg}_2({T})$, we have
$$ \EE_{\la}[f\ot f\,|\,\Xb\times \Xb](\xb,\xb')=0$$
for $\mub\ot\mub$-a.a.\ $(\xb,\xb')\in \Xb\times \Xb$.
\end{Th}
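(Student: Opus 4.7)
The plan is to prove both implications via Markov-operator calculations. The forward implication will follow by slicing $\lambda$ over the second $X$-coordinate and applying Lemma~\ref{l:tofibers1} fiberwise; the reverse will express $\|\Phi_\rho^*(f)\|^2$ as an integral of $f\otimes f$ against a self-joining in $J_2^{\rm RelErg}(\widetilde T)$ and invoke the relative ergodic decomposition of Corollary~\ref{c:martim1}.

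For $(\Rightarrow)$, fix $\lambda\in J_2^{\rm RelErg}(\widetilde T)$ with disintegration $\lambda=\int\lambda_{x,x'}\,d\mu(x)d\mu(x')$, each $\lambda_{x,x'}\in J^e(T_x,T_{x'})$. For each $x'\in X$, I consider the probability measure $\lambda^{x'}:=\int\delta_x\otimes\lambda_{x,x'}\,d\mu(x)$ on $(X\times Y)\times Y$. A direct check shows that $\lambda^{x'}$ is a joining of $\widetilde T$ with the \emph{ergodic} automorphism $T_{x'}$ of $(Y,\nu)$, and that its disintegration over the first $X$-coordinate is $(\lambda^{x'})_x=\lambda_{x,x'}$. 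Taking $g(y')=f(x',y')$, the assumption $\EE_{\mu\ot\nu}[f\,|\,X]=0$ ensures $g\in L_0^2(Y,\nu)$ for $\mu$-a.e.\ $x'$. Since $F_{{\rm we},0}(\widetilde T)\subset F_{\rm we}(\widetilde T)$, Lemma~\ref{l:tofibers1} applied to $\rho=\lambda^{x'}$ yields, for $\mu$-a.e.\ $x$ (depending on $x'$),
$$\int_Y f(x,y)\,\Phi_{\lambda_{x,x'}}\bigl(f(x',\cdot)\bigr)(y)\,d\nu(y)=\int f(x,y)f(x',y')\,d\lambda_{x,x'}(y,y')=0,$$
which is precisely $\EE_\lambda[f\otimes f\,|\,X\times X](x,x')=0$. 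A Fubini argument promotes this to the desired $\mu\otimes\mu$-a.e.\ statement, the joint measurability of the exceptional set being handled by the Borel-selection framework of Section~\ref{s:technikalia}.

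For $(\Leftarrow)$, take any ergodic $R$ on $(Z,\mathcal D,\kappa)$ and any $\rho\in J(R,\widetilde T)$; I must show $\Phi_\rho^*(f)=0$. Let $\hat\rho\in J_2(\widetilde T)$ be the self-joining with $\Phi_{\hat\rho}=\Phi_\rho\circ\Phi_\rho^*$ (the relatively independent product of $\rho$ with itself over $R$), so that $\|\Phi_\rho^*(f)\|_{L^2(\kappa)}^2=\int f\otimes\bar f\,d\hat\rho$. First, since $R$ is ergodic and $\Id_X$ is identity, $\rho|_{Z\times X}=\kappa\otimes\mu$; disintegrating over $Z$ then gives $\hat\rho|_{X\times X}=\int\mu\otimes\mu\,d\kappa=\mu\otimes\mu$. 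Corollary~\ref{c:martim1} now decomposes $\hat\rho=\int_0^1\hat\rho_t\,dt$ with each $\hat\rho_t\in J_2^{\rm RelErg}(\widetilde T)$. Applying the hypothesis to each $\hat\rho_t$ and integrating in $t$ gives $\int f\otimes f\,d\hat\rho=0$; taking $f$ real-valued, this equals $\int f\otimes\bar f\,d\hat\rho=\|\Phi_\rho^*(f)\|^2=0$, so $f\perp\mathrm{Im}(\Phi_\rho)$.

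The main subtlety I expect to navigate carefully is the gap between the bilinear expression $\EE_\lambda[f\otimes f\,|\,X\times X]$ in the hypothesis and the sesquilinear $\int f\otimes\bar f\,d\hat\rho$ governing the Markov-operator norm. The two agree for real $f$ (the natural setting here); for complex $f$, one decomposes $f$ into its real and imaginary parts and uses that $F_{\rm we}(\widetilde T)$ is stable under complex conjugation because Markov operators preserve real-valuedness. The two genuinely nontrivial ergodic-theoretic inputs are thus Lemma~\ref{l:tofibers1}, which powers the fiberwise argument in the forward direction, and Corollary~\ref{c:martim1}, which enables the reduction to relatively ergodic self-joinings in the backward direction.
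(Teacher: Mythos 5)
Your proof is correct, and it runs on the same underlying machinery as the paper's: the disintegration $\la=\int\la_{x,x'}\,d\mu(x)d\mu(x')$, the relatively independent square of a joining with an ergodic system, and Corollary~\ref{c:martim1}. The differences are in how each implication is concluded. In the forward direction the paper argues by contradiction, fixing a single ergodic component $T_{x_0}$ via Fubini and building one joining of $\widetilde{T}$ with $T_{x_0}$ out of the fibres $\la_{x_0,x'}$; you instead build, for each $x'$, the joining $\la^{x'}$ of $\widetilde{T}$ with $T_{x'}$ and quote Lemma~\ref{l:tofibers1} -- the same slicing idea, packaged positively rather than ad absurdum. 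In the backward direction the divergence is more substantial: the paper uses $\la_{x,x'}=\rho_{x'}^\ast\circ\rho_x$ to show the vectors $\Phi_{\rho_x}(f)$ are pairwise orthogonal for $\mu\ot\mu$-a.e.\ $(x,x')$ and then needs a separability/maximal-orthogonal-family argument (plus a separate treatment of atoms of $\mu$) to get $\Phi_{\rho_x}(f)=0$ a.e.; your identity $\|\Phi_\rho^\ast(f)\|^2=\int f\ot\ov{f}\,d\hat\rho$, combined with integration over the relative ergodic decomposition of $\hat\rho$, reaches the (weaker but sufficient) conclusion $\Phi_\rho^\ast(f)=0$ in one line, with no case distinction for atoms. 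One small caveat, which the paper shares since it works with real-valued functions throughout: the passage from the bilinear quantity $\int f\ot f\,d\hat\rho_t$ appearing in the hypothesis to the sesquilinear $\int f\ot\ov{f}\,d\hat\rho$ governing $\|\Phi_\rho^\ast(f)\|^2$ genuinely requires $f$ real; your polarization remark settles the complex case for the forward implication, but for the backward one the hypothesis as stated only controls the bilinear form, so the theorem should be read for real $f$ (or the hypothesis restated with $f\ot\ov{f}$).
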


\begin{itemize}
\item Note that $\la\in J_2^{\rm RelErg}({T})$ if and only if $\la|_{{\Xb}\times {\Xb}}=\mub\ot\mub$ and $\ci_{{T}\times{T}}$ is the sigma-algebra generated by the projection to $\Xb\times \Xb$ modulo $\lambda$.
\item Under the assumption on $\la$, $\EE_{\la}[f\ot f\,|\,\Xb\times \Xb](\xb,\xb')=0$ for $\mub\ot\mub$-a.a.\ $(\xb,\xb')\in \Xb\times \Xb$ if and only if $\lim_{N\to\infty}\frac1N\sum_{n\leq N}(f\otimes f)\circ ({T}\times {T})^n=0$ (by the von Neumann theorem).
\item Note also that (assuming the validity of Theorem~\ref{t:OrMaIm}), due to Corollary~\ref{c:martim1}: $f\perp F_{\rm we}({T})$ if and only if for all $\la\in J_2({T})$, we have
$\EE_{\la}[f\ot f\,|\,\Xb\times \Xb](\xb,\xb')=0$ for $\mub\ot\mub$-a.a.\ $(\xb,\xb')\in \Xb\times \Xb$.
\item For another equivalent condition (important for applications), see Corollary~\ref{c:relvN2}.
\end{itemize}

\begin{Remark}\label{r:module} It follows directly from Theorem~\ref{t:OrMaIm} that if $f\in L^2_0(X,\mu)$ satisfies \eqref{now5} and $f\perp F_{\rm we}({T})$ then, for each $g\in L^\infty(\Xb,\mub)$, we have $gf\perp F_{\rm we}({T})$.\end{Remark}

\begin{Remark} \label{ft:thm4.5} If we do not want to assume \eqref{now5} then the assertion $\EE_{\la}[f\ot f\,|\,\Xb\times \Xb](\xb,\xb')=0$, in view of \eqref{now4a}, should be replaced by
$$(\ast)\qquad\EE_{\la}[f\ot f\,|\,\Xb\times \Xb](\xb,\xb')=\EE_{\mu}(f\,|\,\Xb)(\xb) \, \EE_{\mu}(f\,|\,\Xb)(\xb')$$
for $\mub\ot\mub$-a.a.\ $(\xb,\xb')\in \Xb\times \Xb$. For example, if we take $\bfu$ such that all its Furstenberg systems are identity, then the only self-joining that we have to consider is the product measure, and the assumption $(\ast)$ holds. We will see later (see Proposition~\ref{p:oues}) that in this case $\bfu$ will be orthogonal to all u.e.\ systems even though the function $\pi_0$ does not satisfy~\eqref{now5}.
\end{Remark}

We need the following:

\begin{Lemma}\label{l:ormaim1}
Assume that $\rho\in J({T},R)$ with $R$ ergodic. Set, by some abuse of notation  $\la:=\rho^\ast\circ \rho\in J_2({T})$
(i.e.\ $\lambda$ is the only joining for which $\Phi_{\la}=\Phi_\rho^\ast\circ\Phi_\rho$). Then $\la|_{\Xb\times \Xb}=\mub\ot\mub$.\end{Lemma}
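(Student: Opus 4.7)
The plan is to reduce the claim to a single observation: because the $X$-coordinate of $\widetilde{T}$ sits inside the identity factor and $R$ is ergodic, the Markov operator $\Phi_\rho$ collapses every $X$-measurable function to a constant. From this, the formula $\Phi_\lambda = \Phi_\rho^\ast \circ \Phi_\rho$ gives the marginal identification almost for free.

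First I would verify that the projection of $\rho$ to $X\times Z$ equals $\mu\ot\kappa$. Indeed, this projection is a joining between $(X,\mu,\Id_X)$ and the ergodic system $(Z,\kappa,R)$, and identity systems are disjoint from all ergodic systems, so the joining must be the product measure. In particular, for every Borel $A\subset X$ and $h\in L^2(Z,\kappa)$,
\[
\langle \Phi_\rho(1_{A\times Y}),h\rangle_{L^2(\kappa)}
 =\int_{A\times Y\times Z}\ov{h(z)}\,d\rho
 =\mu(A)\int_Z\ov{h}\,d\kappa,
\]
so $\Phi_\rho(1_{A\times Y})=\mu(A)\cdot\raz_Z$ as an element of $L^2(Z,\kappa)$.

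Next, using the defining relation \eqref{marko1} for $\Phi_\lambda=\Phi_\rho^\ast\circ\Phi_\rho$, for any Borel $A,B\subset X$,
\[
\la\bigl((A\times Y)\times (B\times Y)\bigr)
 =\langle \Phi_\lambda(1_{A\times Y}),1_{B\times Y}\rangle_{L^2(\mu\ot\nu)}
 =\langle \Phi_\rho(1_{A\times Y}),\Phi_\rho(1_{B\times Y})\rangle_{L^2(\kappa)}.
\]
Substituting the computation above, this equals $\mu(A)\mu(B)$, which is exactly the statement $\la|_{X\times X}=\mu\ot\mu$.

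The only step that could hide a subtlety is the first one, namely identifying the projection of $\rho$ to $X\times Z$ as $\mu\otimes\kappa$; but this is the classical disjointness of identities from ergodic systems applied to the factor map $\widetilde{T}\to \Id_X$, so no real obstacle is expected. The rest is a routine unfolding of the Markov-operator/joining dictionary.
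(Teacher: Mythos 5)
Your proposal is correct and follows essentially the same route as the paper: the key point in both is that $\rho|_{X\times Z}=\mu\ot\kappa$ by disjointness of the identity factor from the ergodic system $R$, which the paper phrases as $\Phi_\rho|_{L^2_0(X,\mu)}=0$ and you phrase equivalently as $\Phi_\rho(\raz_{A\times Y})=\mu(A)\cdot\raz_Z$. Unwinding $\Phi_\la=\Phi_\rho^\ast\circ\Phi_\rho$ on $X$-measurable functions then gives $\la|_{X\times X}=\mu\ot\mu$ exactly as in the paper's argument.
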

\begin{proof} Since $R$ is ergodic,  $\rho|_{\Xb\times Z}=\mub\ot\kappa$. That is, $\Phi_\rho|_{L_0^2(\Xb,\mub)}=0$. By the definition of $\la$,
$$
\Phi_{\la}|_{L_0^2(\Xb)}=\Phi^\ast_{\rho}\circ\Phi_{\rho}|_{L_0^2(\Xb)}=0,$$
whence $\la|_{\Xb\times \Xb}=\mub\ot\mub$.\end{proof}

Under the assumptions of the lemma, we have $\rho=\int_{\Xb}\rho_{\xb}\,d\mub(\xb)$, where $\rho_{\xb}\in J(T_{\xb},R)$ for $\mub$-a.a.\ $\xb\in {\Xb}$.
Moreover, by disintegrating
$$
\la=\int_{\Xb\times \Xb}\la_{\xb,\xb'}\,d\mub(\xb)d\mub(\xb'),$$
we obtain that
\beq\label{zloz}
\la_{\xb,\xb'}=\rho^\ast_{\xb'}\circ \rho_{\xb}.\eeq
Indeed,
$$
\Phi_{\la}=\Phi^\ast_{\rho}\circ\Phi_{\rho}= \int_{\Xb}\Phi^\ast_{\rho_{\xb}}\,d\mub(\xb)
\circ \int_{\Xb}\Phi_{\rho_{\xb'}}\,d\mub(\xb')=
\int_{\Xb\times \Xb}\Phi^\ast_{\rho_{\xb}}\circ \Phi_{\rho_{\xb'}}\,d\mub(\xb)d\mub(\xb').$$

\subsection{Proof of Theorem~\ref{t:OrMaIm}. Sufficiency}

Take any ergodic $R$ on $\zdk$ and let $\rho\in J(T,R)$.
 Set $\la=\rho^\ast\circ\rho\in J_2({T})$. In view of Lemma~\ref{l:ormaim1}, $\la|_{\Xb\times \Xb}=\mub\ot\mub$. Now, by Corollary~\ref{c:martim1},  $\la=\int_0^1\la_t\,dt$ with $\la_t\in J_2^{\rm RelErg}({T})$. We then consider $f\in L^2(X,\mu)$ such that (in particular) $\EE_{\la_t}[f\ot f|\,\Xb\times\Xb]=0$ $\mub\ot\mub$-a.e., whence $\EE_{\la}[f\ot f\,|\,\Xb\times \Xb]=0$ $\mub\ot\mub$-a.e. Hence, by~\eqref{zloz}, for $\xb\in\Xb_\infty$ (if $\xb\in \Xb_n$ we replace $\nu$ with $\nu_n$ in the calculation below)
\begin{multline*}
0=\EE_{\la}[f\ot f\,|\,\Xb\times \Xb](\xb,\xb')=\int f\ot f\, d\la_{\xb,\xb'}=\\
\int\Phi_{\la_{\xb,\xb'}}(f)
f|_{\{\xb\}\times Y}\,d\nu=\int_{Z}\Phi_{\rho_{\xb}}(f)\cdot \Phi_{\rho_{\xb'}}(f)\,d\kappa
\end{multline*}
for $\mub\ot\mub$-a.a.\ $(\xb,\xb')\in \Xb\times \Xb$. In view of \eqref{doda28}, all we need to show is that $\Phi_\rho(f)=0$, where $\Phi_\rho:L^2(X,\mu)\to L^2(Z,\kappa)$.
A point $\xb\in {\Xb}$ is called ``good'' if
$$
\mub(\{\xb'\in {\Xb}:\:\Phi_{\rho_{\xb}}(f)\perp \Phi_{\rho_{\xb'}}(f)\})=1.$$
Since the set
 of $(\xb,\xb')$ such that $\Phi_{\rho_{\xb}}(f)\perp \Phi_{\rho_{\xb'}}(f)$ is Borel, by Fubini's theorem the set $G$ of ``good'' points has full measure. Set
 $$
 C:=\{\xb\in {\Xb}:\: \Phi_{\rho_{\xb}}(f)\neq0\}.$$
 Suppose that $\mub(C)>0$. Then $\mub(G\cap C)=\mub(C)>0$. In the set $C\cap G$ select a maximal subset $\{\xb_n:\: n\geq1\}$, so that $\Phi_{\rho_{\xb_n}}(f)\perp \Phi_{\rho_{\xb_m}}(f)$ for $n\neq m$ (this set is countable because $L^2(Z,\kappa)$ is separable, and we can find a maximal set by Zorn's lemma). Let
 $$
 B_n:=\{\xb\in {\Xb}:\:\Phi_{\rho_{\xb}}(f)\perp \Phi_{\rho_{\xb_n}}(f)\}.$$
 Then $\mub(B_n)=1$ and therefore
 $$
 \mub(\bigcap_{n\geq1}B_n\cap G\cap C)>0$$
 and adding any $\xb\in \bigcap_{n\geq1}B_n\cap G\cap C$ makes a larger family than $\{\xb_n:\:n\geq1\}$, a contradiction.

 It follows that the set $C$ has measure zero, and this yields $\Phi_\rho(f)=0$.

\subsection{Proof of Theorem~\ref{t:OrMaIm}. Necessity}

(By contraposition.)
Suppose that there exists $\la\in J_2^{\rm RelErg}({T})$ for which the claim does not hold. Without loss of generality, we assume that $f$ is real, and that, for $(\xb,\xb')$ in a set of positive $\mub\otimes\mub$-measure, we have
\[
 \EE_{\la}[f\ot f\,|\,\Xb\times \Xb](\xb,\xb') = \int f\ot f\,d\la_{\xb,\xb'} > 0.
\]
Using Fubini's theorem, there exists a point ${\xb_0}$ such that, for $\xb$ from a set $A\subset \Xb$ of positive measure,
\[
 \int f\ot f\,d\la_{\xb,\xb_0}\ >\ 0.
\]
We set $R=T_{{\xb_0}}$ (which is ergodic, we also assume that $\xb_0\in \Xb_\infty$), and we define $\rho\in J({T},R)$ by setting (cf.\ Section~\ref{s:jerg})
$\rho:=\int \rho_{\xb}\,d\mub(\xb)$, where $\rho_{\xb}=\la_{\xb,{\xb_0}}$ for $\xb\in A$, and $\rho_{\xb}$ equals the product measure otherwise. We easily obtain, using \eqref{now5} for the last equality
\begin{multline*}
 \int \Phi_\rho(f)\cdot f\,d\mu=
\int f\ot f\,d\rho= \\
\int_{\Xb}\Big(\int f\ot f\,d\rho_{\xb}\Big)\,d\mub(\xb)=
\int_{A}\Big(\int f\ot f\,d\rho_{\xb}\Big)\,d\mub(\xb)
\ >\ 0.
\end{multline*}

\subsection{A  condition for being orthogonal to ergodic Markov images}
We need the following lemma (relativizing the case of ergodic decomposition).
\begin{Lemma}\label{l:relvN}
Let $R$ be an automorphism of $\zdr$ and let $\ca\subset\ci_R$ be a factor. Assume that $\rho=\int_0^1\rho_t\,dQ(t)$, where $\rho_t$ are $R$-invariant and $\ca=\ci_R\mod \rho_t$  for $Q$-a.a.\ $t\in[0,1]$. Let $f\in L^\infty(Z,\rho)$ and $\EE_{\rho_t}[f|\ca]=0$ for $Q$-a.a.\ $t\in[0,1]$.
Then $\EE_\rho[f\,|\,\ci_R]=0$.
\end{Lemma}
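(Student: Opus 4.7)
The statement will follow from the defining characterization of the conditional expectation: it suffices to show that $\int_A f\, d\mu = 0$ for every $A\in\mathcal{I}_T$, since $\mathbb{E}_\mu[f\,|\,\mathcal{I}_T]$ is the unique $\mathcal{I}_T$-measurable function with this property.

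Fix $A\in\mathcal{I}_T$. Using the decomposition $\mu=\int_0^1 \mu_t\,dQ(t)$, write
\[
\int_A f\, d\mu = \int_0^1 \Bigl(\int_A f\, d\mu_t\Bigr)\, dQ(t).
\]
So it is enough to show that for $Q$-a.a.\ $t$, one has $\int_A f\, d\mu_t = 0$. This is where I would use the hypothesis $\mathcal{A}=\mathcal{I}_T\bmod\mu_t$: since $A\in\mathcal{I}_T$, for $Q$-a.a.\ $t$ there exists $A_t\in\mathcal{A}$ with $\mu_t(A\triangle A_t)=0$. Then, using $\mathcal{A}$-measurability of $\mathbbm{1}_{A_t}$ and the assumption $\mathbb{E}_{\mu_t}[f\,|\,\mathcal{A}]=0$,
\[
\int_A f\, d\mu_t = \int \mathbbm{1}_{A_t}\, f\, d\mu_t = \int \mathbbm{1}_{A_t}\, \mathbb{E}_{\mu_t}[f\,|\,\mathcal{A}]\, d\mu_t = 0.
\]
Integrating over $t$ gives $\int_A f\, d\mu=0$, as required.

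\textbf{Expected obstacle.} The argument is very short; the only subtlety is a measure-theoretic one. For a \emph{single} $A\in\mathcal{I}_T$, the identification $\mathcal{A}=\mathcal{I}_T\bmod\mu_t$ provides an $A_t\in\mathcal{A}$ only up to a $Q$-null set of $t$'s, and one must check this null set is handled correctly — but since we only test $A$ one at a time, this is harmless. If one wanted a simultaneous (in $t$) approximation for all $A\in\mathcal{I}_T$ one would need a measurable selector, but for proving $\mathbb{E}_\mu[f\,|\,\mathcal{I}_T]=0$ the pointwise (in $A$) version above is enough.
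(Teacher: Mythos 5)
Your proof is correct, but it takes a different route from the paper's. You argue directly from the defining property of conditional expectation: fix $A\in\ci_T$, use $\ca=\ci_T\bmod\mu_t$ to find $A_t\in\ca$ with $\mu_t(A\triangle A_t)=0$, conclude $\int_A f\,d\mu_t=\int \ind{A_t}\EE_{\mu_t}[f\,|\,\ca]\,d\mu_t=0$ for $Q$-a.a.\ $t$, and integrate in $t$. Your handling of the null-set issue is also right: since you test one invariant set $A$ at a time, no measurable selection of $t\mapsto A_t$ is needed. The paper instead runs the argument through the von Neumann mean ergodic theorem: setting $F_{N,t}:=\int_X\bigl|\frac1N\sum_{n\leq N}f\circ T^n\bigr|^2\,d\mu_t$, the hypotheses give $F_{N,t}\to0$ for $Q$-a.a.\ $t$, dominated convergence gives $\int_X\bigl|\frac1N\sum_{n\leq N}f\circ T^n\bigr|^2\,d\mu\to0$, and a second application of von Neumann identifies the limit as $\EE_\mu[f\,|\,\ci_T]$. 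Your argument is more elementary (no ergodic theorem, no boundedness of $f$ beyond integrability of $\ind{A_t}f$ is really used) and avoids the $L^2$ machinery entirely. What the paper's formulation buys is that the conclusion is delivered in exactly the form used downstream — vanishing of the ergodic averages $\frac1N\sum_{n\leq N}f\circ T^n$ in $L^2(\mu)$ — which is how the orthogonality condition $(\ast)$ of Remark~\ref{r:Bosher} and Corollary~\ref{c:relvN2} are phrased and checked combinatorially. Either proof is acceptable.
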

\begin{proof} Denote $F_{N,t}:=\int_Z|\frac1N\sum_{n\leq N}f\circ R^n|^2\,d\rho_t$. Then, for $Q$-a.a.\ $t\in[0,1]$, we have $\lim_{N\to\infty}F_{N,t}=0$ by the von Neumann theorem (in $L^2(Z,\rho_t)$) and our assumptions. By the Lebesgue dominated theorem: $\lim_{N\to\infty}\int_0^1F_{N,t}\,dQ(t)=0$. That is,
$$
0=\lim_{N\to\infty}\int_0^1
\Big(\int_Z\Big|\frac1N\sum_{n\leq N}f\circ R^n\Big|^2\,d\rho_t\Big)dQ(t)=
\lim_{N\to\infty}\int_Z\Big|\frac1N
\sum_{n\leq N}f\circ R^n\Big|^2\,d\rho.$$
It follows again by the von Neumann theorem that $\EE_\rho[f\,|\,\ci_R]=0$.\end{proof}
Returning to our context, by Corollary~\ref{c:martim1}, Theorem~\ref{t:OrMaIm} and Lemma~\ref{l:relvN}, we obtain the following:
\begin{Cor}\label{c:relvN2}
Assume that $f\in L^2_0(X,\mu)$ satisfies~\eqref{now5}. Then
$f\perp F_{\rm we}({T})$ if and only if for all $\la\in J_2({T})$ satisfying $\la|_{\Xb\times \Xb}=\mub\ot\mub$, we have
$\EE_{\la}[f\ot f\,|\,\ci_{{T}\times{T}}]=0$.\end{Cor}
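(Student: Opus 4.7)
My plan is to combine the three results cited in the paragraph preceding the corollary (Corollary~\ref{c:martim1}, Theorem~\ref{t:OrMaIm}, and Lemma~\ref{l:relvN}) by applying them in sequence to the product system $(\widetilde{T}\times\widetilde{T},\lambda)$ and the function $f\otimes f$. The key observation is the one already flagged in the bullets: for $\lambda\in J_2^{\rm RelErg}(\widetilde{T})$ the sub-$\sigma$-algebra $\ci_{\widetilde{T}\times\widetilde{T}}$ coincides (mod $\lambda$) with the pull-back of $X\times X$, so on this subset of joinings the two conditional expectations in the statement are literally the same.

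For the backward implication, take any $\lambda\in J_2^{\rm RelErg}(\widetilde{T})$ (the set is nonempty by Remark~\ref{rm:nonempty}); by the definition of $J_2^{\rm RelErg}$ we have $\ci_{\widetilde{T}\times\widetilde{T}}=\sigma(X\times X)$ mod $\lambda$, so the hypothesis gives $\EE_\lambda[f\otimes f\,|\,X\times X]=\EE_\lambda[f\otimes f\,|\,\ci_{\widetilde{T}\times\widetilde{T}}]=0$. An application of Theorem~\ref{t:OrMaIm} then yields $f\perp F_{\rm we}(\widetilde{T})$.

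For the forward implication, fix an arbitrary $\lambda\in J_2(\widetilde{T})$ with $\lambda|_{X\times X}=\mu\otimes\mu$. By Corollary~\ref{c:martim1}, decompose $\lambda=\int_0^1 \lambda_t\,dt$ with $\lambda_t\in J_2^{\rm RelErg}(\widetilde{T})$. For each $t$, Theorem~\ref{t:OrMaIm} applied under the assumption $f\perp F_{\rm we}(\widetilde{T})$ gives $\EE_{\lambda_t}[f\otimes f\,|\,X\times X]=0$, and the relative-ergodicity of $\lambda_t$ upgrades this to $\EE_{\lambda_t}[f\otimes f\,|\,\ci_{\widetilde{T}\times\widetilde{T}}]=0$. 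Now apply Lemma~\ref{l:relvN} to the non-ergodic system $(\widetilde{T}\times\widetilde{T},\lambda)$ with the decomposition $\lambda=\int \lambda_t\,dt$, the function $f\otimes f\in L^\infty$, and the factor $\ca=\sigma(X\times X)\subset\ci_{\widetilde{T}\times\widetilde{T}}$; the hypotheses of the lemma are exactly what the previous sentence supplies, and the conclusion is $\EE_\lambda[f\otimes f\,|\,\ci_{\widetilde{T}\times\widetilde{T}}]=0$.

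I do not expect a real obstacle here: the statement is essentially a repackaging of Theorem~\ref{t:OrMaIm} in which the integral over a convex combination of relatively ergodic joinings is replaced by a single conditional expectation on $\ci_{\widetilde{T}\times\widetilde{T}}$. The only point deserving a moment of care is the verification that Lemma~\ref{l:relvN} applies in the form needed, that is, that $\lambda_t$-a.s.\ the factor $X\times X$ coincides with $\ci_{\widetilde{T}\times\widetilde{T}}$; but this is built into the definition~\eqref{RelErg} of $J_2^{\rm RelErg}(\widetilde{T})$.
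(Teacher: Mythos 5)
Your proposal is correct and follows exactly the route the paper indicates: the paper's own justification is the one-line citation of Corollary~\ref{c:martim1}, Theorem~\ref{t:OrMaIm} and Lemma~\ref{l:relvN}, and you have assembled these three ingredients in precisely the intended way (the backward direction via the inclusion $J_2^{\rm RelErg}(\widetilde{T})\subset\{\la:\la|_{X\times X}=\mu\ot\mu\}$ and the identification of $\ci_{\widetilde{T}\times\widetilde{T}}$ with $\sigma(X\times X)$ mod any relatively ergodic joining, the forward direction via the decomposition $\la=\int_0^1\la_t\,dt$ and the relativized von Neumann lemma applied to $(\widetilde{T}\times\widetilde{T},\la)$ with $\ca=\sigma(X\times X)$). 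No gaps beyond the mild integrability point for $f\ot f$ that the paper itself glosses over.
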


\begin{Remark}\label{r:Bosher} The above corollary yields the full solution of Boshernitzan's problem (see next section). To test whether $f\perp F_{\rm we}({T})$ and assuming that $\EE(f|\Xb)=0$, we only need (and it is sufficient) to show that $$(\ast)\;\;\;\;\lim_{N\to\infty}
\int_{{X}\times{X}}\Big|\frac1N\sum_{n\leq N}(f\ot f)\circ ({T}\times{T})^n\,d\la\Big|^2=0$$ for all self-joinings $\la$ of ${T}$ for which $\la|_{\Xb\times \Xb}=\mub\ot\mub$. When we consider Furstenberg systems of $\bfu$ (satisfying the condition: ``zero mean on typical short interval''), then $f=\pi_0$ and the condition $(\ast)$ is expressed combinatorially (i.e.\ expressed in terms of properties of $\bfu$), see Proposition~\ref{p:ju1} and next sections.\end{Remark}

\begin{Remark}\label{r:Bosher1}
If ${T}\perp {\rm Erg}$ then of course each $f$  satisfying $\EE(f|\Xb)=0$ is orthogonal to all ergodic images, but we can also see that the condition $(\ast)$ is trivially satisfied because each $\la$ as above will simply be product measure (as the extension ${T}\to{\Id}|_{\Xb}$ is confined).\end{Remark}
\begin{Remark}
Note that if $\lambda|_{\Xb\times \Xb}=\mub\ot\mub$ then (unless ${T}$ is ergodic) $\lambda$ cannot be the
graph measure ${\mu}_{S}$ for an element ${S}$ of the centralizer $C({T})$ of ${T}$. Indeed, if $S\in C({T})$ then ${S}$ must preserve the sigma-algebra of invariant sets. We obtain
$$
{\mu}_{{S}}|_{\Xb\times \Xb}=\mu_{\bar S},$$ where ${\bar S}:={S}|_{\Xb}$. But $\mu_{\bar S}\neq\mub\ot\mub$ unless $\Xb$ reduces to one point.\end{Remark}

\section{Orthogonality to uniquely ergodic systems}
We recall that (cf.~\eqref{f:defort}) the meaning of orthogonality of $\bfu$ to UE is that $$\lim_{N\to\infty}\frac1N\sum_{n\leq N}f(T^nx)\bfu(n)=0$$ for each uniquely ergodic topological system $(X,T)$, $f\in C(X)$ of zero mean (with respect to the unique invariant measure) and all $x\in X$.

\subsection{Orthogonality and ergodic Markov images}
\begin{Prop} \label{p:oues}
Let $\bfu:\N\to\D$. Then $\bfu\perp\,{\rm UE}$ if and only if for each Furstenberg system $\kappa\in V(\bfu)$, $\pi_0\perp F_{{\rm we},0}(X_{\bfu},\kappa,S)$.
\end{Prop}

\begin{proof} $\Rightarrow$ (by contradiction) Suppose that for some $\kappa\in V(\bfu)$ there is an ergodic system $(Z',\nu',R')$ and a joining $\rho'\in J(\nu',\kappa)$ for which $\pi_0\not\perp {\rm Im}(\Phi_{\rho'}|_{L^2_0(Z',\nu')})$. Using the Jewett-Krieger theorem, we obtain that there exists a uniquely ergodic system $(Z,\nu,R)$ and a joining $\rho\in J(\nu,\kappa)$ such that $\pi_0\not\perp {\rm Im}(\Phi_\rho|_{L^2_0(Z,\nu)})$ (in $L^2(\kappa)$).  It follows that there exists $f\in C(Z)$ of zero mean such that
$\int_{X_{\bfu}} \Phi_\rho(f) \pi_0\,d\kappa \neq0$, in other words
$\int_{Z\times X_{\bfu}}f\ot \pi_0\,d\rho\neq0$. Let $(N_m)$ satisfy $\frac1{N_m}\sum_{n\leq N_m}\delta_{S^n\bfu}\to\kappa$. In view of Theorem~\ref{t:lifting}, there exist a sequence $(z_n)\subset Z$ and a subsequence $(N_{m_\ell})$ such that
$$
\frac1{N_{m_\ell}}\sum_{n\leq N_{m_\ell}}\delta_{(z_n, S^n\bfu)}\to \rho$$
and the set $\{n\geq1:\: z_{n}\neq Rz_{n-1}\}$ is contained in a subset of $\NN$ of the form $\{b_1<b_2<\ldots\}$, where
$b_{k+1}-b_k\to\infty$. Adding if necessary some more $b_k$'s to this set, we may also assume that
$\lim_{k\to\infty} \frac{b_{k+1}}{b_k} = 1$. In this way, defining $K_\ell:=\max\{k:b_k\le N_{m_\ell}\}$, we get $\lim_{\ell\to\infty} \frac{b_{K_\ell}}{N_{m_\ell}} = 1$, and it follows that
\begin{multline*}
0\neq \int f\ot \pi_0\,d\rho=\lim_{\ell\to\infty} \frac1{N_{m_\ell}}\sum_{n\leq N_{m_\ell}}f(z_n)\bfu(n)\ = \\
\lim_{\ell\to\infty}\frac1{b_{K_\ell}}\sum_{k<K_\ell}\Big(\sum_{b_k\leq n<b_{k+1}}f(R^{n-b_k}z_{b_k})\bfu(n)\Big).
\end{multline*}
However, the latter limit is~$0$ because of Theorem~\ref{t:orbmod}  and our assumption of orthogonality on $\bfu$.

$\Leftarrow$ (by contradiction) Let $(X,T)$ be uniquely ergodic. Suppose that for some subsequence $(N_k)$    we have
$\frac1{N_k}\sum_{n\leq N_k}f(T^nx)\bfu(n):=c\neq0$ (for some zero mean $f\in C(X)$ and $x\in X$) and $\frac1{N_k}\sum_{n\leq N_k}\delta_{(T^nx,S^n\bfu)}\to\rho$. Then $\rho$ is a joining, $\rho\in J(\nu,\kappa)$, where $\nu$ is the unique invariant measure for $T$, and $\kappa\in V(\bfu)$. Since now
$$
c=\int f\ot \pi_0\,d\rho=\int \EE_\kappa[f\,|\,X_{\bfu}]\pi_0\,d\kappa,$$
we obtain a contradiction as $\nu$ is ergodic.
\end{proof}

\begin{Remark}
If we assume that $M(\bfu)=0$ (which is equivalent to $\EE_\kappa[\pi_0]=0$ for each Furstenberg system $\kappa$ of $\bfu$), then in the proposition above we can replace
$F_{{\rm we},0}(X_{\bfu},\kappa,S)$ with $F_{{\rm we}}(X_{\bfu},\kappa,S)$.
\end{Remark}

\begin{Remark} Note that in the proof we used unique ergodicity twice: each ergodic system has a uniquely ergodic model and then the use of  Theorem~\ref{t:orbmod} for orbital uniquely ergodic models. As the latter models are not (in general) minimal, the question arises whether orthogonality with respect to all uniquely ergodic systems is the same as orthogonality to all strictly ergodic systems.
\end{Remark}

\begin{Remark} If all Furstenberg systems of $\bfu$ are identities then $\bfu$ is orthogonal to all u.e.\ systems. Hence (see \cite{Go-Le-Ru}) all mean slowly varying functions
are orthogonal to all u.e.\ systems.
In the next section we will consider Boshernitzan's problem in the class of multiplicative functions.
\end{Remark}

\begin{Remark} \label{r:gdylog} If orthogonality is given by the logarithmic way of averaging then all the results in the paper persist, with the only change that instead of considering Furstenberg systems of $\bfu$ we need to consider all logarithmic Furstenberg systems of $\bfu$. For example, $\bfu\perp_{\rm log} {\rm UE}$ if and only if for each $\kappa\in V^{\rm log}(\bfu)$, we have $\pi_0\perp F_{{\rm we},0}(X_{\bfu},\kappa,S)$.\end{Remark}

\subsection{Pretentious multiplicative function orthogonal to all uniquely ergodic systems}\label{s:pretensious}
We consider arithmetic functions  $\bfu:\N\to\D$ which are {\em multiplicative}, i.e.\ \beq \label{comu}
\bfu(mn)=\bfu(m)\bfu(n)\text{ whenever }m,n\text{ are coprime},\eeq
where $\D$ denotes the unit disc (if~\eqref{comu} is satisfied unconditionally then we speak about {\em complete multiplicativity} of $\bfu$). Let
$$\|\bfu\|_B:=\limsup_{N\to\infty}\frac1N\sum_{n\leq N}|\bfu(n)|$$
stand for the Besicovitch pseudo-norm of $\bfu$. For example, $\bfu(n):=n^{-r}$, where $r>0$, is a completely multiplicative function of zero Besicovitch pseudo-norm.
We recall that (see, e.g.\ \cite{Be-Ku-Le-Ri}, Lemma 2.9)
\beq\label{Besico1}
\|\bfu\|_{B}=0 \Leftrightarrow \sum_{p\in\PP}\frac1p(1-|\bfu(p)|)=\infty\eeq
(the RHS in \eqref{Besico1} yields many other possibilities to create multiplicative functions of zero Besicovitch pseudo-norm). When $\|\bfu\|_{B}=0$ then $\bfu(n)\to0$ along a subsequence of full density in $\N$. It is easy to see that in this case  $\pi_0=0$ $\kappa$-a.e., where $\kappa=\delta_{\underline{0}}$ is the only Furstenberg system of $\bfu$. It follows that  multiplicative functions of Besicovitch pseudo-norm zero are orthogonal  to all topological systems (this fact does not use the multiplicativity of $\bfu$),  in particular,  to all uniquely ergodic systems. Another easy case arises when $\bfu(n)=n^{it}$ ($t\in\R$) is an Archimedean character. In this case, $\bfu$ is mean slowly varying function, i.e.\ it satisfies~\eqref{msvf}: $\frac1N\sum_{n\leq N}|\bfu(n+1)-\bfu(n)|\to0$ when $N\to\infty$ (in fact, $(n+1)^{it}-n^{it}\to0$ when $n\to\infty$); note also that, in general,  if $\|\bfu\|_B=0$ then $\bfu$ is mean slowly varying.
It is already remarked in \cite{Go-Le-Ru} that mean slowly varying functions (without the multiplicativity restriction) are precisely those whose all Furstenberg systems yield identities (so we obtain functions orthogonal to all uniquely ergodic systems). Klurman \cite{Kl} proved that if $\bfu$ is multiplicative ($|\bfu|\leq1$) and mean slowly varying then either $\|\bfu\|_B=0$ or $\bfu$ is an Archimedean character (Klurman's theorem contains also the unbounded case: $n\mapsto n^{z}$, where $0<{\rm Re}\,z<1$ which we do not consider as we assume $|\bfu(n)|\leq1$).
Recall that the multiplicative ``distance'' between two multiplicative functions $\bfu,\bfv:\N\to\D$ is defined as
\[
D(\bfu,\bfv)^2:=\sum_{p\in\PP}\frac1p(1-{\rm Re}(\bfu(p)\ov{\bfv(p)})).
\]
Then,  $\bfu$ is called {\em pretentious} if for some $t\in\R$ and a Dirichlet character $\chi$, we have
$D(\bfu,\chi\cdot n^{it})<+\infty$.
 Note that, in view of~\eqref{Besico1}, $\bfu$ is {\bf not} pretentious whenever $\|u\|_B=0$.
We aim at proving the following result.
\begin{Cor}\label{c:PretMult}
The only pretentious multiplicative functions $\bfu:\N\to\D$ orthogonal to all uniquely ergodic systems are Archimedean characters $n\mapsto n^{it}$, $t\in\R$.\end{Cor}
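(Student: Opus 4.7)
\emph{Plan.} I reduce to Klurman's theorem \cite{Kl}, according to which a multiplicative $\bfu:\N\to\D$ that is mean slowly varying must satisfy $\|\bfu\|_B=0$ or $\bfu(n)=n^{it}$ for some $t\in\R$. Pretentiousness eliminates the Besicovitch null case: from $1-|\bfu(p)|\le 1-\operatorname{Re}(\bfu(p)\ov{\chi(p)}p^{-it})$ and the assumed finiteness of $D(\bfu,\chi n^{it})^2$ we obtain $\sum_p(1-|\bfu(p)|)/p<\infty$, hence $\|\bfu\|_B>0$ by~\eqref{Besico1}. The whole task therefore reduces to showing that $\bfu$ is mean slowly varying.

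For this I route the argument through the ergodicity of Furstenberg systems of pretentious multiplicative functions (the structural input established in \cite{Fr-Le-Ru} and quoted in Remark~\ref{r:ju2}). Granted that every $\kappa\in V(\bfu)$ is ergodic, the identity self-joining of $(X_\bfu,\kappa,S)$ gives $F_{\mathrm{we},0}(X_\bfu,\kappa,S)=L^2_0(X_\bfu,\kappa)$, so the hypothesis $\bfu\perp\mathrm{UE}$ combined with Proposition~\ref{p:oues} forces $\pi_0\perp L^2_0(X_\bfu,\kappa)$, i.e.\ $\pi_0$ is $\kappa$-almost everywhere equal to some constant $c_\kappa$. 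Then $\pi_1=\pi_0\circ S=c_\kappa$ as well, so $\int|\pi_1-\pi_0|\,d\kappa=0$ for every $\kappa\in V(\bfu)$. Since $|\pi_1-\pi_0|$ is continuous on $X_\bfu$, every subsequential limit of $\frac1N\sum_{n\le N}|\bfu(n+1)-\bfu(n)|$ equals one of these integrals, so all such limits vanish; thus $\bfu$ is mean slowly varying and Klurman yields $\bfu(n)=n^{it}$.

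The main potential obstacle is that Remark~\ref{r:ju2} explicitly quotes \cite{Fr-Le-Ru} only for $\bfu$ pretending to a non-principal Dirichlet character, so one must verify that the ergodicity of Furstenberg systems persists in the principal case $\chi\equiv 1$. If this case is not directly covered, it can be handled by hand as follows. By Corollary~I (mean slowly varying functions are multipliers of $\mathrm{UE}^\perp$, applied to the mean slowly varying multiplier $n^{-it}$), the function $\bfv:=\bfu\cdot n^{-it}$ is multiplicative, pretends to $1$, and remains orthogonal to $\mathrm{UE}$. Testing this orthogonality against the uniquely ergodic rotation on $\Z/p\Z$ with the zero-mean function $\ind{\{0\}}-1/p$ and expanding asymptotically via the Halász-Delange Euler product forces the local factor $L_p:=\sum_{k\ge 0}\bfv(p^k)/p^k$ to equal $p/(p-1)$ for every prime $p$; the equality case of the triangle inequality applied to $\sum_{k\ge 1}\bfv(p^k)/p^k=1/(p-1)$ then forces $\bfv(p^k)=1$ for every $k\ge 1$, hence $\bfv\equiv 1$ and $\bfu(n)=n^{it}$. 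The remaining Euler-product degeneracies (a vanishing local factor $L_q=0$) are impossible for $q\ge 3$ since then $|L_q-1|\le 1/(q-1)<1$, and the only case left, $q=2$ with $\bfv(2^k)\equiv -1$, is ruled out by a secondary test against the rotation on $\Z/2\Z$.
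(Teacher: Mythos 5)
Your reduction to Klurman's theorem and the elimination of $\|\bfu\|_B=0$ via pretentiousness are fine, and in the genuinely ergodic situation your chain (diagonal self-joining gives $F_{{\rm we},0}(X_{\bfu},\kappa,S)=L^2_0(\kappa)$, hence $\pi_0$ constant, hence mean slow variation, hence Klurman) is correct and is a legitimate alternative to the paper's contradiction argument. The gap is in the structural input. It is \emph{not} true that every Furstenberg system of a pretentious multiplicative function is ergodic: what \cite{Fr-Le-Ru} provides (and what the paper's own proof uses) is ergodicity only when the Archimedean exponent $t$ in $D(\bfu,\chi\cdot n^{it})<\infty$ is zero; for $t\neq 0$ the Furstenberg systems are of the form ``identity $\times$ ergodic'' with a non-trivial identity part. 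Indeed $n^{it}$ itself is pretentious, is orthogonal to UE, and its Furstenberg systems are non-trivial identities (Remark~\ref{r:dodatniau1}, \cite{Go-Le-Ru}), on which $\pi_0$ is certainly not a.e.\ constant; so your intermediate conclusion ``$\pi_0=c_\kappa$'' fails for a function satisfying all your hypotheses. You sense that something is uncovered, but you misdiagnose it as the principal-versus-non-principal dichotomy for $\chi$, whereas the operative dichotomy is $t=0$ versus $t\neq0$. Consequently your fallback, which treats only $\bfv=\bfu\cdot n^{-it}$ pretending to $1$, leaves the case ``$\chi$ non-principal and $t\neq 0$'' resting entirely on the false ergodicity premise.

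The repair is already in your toolkit: apply the multiplier step \emph{first}. By Proposition~\ref{p:t+m} (Corollary~I), $\bfv:=\bfu\cdot n^{-it}$ is multiplicative, still orthogonal to UE, and pretends to a genuine Dirichlet character with no Archimedean twist. If that character is non-principal, \cite{Fr-Le-Ru} does give ergodicity of all Furstenberg systems of $\bfv$, your main route applies to $\bfv$, and Klurman forces $\bfv=n^{is}$, which is impossible since $D(n^{is},\chi)=\infty$ for non-principal $\chi$. If it is principal, your Hal\'asz--Delange computation takes over --- though there you should also justify the mean values you compare: for a complex $\bfv$ pretending to $1$ the partial Euler products may spiral rather than converge, and one needs their modulus to stay bounded away from $0$ (which your non-vanishing of the local factors does supply) before concluding that the bracket $\frac{L_p-1}{L_p}-\frac1p$ must vanish. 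For comparison, the paper handles $t\neq0$ by a different mechanism: Proposition~\ref{p:Mprod} shows that orthogonality to $F_{{\rm we},0}$ in a product ``identity $\times$ ergodic'' forces $\pi_0$ to be $\ci_S$-measurable, and the contradiction comes from the non-triviality of the ergodic part given by \cite{Fr-Le-Ru}.
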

\begin{proof}
Suppose that $\bfu$ is not an Archimedean character. Then $D(\bfu, \chi\cdot n^{it})<\infty$, where $\chi$ is a (non-trivial) primitive Dirichlet character. If $t=0$ then by Theorem 2.8(i) \cite{Fr-Le-Ru} all Furstenberg systems of $\bfu$ are ergodic, and each of them is non-trivial (unless $\bfu=1$). If $t\neq0$, then since $\bfu$ is (by assumption) not an Archimedean character, its spectrum cannot be trivial by Theorem 2.8(ii) \cite{Fr-Le-Ru}, and it follows by the same theorem that each Furstenberg system of it is of the form ``identity$\times T$'' with $T$ ergodic, and the ergodic part is non-trivial.
Now, since $\bfu\perp{\rm UE}$, $\pi_0\perp F_{{\rm we},0}(X_{\bfu},\kappa,S)$ for each $\kappa\in V(\bfu)$ in view of Proposition~\ref{p:oues}.
By Proposition~\ref{p:Mprod}, it follows that for each Furstenberg system $\kappa$, $\pi_0$ is $\kappa$-almost-surely measurable with respect to $\ci_S$. But then the sigma-algebra generated by the process $(\pi_0\circ S^n)$ is $\ci_S$, which yields an identity action, contradicting non-triviality of the ergodic part of the Furstenberg systems.
\end{proof}
\begin{Remark}In view of the recent development around the Chowla and Sarnak conjectures, it is reasonable to expect that Archimedean characters and zero Besicovitch pseudo-norm multiplicative functions are the only (bounded by~1) multiplicative functions which are orthogonal to all uniquely ergodic sequences.\end{Remark}

\subsection{Examples of sequences orthogonal to all uniquely ergodic systems}
\begin{Lemma}\label{l:lis1}
Let $(X,T)$ be a topological system and let $(x_n)\subset X$ satisfy:\\
(i) the density of  $\{n\in\N\colon x_{n+1}\neq Tx_n\}$ is zero,\\
(ii) $(x_n)$ is quasi-generic along $(N_k)$ for $\nu\in M(X)$  (cf.\ Theorem~\ref{t:lifting}).\\
Then $\nu$ is $T$-invariant. \end{Lemma}
\begin{proof} Obvious, by considering the integrals of $f, f\circ T\in C(X)$. \end{proof}

\begin{Remark} Recall that (ii) means $\lim_{k\to\infty}\frac1{N_k}\sum_{n\leq N_k}f(x_n)=\int f\,d\nu$ for all $f\in C(X)$. A general problem arises of whether generic sequences satisfying (i) do exist. The lifting lemma (when applied directly), see Theorem~\ref{t:lifting},  yields the existence of such a sequence $(x_n)$ satisfying (i) and also (ii) along a subsequence $(N_k)$;  however we do not control other subsequences.
\end{Remark}

\begin{Lemma}\label{l:lis2} Assume that $(Z,R)$ is another topological system in which $z$ is a generic point for a measure $\kappa$ (which must be in $M(Z,R)$). Suppose that
$$
\frac1{N_k}\sum_{n\leq N_k}\delta_{(x_n,R^nz)}\to\rho \in M(X\times Z).$$
Then $\rho$ is $T\times R$-invariant.\end{Lemma}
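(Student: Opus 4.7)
The plan is to show $T\times R$-invariance of $\rho$ by testing against an arbitrary $F\in C(X\times Z)$ and comparing the two Birkhoff-type averages $\frac1{N_k}\sum_{n\le N_k}F(x_n,R^n z)$ and $\frac1{N_k}\sum_{n\le N_k}F(T x_n,R^{n+1} z)$. The first average converges to $\int F\,d\rho$ by hypothesis, while the second converges to $\int F\circ (T\times R)\,d\rho$, so it suffices to show these two limits coincide.

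First I would do a trivial index shift: since the ``boundary'' contribution is $O(1/N_k)$,
\[
\int F\,d\rho \;=\; \lim_k\frac1{N_k}\sum_{n\le N_k}F(x_n,R^n z) \;=\; \lim_k\frac1{N_k}\sum_{n\le N_k}F(x_{n+1},R^{n+1} z).
\]
Then I would use assumption (i) of Lemma~\ref{l:lis1}: the set $E:=\{n\in\N\colon x_{n+1}\ne T x_n\}$ has density zero. For $n\notin E$ one has $F(x_{n+1},R^{n+1}z)=F(Tx_n,R^{n+1}z)$, hence
\[
\Bigl|\frac1{N_k}\sum_{n\le N_k}F(x_{n+1},R^{n+1}z)-\frac1{N_k}\sum_{n\le N_k}F(Tx_n,R^{n+1}z)\Bigr| \le \frac{2\|F\|_\infty \cdot |E\cap[1,N_k]|}{N_k} \longrightarrow 0.
\]
Combining these,
\[
\int F\,d\rho \;=\; \lim_k\frac1{N_k}\sum_{n\le N_k}F(T x_n,R^{n+1}z) \;=\; \lim_k\frac1{N_k}\sum_{n\le N_k}(F\circ(T\times R))(x_n,R^n z) \;=\; \int F\circ(T\times R)\,d\rho.
\]

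There is no real obstacle here; the only delicate point is that the density-zero ``bad'' set $E$ is the same set that appears in hypothesis (i), and since $F$ is uniformly continuous on the compact space $X\times Z$ (in fact just bounded suffices), the comparison above requires no joint-continuity gymnastics. One could equally well phrase the argument as follows: the finite measures $\frac1{N_k}\sum_{n\le N_k}\delta_{(x_n,R^n z)}$ and $\frac1{N_k}\sum_{n\le N_k}\delta_{(T x_n, R^{n+1}z)}$ differ in total variation by at most $2|E\cap[1,N_k]|/N_k\to 0$, so they share weak$^\ast$-limits, and the latter sequence is the pushforward of the former under $T\times R$ up to a vanishing error from the index shift.
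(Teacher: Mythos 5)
Your proof is correct and is essentially the paper's argument: the paper simply applies Lemma~\ref{l:lis1} to the sequence $(x_n,R^nz)$ in the product system $(X\times Z, T\times R)$, observing that its ``bad'' set coincides with $\{n\colon x_{n+1}\neq Tx_n\}$ and hence has density zero, and your write-up just spells out the index-shift and density-zero estimate that the paper leaves as ``obvious.''
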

\begin{proof} The sequence $(x_n,R^nz)$ is generic for $\rho$ along a subsequence and satisfies (i) from Lemma~\ref{l:lis1} for $T\times R$.\end{proof}
\begin{Lemma}\label{l:lis3} Assume that in Lemma~\ref{l:lis1} additionally $(X,\nu,T)\in {\rm Erg}^\perp$ for {\bf any}  $(N_k)$ along which we have convergence, i.e.\ for each $\nu\in V((x_n))$. Then, for each $f\in C(X)$ , we have
$$
(\bfu(n):=f(x_n))\perp\,{\rm UE}. $$
\end{Lemma}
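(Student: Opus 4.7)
The plan is to verify the uncorrelation condition~\eqref{f:defort} by a compactness-and-joinings argument. Fix a uniquely ergodic system $(Y,R)$ with unique invariant measure $\mu$, a function $g\in C(Y)$, and a point $y\in Y$. Since $\frac1N\sum_{n\leq N} g(R^n y)\to \int g\,d\mu$ by unique ergodicity of $(Y,R)$, checking~\eqref{f:defort} reduces to showing that
\[
\frac{1}{N}\sum_{n\leq N} g(R^n y)\, f(x_n) \xrightarrow[N\to\infty]{} 0
\]
whenever $\int g\,d\mu = 0$.

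Suppose for contradiction that there is a subsequence $(N_k)$ along which this average converges to some $c\neq 0$. By compactness of $M(X\times Y)$, after passing to a further subsequence I may assume that $\frac{1}{N_k}\sum_{n\leq N_k}\delta_{(x_n, R^n y)}$ converges weakly to some $\rho\in M(X\times Y)$. By Lemma~\ref{l:lis2} (applied with $z=y$, whose orbit is generic for $\mu$ by unique ergodicity), $\rho$ is $T\times R$-invariant. Its $X$-marginal is then some $\nu\in V((x_n))$ (which is $T$-invariant by Lemma~\ref{l:lis1}), while by unique ergodicity its $Y$-marginal is $\mu$.

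Hence $\rho\in J(\nu,\mu)$. By the standing hypothesis, $(X,\nu,T)\in{\rm Erg}^\perp$; since $(Y,\mu,R)$ is ergodic, the two systems are disjoint and $\rho=\nu\otimes\mu$. Therefore
\[
c \;=\; \int_{X\times Y} f(x)\,g(y)\,d\rho(x,y) \;=\; \int f\,d\nu\cdot \int g\,d\mu \;=\; 0,
\]
contradicting $c\neq 0$.

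The proof is essentially a direct application of the previous two lemmas together with the disjointness hypothesis on every $\nu\in V((x_n))$; the only point that requires brief verification is that the sequence $(x_n, R^n y)$ satisfies hypothesis~(i) of Lemma~\ref{l:lis1} for the product action $T\times R$, which is immediate from $\{n: x_{n+1}\neq Tx_n\}$ having density zero. I expect no substantial obstacle.
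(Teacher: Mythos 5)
Your proof is correct and follows essentially the same route as the paper: pass to a subsequence along which the empirical measures on $X\times Z$ converge, invoke Lemma~\ref{l:lis2} to see the limit is a $T\times R$-invariant joining of some $\nu\in V((x_n))$ with the unique invariant measure, and use the disjointness hypothesis to factor the integral. The only difference is the cosmetic framing as a proof by contradiction rather than a direct computation of the limit along any convergent subsequence.
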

\begin{proof} Take any $(Z,R)$ u.e.\ (with a unique $R$-invariant measure $\kappa$). Suppose that for some $(N_k)$ and $z\in Z$, we have
$$
\frac1{N_k}\sum_{n\leq N_k}\delta_{(x_n,R^nz)}\to \rho.$$
By Lemma~\ref{l:lis2}, it follows that $\rho\in M(X\times Z,T\times R)$ and, moreover, the marginals of $\rho$ are $\nu$ and $\kappa$, respectively. It follows that for each $\kappa$-zero mean $g\in C(Z)$, we have
$$
\lim_{k\to\infty}\frac1{N_k}\sum_{n\leq N_k}\bfu(n)g(R^nz)=\int f\ot g\,d\rho=
\int f\,d\nu\cdot\int g\,d\kappa=0$$
by the disjointness of $(X,\nu,T)$ and $(Z,\kappa,R)$.
\end{proof}

\begin{Remark}\label{r:ujeden} Assume for simplicity that $f:X\to\C$ has modulus~1. Note that for the sequence $\bfu$ in the lemma if $\frac1{N_k}\sum_{n\leq N_k}\delta_{S^n\bfu}\to\kappa$, and
$\frac1{N_k}\sum_{n\leq N_k}\delta_{x_n}\to\nu$
then  for all $m_i,\ell_i\in \Z$   ($i=1,\ldots,r$), if we set $g:=\prod_{i\leq r} \pi_{m_i}^{\ell_i}$, then using (i) from Lemma~\ref{l:lis1} to justify the fifth equality and~(ii) for the last one, we have
\begin{align*}
 \int_{X_{\bfu}} g\,d\kappa &= \lim_{k\to\infty}\frac1{N_k}\sum_{n\leq N_k}g(S^n\bfu) \\
 &= \lim_{k\to\infty}\frac1{N_k}\sum_{n\leq N_k}\prod_{i\leq r}\pi_{m_i}^{\ell_i}(S^n\bfu) \\
 &= \lim_{k\to\infty}\frac1{N_k}\sum_{n\leq N_k}\prod_{i\leq r}\bfu_{n+m_i}^{\ell_i} \\
 &= \lim_{k\to\infty}\frac1{N_k}\sum_{n\leq N_k}\prod_{i\leq r}\bigl(f(x_{n+m_i})\bigr)^{\ell_i} \\
 &= \lim_{k\to\infty}\frac1{N_k}\sum_{n\leq N_k}
\prod_{i\leq r}\bigl(f(T^{m_i}x_{n})\bigr)^{\ell_i}=\int_{X} \prod_{i\leq r}(f\circ T^{m_i})^{\ell_i}\,d\nu.
\end{align*}
It follows that, for any $H\geq1$,
$$
\int_{X_{\bfu}}\Big|\frac1H\sum_{h\leq H}\pi_0\circ S^h\Big|^2\,d\kappa=
\int_{X}\Big|\frac1H\sum_{h\leq H}f\circ T^h\Big|^2\,d\nu.$$
Hence, using the von Neumann theorem, we obtain that
\beq\label{ujeden1}
\EE_\nu[f\,|\,\ci_T]=0\text{ iff } \EE_\kappa[\pi_0\,|\,\ci_S]=0.\eeq
In view of Proposition~\ref{p:ju1}, with the additional assumption that $\EE_\nu[f\,|\,\ci_T]=0$, for each $\nu$ for which $(x_n)$ is quasi-generic, the sequence $\bfu$ which we obtain satisfies $\|\bfu\|_{u^1}=0$.
\end{Remark}

\begin{Remark}\label{r:ujedenA} If we also assume that the topological system $(X,T)$ has zero topological entropy,
and that the set $\{n\in\N\colon x_{n+1}\neq Tx_n\}$ is of the form $\{b_1<b_2<\cdots\}$ with $b_{k+1}-b_k\to\infty$, then the sequence $\bfu=(f(x_n))$ has zero topological entropy as well (see~\cite[Proof of Corollary~9]{Ab-Ku-Le-Ru}).
\end{Remark}

\vspace{1ex}

Now, we provide examples of sequences which are orthogonal to all uniquely ergodic systems. Fix an irrational $\alpha\in [0,1)$. We define the sequence $(w_n)\subset \T^2$ in the following way:
\beq\label{eq:star}\begin{array}{c}
(\alpha,0),(2\alpha,0),(2\alpha,2\alpha),(2\alpha,4\alpha),(2\alpha,6\alpha),
\ldots\\
\ldots,(k\alpha,0),(k\alpha,k\alpha),\ldots,\bigl(k\alpha,(k^2-1)k\alpha\bigr),\ldots\end{array}
\eeq
This sequence is the concatenation of pieces, indexed by $k\geq1$: at stage $k$ we take $(k\alpha,0)$ and the initial $k^2$-long piece of its orbit via the map $T:(x,y)\mapsto (x,x+y)$ on $\T^2$, cf.\ Section~\ref{e:przyk}. We claim that the sequence we defined is generic for $\nu={\rm Leb}_{\T}\ot{\rm Leb}_{\T}$. Indeed, to see this, first notice that we only need to check the case $N=1^2+2^2+\ldots+L^2$ as the RHS is of order $L^3$ while the next group in the definition of $(w_n)$ has length $(L+1)^2$ which is $o(1)$ with respect to $N$. For $r,s\in\Z$, consider $F(x,y)=e^{2\pi i(rx+sy)}$ (we check the weak convergence of our sequence testing on characters of $\T^2$). Assume first that $s\neq0$, then we have
$$
\frac1N\sum_{n\leq N}F(w_n)=\frac1N\sum_{k\leq L}e^{2\pi irk\alpha}\sum_{j=0}^{k^2-1}e^{2\pi isjk\alpha}=
\frac1N\sum_{k\leq L}e^{2\pi irk\alpha}\frac{e^{2\pi ik^2sk\alpha}-1}{e^{2\pi isk\alpha}-1}.
$$
Fix $\vep_0>0$. Then, if $L$ is large enough,
$$
\Big|\{k\leq L\colon \Big|e^{2\pi ik(s\alpha)}-1\Big|\geq\vep_0\}\Big|\geq(1-3\vep_0)L$$
and for each $k$ in this large set, we have
$$
\sum_{j=0}^{k^2-1}e^{2\pi isjk\alpha}=O(1/\vep_0).$$
For the remaining $k\leq L$ the sums are bounded by $k^2$. Thus, remembering  that $\sum_{m\leq M} m^2=\frac13M^3+O(M^2)$, we obtain
\begin{align*}
\Big|\frac1N\sum_{n\leq N}F(w_n)\Big| &\leq \frac1N\sum_{k\leq L}O(1/\vep_0)+
\frac1N\sum_{k=L(1-3\vep_0)}^Lk^2\\
&\leq O(1/\vep_0)\frac LN+\frac1N\Big(\frac13L^3-\frac13(L(1-3\vep_0))^3+O(L^2)\Big)\\
&=o(1)+O(\vep_0)+\frac{O(L^2)}N=o(1)+O(\vep_0)
\end{align*}
when $L\to\infty$.

Now, let us deal with the case $(s=0,r\neq0)$. We then write, using Abel's summation formula,
\begin{multline}
 \label{eq:Abel}
 \frac1N\sum_{n\leq N}F(w_n)=\\
 \frac1N\sum_{1\leq k\leq L} k^2 e^{2\pi ikr\alpha}
= \frac1N \sum_{1\leq j\leq L} \bigl(j^2-(j-1)^2\bigr) \sum_{j\leq k \leq L} e^{2\pi ikr\alpha}.
\end{multline}
For a fixed $\vep_0>0$, we consider the contribution of $j$'s which are less than $(1-\vep_0)L$: for such a $j$, we have
$$ \left| \frac{1}{L-j+1} \sum_{j\leq k \leq L} e^{2\pi ikr\alpha} \right| \leq \frac{1}{\vep_0L} \left|\frac{2}{1-e^{2\pi ir\alpha}}\right|= \frac1L O(1/\vep_0),$$
and since
$$ \frac1N \sum_{1\leq j\leq L} \bigl(j^2-(j-1)^2\bigr) (L-j+1) = 1 $$
(this corresponds to~\eqref{eq:Abel} when $r=0$),
we get
$$  \left| \frac1N \sum_{1\leq j\leq (1-\vep_0)L} \bigl(j^2-(j-1)^2\bigr) \sum_{j\leq k \leq L} e^{2\pi ikr\alpha}  \right| = \frac1L O(1/\vep_0). $$
In the remaining terms, corresponding to $j>(1-\vep_0)L$, we just use
$$ \left|\sum_{j\leq k \leq L} e^{2\pi ikr\alpha} \right| = O(\vep_0 L) $$
to get
$$  \left| \frac1N \sum_{ (1-\vep_0)L < j\leq L} \bigl(j^2-(j-1)^2\bigr) \sum_{j\leq k \leq L} e^{2\pi ikr\alpha}  \right| = \frac1N O(L^3\vep_0)=O(\vep_0). $$
We finally have for $(s=0,r\neq 0)$
$$\Big|\frac1N\sum_{n\leq N}F(w_n)\Big| = \frac1L O(1/\vep_0) + O(\vep_0),
$$
which concludes the proof of the genericity of the sequence $(w_n)$.

We now apply Lemma~\ref{l:lis3} to $T:\T^2\to\T^2$, $T(x,y)=(x,x+y)$ and $f(x,y)=e^{2\pi iy}$ and the sequence~\eqref{eq:star} to obtain that the sequence
\beq\label{waznyciag}\begin{array}{c}
1,1,e^{2\pi i 2\alpha},e^{2\pi i 4\alpha},e^{2\pi i 6\alpha},
\ldots\\
\ldots,1,e^{2\pi i k\alpha},\ldots,e^{2\pi i (k^2-1)k\alpha},\ldots\end{array}
\eeq
is orthogonal to all uniquely ergodic systems.

\subsection{Properties of arithmetic functions orthogonal to all u.e. systems} \label{s:arithm}
We start by the following consequence of Theorem~\ref{t:OrMaIm}.

\begin{Cor}\label{c:thi1} Let $T\in{\rm Aut}\xbm$ and let $f\perp F_{\rm we}({X},{\mu},{T})$. Let $\eta$ be a ${T}$-invariant probability measure with $\eta\ll {\mu}$. Assume that $d\eta/d{\mu}$ is bounded so that $f$ can also be considered in $L^2(X,\eta)$. Then, we have $f\perp F_{\rm we}({X},\eta,{T})$.\end{Cor}

\begin{proof} Since $(X,\mu)=\bigsqcup_n(\Xb_n\times \{1,\ldots,n\},\mub|_{\Xb_n}\ot\nu_n)\sqcup
(\Xb_\infty\times Y,\mub|_{\Xb_\infty}\ot\nu)$ (with the action $(\xb,u)\mapsto (\xb,T_{\xb}(u))$) and $f\in F_{\rm we}(X,\mu,T)$, also $f|_{X_n}\in F_{\rm we}(T|_{X_n})$. But if $n\in\N$, the action of $T|_{X_n}$ is of the form $\Id\times R_n$, with $R_n$ ergodic, whence $f|_{X_n}$ must be $T|_{X_n}$-invariant by Proposition~\ref{p:Mprod}, so it is in fact $\Xb_n$-measurable. Therefore, also $f|_{X_n}$ will  be $T|_{X_n}$-invariant for $\eta|_{X_n}$. It follows that w.l.o.g., we can assume that $T$ is aperiodic, that is, we can represent $({X},{\mu},{T})$ in the form $(\Xb\times Y,\mub\ot\nu, (\xb,y)\mapsto (\xb,T_{\xb}y))$ (with $x\mapsto T_{\xb}$ being the ergodic decomposition). Then $h:=d\eta/d(\mub\ot\nu)$ is $\Xb$-measurable (because $\eta$ is ${T}$-invariant and so is $h$). We consider the probability measure $\mub_h$ on $\Xb$ defined by $d\mub_h/d\mub=h$, so that $\eta=\mub_h\ot\nu$. Since $f\perp F_{\rm we}({X},{\mu},{T})$, the condition in the statement of Theorem~\ref{t:OrMaIm}
(see also Remark~\ref{ft:thm4.5})
is satisfied, and we rewrite it as follows: for each measurable map $\lambda:(\xb,\xb')\mapsto \lambda_{\xb,\xb'}\in J(T_{\xb},T_{\xb'})$ such that $\lambda_{\xb,\xb'}$ is ergodic $\mub\ot\mub$- a.s, we have
$$
\int_{Y\times Y}f(\xb,y)f(\xb',y')\,d\lambda_{\xb,\xb'}(y,y')=\Big(\int_Yf(\xb,y)\,d\nu(y)\Big)\Big(
\int_Yf(\xb',y)\,d\nu(y)\Big).
$$
But the same condition remains satisfied if we replace everywhere $\mub$ with $\mub_h$. Indeed, we know (see Remark~\ref{rm:nonempty}) that there exists a map $\rho:(\xb,\xb')\mapsto \rho_{\xb,\xb'}\in J(T_{\xb},T_{\xb'})$ such that $\rho_{\xb,\xb'}$ is ergodic for $\mub\ot\mub$- a.a.\ $(\xb,\xb')\in \Xb\times \Xb$. Therefore, if $\lambda:(\xb,\xb')\mapsto \lambda_{\xb,\xb'}\in J(T_{\xb},T_{\xb'})$ satisfies $\lambda_{\xb,\xb'}$ is ergodic $\mub_h\ot\mub_h$- a.e., we can always modify this map by setting
$$ \tilde\lambda_{\xb,\xb'}:=\begin{cases}
                        \lambda_{\xb,\xb'} &\text{ if }h(\xb)h(\xb')>0,\\
                        \rho_{\xb,\xb'} &\text{ otherwise.}
                       \end{cases}
$$
Then $\tilde\lambda_{\xb,\xb'}$ is ergodic for $\mub\ot\mub$-a.a.\ $(\xb,\xb')\in \Xb\times \Xb$, and therefore, for $\mub\ot\mub$-a.a.\ $(\xb,\xb')$, we have
$$
\int_{Y\times Y}f(\xb,y)f(\xb',y')\,d\tilde\lambda_{\xb,\xb'}(y,y')=\Big(\int_Yf(\xb,y)\,d\nu(y)\Big)\Big(
\int_Yf(\xb',y)\,d\nu(y)\Big).
$$
But $\tilde\lambda_{\xb,\xb'}=\lambda_{\xb,\xb'}$, $\mub_h\ot\mub_h$-a.s,
so again by Theorem~\ref{t:OrMaIm}, this proves $f\perp F_{\rm we}({X},\eta,{T})$.
\end{proof}

Now, we will prove the following result.

\begin{Prop}\label{p:thi2} Let $\bfu$, $\|\bfu\|_{u^1}=0$, be a bounded arithmetic function orthogonal to all u.e.\ systems. Then $\bfu$ is also orthogonal to all systems whose set of ergodic invariant measures is countable. More precisely: if $(Z,R)$ is a topological system whose set $\{\rho_i\colon i\in I\}$ of ergodic measures is countable, then for any $z_0\in Z$ and any function $g\in C(Z)$,
we have
$$
\frac1N \sum_{n<N} \bfu(n) g(R^nz_0)\to 0.
$$
\end{Prop}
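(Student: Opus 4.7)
The plan is to assume the contrary and extract a joining, then exploit the countable ergodic decomposition together with Corollary~\ref{c:thi1} to reach a contradiction with the hypothesis $\bfu\perp\mathrm{UE}$.

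More concretely, suppose for contradiction that there exist $z_0\in Z$, $g\in C(Z)$, and a subsequence $(N_k)$ along which
$$ \frac1{N_k}\sum_{n\leq N_k}\bfu(n)g(R^nz_0)\to c\neq 0. $$
Passing to a further subsequence if necessary, I would arrange that simultaneously
$ \frac1{N_k}\sum_{n\leq N_k}\delta_{(S^n\bfu,R^nz_0)}\to\rho$,
so that $\rho$ is a joining of some $\kappa\in V(\bfu)$ with some $\eta\in M(Z,R)$, and $c=\int \pi_0\otimes g\,d\rho$. Since $\|\bfu\|_{u^1}=0$, Proposition~\ref{p:ju1} gives $\EE_\kappa[\pi_0]=0$, and combined with $\bfu\perp\mathrm{UE}$ and Proposition~\ref{p:oues} we obtain $\pi_0\perp F_{\rm we}(X_{\bfu},\kappa,S)$ (here $F_{{\rm we},0}$ and $F_{\rm we}$ agree on the subspace orthogonal to constants, since $\mathbf{1}\in F_{\rm we}$).

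Next, since the ergodic invariant measures of $(Z,R)$ form a countable set $\{\rho_i:i\in I\}$, the ergodic decomposition of $\eta$ is atomic: $\eta=\sum_{i}\alpha_i\eta_i$ with $\eta_i$ ergodic (each $\eta_i$ being one of the $\rho_i$) and $\alpha_i\geq 0$, $\sum_i\alpha_i=1$. Choose a measurable partition $Z=\bigsqcup_i Z_i$ with $\eta_i(Z_i)=1$, and disintegrate $\rho$ along the corresponding atomic factor on the second coordinate: $\rho=\sum_i\alpha_i\rho_i$, where $\rho_i$ is the conditional probability of $\rho$ on $X_{\bfu}\times Z_i$. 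Each $\rho_i$ is $S\times R$-invariant (because $Z_i$ is $R$-invariant), its $Z$-marginal is $\eta_i$, and its $X_{\bfu}$-marginal is some $S$-invariant probability $\kappa_i$. Since $\kappa=\sum_i\alpha_i\kappa_i$, for every $i$ with $\alpha_i>0$ we have $\kappa_i\ll\kappa$ with $d\kappa_i/d\kappa\leq 1/\alpha_i$ bounded. Because $c=\sum_i\alpha_i\int\pi_0\otimes g\,d\rho_i\neq 0$, some index $i_0$ satisfies $\int\pi_0\otimes g\,d\rho_{i_0}\neq 0$.

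Now I would apply Corollary~\ref{c:thi1} with $\widetilde{\mu}=\kappa$, $\eta=\kappa_{i_0}$, and $f=\pi_0$: the boundedness of $\pi_0$ and of the density $d\kappa_{i_0}/d\kappa$ give $\pi_0\perp F_{\rm we}(X_{\bfu},\kappa_{i_0},S)$. But $\rho_{i_0}$ is a joining of $\kappa_{i_0}$ with the \emph{ergodic} system $(Z,\eta_{i_0},R)$, so $\Phi_{\rho_{i_0}}(g)\in F_{\rm we}(X_{\bfu},\kappa_{i_0},S)$, forcing
$$ \int\pi_0\otimes g\,d\rho_{i_0}=\langle \pi_0,\Phi_{\rho_{i_0}}(g)\rangle_{L^2(\kappa_{i_0})}=0, $$
a contradiction.

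The main obstacle I expect is conceptual rather than technical: the marginals $\kappa_i$ arising from the decomposition of $\rho$ need not themselves be Furstenberg systems of $\bfu$, so one cannot directly invoke Proposition~\ref{p:oues} on the $\kappa_i$. Corollary~\ref{c:thi1} is exactly the tool that bridges this gap by transferring orthogonality to $F_{\rm we}$ from $\kappa$ to any measure absolutely continuous with bounded density with respect to $\kappa$.
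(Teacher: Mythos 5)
Your proposal is correct and follows essentially the same route as the paper's proof: extract a joining $\rho$ along a generic subsequence, split the $Z$-marginal over the countably many ergodic measures via $R$-invariant sets $Z_i$, observe that the resulting $X_{\bfu}$-marginals $\kappa_i$ are absolutely continuous with bounded density with respect to the Furstenberg system $\kappa$, and invoke Corollary~\ref{c:thi1} to transfer $\pi_0\perp F_{\rm we}$ to each $\kappa_i$. The only cosmetic difference is that you argue by contradiction on a single index $i_0$ while the paper sums the (vanishing) contributions over all $i$ directly; your remark reconciling $F_{{\rm we},0}$ with $F_{\rm we}$ via $\EE_\kappa[\pi_0]=0$ is a correct and slightly more careful rendering of a step the paper leaves implicit.
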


\begin{proof}
Let $(N_k)$ be an increasing sequence along which $(u,z_0)$ is generic for some $S\times R$-invariant probability measure $\kappa$:
$$
\frac1{N_k}\sum_{n<N_k}\delta_{(S^n\bfu,R^nz_0)}\to\kappa\text{ when }k\to\infty.$$
Let $\rho$ be the marginal of $\kappa$ on $Z$. Then $\rho=\sum_{i\in I}\alpha_i\rho_i$ for some non-negative real numbers $\alpha_i$ and $\sum_{i\in I}\alpha_i=1$. For each $i\in I$, set
$$Z_i:=\{z\in Z\colon z\text{ is generic for }\rho_i\}.$$
Then, $Z_i$ is Borel, $R$-invariant and $Z_i\cap Z_j=\emptyset$ for $i\neq j$. Moreover, $\rho(\bigcup_{i\in I}Z_i)=1$. Let $\mu$ be the marginal of
$\kappa$ on $X_{\bfu}$: $\mu$ determines a Furstenberg system of $\bfu$. Since we assume $\bfu\perp{\rm UE}$, Proposition~\ref{p:oues} ensures that, in $L^2(X_{\bfu},\mu)$, we have
$$\pi_0\perp F_{\rm we}(X_{\bfu},\mu,S).$$
For each $i\in I$ such that $\alpha_i>0$, we let $\kappa_i\ll\kappa$ be the probability measure defined by
$$
\frac{d\kappa_i}{d\kappa}:=\frac1{\alpha_i}\raz_{X_{\bfu}\times Z_i}.$$
This Radon-Nikodym derivative is $S\times R$-invariant, hence, $\kappa_i$ is
$S\times R$-invariant. Let $\mu_i$ be the marginal of $\kappa_i$ on $X_{\bfu}$. We have:
\begin{itemize}
\item $\mu_i\ll\mu$,
\item $\mu_i$ is $S$-invariant,
\item $\frac{d\mu_i}{d\mu}\le \frac{1}{\alpha_i}$.
\end{itemize}
Furthermore, we have
$$\frac1{N_k}\sum_{n<N_k}\bfu(n)g(R^nz_0)\to
\int_{X_{\bfu}\times Z}\pi_0\ot g\,d\kappa=
\sum_{i\in I}\alpha_i \int_{X_{\bfu}\times Z}\pi_0\ot g\,d\kappa_i.$$
But $\kappa_i$ is a joining of $\mu_i$ with the ergodic measure $\rho_i$. By Corollary~\ref{c:thi1}, we have $\pi_0\perp F_{\rm we}(X_{\bfu},\mu_i,S)$ and finally, we get
$$
\frac1{N_k}\sum_{n<N_k}\bfu(n)g(R^nz_0)\to0.$$
\end{proof}

\begin{Remark}\label{r:blabla} If we drop the assumption $\|\bfu\|_{u^1}=0$, then we obtain that for
any function $g\in C(Z)$,
satisfying $\int_Z g \,d\rho_i=0$ for all $i\in I$,
we have
$\frac1N \sum_{n<N} \bfu(n) g(R^nz)\to 0$ when $N\to\infty$ (for all $z\in Z$).\end{Remark}

If we take $\bfu\perp {\rm UE}$ ($\|\bfu\|_{u^1}=0$), we know that $\bfu$ is in fact orthogonal to all topological systems whose set of invariant measures is countable. Using Proposition~\ref{p:thi2} and proceeding now as in the proof P$_2\Rightarrow {\rm P}_3$ in Section 2.2 \cite{Ab-Ku-Le-Ru}, we obtain the following result.

\begin{Cor}\label{c:thi3}
Suppose that $\bfu\perp {\rm UE}$ ($\|\bfu\|_{u^1}=0$). Then, for any uniquely ergodic system $(Y,S)$, for any increasing $(b_k)$ with $b_{k+1}-b_k\to\infty$, for all $(y_k)\subset Y$ and all $f\in C(Y)$ with $\int f\,d\nu=0$ ($\nu$ is the unique invariant measure on $Y$), we have
$$\lim_{K\to\infty}\frac1{b_K}\sum_{k<K}\Big|\sum_{b_k\leq n<b_{k+1}}f(S^{n-b_k}y_k)\bfu(n)\Big|=0,$$
i.e.\ $(Y,S)$ satisfies the strong MOMO property (relative to $\bfu$).\end{Cor}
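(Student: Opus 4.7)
Setting $x_n := S^{n-b_k}y_k$ for $n \in [b_k,b_{k+1})$ and writing $k(n)$ for the block index of $n$, the sequence $\underline x := (x_n)_{n \ge 0}$ has $\{n : x_{n+1} \neq Sx_n\} \subset \{b_k-1 : k \ge 1\}$, a set whose consecutive gaps $b_{k+1}-b_k$ tend to infinity. Theorem~\ref{t:orbmod} applied to the uniquely ergodic $(Y,S)$ then gives that $\widetilde Y := \overline{\{S^n\underline x : n \ge 0\}} \subset Y^\N$ is uniquely ergodic, with unique invariant measure $\widetilde\nu$ whose zeroth-coordinate marginal equals $\nu$.

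Using $|z| \le |\Re z| + |\Im z|$ and choosing $\sigma_k,\tau_k \in \{-1,1\}$ realizing $|\Re S_k| = \sigma_k \Re S_k$ and $|\Im S_k| = \tau_k \Im S_k$ for $S_k := \sum_{b_k \le n < b_{k+1}} f(x_n)\bfu(n)$, the desired vanishing reduces to showing, for every sign pattern $(\epsilon_k) \in \{-1,1\}^\N$ (depending on $f$, $\bfu$, and $(x_n)$),
$$
\frac{1}{b_K}\sum_{n<b_K} f(x_n)\,\epsilon_{k(n)}\,\bfu(n) \xrightarrow[K \to \infty]{} 0.
$$
Fix such a pattern, form $\underline w := (x_n,\epsilon_{k(n)})_{n \ge 0} \in (Y \times \{-1,1\})^\N$, and let $(Z,R)$ be the one-sided subshift $Z := \overline{\{R^n \underline w : n \ge 0\}}$ with $R$ the shift. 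I claim $(Z,R)$ has only finitely many ergodic invariant probabilities. The $Y^\N$-projection of any such measure is carried by $\widetilde Y$ and shift-invariant, hence equals $\widetilde\nu$ by unique ergodicity. For the $\{-1,1\}^\N$-projection, the hypothesis $b_{k+1}-b_k \to \infty$ forces every cluster point of the $R$-orbit of $(\epsilon_{k(n)})_n$ to have at most one ``jump'' (any window $[0,M)$ of a sufficiently late shift meets at most one block boundary); a standard shift-invariance argument (the expected number of jumps would otherwise be infinite) then forces any $R$-invariant probability on that orbit closure to be a convex combination of $\delta_{\underline 1}$ and $\delta_{-\underline 1}$. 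Disintegrating over the $\{-1,1\}^\N$-marginal and invoking unique ergodicity of $\widetilde Y$ slice-by-slice, the ergodic $R$-invariant probabilities on $Z$ are exactly $\widetilde\nu \otimes \delta_{\underline 1}$ and $\widetilde\nu \otimes \delta_{-\underline 1}$.

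Since by hypothesis $\bfu \perp \mathrm{UE}$ and $\|\bfu\|_{u^1} = 0$, Proposition~\ref{p:thi2} applies to $(Z,R)$; taking the continuous $g : Z \to \C$, $g((y_n,a_n)_n) := f(y_0)\,a_0$ and the point $z_0 := \underline w$ gives $\frac{1}{N}\sum_{n<N} \bfu(n)\,g(R^n\underline w) \to 0$, and specializing to $N = b_K$ recovers exactly the reduction displayed above. The main obstacle is the middle paragraph: verifying rigorously that $(Z,R)$ has countably many ergodic invariant probabilities, which requires combining Theorem~\ref{t:orbmod} for the $Y$-coordinate with the shift-invariance / bounded-jump argument on the block-constant sign coordinate driven by $b_{k+1}-b_k \to \infty$.
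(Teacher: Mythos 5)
Your argument is correct and is essentially the paper's own proof: the paper derives Corollary~\ref{c:thi3} by combining Proposition~\ref{p:thi2} with the standard P$_2\Rightarrow$P$_3$ argument of \cite{Ab-Ku-Le-Ru}, which is exactly your construction (orbital model via Theorem~\ref{t:orbmod}, block-constant sign sequence whose orbit closure carries only the two constant-sequence invariant measures, and then orthogonality to the resulting system with finitely many ergodic measures). Your verification of the key middle step is sound, so nothing is missing.
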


\subsection{Mean slowly varying functions are multipliers of the UE$^\perp$}
\noindent
 We say that an arithmetic function $\bfv$ is a {\em multiplier} for the problem of orthogonality to all u.e.\ systems if
$$\bfu\cdot\bfv\perp {\rm UE}\text{ whenever } \bfu\perp {\rm UE}.$$
Note that if $\bfv$ is a multiplier and $\widetilde{\bfv}$ satisfies
$\|\bfv-\widetilde{\bfv}\|_B=0$ then also $\widetilde{\bfv}$ is a multiplier.
Note also that $\bfv(n)=n^{it}$ is a multiplier in the above sense since taking any u.e.\ sequence $(a(n))$ and denoting $s_n:=\sum_{k\leq n}\bfu(k)a(k)$, we have (summation by parts):
$$
\frac1N\sum_{n\leq N}\bfv(n)\bfu(n)a(n)=o(1)+\frac1N\sum_{n\leq N}n(\bfv(n+1)-\bfv(n))\frac{s_n}{n}
$$
and the claim follows from the fact that $(n+1)^{it}-n^{it}=O(1/n)$ (Archimedean characters are slowly varying functions with a speed). A natural question arises whether
we can replace here Archimedean characters by any function whose all Furstenberg systems are identities, in other words, is the above true for all mean slowly varying functions \cite{Go-Le-Ru}? We will see that the answer to this question is positive.

We need a general lemma on mean slowly varying functions.
\begin{Lemma}\label{l:thi4}
If $\bfv$ is a (bounded) mean slowly varying function then there exist:
\begin{itemize}\item
another mean slowly varying function $\widetilde{\bfv}$ such that
$$\frac1N\sum_{n\leq N}|\bfv(n)-\widetilde{\bfv}(n)|\to 0\text{ when }N\to\infty,$$
that is, the two functions are equal modulo Besicovitch pseudo-norm: $\|\bfv-\widetilde{\bfv}\|_B=0$,
\item
an increasing sequence $(b_k)$ satisfying $b_{k+1}-b_k\to\infty$,
\item a bounded sequence $(z_k)\subset\C$,
\item a sequence $0<\vep_k\to0$ monotonically\end{itemize}
such that for all $k\geq1$ and all $n\in\{b_k,\ldots,b_{k+1}-1\}$ we have $|\widetilde{\bfv}(n)-z_k|<\vep_k$.
\end{Lemma}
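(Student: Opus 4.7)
The plan is to modify $\bfv$ on a set of zero Besicovitch density so that the modified function $\widetilde\bfv$ is approximately constant on long consecutive blocks. The idea is that, since $\bfv$ is mean slowly varying, its total variation $D(N):=\sum_{n\leq N}|\bfv(n+1)-\bfv(n)|$ satisfies $D(N)=o(N)$, so on most sub-intervals of a given length $L$ the local total variation is small; on the few intervals where it is large, we simply overwrite $\bfv$ by a constant, and since these intervals form a sparse set the overwriting is invisible to $\|\cdot\|_{B}$.

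Concretely, I would pick dyadic tolerances $\vep_j:=2^{-j}$ together with a rapidly increasing sequence $(N_j)$ such that $D(N)\leq \vep_j^{\,3}N$ for all $N\geq N_j$, and block lengths $L_j\to\infty$ slow enough that $L_j\vep_j\to 0$ (say $L_j:=\lfloor \vep_j^{-1/2}\rfloor$), while also arranging $N_{j+1}\geq L_j^{\,2}N_j$. Then I would split each window $[N_j,N_{j+1})$ into consecutive blocks $I_k=[b_k,b_{k+1})$ of length $L_j$ and set $\vep_k:=2\vep_j$ for $k$ in window $j$; this yields $(b_k)$ increasing with $b_{k+1}-b_k\to\infty$ and $(\vep_k)$ monotonically decreasing to $0$. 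For each block I would compute $V_k:=\sum_{n\in I_k}|\bfv(n+1)-\bfv(n)|$, call $I_k$ \emph{good} if $V_k\leq \vep_j$ and \emph{bad} otherwise, and then take $z_k:=\bfv(b_k)$, keeping $\widetilde\bfv:=\bfv$ on good blocks and overwriting $\widetilde\bfv\equiv z_k$ on bad blocks.

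The three verifications are then short. The block-constancy $|\widetilde\bfv(n)-z_k|<\vep_k$ is immediate (on a good block by the triangle inequality applied to consecutive differences, and on a bad block by construction). Markov's inequality gives at most $D(N_{j+1})/\vep_j\leq \vep_j^{\,2}N_{j+1}$ bad blocks in window $j$, so the total length where $\widetilde\bfv\neq\bfv$ inside that window is bounded by $L_j\vep_j^{\,2}N_{j+1}=o(N_{j+1})$, and summing using the growth of $(N_j)$ yields $\|\bfv-\widetilde\bfv\|_{B}=0$. For mean slow variation of $\widetilde\bfv$, the in-block differences of $\widetilde\bfv$ either coincide with those of $\bfv$ (on good blocks) or vanish (on bad blocks), so their total up to $N$ is bounded by $D(N)=o(N)$; the block-boundary jumps contribute at most $2\|\bfv\|_{\infty}$ each, and the total number of block boundaries up to $N_{j+1}$ is $O\bigl(\sum_{i\leq j}N_{i+1}/L_i\bigr)=o(N_{j+1})$, thanks to the condition $N_{j+1}\geq L_j^{\,2}N_j$ which makes the sum a geometric-type tail dominated by the last term $N_{j+1}/L_j$.

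The main obstacle is coherently balancing the three auxiliary scales $(N_j),(L_j),(\vep_j)$: $L_j$ has to tend to infinity but slowly enough that bad blocks carry negligible mass, $\vep_j$ has to decrease slowly enough compared to $D(N)/N$ to leave room for many good blocks, and $(N_j)$ has to outrun $(L_j)$ so that block-boundary jumps do not spoil the mean slow variation of $\widetilde\bfv$. The explicit dyadic choices above satisfy all three demands simultaneously, and no further estimate on $\bfv$ beyond $D(N)=o(N)$ is needed.
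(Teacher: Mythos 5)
Your proposal is correct: the strategy of modifying $\bfv$ on a set of zero Besicovitch density and then partitioning $\N$ into blocks of length tending to infinity on which the modified function oscillates by less than $\vep_k$ is exactly the strategy of the paper, but your implementation of the block selection is genuinely different. The paper works pointwise: it fixes thresholds $\delta_j$, uses mean slow variation in the form $\frac1N|\{n<N:|\bfv(n)-\bfv(n+1)|\geq\delta_j\}|\to0$ to see that the set $B$ of large individual jumps has density zero, flattens $\bfv$ on the (still negligible) set where elements of $B$ cluster too closely, and then lets the surviving large-jump positions dictate a partition into blocks of length between $j$ and $2j-1$; the in-block oscillation is then at most $2j\delta_j$, which is made small by choosing $\delta_j$ after the fact. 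You instead work with the total variation $D(N)=o(N)$, take blocks of a fixed length $L_j$ per window, declare a block bad when its local variation exceeds $\vep_j$, count bad blocks by Markov's inequality, and flatten $\bfv$ only there. Your route makes the condition $b_{k+1}-b_k\to\infty$ automatic (the block lengths are prescribed) and replaces the paper's ``too close'' merging step by a clean Chebyshev-type count, at the cost of having to juggle the three scales $(N_j),(L_j),(\vep_j)$ coherently; your dyadic choices do satisfy all the constraints. Two small points you should make explicit in a full write-up: the Markov count and the bad-length bound must be applied up to an arbitrary $N$ inside the window $[N_j,N_{j+1})$, not only at $N=N_{j+1}$ (this works, since $\sum_{I_k\subset[N_j,N]}V_k\leq D(N)\leq\vep_j^3N$), and the initial segment $[1,N_1)$ together with the last, possibly incomplete, block of each window need a (trivial) separate treatment so that the oscillation bound $|\widetilde{\bfv}(n)-z_k|<\vep_k$ holds for \emph{every} $k$.
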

\begin{proof}
Since $\bfv$ is mean slowly varying, for any $\delta>0$, we have
$$
\frac1N\sum_{n<N}\raz_{|\bfv(n)-\bfv(n+1)|\geq\delta}\to 0\text{ when }N\to\infty.$$
Let us fix a decreasing sequence $0<\delta_j\to0$. Then, we get an increasing sequence $(M_j)$ such that for all $j\geq1$ and all $N\geq M_j$, we have
$$
\frac1N\sum_{n<N}\raz_{|\bfv(n)-\bfv(n+1)|\geq\delta_j}<\frac1{2^j}.$$
We now define
$$
B:=\bigcup_{j\geq1}\{n\colon M_j\leq n<M_{j+1}, |\bfv(n)-\bfv(n+1)|\geq\delta_j\}.$$
If $M_j\leq N<M_{j+1}$ then we have
$$
\frac1N\sum_{n<N}\raz_{B}(n)\leq \frac1N\sum_{n<N}\raz_{|\bfv(n)-\bfv(n+1)|\geq\delta_j}<\frac1{2^j}.$$
The $b_k$'s in the statement of the lemma are destined to cover this set where the sequence has large gaps. But as we want to have $b_{k+1}-b_k\to\infty$, we have to avoid integers in $B$ which are too close to each other. Let us define more precisely what we mean: we say that two consecutive elements $b,b'$ of $B$ are \emph{too close} if
\begin{itemize}
\item $M_j\leq b<M_{j+1}$ for some $j$, and
\item $b'\leq b+j$.
\end{itemize}
Then we set $C:=\bigcup_b\{b,b+1,\ldots,b'-1\}$, where the union ranges over all pairs $(b,b')$ in $B$ which are too close to each other. For each $n$ in $C$, we now change $\bfv(n)$ by setting $\widetilde{\bfv}(n):=\bfv(\ell(n))$, where $\ell(n)$ is defined as the largest integer smaller than $n$ which is not in $C$. For $n\in\N\setminus C$, we set $
\widetilde{\bfv}(n):=\bfv(n)$. Define $\widetilde{B}$ from $\widetilde{\bfv}$ as we defined $B$ from $\bfv$. We get $\widetilde{B}\subset B$, but we have removed
from $B$ all integers $b$ which are too close to the next element of $B$. Observe now that we have for $M_j\leq N<M_{j+1}$
$$
\frac1N|C\cap\{1,\ldots, N\}|\leq j\frac1N|B\cap\{1,\ldots,N\}|\leq \frac{j}{2^j}\to0$$
when $j\to\infty$. This shows that $\frac1N\sum_{n<N}|\bfv(n)-\widetilde{\bfv}(n)|\to 0$. Moreover, the set $\widetilde{B}$ (where $|\widetilde{\bfv}(n)-\widetilde{\bfv}(n-1)|$ is ``too large'') now satisfies: if $b<b'$ are two elements in $\widetilde{B}$ with $M_j\leq b<m_{j+1}$ then $b'-b>j$. Then, we can choose the increasing sequence $(b_k)$ such that:
\begin{itemize}
\item $\widetilde{B}\subset \{b_k\colon k\geq1\}$,
\item if $M_j\leq b_k< M_{j+1}$ then $b_{k+1}-b_k\in\{j,\ldots,2j-1\}$.
\end{itemize}
Then, the inequality $b_{k+1}-b_k\geq j$ ensures that $b_{k+1}-b_k\to\infty$,
and the inequality  $b_{k+1}-b_k\leq 2j$ together with the fact that all $n\in\{b_k+1,\ldots, b_{k+1}-1\}$ are not in $\widetilde{B}$ ensure that for any such $n$,
     $$
     |\widetilde{\bfv}(n)-\widetilde{\bfv}(b_k)|\leq 2j\cdot \delta_j.$$
We set $z_k:=\widetilde{\bfv}(b_k)$, and we choose $\delta_j$ so that $2j\delta_j<2^{-j}$. Then we get
$$
\sup_{n\in\{b_k,\ldots, b_{k+1}-1\}}|\widetilde{\bfv}(n)-z_k|\to0$$ when $k\to\infty$.
\end{proof}
\begin{Prop}\label{p:t+m}
If $\bfv$ is mean slowly varying then $\bfv$ is a multiplier of the class UE$^\perp$.\end{Prop}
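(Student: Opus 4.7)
The natural strategy is to combine the structural Lemma~\ref{l:thi4} with the strong MOMO property provided by Corollary~\ref{c:thi3}. First, I would use the remark immediately preceding Proposition~\ref{p:t+m}: if $\|\bfv - \widetilde{\bfv}\|_{B}=0$ then $\bfv$ and $\widetilde{\bfv}$ are multipliers simultaneously, because for any bounded $\bfu$ and any $f\in C(Y)$ on a u.e.\ system $(Y,S)$ the Ces\`aro average of $\bfu(n)(\bfv(n)-\widetilde{\bfv}(n))f(S^ny)$ is bounded by $\|\bfu\|_\infty\|f\|_\infty \|\bfv-\widetilde{\bfv}\|_B$. So I may replace $\bfv$ by the ``tamed'' version $\widetilde{\bfv}$ supplied by Lemma~\ref{l:thi4}, together with the associated data $(b_k), (z_k), (\vep_k)$.

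Next, I would fix a uniquely ergodic $(Y,S,\nu)$, a zero-mean $f\in C(Y)$, a point $y\in Y$, and examine
$$
\frac1N\sum_{n\leq N}\bfu(n)\widetilde{\bfv}(n)f(S^ny).
$$
Writing $\widetilde{\bfv}(n)=z_{k(n)}+O(\vep_{k(n)})$ on $[b_k,b_{k+1})$, the error contribution is bounded by $\frac{\|\bfu\|_\infty\|f\|_\infty}{N}\sum_{n\leq N}\vep_{k(n)}$ which tends to $0$ since $\vep_k\downarrow 0$ and $b_{k+1}-b_k\to\infty$. What remains is
$$
\frac1N\sum_{k\leq K_N}z_k\,A_k(N),\qquad A_k(N):=\sum_{n\in[b_k,b_{k+1})\cap[1,N]}\bfu(n)f(S^ny),
$$
where $K_N$ is the largest $k$ with $b_k\leq N$. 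The incomplete terminal block is $A_{K_N}(N)=s_N-s_{b_{K_N}-1}$ with $s_m:=\sum_{n\leq m}\bfu(n)f(S^ny)$; since $\bfu\perp(Y,S)$ gives $s_m=o(m)$, dividing by $N\geq b_{K_N}$ shows this boundary contribution is $o(1)$.

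For the complete blocks I would invoke the strong MOMO property of $\bfu$ with respect to $(Y,S)$ along the sequence $(b_k)$: taking $y_k:=S^{b_k}y$ we have $f(S^{n-b_k}y_k)=f(S^ny)$, and Corollary~\ref{c:thi3} yields
$$
\frac1{b_{K_N}}\sum_{k<K_N}\bigl|A_k(\infty)\bigr|\longrightarrow 0.
$$
Since $|z_k|\leq\|\bfv\|_\infty$ and $\tfrac1N\leq\tfrac1{b_{K_N}}$, this controls the main sum.

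The main obstacle is a hypothesis mismatch: Corollary~\ref{c:thi3} is stated under $\|\bfu\|_{u^1}=0$, whereas Proposition~\ref{p:t+m} (and Corollary~I) assumes only $\bfu\perp{\rm UE}$. To close this gap I would check that the derivation of strong MOMO from Proposition~\ref{p:thi2} still runs when the target system is uniquely ergodic and $\int f\,d\nu=0$: a u.e.\ system has a single ergodic measure, so Remark~\ref{r:blabla} already gives $\bfu\perp{\rm CE}$ on zero-mean observables without any $u^1$ assumption, and the concatenation/lifting step going from countable-ergodic orthogonality to strong MOMO (in the spirit of ``P$_2\Rightarrow{\rm P}_3$'' of \cite{Ab-Ku-Le-Ru}) transfers verbatim to this setting. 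Once this unconditional form of MOMO is in place the rest of the argument is uniform in $\bfu$.
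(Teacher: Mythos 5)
Your proof is correct and follows essentially the same route as the paper: replace $\bfv$ by the tamed $\widetilde{\bfv}$ from Lemma~\ref{l:thi4}, decompose into the blocks $[b_k,b_{k+1})$, absorb the $\vep_k$-errors, and control the main term by the strong MOMO property of Corollary~\ref{c:thi3}. Your extra care with the incomplete terminal block and, more importantly, your observation that Corollary~\ref{c:thi3} formally carries the hypothesis $\|\bfu\|_{u^1}=0$ (which Proposition~\ref{p:t+m} does not assume) identifies a genuine subtlety that the paper's own proof glosses over; your resolution --- that for a uniquely ergodic target and a zero-mean observable, Remark~\ref{r:blabla} supplies the needed orthogonality without the $u^1$ assumption, after which the P$_2\Rightarrow$P$_3$ construction (which only produces finitely many ergodic measures, each a copy of the unique measure, against which the observable still has zero mean) yields strong MOMO unconditionally --- is the right way to close that gap.
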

\begin{proof} Using Lemma~\ref{l:thi4}, we only need to show that $\widetilde{\bfv}$ is a multiplier. Given $\bfu\perp{\rm UE}$ and $(X,T)$ a uniquely ergodic system with $f\in C(X)$ and $x\in X$,
we have
$$
\Big|\frac1{b_K}\sum_{k<K}\sum_{b_k\leq n<b_{k+1}}\bfu(n)\widetilde{\bfv}(n)f(T^nx)\Big|\leq$$$$
\frac1{b_K}\sum_{k<K}|z_k|\Big|\sum_{b_k\leq n<b_{k+1}}\bfu(n)f(T^nx)\Big|+
\Big(\sum_{k<K}\vep_k\frac{b_{k+1}-b_k}{b_K}\Big)\|\bfu\|_\infty\|f\|_\infty$$
and when $K\to\infty$, the first summand is going to zero because of the strong MOMO property (Corollary~\ref{c:thi3}), while the second also goes to zero since $\vep_k\to0$.
\end{proof}

We have been unable to answer the converse: Is every multiplier of ${\rm UE^{\perp}}$ a mean slowly varying function?

\subsection{How to recognize that a self-joining of a Furstenberg system has product measure as projection?}
\label{s:howtorecognize}
So we have a Furstenberg system $(X_{\bfu},\kappa,S)$ of $\bfu$ with identifications
$$
(X_{\bfu},\kappa)=
\bigsqcup_n(\Xb_{\bfu,n}\times\{1,\ldots,n\},
\ov{\kappa}|_{\Xb_{\bfu,n}}\ot\nu_n)\sqcup (\Xb_{\bfu,\infty}\times Y,
\ov{\kappa}|_{\Xb_{\bfu,\infty}}\ot\nu)$$
Let $\rho\in J_2(S,\kappa)$.
Then, in view of~\eqref{now3},
$$\rho|_{\Xb_{\bfu}\times \Xb_{\bfu}}=\ov{\kappa}\ot\ov{\kappa}\text{ if and only if }
\rho|_{(X_{\bfu}\times \Xb_{\bfu})}=\kappa\ot\ov{\kappa}$$
and the latter holds if and only if
for all ``monomials'' $P=\prod_{j=1}^k\pi_0^{p_j}\circ S^{r_j}$ and $Q=\prod_{j=1}^\ell\pi_0^{q_j}\circ S^{s_j}$, we have
\begin{multline*}
\int_{X_{\bfu}\times X_{\bfu}} P(x)\,\EE_\kappa[Q\,|\,\Xb_{\bfu}](x')\,d\rho(x,x')= \\
\int_{X_{\bfu}}P\,d\kappa\int_{\Xb_{\bfu}}
\EE_\kappa[Q\,|\,\Xb_{\bfu}](\xb')\,d\ov{\kappa}(\xb')=
\int_{X_{\bfu}}  P\,d\kappa\int_{X_{\bfu}} Q\,d\kappa.
\end{multline*}
Now, in $L^2(X_{\bfu},\kappa)$, we have
$$
\EE_\kappa[Q\,|\,\Xb_{\bfu}]=
\lim_{N\to\infty}\frac1N\sum_{n\leq N}Q\circ S^n.$$
In this equality, we can  replace $L^2(\kappa)$ with $L^2(\rho)$ which yields
$$
\int_{X_{\bfu}\times X_{\bfu}} P(x)\,\EE_\kappa[Q\,|\,\Xb_{\bfu}](x')\,d\rho(x,x')=
\lim_{N\to\infty}\frac1N\sum_{n\leq N}\int_{X_{\bfu}\times X_{\bfu}}P\ot Q\circ S^n\,d\rho.$$
Collecting the remarks above, we have proved the following result:

\begin{Prop}\label{p:orzucie} Under the notation above, let $\rho\in J_2(S,\kappa)$. Then $\rho|_{\Xb_{\bfu}\times \Xb_{\bfu}}=\ov{\kappa}\ot\ov{\kappa}$ if and only if  we have
\beq\label{comput1}
\lim_{N\to\infty}\frac1N\sum_{n\leq N}\int_{X_{\bfu}\times X_{\bfu}}P\ot Q\circ S^n\,d\rho=
\int_{X_{\bfu}}  P\,d\kappa\int_{X_{\bfu}} Q\,d\kappa\eeq
for all monomials $P,Q$.
\end{Prop}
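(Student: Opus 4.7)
I would argue the two implications simultaneously by setting up a chain of equivalences, each justified by a fact already proved or noted earlier.

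First, using the relative disjointness observation~\eqref{now3}: because the fibre maps $T_x$ are ergodic, the extension $\widetilde T\to \widetilde T|_X=\Id_X$ is relatively ergodic, so any $\rho\in J_2(S,\kappa)$ whose restriction to $X\times X$ is $\mu\otimes\mu$ automatically satisfies $\rho|_{(X\times Y)\times X}=(\mu\otimes\nu)\otimes\mu$; the converse is trivial by projection. Thus the condition $\rho|_{X\times X}=\mu\otimes\mu$ is equivalent to the relative product identity $\rho|_{(X\times Y)\times X}=(\mu\otimes\nu)\otimes\mu$.

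Next, I test the latter equality of Borel probability measures on a total family in $L^2\bigl((X\times Y)\times X,(\mu\otimes\nu)\otimes\mu\bigr)$. On the $(X\times Y)$-side, the monomials $P=\prod_j \pi_0^{p_j}\circ S^{r_j}$ span a dense subspace of $L^2(X_{\bfu},\kappa)$ by Stone--Weierstrass (after conjugation: if necessary, I enlarge to include conjugate exponents, but the formal statement allows this). Since $X$ is identified with the factor corresponding to $\ci_{\widetilde T}$, any element of $L^2(X,\mu)$ is, mod $\mu$, a conditional expectation $\EE_\kappa[Q\,|\,X]$ for some $Q$ in a dense subspace of $L^2(X_{\bfu},\kappa)$, and one can take $Q$ to range over the same family of monomials. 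Hence the product marginal identity is equivalent to
\[
\int P(x,y)\,\EE_\kappa[Q\,|\,X](x')\,d\rho(x,y,x',y')=\int_{X_{\bfu}}P\,d\kappa\cdot\int_{X_{\bfu}}Q\,d\kappa
\]
for all monomials $P,Q$.

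Finally, I replace $\EE_\kappa[Q\,|\,X]$ by its ergodic Cesàro average. By von Neumann's theorem applied to $S$ on $(X_{\bfu},\kappa)$, we have $\frac{1}{N}\sum_{n\leq N}Q\circ S^n\to \EE_\kappa[Q\,|\,X]$ in $L^2(\kappa)$. The key transfer point is that the map $f\mapsto \mathbf{1}\otimes f$ is an isometry from $L^2(\kappa)$ into $L^2(\rho)$, because the second marginal of $\rho$ is $\kappa$; hence the same Cesàro convergence holds in $L^2(\rho)$. Combined with the boundedness of $P$ (one can freely cap monomials against $\pi_0$ since $|\pi_0|\leq 1$ on $X_{\bfu}$), this legitimates exchanging the limit and the $\rho$-integral, giving
\[
\int P(x,y)\,\EE_\kappa[Q\,|\,X](x')\,d\rho=\lim_{N\to\infty}\frac1N\sum_{n\leq N}\int_{X_{\bfu}\times X_{\bfu}}P\otimes (Q\circ S^n)\,d\rho.
\]
Equating this with $\int P\,d\kappa\cdot\int Q\,d\kappa$ gives~\eqref{comput1} and completes the chain of equivalences.

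I do not expect any serious obstacle: steps one and two are essentially bookkeeping on top of prepared material, and the only technical point—step three—reduces to the isometric embedding $L^2(\kappa)\hookrightarrow L^2(\rho)$ via $f\mapsto \mathbf{1}\otimes f$, which is immediate from the marginal condition. The subtlest judgement is that the monomials $P,Q$ really form a sufficiently rich test family; this is just the standard density of cylinder functions in $L^2$ of a shift space.
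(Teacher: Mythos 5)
Your argument is correct and follows essentially the same route as the paper: the reduction via~\eqref{now3} to $\rho|_{(X\times Y)\times X}=(\mu\otimes\nu)\otimes\mu$, testing against monomials $P$ and conditional expectations $\EE_\kappa[Q\,|\,X]$, and then replacing the conditional expectation by its Ces\`aro average using the von Neumann theorem viewed in $L^2(\rho)$. Your explicit remarks on the isometry $f\mapsto \mathbf{1}\otimes f$ and on closing the monomial family under conjugation are just slightly more careful versions of steps the paper leaves implicit.
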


An alternative arises by using, instead of ``monomials'', the characteristic functions $\raz_{[B]_0}$: $B\in A^k$, where $A$
is the (finite) set of values  of $\bfu$ and $k\geq1$ (here $[B]_s=\{x\in X_{\bfu}\colon \bfu[s,s+k-1]=B\}$. As shifts of such functions yield a linearly dense subset in $L^2(\kappa)$, by repeating all arguments that led to Proposition~\ref{p:orzucie}, we obtain the following:
\begin{Cor}\label{c:orzucie} Under the notation above, let $\rho\in J_2(S,\kappa)$. Then $\rho|_{\Xb_{\bfu}\times \Xb_{\bfu}}=\ov{\kappa}\ot\ov{\kappa}$ if and only if  we have
\beq\label{comput1a}
\lim_{N\to\infty}\frac1N\sum_{n\leq N}\rho([B]_0\times [C]_{-n})=\kappa([B]_0)\kappa([C])\eeq
for all blocks $B\in A^k,C\in A^\ell$ with $k,\ell\geq1$.
\end{Cor}
\begin{Remark} An equivalent form of this corollary is that: if $\Phi_\rho$ stands for the Markov operator corresponding to $\rho$ then $\Phi_\rho|_{L^2(\ci_S)}=0$ if and only if $\Phi_\rho\circ \frac1N\sum_{n\leq N}S^n\to 0$ weakly in $L_0^2(\kappa)$. Indeed, $\frac1N\sum_{n\leq N}S^n\to \EE_\kappa[\cdot\,|\,\ci_S]$ (in $L^2(\kappa)$) by the von Neumann theorem.
\end{Remark}
Note that the quantities we have: the LHS terms and the RHS term in~\eqref{comput1} and in~\eqref{comput1a} are computable through the generic sequence $(S^n\bfu,S^{\phi_k(n)}\bfu)$ along $(N_k)$, see Proposition~\ref{prop:selfjoiningsofFS}, and
the generic point $\bfu$ along $(N_k)$, respectively.

\subsection{Summing up} Our aim is to describe those $\bfu$ which are orthogonal to all u.e.\ sequences (we assume that $\bfu$ has zero mean on typical short interval). For that, for all Furstenberg systems $\kappa\in V(\bfu)$ we need to check the assertion of Corollary~\ref{c:relvN2}. In this corollary, we need to deal with some self-joinings $\la$ (of $\kappa$). In fact, all of them are described combinatorially, using only $\bfu$, see Proposition~\ref{prop:selfjoiningsofFS}. We need to check the assertion of Corollary~\ref{c:relvN2}, which by Remark~\ref{r:Bosher} (see $(\ast)$ there) is
$$
\lim_{L\to\infty}\int_{X_{\bfu}\times X_{\bfu}}\Big|\frac1L\sum_{\ell\leq L}(\pi_0\ot\pi_0)(S\times S)^\ell\Big|^2d\la=0.$$
That is, given $\vep>0$, for $L>L_0$, we want to see
$$\int_{X_{\bfu}\times X_{\bfu}}\Big|\frac1L\sum_{\ell\leq L}(\pi_0\ot\pi_0)(S\times S)^\ell\Big|^2d\la<\vep,$$ where the integral is computable  along a subsequence $(N_k)$ (in fact, a subsequence of it), so (using  Proposition~\ref{prop:selfjoiningsofFS}), we need
$$\limsup_{k\to\infty}\frac1{N_k}\sum_{n\leq N_k}\Big|\frac1L\sum_{\ell\leq L}(\pi_0\ot\pi_0)\circ(S\times S)^\ell(S^n\bfu,S^{\phi_k(n)}\bfu)\Big|^2=$$$$
\limsup_{k\to\infty}\frac1{N_k}\sum_{n\leq N_k}\Big|\frac1L\sum_{\ell\leq L}\bfu(n+\ell)\bfu(\phi_k(n)+\ell)\Big|^2<\vep,$$
which is, from the combinatorial point of view,  a certain condition on the behaviour on short intervals. However, it is clear that if we consider all self-joinings, then,  the above condition is not satisfied (take the diagonal  self-joining). The key here is that we only consider those $\la$ which satisfy  \eqref{comput1a} (or \eqref{comput1}).

\Large
\begin{center} Part II
\end{center}

\normalsize
\section{Characteristic classes and orthogonality to uniquely ergodic systems}
\subsection{Characteristic classes and disjointness} \label{s:ccd}
In what follows, we consider $\cf$ a characteristic class.
We recall that each non-trivial characteristic class contains the class ID of all identities \cite{Ka-Ku-Le-Ru}.

\begin{Prop}\label{p:eqdis} Let ${T}\in {\rm Aut}({X},{\mu})$ and let $\cf$ be a characteristic class. Let $\af$ be the largest $\cf$-factor of ${T}$. Then the following conditions are equivalent:\\
(i) ${T}\perp \cf\cap{\rm Erg}$,\\
(ii) ${T}|_{\af}\perp \cf\cap{\rm Erg}$,\\
(iii) ${T}|_{\af}\perp {\rm Erg}$.\end{Prop}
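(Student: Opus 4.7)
The plan is to derive all three equivalences from Corollary~\ref{c:rozchar}, which asserts that whenever $V\in\cf$ and $W$ is an arbitrary automorphism, one has $V\perp W$ if and only if $V\perp W|_{\af(W)}$. The crucial observation is that this corollary can be applied with $\widetilde{T}|_{\af}$ in the role of $V$, since $\widetilde{T}|_{\af}\in\cf$ by the very definition of the largest $\cf$-factor; this symmetry is what will allow me to ``restrict'' either side of a disjointness relation to its largest $\cf$-factor.

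For the equivalence (i) $\Leftrightarrow$ (ii), I would fix $S\in\cf\cap{\rm Erg}$ and apply Corollary~\ref{c:rozchar} with $V=S$ and $W=\widetilde{T}$: since $S\in\cf$, this yields $S\perp\widetilde{T}$ if and only if $S\perp\widetilde{T}|_{\af}$. Taking the conjunction over all $S\in\cf\cap{\rm Erg}$ gives the desired equivalence. The implication (iii) $\Rightarrow$ (ii) is trivial as $\cf\cap{\rm Erg}\subset{\rm Erg}$.

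The only direction requiring a second look is (ii) $\Rightarrow$ (iii). Given any $S\in{\rm Erg}$, I apply Corollary~\ref{c:rozchar} in the opposite configuration, this time with $V=\widetilde{T}|_{\af}$ and $W=S$; this is legitimate precisely because $\widetilde{T}|_{\af}\in\cf$, and it produces the equivalence $\widetilde{T}|_{\af}\perp S\Leftrightarrow \widetilde{T}|_{\af}\perp S|_{\af(S)}$, where $\af(S)$ denotes the largest $\cf$-factor of $S$ (which exists by Theorem~\ref{t:largest}). The factor system $S|_{\af(S)}$ is ergodic, being a factor of the ergodic $S$, and lies in $\cf$ by construction, so $S|_{\af(S)}\in\cf\cap{\rm Erg}$. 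Hypothesis (ii) then supplies $\widetilde{T}|_{\af}\perp S|_{\af(S)}$, whence $\widetilde{T}|_{\af}\perp S$. There is no genuine obstacle here: once one recognizes this two-sided use of Corollary~\ref{c:rozchar}, the proof is purely formal, the substantive content having already been absorbed into Theorem~\ref{t:largest} and Corollary~\ref{c:rozchar}.
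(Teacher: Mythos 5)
Your proof is correct and follows essentially the same route as the paper: the paper likewise proves (ii)$\Rightarrow$(i) by applying Corollary~\ref{c:rozchar} with the ergodic $R\in\cf$ playing the role of the $\cf$-system, and (ii)$\Rightarrow$(iii) by applying the same corollary in the ``opposite configuration'' with $\widetilde{T}|_{\af}\in\cf$ against the largest $\cf$-factor of an arbitrary ergodic $S$, declaring the remaining implications straightforward. Your two-sided use of the corollary is exactly the paper's argument.
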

\begin{proof}
(ii) $\Rightarrow$ (i)   Take any $R$ ergodic in $\cf$. Then ${T}|_{\af}$ is disjoint with $R$ by (ii), and this disjointness lifts to $ T$ by Corollary~\ref{c:rozchar}, as $R\in\cf$.

(ii) $\Rightarrow$ (iii)   Take any $R$ ergodic.  Then ${T}|_{\af}$ is disjoint with the largest $\cf$-factor of $R$ (by (ii)), and then this disjointness lifts to $R$ by Corollary~\ref{c:rozchar}, as ${T}|_{\af}\in\cf$.

The other implications are straightforward.
\end{proof}

Remembering that $\ci_{{T}}\subset\af \mod{\mu}$ and using Proposition~\ref{p:eqdis} together with Theorem~\ref{t:duzoroz}, we obtain the following.

\begin{Cor}\label{c:charconf} ${T}\perp \cf\cap{\rm Erg}$ if and only if
${T}|_{\af}$ is a confined extension of the sigma-algebra of invariant sets.\end{Cor}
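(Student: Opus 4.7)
The plan is to chain together Proposition~\ref{p:eqdis} and the equivalence (i)~$\Leftrightarrow$~(ii) of Theorem~\ref{t:duzoroz}, and then to identify the base of the relevant extension with the sigma-algebra of invariant sets.

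First I would invoke Proposition~\ref{p:eqdis} to reduce the statement $\widetilde{T}\perp \cf\cap{\rm Erg}$ to $\widetilde{T}|_{\af}\perp{\rm Erg}$, i.e.\ to the assertion that $\widetilde{T}|_{\af}\in{\rm Erg}^\perp$. The second step is to apply Theorem~\ref{t:duzoroz} to the automorphism $\widetilde{T}|_{\af}$ (after passing, if necessary, to its aperiodic part as indicated at the start of Section~3: the periodic points can be separated, and on periodic atoms disjointness from ergodic systems forces them to disappear). Theorem~\ref{t:duzoroz}~(i)~$\Leftrightarrow$~(ii) then rephrases $\widetilde{T}|_{\af}\in{\rm Erg}^\perp$ as the statement that the extension $\widetilde{T}|_{\af}\to\Id_{C}$ is confined, where $C$ is the space of ergodic components of $\widetilde{T}|_{\af}$, carrying the factor sigma-algebra $\ci_{\widetilde{T}|_{\af}}$.

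The last step is to observe that $\ci_{\widetilde{T}|_{\af}}=\ci_{\widetilde{T}}\,{\rm mod}\,\widetilde{\mu}$. Indeed, the class ID of identities is contained in every non-trivial characteristic class, hence in~$\cf$; consequently the factor of $\widetilde{T}$ associated with $\ci_{\widetilde{T}}$ lies in $\cf$, and by maximality of $\af$ we have $\ci_{\widetilde{T}}\subset\af$ mod~$\widetilde{\mu}$. Conversely, invariant sets of $\widetilde{T}|_{\af}$ are a fortiori invariant sets of $\widetilde{T}$, so the two sigma-algebras coincide modulo $\widetilde{\mu}$. Therefore the base $\Id_{C}$ of the confined extension in the previous paragraph is exactly the sigma-algebra of invariant sets of $\widetilde{T}$, which gives the desired equivalence.

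I do not anticipate a genuine obstacle: the only subtlety is making sure the aperiodicity hypothesis of Theorem~\ref{t:duzoroz} is handled correctly on the $\af$-factor, but this is addressed by the standard reduction at the beginning of Section~3 (a periodic component of positive measure immediately destroys disjointness from an ergodic finite rotation which lies in $\cf\cap{\rm Erg}$, so both sides of the equivalence fail together on such a component).
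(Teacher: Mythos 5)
Your proof is correct and follows essentially the same route as the paper: the authors derive the corollary precisely by combining Proposition~\ref{p:eqdis} with Theorem~\ref{t:duzoroz} and the observation that $\ci_{\widetilde{T}}\subset\af\bmod\widetilde{\mu}$ (since ${\rm ID}\subset\cf$). Your extra care about aperiodicity of $\widetilde{T}|_{\af}$ is a reasonable addition but matches the standing reduction already made in the paper.
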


Before, we formulate a result which resembles more Theorem~\ref{t:duzoroz} than the above corollary, let us consider two examples.

\begin{Example}\label{e:przyk1}
Let $\cf$=DISP be the (characteristic) class of all automorphisms with discrete spectrum. Let $T:\T^2\to\T^2$, $T(x,y)=(x,y+x)$ (considered with Leb$_{\T^2}$). It is not hard to see that $\af $ is precisely $\ci_T$ (that is, the sigma-algebra of the first coordinate). Indeed, note  that on $L^2(\T^2)\ominus L^2(\T\ot\{\emptyset,\T\})$ the spectrum is purely Lebesgue. We also have $T\in {\rm Erg}^\perp$. Furthermore, for any automorphism from Erg$^\perp$ its only eigenvalue is~1, so its Kronecker factor is {\bf always} the sigma-algebra of invariant sets, while for the fiber automorphisms we can have discrete spectrum. On the other hand, for our $T$, on a.a.\ fibers (ergodic components) the spectrum is discrete, so on a.a.\ fibers the largest $\cf$-factor is the whole space. In other words, the restriction of the factor $\af $ to fibers {\bf does not} give the (largest) $\cf$-factors on the fibers.\end{Example}

\begin{Example}\label{e:przyk2}
Let $\cf=$ZE be the  class of  automorphisms with zero entropy. According to \cite{Ka-Ku-Le-Ru}, this is the largest (proper) characteristic class. Assume that ${T}$  is of the form ${T}(\xb,u)=(\xb, T_{\xb}u)$ acting on
$X=\bigsqcup_{n\geq1}\Xb_n\times\{1,\ldots,n\}\sqcup \Xb_\infty\times Y$, where $\mu|_{\Xb_n\times\{1,\ldots,n\}}=\mub|_{\Xb_n}\ot\nu_n$ and
$\mu|_{\Xb_\infty\times Y}=\mub|_{\Xb_\infty}\ot\nu$
(note that for each $2\leq n\in\N$, $T|_{X_n}$ is not disjoint with $R_n$ which is ergodic with zero entropy).
In Proposition~\ref{prop:Pinsker} below, we will show that contrary to the phenomenon in the previous example, here, the largest $\cf$-factor (the Pinsker sigma-algebra) of ${T}$ is also the largest $\cf$-factor for a.a.\ fiber automorphisms. By applying  Theorem~\ref{t:duzoroz} (the equivalence of (ii) and (iii)), we hence obtain the following result.

\begin{Cor}\label{c:duzorozPinsker} The following conditions are equivalent:\\
(i)
${T}\perp {\rm Erg}\cap{\rm ZE}$.\\
(ii) The extension ${T}|_{\Pi({T})}\to \textcolor{DarkViolet}\Id_{\Xb}$ is confined.\\
(iii) For $\mub\ot\mub$-a.a.\ $(\xb,\xb')\in \Xb\times \Xb$, we have $\Pi(T_{\xb})\perp \Pi(T_{\xb'})$.\end{Cor}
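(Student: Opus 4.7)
My plan is to derive the two equivalences (i)$\Leftrightarrow$(ii) and (ii)$\Leftrightarrow$(iii) separately, each by specializing a previously established result of the paper to the characteristic class $\cf=\mathrm{ZE}$. For (i)$\Leftrightarrow$(ii), since $\mathrm{ZE}$ is the largest proper characteristic class (recalled in Section~\ref{sec:cc}), its largest $\cf$-factor in any system is precisely the Pinsker factor, so $\af=\Pi(\widetilde{T})$. Applying Corollary~\ref{c:charconf} then yields that $\widetilde{T}\perp\mathrm{ZE}\cap\mathrm{Erg}$ iff $\widetilde{T}|_{\Pi(\widetilde{T})}$ is a confined extension of its own sigma-algebra of invariant sets. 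Aperiodicity together with the relatively ergodic extension $\widetilde{T}\to\mathrm{Id}_X$ give $\ci_{\widetilde{T}}=\cb_X$ mod $\mu\otimes\nu$, and $\cb_X\subset\Pi(\widetilde{T})$ because identities have zero entropy. Hence the invariant sigma-algebra of $\widetilde{T}|_{\Pi(\widetilde{T})}$ still reduces to $\cb_X$, so confinement over the invariant sigma-algebra coincides with confinement of the extension over $\mathrm{Id}_X$, which is exactly (ii).

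For (ii)$\Leftrightarrow$(iii), the engine is Proposition~\ref{prop:Pinsker} (proved later in Section~\ref{s:PpEC}), stating that the trace of $\Pi(\widetilde{T})$ on $\mu$-almost every ergodic component agrees with $\Pi(T_x)$. Consequently the restricted system $\widetilde{T}|_{\Pi(\widetilde{T})}$ is again of the form~\eqref{ergdec17}, now with ergodic fiber automorphisms $T_x|_{\Pi(T_x)}$. A small amount of care is needed to verify that $x\mapsto T_x|_{\Pi(T_x)}$ is Borel as a map into $\mathrm{Aut}(Y,\nu)$; this should follow from a measurable selection of generators for the Pinsker algebras (in the spirit of Theorem~\ref{kallman}) combined with the already known Borelness of $x\mapsto T_x$ from~\eqref{ergdec18}. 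Once the fibered form is in place, the equivalence (ii)$\Leftrightarrow$(iii) of Theorem~\ref{t:duzoroz} applied to $\widetilde{T}|_{\Pi(\widetilde{T})}$ yields exactly that this extension is confined iff $T_x|_{\Pi(T_x)}\perp T_{x'}|_{\Pi(T_{x'})}$ for $\mu\otimes\mu$-a.e.\ $(x,x')$, i.e.\ $\Pi(T_x)\perp\Pi(T_{x'})$ a.e., which is (iii).

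The main obstacle is Proposition~\ref{prop:Pinsker} itself: showing that the Pinsker factor of $\widetilde{T}$, when cut down to an ergodic component, equals (and not merely contains or is contained in) $\Pi(T_x)$. Example~\ref{e:przyk1}, where the analogous compatibility fails for the class $\mathrm{DISP}$, shows that such fiberwise compatibility is not automatic for an arbitrary characteristic class and must genuinely exploit features specific to zero entropy, typically via a Rokhlin--Sinai style generator argument or the conditional-entropy formula for relative Pinsker factors. A secondary, more routine, technical issue is the Borel dependence $x\mapsto T_x|_{\Pi(T_x)}$ needed to invoke Theorem~\ref{t:duzoroz} on the restricted system. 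Once these two points are handled, the corollary reduces to a mechanical assembly of Corollary~\ref{c:charconf} (with $\cf=\mathrm{ZE}$) and Theorem~\ref{t:duzoroz} applied to $\widetilde{T}|_{\Pi(\widetilde{T})}$.
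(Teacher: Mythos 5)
Your proposal follows the paper's own route: the paper obtains (i)$\Leftrightarrow$(ii) by specializing Corollary~\ref{c:charconf} to $\cf=\mathrm{ZE}$ (where $\af=\Pi(\widetilde{T})$ and $\ci_{\widetilde{T}}\subset\Pi(\widetilde{T})$ identifies the invariant sigma-algebra of the restriction with $\cb_X$), and (ii)$\Leftrightarrow$(iii) by invoking Proposition~\ref{prop:Pinsker} to identify the fibers of $\widetilde{T}|_{\Pi(\widetilde{T})}$ with $T_x|_{\Pi(T_x)}$ and then applying the equivalence (ii)$\Leftrightarrow$(iii) of Theorem~\ref{t:duzoroz}. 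You correctly single out Proposition~\ref{prop:Pinsker} as the genuine content (the paper likewise defers its proof to Section~\ref{s:PpEC}), so the argument is essentially identical to the paper's.
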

\end{Example}

In particular, if the fiber automorphisms $T_{\xb}$ are Kolmogorov, then ${T}\perp {\rm Erg}\cap{\rm ZE}$.

\begin{Example}\label{e:przyk2a} Let $\Xb=(0,1]$, $Y=SL_2(\R)/SL_2(\Z)$ (considered with the corresponding Haar measure) and ${T}(t,x\Gamma):=(t,g_tx\Gamma)$, where $\Gamma=SL_2(\Z)$,  $g_t:=\left[\begin{array}{cc}e^{-t}&0\\0&e^t\end{array}\right]$. Then ${T}\perp {\rm Erg}\cap{\rm ZE}$ while ${T}$ is not disjoint with Erg (because the fiber automorphisms are not a.a.\ disjoint; $T_t$ is Bernoulli with entropy $t$) neither with ZE (its Pinsker factor equal to the sigma-algebra of invariant sets is non-trivial).\end{Example}

On the other hand, note that ${T}$ in Example~\ref{e:przyk2a} has no non-trivial ergodic factor as each such would have to be a factor of a.a.\ ergodic components which is in conflict with entropy on the fibers.

\begin{Remark} Two further classes: ID and NIL$_1$~(as proved in \cite{Ka-Ku-Le-Ru}, the latter class consists of automorphisms whose a.a.\ ergodic components have discrete spectrum) behave similarly to ZE. For the ID class simply the trace of the sigma-algebra of invariant sets is the trivial sigma-algebra on each fiber. For the NIL$_1$ class the largest $\cf$-factor is the relative Kronecker over the sigma-algebra of invariant sets. It is proved in \cite{Ka-Ku-Le-Ru} that this factor comes exactly from the Kronecker factors on the fibers and, in the ergodic case, the relative Kronecker factor over the sigma-algebra of invariant sets is exactly the Kronecker factor.  \end{Remark}

On the base of the above, we have:\\
\underline{Conjecture:}
Corollary~\ref{c:duzorozPinsker} holds for each characteristic class $\cf$ satisfying $\cf=\cf_{\rm ec}$, that is, almost every ergodic component of a member in $\cf$ also belongs to $\cf$. (Note that the last three classes satisfy $\cf=\cf_{\rm ec}$.)

\subsection{Orthogonality to zero entropy and uniquely ergodic systems. General characteristic class case}
We now merge Boshernitzan's problem with the approach of \cite{Ka-Ku-Le-Ru} to characterize $\bfu$ orthogonal to the systems whose invariant measures determine systems belonging to a fixed characteristic class. 
As noticed in Proposition~\ref{ec99}, we do not need to resort to the so-called ec-classes in the setting of Boshernitzan's problem.

\begin{Prop} \label{p:ouesZEgen}
Let $\bfu:\N\to\D$. Let $\cf$ be any non-trivial characteristic class. Then $\bfu\perp$ uniquely ergodic systems in $\cf$ if and only if for each Furstenberg system $\kappa\in V(\bfu)$,
\beq\label{char17}
\EE_\kappa[\pi_0\,|\,\ca_\cf(X_{\bfu},\kappa,S)] \perp F_{\rm we}(X_{\bfu},\kappa,S).\eeq
\end{Prop}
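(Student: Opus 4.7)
The plan is to adapt the proof of Proposition~\ref{p:oues} (the $\cf = $ all systems case) to the characteristic class setting, using Theorem~\ref{t:largest} as the bridge that keeps the factor $\ca_\cf$ in play throughout. Write $\widetilde{f}:=\EE_\kappa[\pi_0\,|\,\ca_\cf(X_{\bfu},\kappa,S)]$ for the projection appearing in \eqref{char17}.

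For the easy direction ($\Leftarrow$): suppose the orthogonality on $\widetilde{f}$ holds for every $\kappa\in V(\bfu)$, and fix a uniquely ergodic $(X,T)\in\cf$ with unique invariant measure $\nu$, together with $f\in C(X)$ of zero mean and $x\in X$. I would pass to a subsequence $(N_k)$ along which $\frac1{N_k}\sum_{n\leq N_k}\delta_{(T^nx,S^n\bfu)}\to\rho$ for some $\rho\in J(\nu,\kappa)$, $\kappa\in V(\bfu)$. The target limit equals $\int f\otimes\pi_0\,d\rho$. Since $(X,\nu,T)\in\cf$, Theorem~\ref{t:largest} forces $\rho$ to be relatively independent over $\ca_\cf(X_{\bfu},\kappa,S)$, so this rewrites as $\int \Phi_\rho(f)\cdot\widetilde{f}\,d\kappa$. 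As $\nu$ is ergodic, $\Phi_\rho(f)\in F_{\rm we}(X_{\bfu},\kappa,S)$, and the hypothesis gives zero.

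For the harder direction ($\Rightarrow$), I would argue by contraposition. Assume $\widetilde{f}\not\perp F_{\rm we}(X_{\bfu},\kappa,S)$ for some $\kappa\in V(\bfu)$, witnessed by an ergodic $R'$ on $(Z',\cd',\nu')$, a joining $\rho'\in J(\nu',\kappa)$, and $g\in L^2(\nu')$ with $\int\widetilde{f}\cdot\Phi_{\rho'}(g)\,d\kappa\neq 0$. The crucial reduction is that one may assume $R'\in\cf$: since $\widetilde{f}$ is $\ca_\cf$-measurable, the integral depends only on the restriction of $\rho'$ to $\cd'\otimes\ca_\cf$, which is a joining of the ergodic $R'$ with a system in $\cf$. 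Theorem~\ref{t:largest} forces this restriction to be relatively independent over $\ca_\cf(R')$, so replacing $g$ by $g':=\EE_{\nu'}[g\,|\,\ca_\cf(R')]$ and $R'$ by its largest $\cf$-factor (still ergodic, now in $\cf$) preserves the non-vanishing integral. A second application of Theorem~\ref{t:largest}, now with the $\cf$-system on the $Z'$-side joined to the full $(X_{\bfu},\kappa,S)$, lets me rewrite the quantity as $\int g'\otimes\pi_0\,d\tilde\rho''$ for an appropriate joining $\tilde\rho''$ of an ergodic $\cf$-system with the Furstenberg system.

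From this point the argument proceeds exactly as in Proposition~\ref{p:oues}: a Jewett--Krieger model (which stays in $\cf$ since characteristic classes are isomorphism-invariant) turns the ergodic $\cf$-system into a uniquely ergodic one, $g'$ is approximated by a zero-mean continuous function while keeping the correlation non-zero, and Theorem~\ref{t:lifting} followed by Theorem~\ref{t:orbmod} produces a point in a uniquely ergodic system inside $\cf$ whose $\bfu$-correlation does not vanish, contradicting $\bfu\perp\,{\rm UE}\cap\cf$. The main obstacle is the double use of Theorem~\ref{t:largest} to descend the ergodic witness from arbitrary to $\cf$ while keeping the correlation non-zero; once that reduction is done, the remaining bookkeeping is a direct transcription of the arguments from Proposition~\ref{p:oues}.
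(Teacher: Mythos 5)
Your proposal is correct and follows essentially the same route as the paper: the $\Leftarrow$ direction uses Theorem~\ref{t:largest} to replace $\pi_0$ by $\EE_\kappa[\pi_0\,|\,\af]$ in the limiting joining, and the $\Rightarrow$ direction reduces the ergodic witness to its largest $\cf$-factor via the same two applications of Theorem~\ref{t:largest} before invoking Jewett--Krieger, the lifting lemma (Theorem~\ref{t:lifting}) and orbital models (Theorem~\ref{t:orbmod}) exactly as in Proposition~\ref{p:oues}. No substantive differences from the paper's argument.
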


\begin{proof} $\Rightarrow$ Assume that $\bfu\perp \rm{UE}\cap\cf$, and suppose that for some $\kappa\in V(\bfu)$ there are an ergodic system $(Z',\nu',R')$, a joining $\rho'\in J(\nu',\kappa)$ and a function $g\in L^2_0(\nu')$ for which 
$$\EE_\kappa[\pi_0\,|\,\af(X_{\bfu},\kappa,S)] \not\perp  \Phi_{\rho'}(g). $$
Since the function on the left-hand side is measurable with respect to a factor in the class $\cf$, we can replace above $g$ by $\EE_{\nu'}[g\,|\,\af(Z',\nu',R')]$.
Thus, replacing if necessary $(Z',\nu',R')$ by its largest $\cf$-factor, we can assume with no loss of generality that $R'\in\cf$. But once we know that, we have
$$ 0\neq \EE_\kappa\bigl[ \EE_\kappa[\pi_0\,|\,\af(X_{\bfu},\kappa,S)] \, \Phi_{\rho'}(g) \bigr] = \EE_\kappa\bigl[ \pi_0 \, \Phi_{\rho'}(g) \bigr]. $$
Then we proceed as in the proof of Proposition~\ref{p:oues}: Using the Jewett-Krieger theorem, we obtain that there exists a uniquely ergodic system (belonging to $\cf$) $(Z,\nu,R)$ and a joining $\rho\in J(\nu,\kappa)$ such that $\pi_0\not\perp {\rm Im}(\Phi_\rho)$ (in $L^2(\kappa)$).  It follows that there exists $f\in C(Z)$ such that $\int  \Phi_\rho(f)\,\pi_0\,d\kappa\neq0$, in other words
$$\int_{Z\times X_{\bfu}}f\ot \pi_0\,d\rho\neq0.$$
Let $(N_m)$ satisfy $\frac1{N_m}\sum_{n\leq N_m}\delta_{S^n\bfu}\to\kappa$. In view of Theorem~\ref{t:lifting}, there exist a sequence $(z_n)\subset Z$ and a subsequence $(N_{m_\ell})$ such that
$$
\frac1{N_{m_\ell}}\sum_{n\leq N_{m_\ell}}\delta_{(z_n, S^n\bfu)}\to \rho$$
and the set $\{n\geq1:\: z_{n}\neq Rz_{n-1}\}$ is contained in a subset of $\NN$ of the form $\{b_1<b_2<\ldots\}$, where
$b_{k+1}-b_k\to\infty$. Adding if necessary some more $b_k$'s to this set, we may also assume that
$\lim_{k\to\infty} \frac{b_{k+1}}{b_k} = 1$. In this way, defining $K_\ell:=\max\{k:b_k\le N_{m_\ell}\}$, we get $\lim_{\ell\to\infty} \frac{b_{K_\ell}}{N_{m_\ell}} = 1$, and it follows that
\begin{multline*}
0\neq \int f\ot \pi_0\,d\rho=\lim_{\ell\to\infty} \frac1{N_{m_\ell}}\sum_{n\leq N_{m_\ell}}f(z_n)\bfu(n)\ = \\
\lim_{\ell\to\infty}\frac1{b_{K_\ell}}\sum_{k<K_\ell}\Big(\sum_{b_k\leq n<b_{k+1}}f(R^{n-b_k}z_{b_k})\bfu(n)\Big).
\end{multline*}

However, from our assumption of orthogonality on $\bfu$, the latter limit is~$0$ because the sequence $\bigl(f(z_n)\bigr)$ can be observed in the orbital system described in Theorem~\ref{t:orbmod}, which is uniquely ergodic and in $\cf$.

$\Leftarrow$  Let $(X,T)$ be uniquely ergodic and in $\cf$. Suppose that, for some $f\in C(X)$, some $x\in X$ and some increasing sequence $(N_k)$    we have the existence of the limit
$$ c:=\lim_{k\to\infty}\frac1{N_k}\sum_{n\leq N_k}f(T^nx)\bfu(n). $$
Extracting a subsequence if necessary, we can also assume  that
$$\frac1{N_k}\sum_{n\leq N_k}\delta_{(T^nx,S^n\bfu)}\to\rho.$$
Then $\rho$ is a joining, $\rho\in J(\nu,\kappa)$, where $\nu$ is the unique invariant measure for $T$, and $\kappa\in V(\bfu)$.
Now, since $(X,\nu,T)\in\cf$, we get by Theorem~\ref{t:largest}
$$ c=\int f\ot \pi_0\,d\rho=\int f\ot \EE_\kappa[\pi_0\,|\,\af(X_{\bfu},\kappa,S)]\,d\rho.$$
So, if we assume that
$$\EE_\kappa[\pi_0\,|\,\af(X_{\bfu},\kappa,S)] \perp F_{\rm we}(X_{\bfu},\kappa,S),$$
we get $c=0$.
\end{proof}

\begin{Remark}The above result also holds in the logarithmic case. Now, by Frantzikinakis-Host's theorem \cite{Fr-Ho}, all zero entropy, uniquely ergodic systems are M\"obius orthogonal. By the above Proposition~\ref{p:ouesZEgen}, the relevant Veech condition is satisfied for $\mob$. However, to read a combinatorial reformulation of Frantzikinakis-Host's theorem is unclear because we have been unable to get a logarithmic version of Proposition~\ref{prop:selfjoiningsofFS} (see Section~\ref{s:question}).
\end{Remark}

Note that if for $\cf$ we take the class of all measure-preserving systems, then we obtain the original Boshernitzan's problem, and we recover the result stated in Proposition~\ref{p:oues}.

Proposition~\ref{p:ouesZEgen} allows us to use directly Theorem~\ref{t:OrMaIm} and Corollary~\ref{c:relvN2} (with ${T}$ replaced by any Furstenberg system of $\bfu$).

\begin{Cor}\label{c:enfin} The following conditions are equivalent:\\
(a) $\bfu \perp \cf\cap {\rm UE}$.\\
(b) For each Furstenberg system $\kappa$ of $\bfu$ the following holds: for each $\la\in J^{\rm RelErg}_2(X_{\bfu},\kappa,S)$ (in particular, $\la|_{\ci_S\ot\ci_S}=\kappa|_{\ci_S}\ot\kappa|_{\ci_S}$), we have
\beq\label{warunekM}
\EE_\lambda\Bigl[\EE_\kappa[\pi_0\,|\,\af]\ot \EE_\kappa[\pi_0\,|\,\af] \, |\, \ci_S\ot\ci_S\Bigr]=
\EE_\kappa[\pi_0\,|\,\ci_S] \ot
\EE_\kappa[\pi_0\,|\,\ci_S].\eeq
(c) For each Furstenberg system $\kappa$ of $\bfu$ the following holds: for each $\la\in J_2(X_{\bfu},\kappa,S)$ with $\la|_{\ci_S\ot\ci_S}=\kappa|_{\ci_S}\ot\kappa|_{\ci_S}$, we have
\beq\label{warunekMbis}\EE_\lambda\Bigl[\EE_\kappa[\pi_0\,|\,\af]\ot \EE_\kappa[\pi_0\,|\,\af] \, |\, \ci_{S\times S}\Bigr]=
\EE_\kappa[\pi_0\,|\,\ci_S] \ot
\EE_\kappa[\pi_0\,|\,\ci_S].\eeq
\end{Cor}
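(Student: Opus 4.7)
The plan is to thread the statement through the three preceding results. Proposition~\ref{p:ouesZEgen} translates (a) into an orthogonality condition for the function $g:=\EE_\kappa[\pi_0\,|\,\af]$ in each Furstenberg system $\kappa\in V(\bfu)$; Theorem~\ref{t:OrMaIm} turns that orthogonality into the joinings condition (b); and Corollary~\ref{c:relvN2} upgrades it to (c).

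For (a) $\Leftrightarrow$ (b), I would fix $\kappa\in V(\bfu)$, realize $(X_\bfu,\kappa,S)$ as $\widetilde{T}(x,y)=(x,T_xy)$ on $(X\times Y,\mu\ot\nu)$ with $X\cong\ci_S$, and introduce the centered function
$$
h:=g-c,\qquad c:=\EE_\kappa[\pi_0\,|\,\ci_S],
$$
noting that $c=\EE_\kappa[g\,|\,\ci_S]$ thanks to the inclusion $\ci_S\subset\af$ (valid since every non-trivial characteristic class contains the identities). Any $\ci_S$-measurable function is orthogonal to $F_{{\rm we},0}(\widetilde{T})$, because $\Phi_\rho^\ast$ sends an $S$-invariant function to an $R'$-invariant, hence constant, function in $L^2(\nu')$ for every ergodic $R'$. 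Hence $g\perp F_{{\rm we},0}$ is equivalent to $h\perp F_{{\rm we},0}$, which in turn (since $h$ has zero $\kappa$-integral) is the same as $h\perp F_{\rm we}$. Theorem~\ref{t:OrMaIm} now reduces this to
$$
\EE_\la[h\ot h\,|\,\ci_S\ot\ci_S]=0\text{ for every }\la\in J_2^{\rm RelErg}(\widetilde{T}).
$$
Expanding $h\ot h = g\ot g - g\ot c - c\ot g + c\ot c$ and using the relative-product identity $\la|_{(X\times Y)\times X}=(\mu\ot\nu)\ot\mu$ from \eqref{now3}, each of the two cross-terms reduces to $c\ot c$, and the residual identity $\EE_\la[g\ot g\,|\,\ci_S\ot\ci_S]=c\ot c$ is exactly \eqref{warunekM}.

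For (b) $\Leftrightarrow$ (c), I would run the same template with Corollary~\ref{c:relvN2} replacing Theorem~\ref{t:OrMaIm}; the condition becomes $\EE_\la[h\ot h\,|\,\ci_{S\times S}]=0$ for every $\la\in J_2(\widetilde{T})$ with $\la|_{X\times X}=\mu\ot\mu$. The expansion is slightly subtler because the relative-product identity is not directly available, but $c\ot 1$ and $1\ot c$ are both $\ci_{S\times S}$-measurable (being invariant under $\mathrm{Id}\times S$ and $S\times\mathrm{Id}$, hence under $S\times S$), so each can be pulled out of the corresponding cross-term; the remaining factor $\EE_\la[g\ot 1\,|\,\ci_{S\times S}]$ is computed via the mean ergodic theorem in $L^2(\la)$, noting that the Ces\`aro averages of $(g\circ S^n)\ot 1$ under $S\times S$ converge $\la$-a.s.\ to $c\ot 1$ because the first marginal of $\la$ is $\kappa$. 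The same telescoping then yields \eqref{warunekMbis}.

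The main obstacle is purely bookkeeping: $g$ has generally nonzero conditional mean over $\ci_S$, so neither Theorem~\ref{t:OrMaIm} nor Corollary~\ref{c:relvN2} applies to $g$ directly. The centering trick $h=g-c$ restores the missing hypothesis, and the residual term $c\ot c=\EE_\kappa[\pi_0\,|\,\ci_S]\ot\EE_\kappa[\pi_0\,|\,\ci_S]$ on the right-hand sides of \eqref{warunekM} and \eqref{warunekMbis} is precisely what the expansion of $h\ot h$ produces; all remaining steps are routine conditional-expectation manipulations and applications of the mean ergodic theorem on the coordinates of $\la$.
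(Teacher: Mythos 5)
Your proposal is correct and follows essentially the same route as the paper: the paper's proof is the one-line chain Proposition~\ref{p:ouesZEgen} $\to$ Theorem~\ref{t:OrMaIm} $\to$ Corollary~\ref{c:relvN2}, with the centering $h=g-\EE_\kappa[\pi_0\,|\,\ci_S]$ and the resulting cross-term cancellation already carried out in~\eqref{now4a},~\eqref{qqq} and Footnote~\ref{ft:thm4.5}. You have merely made explicit the bookkeeping (the reduction of $g\perp F_{{\rm we},0}$ to $h\perp F_{\rm we}$, and the identification of the cross-terms with $\EE_\kappa[\pi_0\,|\,\ci_S]\ot\EE_\kappa[\pi_0\,|\,\ci_S]$ via~\eqref{now3} and the von Neumann theorem) that the paper leaves implicit.
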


\subsection{Orthogonality to systems with countably many ergodic invariant measures, all in $\cf$}\label{s:cmec}

The purpose of this section is to prove the following generalization of Proposition~\ref{p:thi2}.

\begin{Prop}\label{p:thi3} Let $\bfu$, $\|\bfu\|_{u^1}=0$, be a bounded arithmetic function orthogonal to all u.e.\ systems in some characteristic class $\cf$. Then $\bfu$ is also orthogonal to all systems whose set of ergodic invariant measures is countable, all of them giving rise to systems in $\cf$. More precisely: if $(Z,R)$ is a topological system whose set $\{\rho_i\colon i\in I\}$ of ergodic measures is countable, and satisfy for all $i$, $(Z,\rho_i,R)\in\cf$, then for any $z_0\in Z$ and any function $g\in C(Z)$,
we have
$$
\frac1N \sum_{n<N} \bfu(n) g(R^nz_0)\to 0.
$$
\end{Prop}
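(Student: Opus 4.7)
The plan is to mimic closely the structure of the proof of Proposition~\ref{p:thi2}, but with the key step replaced by a joining-lifting trick so that Proposition~\ref{p:ouesZEgen} can be applied to a genuine Furstenberg system of $\bfu$ rather than to a possibly-non-Furstenberg auxiliary measure. Fix a topological system $(Z,R)$ with countable set $\{\rho_i\}_{i\in I}$ of ergodic measures, each giving $(Z,\rho_i,R)\in\cf$, together with $z_0\in Z$ and $g\in C(Z)$. Passing to a subsequence, it suffices to check $\int\pi_0\ot g\,d\kappa=0$ where $\kappa$ is an $S\times R$-invariant measure along which $(\bfu,z_0)$ is quasi-generic. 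Let $\mu$ denote the $X_\bfu$-marginal of $\kappa$ (so $\mu$ is a Furstenberg system of $\bfu$) and $\rho$ the $Z$-marginal. Writing $\rho=\sum_i\alpha_i\rho_i$ and decomposing $\kappa=\sum_i\alpha_i\kappa_i$ exactly as in the proof of Proposition~\ref{p:thi2}, each $\kappa_i$ belongs to $J(\mu_i,\rho_i)$ for some $S$-invariant $\mu_i\ll\mu$ with $h_i:=d\mu_i/d\mu\le 1/\alpha_i$; moreover each $h_i$ is $S$-invariant.

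Fix $i$ with $\alpha_i>0$; I aim at $\int\pi_0\ot g\,d\kappa_i=0$. Decompose $g=g_0+c_i$ with $c_i=\int g\,d\rho_i$. The constant contribution equals $c_i\int\pi_0\,h_i\,d\mu$, which vanishes because $\|\bfu\|_{u^1}=0$ forces $\EE_\mu[\pi_0\,|\,\ci_S]=0$ (Proposition~\ref{p:ju1}) and $h_i$ is $\ci_S$-measurable. For the remaining term $\int\pi_0\,\Phi_{\kappa_i}(g_0)\,d\mu_i$, I introduce the lifted joining
$$\tilde\kappa_i:=\alpha_i\kappa_i+(1-\alpha_i)\mu'\ot\rho_i, \qquad \mu':=\frac{1-\alpha_i h_i}{1-\alpha_i}\mu,$$
where $\mu'$ is an $S$-invariant probability thanks to the bound $\alpha_ih_i\le 1$. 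A direct check on marginals shows $\tilde\kappa_i\in J(\mu,\rho_i)$, and disintegrating over $X_\bfu$ gives $(\tilde\kappa_i)_x=\alpha_ih_i(x)(\kappa_i)_x+(1-\alpha_ih_i(x))\rho_i$. Using $\int g_0\,d\rho_i=0$ this yields $\Phi_{\tilde\kappa_i}(g_0)=\alpha_i h_i\,\Phi_{\kappa_i}(g_0)$ (extended by zero where $h_i=0$), so
$$\int\pi_0\,\Phi_{\kappa_i}(g_0)\,d\mu_i=\int\pi_0\,h_i\,\Phi_{\kappa_i}(g_0)\,d\mu=\frac{1}{\alpha_i}\int\pi_0\,\Phi_{\tilde\kappa_i}(g_0)\,d\mu.$$

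Since $\tilde\kappa_i$ is a joining of $\mu$ with the ergodic system $(Z,\rho_i,R)\in\cf$, Theorem~\ref{t:largest} gives $\int\pi_0\,\Phi_{\tilde\kappa_i}(g_0)\,d\mu=\int\EE_\mu[\pi_0\,|\,\af]\,\Phi_{\tilde\kappa_i}(g_0)\,d\mu$, while Proposition~\ref{p:ouesZEgen} applied to the Furstenberg system $\mu$ (using $\bfu\perp\cf\cap{\rm UE}$) yields $\EE_\mu[\pi_0\,|\,\af]\perp F_{\rm we}(X_\bfu,\mu,S)$. Because $\Phi_{\tilde\kappa_i}(g_0)\in F_{\rm we,0}(X_\bfu,\mu,S)$ (ergodic joining, zero-mean input), this integral vanishes; summing over $i$ gives $\int\pi_0\ot g\,d\kappa=0$. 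The conceptual obstacle avoided by the lifting is that $\mu_i$ need not itself be a Furstenberg system of $\bfu$, so Proposition~\ref{p:ouesZEgen} is not directly available for $\mu_i$; a naive application of Corollary~\ref{c:thi1} only gives orthogonality of $\EE_\mu[\pi_0\,|\,\af]$ to $F_{\rm we}(\mu_i)$, whereas pairing with $\Phi_{\kappa_i}(g_0)$ seems a priori to require the analogous orthogonality for the $\mu_i$-conditional expectation onto the largest $\cf$-factor with respect to $\mu_i$, and these two conditional expectations need not coincide. The lifting to $\tilde\kappa_i$ pushes the whole computation back into $L^2(\mu)$, where the Veech-type condition from Proposition~\ref{p:ouesZEgen} applies cleanly.
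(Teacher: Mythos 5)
Your argument is correct, and it takes a genuinely different route from the paper at the key step. Both proofs start identically: pass to a subsequence along which $(\bfu,z_0)$ is quasi-generic for $\kappa$, write $\rho=\sum_i\alpha_i\rho_i$, and split $\kappa=\sum_i\alpha_i\kappa_i$ with $\kappa_i\in J(\mu_i,\rho_i)$, $d\mu_i/d\mu\le 1/\alpha_i$ and $S$-invariant. The paper then faces exactly the obstacle you name -- that $\mu_i$ need not be a Furstenberg system and that the $\cf$-conditional expectations with respect to $\mu$ and $\mu_i$ could a priori differ -- and resolves it head-on: Lemmas~\ref{lemma:CC1} and~\ref{lemma:CC2} (proved via infinite relatively independent self-joinings and the law of large numbers) show that $\EE_{\mu_i}[\,\cdot\,|\,\af(X_{\bfu},\mu_i,S)]=\ind{h_i>0}\,\EE_\mu[\,\cdot\,|\,\af(X_{\bfu},\mu,S)]$, after which Remark~\ref{r:module}, Corollary~\ref{c:thi1} and Theorem~\ref{t:largest} applied to $\kappa_i\in J(\mu_i,\rho_i)$ finish the job. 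You sidestep all of this by lifting $\kappa_i$ to $\tilde\kappa_i=\alpha_i\kappa_i+\bigl((1-\alpha_ih_i)\mu\bigr)\ot\rho_i\in J(\mu,\rho_i)$, so that $\Phi_{\tilde\kappa_i}(g_0)=\alpha_ih_i\Phi_{\kappa_i}(g_0)$ lands in $F_{{\rm we},0}(X_{\bfu},\mu,S)$ and the Veech-type condition from Proposition~\ref{p:ouesZEgen}, together with Theorem~\ref{t:largest} for the joining $\tilde\kappa_i$ of the genuine Furstenberg system with the ergodic $\cf$-system $(Z,\rho_i,R)$, kills each term; the constant part $c_i\int\pi_0 h_i\,d\mu$ vanishes by $\EE_\mu[\pi_0\,|\,\ci_S]=0$ exactly as you say. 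Your route is shorter and self-contained (I verified the marginals of $\tilde\kappa_i$ and the Markov-operator identity; the only cosmetic point is the degenerate case $\alpha_i=1$, where $h_i\equiv1$ and one takes $\tilde\kappa_i=\kappa_i$, best handled by writing the second summand as the product of the sub-probability $(1-\alpha_ih_i)\mu$ with $\rho_i$ rather than normalizing by $1-\alpha_i$). What it gives up is the structural content of Lemmas~\ref{lemma:CC1}--\ref{lemma:CC2}, which the paper reuses elsewhere, notably in the proof of Proposition~\ref{p:countableCC}.
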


The proof of the above proposition follows essentially the same lines as the proof of Proposition~\ref{p:thi2}, but with a further refinement to take into account the underlying characteristic class $\cf$. This is what the lemmas below are made for.
The framework of these lemmas is quite the same as in Corollary~\ref{c:thi1}: we have a measure-preserving system $(X,\mu,T)$, and a $T$-invariant probability measure $\eta$ such that $\eta\ll \mu$, and $\phi:=d\eta/d\mu$ is assumed to be bounded. We identify elements of $L^\infty(\eta)$ with elements of $L^\infty(\mu)$ which vanish on $\{x:\phi(x)=0\}$. We constantly use this identification in the following lemmas.

\begin{Lemma}
\label{lemma:CC1}
 In the framework described above, let $f\in L^\infty(\mu)$ satisfy $f=0$ on $\{x:\phi(x)=0\}$.
 Then, we have the equivalence:
 $$ f\text{ is }\af(X,\mu,T)\text{-measurable} \Longleftrightarrow f\text{ is }\af(X,\eta,T)\text{-measurable}.
 $$
\end{Lemma}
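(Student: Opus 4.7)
The proof hinges on the fact that the Radon-Nikodym derivative $\phi := d\eta/d\mu$ is automatically $T$-invariant. Indeed, $T$-invariance of both $\mu$ and $\eta$ forces $\phi\circ T = \phi$ modulo $\mu$, so $\phi$ is $\ci_T$-measurable. Since every non-trivial characteristic class contains the identity class, $\ci_T \subseteq \af(X,\mu,T) \cap \af(X,\eta,T)$, which means that $\phi$ (and the set $\{\phi > 0\}$) is measurable with respect to both characteristic factors. In particular, $\eta$ is carried by $\{\phi > 0\}$, and any $f\in L^\infty(\mu)$ vanishing on $\{\phi = 0\}$ only sees this $T$-invariant subset.

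I would then establish the following stability claim: for any $T$-invariant sub-$\sigma$-algebra $\ca \subseteq \cb$, one has $(X,\ca,\mu,T)\in\cf$ if and only if $(X,\ca,\eta,T) \in \cf$, at least after restricting the atlas to $\{\phi>0\}$. The rationale is that, because $\phi$ is $\ci_T$-measurable, it takes a constant value $c_t$ on each ergodic component $\mu_t$ of $\mu$. Writing $\mu = \int \mu_t\, dP(t)$ for the ergodic decomposition, one gets $\eta = \int \mu_t\, dQ(t)$ with $dQ/dP = c_t$, and the same relationship persists after restricting to $\ca$. Thus passing from $\mu$ to $\eta$ on $\ca$ is simply a bounded reweighting of the same ergodic components, and this operation preserves membership in $\cf$. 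For sub-$\sigma$-algebras with countable ergodic decomposition this is exactly Proposition~\ref{p:countableCC}; for the general case one approximates by countable ones and invokes the closure of $\cf$ under inverse limits.

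Granted the stability claim, the lemma follows in both directions: $\af(X,\mu,T)$ and $\af(X,\eta,T)$ are both characterized as the supremum of the same family of $T$-invariant sub-$\sigma$-algebras yielding a $\cf$-system, hence they agree modulo $\eta$. Because $f$ vanishes on $\{\phi=0\}$, its $\af(X,\mu,T)$-measurability in $L^\infty(\mu)$ and its $\af(X,\eta,T)$-measurability in $L^\infty(\eta)$ become equivalent.

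The main obstacle is making the reweighting argument rigorous beyond the countable-ergodic-decomposition case. One robust route is to bypass ergodic decomposition entirely and instead use Theorem~\ref{t:largest}: given any joining $\rho' \in J((X,\eta,T),(Y,\nu,R))$ with $R\in\cf$, one extends it to a joining of $(X,\mu,T)$ with $(Y,\nu,R)$ by gluing in a product piece over $\{\phi=0\}$. The resulting joining is relatively independent over $\af(X,\mu,T)$ by hypothesis, and this independence descends to $\rho'$ upon conditioning on $\{\phi>0\}$, yielding the inclusion $\af(X,\mu,T) \subseteq \af(X,\eta,T) \mod \eta$. The reverse inclusion is handled symmetrically after choosing a representative of $\af(X,\eta,T)$ that is trivial on $\{\phi=0\}$, using the boundedness of $\phi$ to ensure the joining extension has a probability measure as marginal.
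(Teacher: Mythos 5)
Your main argument (the ``stability claim'') has a genuine gap. The assertion that passing from $\mu$ to $\eta$ is ``simply a bounded reweighting of the same ergodic components, and this operation preserves membership in $\cf$'' presupposes that membership in a characteristic class is determined by the ergodic components — but the paper warns explicitly, just before Proposition~\ref{p:countableCC}, that a system whose a.e.\ ergodic component lies in $\cf$ need not lie in $\cf$, so this is exactly the subtle point and cannot be taken for granted. Moreover, Proposition~\ref{p:countableCC}, which you invoke for the countable case, is itself deduced in the paper \emph{from} Lemma~\ref{lemma:CC1}, so relying on it here is circular; and the step ``approximate by countable ones and invoke closure under inverse limits'' is not an argument — closure under inverse limits concerns increasing sequences of factor sub-sigma-algebras of a fixed system, not approximations of an ergodic decomposition.

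Your fallback route in the last paragraph is much closer to a proof, and it is genuinely different from the paper's (which forms the \emph{infinite} relative product of a two-fold joining of $(X,\mu,T)$ and $(X,\eta,T)$ over $X_\mu$, resp.\ over $X_\eta$, and uses the law of large numbers in the fibers to recover $f$ from a $\cf$-measurable limit of empirical averages). But as written it is incomplete and mislabelled. Extending a joining $\rho'$ of $(X,\eta,T)$ with a $\cf$-system $(Y,\nu,R)$ to the joining $\rho=\alpha\rho'+(\mu-\alpha\eta)\otimes\nu$ of $(X,\mu,T)$ with $(Y,\nu,R)$ (where $\alpha\phi\le1$) and applying Theorem~\ref{t:largest} shows that $\rho'$ is relatively independent over the trace of $\af(X,\mu,T)$; testing this against the graph joining with $(X/\af(X,\eta,T),\eta,T)$ yields $\af(X,\eta,T)\subseteq\af(X,\mu,T)\bmod\eta$ — the \emph{opposite} inclusion to the one you state, and it settles only the implication ``$\af(X,\eta,T)$-measurable $\Rightarrow$ $\af(X,\mu,T)$-measurable''. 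The other implication is not ``symmetric'': there one must produce a joining of $(X,\eta,T)$ with the $\cf$-system $(X/\af(X,\mu,T),\mu|_{\af},T)$, for instance $\alpha\cdot(\text{graph of the factor map pushed by }\eta)+\eta\otimes(\mu|_{\af}-\alpha\eta|_{\af})$ (positivity of the second summand uses $\alpha\phi\le1$ and the $\af(X,\mu,T)$-measurability of $\phi$), and test relative independence over $\af(X,\eta,T)$ against $f\otimes\bar f$ to get $\|f\|_{L^2(\eta)}=\|\EE_\eta[f\,|\,\af(X,\eta,T)]\|_{L^2(\eta)}$. With these corrections your two-fold-joining route works and is arguably more elementary than the paper's argument, but the proposal as it stands does not contain it.
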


\begin{proof}
 Let $0<\alpha<1$ be such that $\alpha\phi\le1$, so that $\alpha \eta \le \mu$. For clarity, we will use the notation $X_\mu$ (respectively, $X_\eta$) for the space $X$ endowed with the measure $\mu$ (respectively the measure $\eta$). We first construct a joining $\lambda$ of $(X_\mu,\mu,T)$ and $(X_\eta,\eta,T)$ by setting, for all measurable subsets $A\subset X_\mu$, $B\subset X_\eta$,
 \begin{align*}
  \lambda(A\times B) &:= \int_A \Bigl(\alpha\phi \ind{B} + \bigl(1-\alpha\phi\bigr)\eta(B)\Bigr)\,d\mu \\
  &= \alpha \eta(A\cap B) + \bigl(\mu(A)-\alpha\eta(A)\bigr)\eta(B)
 \end{align*}
 (note that the associated Markov operator $\Phi_\lambda:L^2(X,\eta)\to L^2(X,\mu)$ is given by $\Phi_\lambda(f)=\alpha\phi f+(1-\alpha\phi)\int f\,d\eta$).
 In particular, the conditional distribution of the second coordinate $x_\eta$ given the first one $x_\mu$ is
 $$ \alpha\phi(x_\mu)\delta_{x_\mu}+\bigl(1-\alpha\phi(x_\mu)\bigr)\eta.$$
 We then construct the joining $\lambda_\infty^{\eta}$ as the relative product of infinitely many copies of $(X_\mu\times X_\eta, \lambda)$ over $(X_\mu,\mu)$:
 this is the $T\times T^{\NN}$- invariant probability measure on $X_\mu\times X_\eta^{\NN}$ whose marginal on $X_\mu$ is $\mu$, and whose conditional distribution on $X_\eta^\NN$ given $x_\mu$ is the infinite product measure
 $$ \Bigl(\alpha\phi(x_\mu)\delta_{x_\mu}+\bigl(1-\alpha\phi(x_\mu)\bigr)\eta\Bigr)^{\otimes \NN}.$$
 By applying the law of large numbers in each fiber determined by $x_\mu$, we get that for $\lambda_\infty^{\eta}$-almost all $\bigl(x_\mu,(x_\eta^k)_{k\in\NN}\bigr)$, the limit
 $$ \ell=\ell\bigl(x_\mu,(x_\eta^k)_{k\in\NN}\bigr) := \lim_{K\to\infty}\frac{1}{K}\sum_{k=1}^K f(x_\eta^k)  $$
 exists, and satisfy
 $$ \ell = \alpha\phi(x_\mu)f(x_\mu)+ \bigl(1-\alpha\phi(x_\mu)\bigr)\int f\, d\eta.$$
It follows that, $\lambda_\infty^{\eta}$-almost surely, we can get the value $f(x_\mu)$ by the formula
$$ f(x_\mu) = \ind{\phi(x_\mu)>0} \dfrac{\ell-\bigl(1-\alpha\phi(x_\mu)\bigr)\int f\, d\eta }{\alpha\phi(x_\mu)}. $$
Now, if $f$ is $\af(X_\eta,\eta,T)$-measurable, 
the limit $\ell$ is measurable with respect to an infinite joining of $\cf$-systems (namely the infinite self-joining of $\af(X_\eta,\eta,T)$ arising from $\lambda_\infty^{\eta}$). Moreover, as $\phi$ is $T$-invariant, and since the class $\cf$ must contain the ID class, $\bigl(x_\mu,(x_\eta^k)_{k\in\NN}\bigr)\mapsto \phi(x_\mu)$ is measurable with respect to $\af(X_\mu\times X_\eta^{\NN}, \lambda_\infty^{\eta}, T\times T^{\NN})$. Therefore, the same holds for $\bigl(x_\mu,(x_\eta^k)_{k\in\NN}\bigr)\mapsto f(x_\mu)$, and this proves that $f$ is $\af(X,\mu,T)$-measurable.
\smallskip

Conversely, assume that $f$ is $\af(X,\mu,T)$-measurable. We will use the same joining $\lambda$ as before, but now we disintegrate it with respect to $x_\eta$: we have for all measurable subsets $A\subset X_\mu$, $B\subset X_\eta$,
\begin{align*}
 & \lambda(A\times B) \\
 & = \int_{X_\mu} \ind{A}(x_\mu) \left( \int_{X_\eta} \ind{B}(x_\eta)
         \left(  \alpha \phi(x_\mu)\, d\delta_{x_\mu}(x_\eta) + \bigl(1-\alpha \phi(x_\mu)\bigr) d\eta(x_\eta)  \right)\right)d\mu(x_\mu)  \\
 & = \int_{X_\eta} \ind{B}(x_\eta) \int_{X_\mu}   \ind{A}(x_\mu)
         \left( \alpha \, d\delta_{x_\eta}(x_\mu) + (1-\alpha)\frac{1-\alpha\phi(x_\mu)}{1-\alpha}  d\mu(x_\mu)\right) d\eta(x_\eta). \\
\end{align*}
Thus, under $\lambda$, the conditional distribution of the first coordinate $x_\mu$ given the second one $x_\eta$ is
$$ \alpha\delta_{x_\eta}+(1-\alpha) \frac{\mu-\alpha\eta}{1-\alpha}.$$
Now, we construct  the joining $\lambda_\infty^{\mu}$ as the relative product of infinitely many copies of $(X_\mu\times X_\eta, \lambda)$ over $(X_\eta,\eta)$:
 this is the $T\times T^{\NN}$- invariant probability measure on $X_\eta\times X_\mu^{\NN}$ whose marginal on $X_\eta$ is $\eta$, and whose conditional distribution on $X_\mu^\NN$ given $x_\eta$ is the infinite product measure
 $$ \Bigl(\alpha\delta_{x_\eta}+(1-\alpha) \frac{\mu-\alpha\eta}{1-\alpha}\Bigr)^{\otimes \NN}.$$
 Again, we apply the law of large number in each fiber determined by $x_\eta$. We get that for $\lambda_\infty^{\mu}$-almost all $\bigl(x_\eta,(x_\mu^k)_{k\in\NN}\bigr)$, the limit
 $$ \bar\ell=\bar\ell\bigl(x_\eta,(x_\mu^k)_{k\in\NN}\bigr) := \lim_{K\to\infty}\frac{1}{K}\sum_{k=1}^K f(x_\mu^k)  $$
 exists, and satisfy
 $$ \bar\ell = \alpha f(x_\eta)+ \int f\,d\mu - \alpha \int f\,d\eta.$$
 It follows that, $\lambda_\infty^{\mu}$-almost surely, we can get the value $f(x_\eta)$ by the formula
$$ f(x_\eta) = \frac{1}{\alpha}\left(\bar\ell- \int f\,d\mu + \alpha \int f\,d\eta\right). $$
But, since we assume here that $f$ is  $\af(X,\mu,T)$-measurable, the limit $\bar\ell$ is measurable with respect to an infinite joining of $\cf$-systems. Therefore, the function $\bigl(x_\eta,(x_\mu^k)_{k\in\NN}\bigr)\mapsto f(x_\eta)$ is measurable with respect to $\af(X_\eta\times X_\mu^{\NN}, \lambda_\infty^{\mu}, T\times T^{\NN})$, and we conclude that $f$ is $\af(X,\eta,T)$-measurable.
\end{proof}

\begin{Lemma}
\label{lemma:CC2}
 We keep the same framework as in Lemma~\ref{lemma:CC1}. For all $f\in L^\infty(\mu)$, we have
 $$
 \EE_\eta[f\,|\,\af(X,\eta,T)] = \ind{\phi>0} \EE_\mu[f\,|\,\af(X,\mu,T)] = \EE_\mu[\ind{\phi>0} f\,|\,\af(X,\mu,T)].
 $$
\end{Lemma}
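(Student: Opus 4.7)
My plan is to prove the two equalities separately, using Lemma~\ref{lemma:CC1} as the main input for the first one, and invariance of $\phi$ for the second one. The key preliminary observation is that $\phi=d\eta/d\mu$ is $T$-invariant (both $\mu$ and $\eta$ are $T$-invariant, so $\phi$ and $\phi\circ T$ are both densities of $\eta$ with respect to $\mu$), hence $\ind{\phi>0}$ is $\ci_T$-measurable. Since every characteristic class contains the ID class, $\ci_T\subset\af(X,\mu,T)$ mod $\mu$, and therefore $\ind{\phi>0}$ is $\af(X,\mu,T)$-measurable. This immediately gives the second equality, because $\af(X,\mu,T)$-measurable functions come out of conditional expectation with respect to $\af(X,\mu,T)$.

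For the first equality, I would first check that the middle quantity $F:=\ind{\phi>0}\,\EE_\mu[f\,|\,\af(X,\mu,T)]$ is $\af(X,\eta,T)$-measurable. Indeed, $F$ is a product of two $\af(X,\mu,T)$-measurable functions (using the preliminary observation), hence $\af(X,\mu,T)$-measurable, and by construction vanishes on $\{\phi=0\}$. Thus Lemma~\ref{lemma:CC1} applies and gives $\af(X,\eta,T)$-measurability, under the identification of $L^\infty(\eta)$ with the functions in $L^\infty(\mu)$ that vanish on $\{\phi=0\}$.

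It remains to verify the defining integral identity for the conditional expectation under $\eta$: namely, for every bounded $\af(X,\eta,T)$-measurable $g$,
$$ \int g\,f\,d\eta \;=\; \int g\,F\,d\eta. $$
Via the same identification, $g$ is viewed as an $L^\infty(\mu)$-function vanishing on $\{\phi=0\}$, and Lemma~\ref{lemma:CC1} in the reverse direction says $g$ is also $\af(X,\mu,T)$-measurable. Both sides can then be rewritten against $\mu$. The left hand side equals $\int g\phi\,f\,d\mu$, and since $g\phi$ is $\af(X,\mu,T)$-measurable, this equals $\int g\phi\,\EE_\mu[f\,|\,\af(X,\mu,T)]\,d\mu$. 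The right hand side equals $\int g\,\ind{\phi>0}\,\EE_\mu[f\,|\,\af(X,\mu,T)]\,\phi\,d\mu$, and using $\ind{\phi>0}\phi=\phi$ this also reduces to $\int g\phi\,\EE_\mu[f\,|\,\af(X,\mu,T)]\,d\mu$. The two quantities coincide, proving the first equality.

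The only slightly delicate point is the bookkeeping between $L^\infty(\mu)$ and $L^\infty(\eta)$: all identifications respect the class ``vanishing on $\{\phi=0\}$'', and the two applications of Lemma~\ref{lemma:CC1} (to deduce $\af(X,\eta,T)$-measurability of $F$, and to represent test functions $g$ as $\af(X,\mu,T)$-measurable) are what make the argument go through. No ergodicity or entropy input is needed beyond ID$\subset\cf$ and the preceding lemma.
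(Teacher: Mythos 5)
Your proof is correct and follows essentially the same route as the paper's: both use the $T$-invariance of $\phi$ (plus ${\rm ID}\subset\cf$) for the second equality, then verify the first by checking $\af(X,\eta,T)$-measurability of the candidate via Lemma~\ref{lemma:CC1} and rewriting the defining integral identity against $\mu$ using $d\eta=\phi\,d\mu$ and a second application of Lemma~\ref{lemma:CC1} to the test function. The only cosmetic difference is that the paper applies Lemma~\ref{lemma:CC1} to the product $\phi g$ while you apply it to $g$ and use measurability of $\phi$ separately; the substance is identical.
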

\begin{proof}
 The equality
 $$
 \ind{\phi>0} \EE_\mu[f\,|\,\af(X,\mu,T)] = \EE_\mu[\ind{\phi>0} f\,|\,\af(X,\mu,T)]
 $$
 is a straightforward consequence of the fact that $\phi$, being $T$-invariant, is $\af(X,\mu,T)$-measurable. It remains to prove that
 $$
 \EE_\eta[f\,|\,\af(X,\eta,T)] = \EE_\mu[\ind{\phi>0} f\,|\,\af(X,\mu,T)].
 $$
 The right-hand side is a bounded, $\af(X,\mu,T)$-measurable function which vanishes on the set $(\phi=0)$, so, by applying Lemma~\ref{lemma:CC1}, it is also $\af(X,\eta,T)$-measurable. So, we just have to check that for a given bounded $\af(X,\eta,T)$-measurable function g, we have
 \begin{equation}
  \label{eq:reste}
  \EE_\eta[gf] = \EE_\eta\Bigl[ g\, \EE_\mu[\ind{\phi>0} f\,|\,\af(X,\mu,T)] \Bigr].
 \end{equation}
 But this can be done through the following chain of equalities:
 \begin{align*}
  \EE_\eta[gf] & = \EE_\mu[\phi g f] \quad\text{(by the definition of $\phi$)} \\
  & =  \EE_\mu[\phi g\, \ind{\phi>0} f] \\
  & = \EE_\mu\Bigl[ \phi g\, \EE_\mu[\ind{\phi>0} f\,|\,\af(X,\mu,T)] \Bigr]  \\
  & = \EE_\eta\Bigl[ g\, \EE_\mu[\ind{\phi>0} f\,|\,\af(X,\mu,T)] \Bigr].
 \end{align*}
The justification of the third equality comes from Lemma~\ref{lemma:CC1} applied to $\phi g$, which proves that this function is $\af(X,\mu,T)$-measurable.
\end{proof}

\begin{proof}[Proof of Proposition~\ref{p:thi3}]
 We repeat verbatim the proof of Proposition~\ref{p:thi2} to define $\kappa$, $\rho$, $(\alpha_i)_{i\in I}$, $(\kappa_i)_{i\in I}$ and $(\mu_i)_{i\in I}$. Our assumption now is that $\bfu\perp \text{UE}\cap\cf$, so, by Proposition~\ref{p:ouesZEgen}, we get
 $$\EE_\mu[\pi_0\,|\,\ca_\cf(X_{\bfu},\mu,S)] \perp F_{\rm we}(X_{\bfu},\mu,S)$$
 for all Furstenberg system $\mu\in V(\bfu)$. It follows from Remark~\ref{r:module} that if we multiply $\pi_0$ by a bounded $T$-invariant function, then the product remains orthogonal to  $F_{\rm we}(X_{\bfu},\mu,S)$.  Therefore, setting for all $i\in I$, $\phi_i:=\frac{d\mu_i}{d\mu}$, we still have
 $$\ind{\phi_i>0}\EE_\mu[\pi_0\,|\,\ca_\cf(X_{\bfu},\mu,S)] \perp F_{\rm we}(X_{\bfu},\mu,S).$$
 We can apply Corollary~\ref{c:thi1} to $\ind{\phi_i>0}\EE_\mu[\pi_0\,|\,\ca_\cf(X_{\bfu},\mu,S)]$, which yields
 $$ \ind{\phi_i>0}\EE_\mu[\pi_0\,|\,\ca_\cf(X_{\bfu},\mu,S)]\perp F_{\rm we}(X_{\bfu},\mu_i,S), $$
 and then Lemma~\ref{lemma:CC2} gives
 $$ \EE_{\mu_i}[\pi_0\,|\,\ca_\cf(X_{\bfu},\mu_i,S)]\perp F_{\rm we}(X_{\bfu},\mu_i,S). $$
 Since $\kappa_i$ is a joining between $(X_{\bfu},\mu_i,S)$ and the ergodic system $(Z,\rho_i,R)$ in $\cf$, the above property ensures by Theorem~\ref{t:largest} that
 $$ \int_{X_{\bfu} \times Z} \pi_0\otimes g \, d\kappa_i = 0. $$
 Then we can conclude as in the proof of Proposition~\ref{p:thi2}.
\end{proof}

As a consequence of Lemma~\ref{lemma:CC1}, we are also ready to prove Proposition~\ref{p:countableCC}.

\noindent
\begin{proof}[Proof of Proposition~\ref{p:countableCC}]
We assume that $\mu=\sum_{i=1}^\infty\alpha_i\mu_i$, where $\mu_i$ are $T$-invariant, ergodic and mutually singular. For all $i\geq1$, set $\phi_i:=\frac{d\mu_i}{d\mu}$. We can write any $f\in L^\infty(X,\mu_i)$ as
$$ f = \sum_{i\geq 1}  f\, \ind{\phi_i>0}. $$
Since $\phi_i$ is $T$-invariant, $\ind{\phi_i>0}$ is measurable with respect to $\af(X,\mu,T)$. Moreover, we can apply Lemma~\ref{lemma:CC1} to the function $f\, \ind{\phi_i>0}$: this function is measurable with respect to $\af(X,\mu,T)$ if and only if it is measurable with respect to $\af(X,\mu_i,T)$. It follows that the following conditions are equivalent:
\begin{itemize}
 \item $f$ is $\af(X,\mu,T)$-measurable,
 \item for each $i$, $f\, \ind{\phi_i>0}$ is $\af(X,\mu,T)$-measurable,
 \item for each $i$, $f\, \ind{\phi_i>0}$ is $\af(X,\mu_i,T)$-measurable.
\end{itemize}
The equivalence in the statement of the proposition follows easily.
\end{proof}

\subsection{Examples}
\subsubsection{The ID class}
\begin{Example} Consider the class ID of all identities. Then $\EE_\kappa[ \pi_0\, |\, \af ]$ is $\af $-measurable, where $\af =\ci_S$, and
$\EE_\kappa[ \pi_0\, |\, \af ]\ot \EE_\kappa[ \pi_0\, |\, \af ]$ is $\ci_S\ot\ci_S$-measurable, so \eqref{warunekM} holds. It follows the assertion~(i) of Corollary~\ref{c:enfin} is satisfied. Hence the only restriction on $\pi_0$ (and $\kappa$) is \eqref{calkazero}, i.e.\ the mean of $\bfu$ equals zero.  Notice that since the only ergodic identity is the one-point system, this fits to the obvious condition of zero mean  of $\bfu$ as those arithmetic functions being orthogonal to all uniquely ergodic identities.\end{Example}

\subsubsection{Discrete spectrum case}
We will need the following observation ($R_\alpha (z)=ze^{2\pi i\alpha}$ stands for the irrational rotation by $\alpha$ on $\bs^1$):

\begin{Lemma}\label{l:prince1}
Assume that ${T}\in{\rm Aut}\xbm$.
Assume also that $0\neq F\in L^2(X,\mu)$ satisfies $F\circ {T}=e^{2\pi i\alpha}F$ (a.e.), where $\alpha\in[0,1)$ is irrational.  Then there exist $g\in L^2(\bs^1, {\rm Leb})$, $g\circ R_\alpha=e^{2\pi i\alpha}g$, and $\rho\in J({T},R_\alpha)$ such that
$\int_{X\times\bs^1}F(x)g(z)\,d\rho(x,z)\neq0$.
\end{Lemma}
\begin{proof} Because $\alpha$ is irrational, $F|_{\bigsqcup_{n\in\N}X_n}=0$, therefore we can assume w.l.o.g.\ that $T$ is aperiodic, i.e.\ $T$
is an automorphism of $({\Xb}\times Y,\mub\ot\nu)$, where
${T}({\xb},y)=({\xb},T_{\xb}y)$ with ${\xb}\mapsto T_{\xb}\in {\rm Aut}(Y,\nu)$ being the ergodic decomposition of ${T}$.

Let $A:=\{{\xb}\in {\Xb}\colon F({\xb},\cdot)\neq0\;\nu-\text{a.e.}\}$. Then $\mub(A)>0$.
Note that for ${\xb}\in A$, $F({\xb},\cdot)$ is an eigenfunction for $T_{\xb}$ corresponding to the eigenvalue $e^{2\pi i\alpha}$. By ergodicity, it follows that $|F({\xb},\cdot)|=:\xi({\xb})>0$   (for a.a.\ ${\xb}\in A$; outside of $A$, $F$ vanishes).

Let $G:A\times Y\to \bs^1$, $G({\xb},y)=F({\xb},y)/\xi({\xb})$. Then $G$ is still an eigenfunction for the automorphism ${T}|_{A\times Y}$ corresponding to the eigenvalue $e^{2\pi i\alpha}$. Moreover, this is readily translated to: $G\circ{T}|_{A\times Y}=R_\alpha \circ G$. Moreover, $G_\ast(\mub|_A\ot\nu)={\rm Leb}$, since the LHS measure must be $R_\alpha$-invariant. It follows that $G$ establishes a factor map from ${T}|_{A\times Y}$ to $R_\alpha$, and we can consider the corresponding graph joining $\rho_1$. We also join ${T}|_{A^c\times Y}$ with $R_\alpha$ by taking $\rho_2$ being the product measure. The final joining $\rho$ is given as $\mub(A)\rho_1+(1-\mub(A))\rho_2$, that is, for $\tilde{B}\subset {\Xb}\times Y$ and $C\subset\bs^1$,
$$
\rho(\tilde{B}\times C)=\mub(A)\rho_1((\tilde{B}\cap (A\times Y))\times C)+(1-\mub(A))\rho_2((\tilde{B}\cap (A^c\times Y))\times C).$$
Set $\chi(z)=z$, which is an eigenfunction of $R_\alpha$ (corresponding to $e^{2\pi i\alpha}$). We have
$$
\int_{{\Xb}\times Y\times\bs^1}F({\xb},y)\ov{\chi}(z)\;d\rho({\xb},y,z)=$$$$
\mub(A)\int_{A\times Y\times \bs^1}F({\xb},y)\ov{\chi}(z)\,d\rho_1({\xb},y,z)
+(1-\mub(A))\int_{A^c\times Y\times \bs^1}F({\xb},y)\ov{\chi}(z)\,d\rho_2({\xb},y,z)=$$$$
\mub(A)\int_{A\times Y\times \bs^1}F({\xb},y)\ov{\chi\circ G}({\xb},y)\,d(\mub|_A\ot\nu)({\xb},y)=$$$$\int_{A\times Y}F({\xb},y)\ov{G({\xb},y)}\,d(\mub|_A\ot\nu)({\xb},y)=\int_A\xi({\xb})\,d\mub({\xb})>0$$
and the result follows.
\end{proof}

\begin{Remark} Note that the assertion of Lemma~\ref{l:prince1} fails if $\alpha=0$. Indeed, each zero mean invariant function $F$ is orthogonal to all ergodic Markov images (as $F$ is measurable with respect to the sigma-algebra of invariant sets and this factor is disjoint from all ergodic automorphisms). In the above proof it was important that the mean of $\chi$ is zero which in the final computation made disappear the part of joining $\rho$ given by product measure.\end{Remark}

Despite the above remark, we will show that the assertion of Lemma~\ref{l:prince1} holds for $\alpha\neq 0$.
For simplicity, assume that $\alpha=1/2$, that is, we consider eigenvalue -1.  Assume first that $T$ is still aperiodic.
Note that if $F\circ {T}=-F$, then we have the same relation on the ergodic components. Then $F^2$ is an invariant function, so we see that on each ergodic component $\{{\xb}\}\times Y$ the function $F({\xb},\cdot)$ has (as before) constant modulus $\xi({\xb})$ and takes two values either $c_{\xb}$ or $-c_{\xb}$. Moreover, if $F({\xb},y)=c_{\xb}$ then $F\circ{T}({\xb},y)=-c_{\xb}$. If we fix $y_0\in Y$, then by taking $F({\xb},y_0)$ we make a measurable choice $d_{\xb}$ of either $c_{\xb}$ or $-c_{\xb}$ at each fiber $\{{\xb}\}\times Y$.

We claim now that we can find a positive measure subset $A'\subset A$ and take a measurable choice $A'\ni {\xb}\mapsto d_{\xb}$ so that
\beq\label{prince1}
\int_{A'\times Y} |F({\xb},y)|^2\frac1{d_{\xb}}\,d(\mub\ot\nu)({\xb},y)=\int_{A'} \xi({\xb})^2/d_{\xb}\,d\mub({\xb})\neq 0.
\eeq
Indeed, the existence of $A'$ follows from the fact that the integrand function is different from zero.
We define $G:A'\times Y\to\{-1,1\}$ by $G({\xb},y)=F({\xb},y)/d_{\xb}$ and repeat the previous proof with the circle replaced by the group $\{-1,1\}$, $R_{1/2}(j)=-j$ and $\chi(j)=j$ (whose mean is zero).

To cope with the general case, first assume that the aperiodic part is non-trivial, find a good joining of it with $R_{1/2}$ and then complete to the full joining  of $T$ and $R_{1/2}$ by taking the product joining of $T|_{X_n}$ ($n\geq1$) with $R_{1/2}$. Finally, if there is no aperiodic part, there must exists a non-trivial periodic part $T|_{X_{n_0}}$ for some $n_0\geq2$. Then simply repeat the proof of the aperiodic case with $T$ replaced by $T|_{X_{n_0}}$.

Given a countable subgroup $G$ denote by DISP$_G$ the family of discrete spectrum automorphisms whose set of eigenvalues is contained in $G$. This family is characteristic.

\begin{Prop}\label{p:prince1} Let $G$ be a countable subgroup of the circle.
Then
$\bfu\perp {\rm UE}\cap {\rm DISP}_G$ if and only if for each Furstenberg system $\kappa\in V(\bfu)$, we have $\sigma_{\pi_0,\kappa}(\{z\})=0$ for each $z\in G\setminus\{1\}$.
\end{Prop}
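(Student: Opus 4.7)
My plan is to apply Proposition~\ref{p:ouesZEgen} with $\mathcal{F}=\mathrm{DISP}_G$, which reduces the assertion to the Veech-type condition
$$
\EE_\kappa[\pi_0\,|\,\af]\;\perp\;F_{{\rm we},0}(X_{\bfu},\kappa,S)
$$
for every Furstenberg system $\kappa\in V(\bfu)$.  The first step is the identification of the characteristic factor:~I would argue that $L^2(\af)$ coincides with the closed Hilbert direct sum $\bigoplus_{z\in G}E_z$ of the eigenspaces $E_z:=\{f\in L^2(\kappa):f\circ S=zf\}$.  This uses that distinct eigenspaces are mutually orthogonal and that, because $\mathrm{DISP}_G$ is a characteristic class, the sigma-algebra generated by all eigenfunctions of $S$ with eigenvalues in $G$ is automatically an $S$-invariant factor with discrete spectrum in $G$, and is the largest such.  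Spectral theory then yields $\|P_z\pi_0\|^2=\sigma_{\pi_0,\kappa}(\{z\})$ for the orthogonal projection $P_z$ onto $E_z$, and since $E_1=L^2(\ci_S)$, the spectral hypothesis amounts to $\EE_\kappa[\pi_0\,|\,\af]=\EE_\kappa[\pi_0\,|\,\ci_S]$ for every $\kappa\in V(\bfu)$.

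The $(\Leftarrow)$ direction is then immediate.  If $\EE_\kappa[\pi_0\,|\,\af]$ is $\ci_S$-measurable, it is carried by the identity factor of $S$, which is disjoint from every ergodic system.  Consequently, for every ergodic $R'$ on $(Z',\nu')$, every joining $\rho\in J(\nu',\kappa)$ and every $g\in L^2_0(\nu')$, the marginal of $\rho$ on $\nu'\otimes\kappa|_{\ci_S}$ is the product measure, so
$$
\langle \Phi_\rho(g),\EE_\kappa[\pi_0\,|\,\ci_S]\rangle_{L^2(\kappa)}=\int g\,d\nu'\cdot\int\EE_\kappa[\pi_0\,|\,\ci_S]\,d\kappa=0,
$$
yielding orthogonality to $F_{{\rm we},0}$.

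For the converse, suppose $\sigma_{\pi_0,\kappa}(\{z_0\})>0$ for some $z_0=e^{2\pi i\alpha_0}\in G\setminus\{1\}$, so that $F:=P_{z_0}\pi_0$ is a non-zero eigenfunction of $S$ with eigenvalue $z_0$.  Invoking the extension of Lemma~\ref{l:prince1} to all non-trivial eigenvalues, sketched in the paragraphs preceding the proposition, I would produce a joining $\rho$ of the Furstenberg system with an ergodic rotation $R_{\alpha_0}$ (on $\bs^1$ if $\alpha_0$ is irrational, on a finite cyclic group otherwise) and a zero-mean eigenfunction $\chi$ of $R_{\alpha_0}$ with eigenvalue $z_0$ satisfying $\int F\bar\chi\,d\rho\neq 0$.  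Because $G$ is a subgroup containing $z_0$, the whole spectrum of $R_{\alpha_0}$ is contained in $G$, so $R_{\alpha_0}\in \mathrm{UE}\cap\mathrm{DISP}_G$, and therefore $\Phi_\rho(\chi)\in F_{{\rm we},0}$.  Since $\Phi_\rho$ intertwines $R_{\alpha_0}$ and $S$, $\Phi_\rho(\chi)$ is itself an eigenfunction of $S$ of eigenvalue $z_0$, hence $\af$-measurable; orthogonality of distinct eigenspaces then gives
$$
\langle \EE_\kappa[\pi_0\,|\,\af],\Phi_\rho(\chi)\rangle_{L^2(\kappa)}=\langle F,\Phi_\rho(\chi)\rangle_{L^2(\kappa)}\neq 0,
$$
contradicting the Veech condition, hence $\bfu\not\perp \mathrm{UE}\cap\mathrm{DISP}_G$ by Proposition~\ref{p:ouesZEgen}.

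The step that requires the most care is the identification $L^2(\af)=\bigoplus_{z\in G}E_z$ in the non-ergodic setting: Example~\ref{e:przyk1} warns that $\af$ can be \emph{strictly smaller} than the ``fiberwise'' $\mathrm{DISP}_G$-factor, so one must check that the closed span of eigenfunctions really is $S$-invariant, yields discrete spectrum with values in $G$, and is maximal with this property, rather than simply importing the ergodic picture.  Once this is in place, the remainder is a routine combination of the Veech-type Proposition~\ref{p:ouesZEgen}, the generalized Lemma~\ref{l:prince1}, and disjointness of identities from ergodic systems.
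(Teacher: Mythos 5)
Your proof is correct, but it is organized rather differently from the paper's. The paper proves Proposition~\ref{p:prince1} directly, without passing through the Veech condition: for $(\Rightarrow)$ it takes the eigenfunction $\pi_{z_0}$ coming from the atom, applies Lemma~\ref{l:prince1} (and its extension to rational eigenvalues) to get a correlating joining with a rotation $R_\alpha$, and then uses the lifting lemma (Theorem~\ref{t:lifting}) together with the orbital uniquely ergodic model (Theorem~\ref{t:orbmod}) to manufacture an explicit uniquely ergodic system with eigenvalues in $G$ that correlates with $\bfu$; for $(\Leftarrow)$ it argues contrapositively, observing that a nonzero correlation forces $\pi_0\not\perp\EE_\rho[1\ot f\,|\,X_{\bfu}]$, whose spectral measure is absolutely continuous with respect to $\sigma_f$ and hence purely atomic with atoms in $G\setminus\{1\}$. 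You instead route both directions through Proposition~\ref{p:ouesZEgen} and the identification of $L^2(\af)$ with the closed span of the $G$-eigenspaces. That identification --- which the paper never needs --- is the one genuinely new step in your argument, and your justification is sound: since $G$ is a group, the span of the $G$-eigenfunctions is a conjugation-closed algebra, so the factor it generates has discrete spectrum with eigenvalues in $G$, and it is maximal because eigenfunctions of any factor are eigenfunctions of the ambient system; Example~\ref{e:przyk1} only warns against identifying $\af$ with the fiberwise Kronecker factor, not with the span of global eigenfunctions. Two small points: (i) you rightly replace the $F_{\rm we}$ of Proposition~\ref{p:ouesZEgen} by $F_{{\rm we},0}$, which is necessary here since Proposition~\ref{p:prince1} permits an atom of $\sigma_{\pi_0,\kappa}$ at $1$, and is what the proof of that proposition actually delivers for zero-mean test functions; (ii) your forward direction still relies on the lifting-lemma/orbital-model construction, only now hidden inside Proposition~\ref{p:ouesZEgen}, so the total content is essentially the same. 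What your packaging buys is the clean equivalence between the ${\rm DISP}_G$-Veech condition and the absence of atoms in $G\setminus\{1\}$; what the paper's direct route buys is that it never has to compute the largest ${\rm DISP}_G$-factor of a non-ergodic Furstenberg system.
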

\begin{proof} $\Rightarrow$ (by contraposition) Suppose that for some $\kappa\in V(\bfu)$, say $$\kappa=\lim_{N\to\infty}\frac1{N_k}\sum_{n\leq N_k}\delta_{S^n\bfu},$$ and for some $1\neq z_0\in G$, we have $\sigma_{\pi_0,\kappa}(\{z_0\})>0$, that is, the spectral measure of $\pi_0$ has an atom at $z_0$. It follows that
$$
\pi_0=\pi_{z_0}+\pi_{z_0}^\perp,$$
where $\pi_{z_0}$ stands for the orthogonal projection of $\pi_0$ on the subspace of eigenfunctions corresponding to $z_0=e^{2\pi i\alpha}$ (we assume that $\alpha$ is irrational, in the  rational case $z_0\neq1$ the proof goes similarly). In view of Lemma~\ref{l:prince1}, there exists a joining $\rho\in J((X_{\bfu},\kappa,S), R_\alpha)$ such that
\beq\label{pronce3}
\int \pi_{z_0}(\omega)\overline{\chi(z)}\,d\rho(\omega,z)\neq0.
\eeq
By Theorem~\ref{t:lifting}, passing to a subsequence of $(N_k)$ if  necessary, we obtain $((S^n\bfu),(w_n))$  a generic sequence (along $(N_k)$) for $\rho$:
$$
\frac1{N_k}\sum_{n\leq N_k}\delta_{(S^n\bfu,w_n)}\to\rho,$$
where the set $\{n\colon w_{n+1}\neq R_\alpha w_n\}$ is of the form $b_1<b_2<\ldots$ with $b_{k+1}-b_k\to\infty$.
Using $(w_n)$, we pass to the corresponding orbital model to obtain a new uniquely ergodic system $(Y,S)$ which is a model of the irrational rotation $R_\alpha$. It follows that ($\underline{w}:=(w_n)$ and $\widetilde{\chi}((y_n))=\chi(y_0)$)
$$
\frac1{N_k}\sum_{n\leq N_k}\bfu(n)\tilde{\chi}(S^n\underline{w})=
\frac1{N_k}\sum_{n\leq N_k}\pi_0(S^n\bfu)\chi(w_n)\to \int\pi_0\ot\ov{\chi}\;d\rho.$$
Since $\pi_{z_0}^\perp \perp \chi$ (computed in $L^2(\rho)$, the spectral measures $\sigma_{\pi_{z_0}^\perp\ot 1}$ and $\sigma_{1\ot \chi}$ are mutually singular),
$$
\int\pi_0\ot\ov{\chi}\;d\rho=\int(\pi_{z_0}+\pi_{z_0}^\perp)\ot\ov{\chi}\;d\rho
$$$$
=\int\pi_{z_0}\ot\ov{\chi}\;d\rho\neq 0$$
and the result follows ($\bfu$ correlates with a uniquely ergodic system having the group of eigenvalues contained in $G$).

$\Leftarrow$  (by contraposition) If
$\bfu\not\perp (X,T)$ (for some u.e.\ $(X,T)$ with the group of eigenvalues contained in $G$), then for some $(N_k)$, $f\in C(X)$ and $x\in X$, we have $\frac1{N_k}\sum_{n\leq N_k}\bfu(n)f(T^nx)\to c\neq0$. Passing to a further subsequence of $(N_k)$ if necessary, we obtain then $c=\int\pi_0\ot f\,d\rho=\int \pi_0\,\EE_\rho[1\ot f\,|\,X_{\bfu}]\,d\kappa$. Since $(X,T)$ is u.e.\ with discrete spectrum ($\subset G$, we can also assume that $\int f=0$),  the spectral measure of $f$ has only atoms $\in G\setminus\{1\}$. The spectral measure of $\EE_\rho[1\ot f\,|\,X_{\bfu}]$ is absolutely continuous w.r.t.\ $\sigma_f$, so it is also  purely atomic and has atoms only in $G\setminus\{1\}$. Since $\pi_0$ is not orthogonal to $\EE_\rho[1\ot f\,|\,X_{\bfu}]$, its spectral measure has an atom belonging to $G\setminus\{1\}$.
\end{proof}

\begin{Remark}\label{f:MarIm} In fact, we have proved the following:
{\em Assume that $T\in{\rm Aut}\xbm$ and $f\in L^2_0\xbm$. Then $f\perp L^2({\rm Im}(\Phi_\rho))$ for each $\rho\in J(R,T)$ and all $R$ ergodic with discrete spectrum if and only if $\sigma_f\ll \eta+\delta_{\{1\}}$ for some continuous measure $\eta$ (on $\bs^1$).}\end{Remark}

Since the condition $\bfu\perp {\rm UE}\cap {\rm DISP}$  is equivalent to $\bfu\perp {\rm UE}\cap{\rm DISP}_G$ for all countable subgroups $G\subset\bs^1$, we obtain the following result.

\begin{Prop}\label{p:scd1a} Assume that $\bfu:\N\to\D$, $M(\bfu)=0$. Then
$\bfu\perp {\rm UE}\cap{\rm DISP}$ if and only if for each $\kappa\in V(\bfu)$ the spectral measure $\sigma_{\pi_0,\kappa}\ll\eta+\delta_{\{1\}}$ for some continuous measure $\eta$.\end{Prop}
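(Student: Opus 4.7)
My plan is to deduce the result directly from Proposition~\ref{p:prince1} by letting the countable group of eigenvalues $G$ range. First I would observe that every uniquely ergodic system with discrete spectrum has a countable group of eigenvalues, so that
$$
{\rm UE}\cap {\rm DISP}=\bigcup_{G}{\rm UE}\cap{\rm DISP}_G,
$$
where $G$ ranges over the countable subgroups of $\bs^1$. Consequently, $\bfu\perp{\rm UE}\cap{\rm DISP}$ if and only if $\bfu\perp{\rm UE}\cap{\rm DISP}_G$ for every countable $G\subset\bs^1$, as announced right before the statement.

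Next, applying Proposition~\ref{p:prince1} to each such $G$, the orthogonality $\bfu\perp{\rm UE}\cap{\rm DISP}_G$ is equivalent to the condition that $\sigma_{\pi_0,\kappa}(\{z\})=0$ for every $\kappa\in V(\bfu)$ and every $z\in G\setminus\{1\}$. Taking the intersection over all countable subgroups $G$ (whose union covers $\bs^1$) turns this into the condition that for every $\kappa\in V(\bfu)$ the measure $\sigma_{\pi_0,\kappa}$ has no atom in $\bs^1\setminus\{1\}$.

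Finally I would translate the ``no atom outside~$1$'' condition into the absolute continuity condition appearing in the statement. Writing the Lebesgue decomposition
$$
\sigma_{\pi_0,\kappa}=\sigma^c_{\pi_0,\kappa}+\sum_{z\in A}c_z\,\delta_{\{z\}},
$$
with $\sigma^c_{\pi_0,\kappa}$ continuous and $A\subset\bs^1$ the (countable) set of atoms, the preceding step asserts $A\subset\{1\}$. Conversely, if $\sigma_{\pi_0,\kappa}\ll\eta+\delta_{\{1\}}$ for some continuous measure $\eta$, then any atom of $\sigma_{\pi_0,\kappa}$ must be an atom of $\eta+\delta_{\{1\}}$, hence equal to~$1$. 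Thus the two conditions are equivalent, and combining with the chain of equivalences above yields the proposition. The only mild subtlety is to handle the atom at~$1$, but there is no obstacle because $M(\bfu)=0$ is not required to forbid it (it merely guarantees that $\EE_\kappa[\pi_0]=0$, which is compatible with a non-trivial invariant component of $\pi_0$); the rotation by~$1$ on a point is the only uniquely ergodic system with eigenvalue~$1$, and it is automatically orthogonal to any zero-mean $\bfu$.
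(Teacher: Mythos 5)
Your proposal is correct and is essentially the paper's own argument: the sentence immediately preceding Proposition~\ref{p:scd1a} records exactly your reduction of $\bfu\perp{\rm UE}\cap{\rm DISP}$ to $\bfu\perp{\rm UE}\cap{\rm DISP}_G$ over all countable subgroups $G\subset\bs^1$, after which Proposition~\ref{p:prince1} and the observation that ``no atoms off $1$'' is equivalent to $\sigma_{\pi_0,\kappa}\ll\eta+\delta_{\{1\}}$ for some continuous $\eta$ finish the proof. Only your closing remark is slightly off as phrased (every system has eigenvalue $1$); the atom at $1$ is harmless simply because eigenfunctions of an ergodic system for the eigenvalue $1$ are constants, which are excluded by testing against zero-mean $f$ --- but this does not affect the argument.
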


An important observation is then that by  Wiener's lemma: for any measure $\sigma$ on the circle
$$
\sum_{1\neq z}|\sigma(\{z\})|^2=\sum_{z}|\sigma(\{z\})|^2-|\sigma(\{1\})|^2=$$
$$
\lim_{H\to\infty}\frac1H\sum_{h\leq H}|\widehat{\sigma}(h)|^2-\Big|
\lim_{H\to\infty}\frac1H\sum_{h\leq H}\widehat{\sigma}(h)\Big|^2.$$
The limits of that kind applied to the function $\pi_0$ in any Furstenberg system $(X_{\bfu},\kappa,S)$ of $\bfu$ are expressible in terms of autocorrelations of $\bfu$.  For example:

\begin{Cor} \label{c:scd2} Assume that $\bfu$ is generic. Then $\bfu$ is orthogonal to all uniquely ergodic models of ergodic transformations with discrete spectrum  if and only if
$$\lim_{H\to\infty}\frac1H\sum_{h\leq H}\lim_{N\to\infty}\frac1N\Big|\sum_{n\leq N}\bfu(n+h)\ov{\bfu(n)}\Big|^2=$$
$$\lim_{H\to\infty}\frac1H\sum_{h\leq H}\Big|\lim_{N\to\infty}\frac1N\sum_{n\leq N}\bfu(n+h)\ov{\bfu(n)}\Big|^2=$$$$
\Big|\lim_{H\to\infty}\frac1H\sum_{h\leq H}\lim_{N\to\infty}\frac1N\sum_{n\leq N}\bfu(n+h)\ov{\bfu(n)}\Big|^2.$$
\end{Cor}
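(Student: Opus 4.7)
My starting point will be Proposition~\ref{p:scd1a}: since $\bfu$ is generic, $V(\bfu)=\{\kappa\}$ reduces to a single Furstenberg system, so $\bfu\perp{\rm UE}\cap{\rm DISP}$ is equivalent to
$$\sigma_{\pi_0,\kappa}\ll\eta+\delta_{\{1\}}\text{ for some continuous measure }\eta,$$
which in turn is just the statement that the only atom of $\sigma:=\sigma_{\pi_0,\kappa}$ is possibly at $1$, i.e.\ $\sum_{z\neq 1}|\sigma(\{z\})|^2=0$. This is the very quantity rewritten just before the corollary via Wiener's lemma as
$$\lim_{H\to\infty}\frac1H\sum_{h\leq H}|\widehat\sigma(h)|^2-\Bigl|\lim_{H\to\infty}\frac1H\sum_{h\leq H}\widehat\sigma(h)\Bigr|^2.$$

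Next I will translate the Fourier coefficients of $\sigma$ into combinatorial averages attached to $\bfu$. By definition, $\widehat\sigma(h)=\int_{X_\bfu}\pi_0\circ S^h\cdot\ov{\pi_0}\,d\kappa$. Genericity of $\bfu$ applied to the continuous cylinder function $\pi_0\circ S^h\cdot\ov{\pi_0}$ on $X_\bfu$ then yields
$$\widehat\sigma(h)=\lim_{N\to\infty}\frac1N\sum_{n\leq N}\bfu(n+h)\ov{\bfu(n)}.$$
Substituting this equality into the two Wiener averages above produces exactly the second and third expressions in the statement, so their coincidence is literally the condition $\sum_{z\neq 1}|\sigma(\{z\})|^2=0$. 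The first expression will then be identified with the second by pulling the modulus squared through the inner $N$-limit, so that $|\widehat\sigma(h)|^2$ is realised as the inner limit written in the corollary.

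The only point I expect to need care is the interchange of the $N$- and $h$-limits. The inner limit in $N$ exists for each fixed $h$ by genericity applied to the continuous function $\pi_0\circ S^h\cdot\ov{\pi_0}$; for the outer $h$-average I will rely on Wiener's lemma itself, already invoked in the text, which gives the existence of $\lim_H\frac1H\sum_{h\leq H}|\widehat\sigma(h)|^2$ (the pure-point $L^2$-mass of $\sigma$) and of $\lim_H\frac1H\sum_{h\leq H}\widehat\sigma(h)$ (namely $\sigma(\{1\})$). Once these two ingredients are in place, the corollary reduces to a pure transcription of the spectral identity into an identity between correlation averages of $\bfu$, so I do not foresee any substantial obstacle beyond the bookkeeping between the spectral and combinatorial descriptions.
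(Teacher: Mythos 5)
Your proposal is correct and follows exactly the route the paper takes: Proposition~\ref{p:scd1a} (or, to avoid the hypothesis $M(\bfu)=0$ which is not stated in the corollary, Proposition~\ref{p:prince1} applied to all countable subgroups $G$) reduces the orthogonality to the statement that $\sigma_{\pi_0,\kappa}$ has no atoms off $1$, and the displayed identity is then precisely the Wiener-lemma computation recorded immediately before the corollary, with $\widehat{\sigma}(h)$ identified with the autocorrelation $\lim_N\frac1N\sum_{n\leq N}\bfu(n+h)\ov{\bfu(n)}$ by genericity. The interchange issues you flag are handled exactly as you say (existence of the inner limit by genericity, of the outer averages by Wiener), so there is no gap.
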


\subsubsection{When Furstenberg systems are almost ergodic}
We consider $\bfu$ satisfying \eqref{now5} for each Furstenberg system $\kappa\in V(\bfu)$.
That is, we assume that
$$
\EE_\kappa[\pi_0\,|\,\ci_S]=0.$$
Let $\cf$ be any characteristic class. We recall that in view of \cite{Ka-Ku-Le-Ru} (see Theorem~B therein)
\beq\label{ve2}
\Big[\forall \kappa\in V(\bfu)\;\;\pi_0\perp L^2(\af(X_{\bfu},\kappa,S))\Big] \Rightarrow \bfu\perp \mathscr{C}_{\cf}, \eeq
where by $\mathscr{C}_{\cf}$ we denote the class of topological systems whose all visible invariant measures yield measure-preserving systems belonging to $\cf$.
We aim at proving the following result.

\begin{Prop}\label{p:alaFran} Assume that $\bfu$ satisfies \eqref{now5} and each $\kappa\in V(\bfu)$ has purely atomic ergodic decomposition (i.e.\ it has countably many ergodic components). Assume that $\cf$ is a characteristic class. Then
$$
\bfu\perp \mathscr{C}_{\cf}\text{ if and only if }\bfu\perp {\rm UE}\cap\cf.$$
\end{Prop}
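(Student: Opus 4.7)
For the direction ``$\Leftarrow$'' I would simply observe that a uniquely ergodic system belonging to $\cf$ is automatically in $\mathscr{C}_{\cf}$ (its only invariant measure being visible), so nothing needs to be done.

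The interesting direction is ``$\Rightarrow$''. My plan is to derive the full Veech condition $\EE_\kappa[\pi_0\,|\,\af(X_{\bfu},\kappa,S)]=0$ for every Furstenberg system $\kappa$ of $\bfu$, and then conclude by the implication \eqref{ve2}. To start, Proposition~\ref{p:ouesZEgen} applied to the assumption $\bfu\perp{\rm UE}\cap\cf$ delivers, for each $\kappa\in V(\bfu)$,
\begin{equation}\label{eq:step1plan}
\EE_\kappa[\pi_0\,|\,\af(X_{\bfu},\kappa,S)]\perp F_{\rm we}(X_{\bfu},\kappa,S).
\end{equation}
Fix such $\kappa$ and write its (purely atomic, by hypothesis) ergodic decomposition as $\kappa=\sum_i\alpha_i\kappa_i$, with $\kappa_i=\kappa|_{E_i}/\alpha_i$ concentrated on $S$-invariant sets $E_i$; the $E_i$'s exhaust the atoms of $\ci_S$ mod~$\kappa$, so the hypothesis \eqref{now5} forces $\int\pi_0\,d\kappa_i=0$ for every $i$.

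The heart of the argument is to transfer \eqref{eq:step1plan} to each ergodic component $\kappa_i$. Since $\ind{E_i}$ is $\ci_S$-measurable, and the identity class is contained in $\cf$ (giving $\ci_S\subset\af(X_{\bfu},\kappa,S)$), I would invoke Remark~\ref{r:module} to get
$$\ind{E_i}\,\EE_\kappa[\pi_0\,|\,\af(X_{\bfu},\kappa,S)]\perp F_{\rm we}(X_{\bfu},\kappa,S),$$
then use Corollary~\ref{c:thi1} to propagate this orthogonality down from $\kappa$ to the absolutely continuous $T$-invariant measure $\kappa_i$. Lemma~\ref{lemma:CC2} identifies this same function, $\kappa_i$-a.e., with $\EE_{\kappa_i}[\pi_0\,|\,\af(X_{\bfu},\kappa_i,S)]$. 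Because $\kappa_i$ is ergodic, the diagonal self-joining gives $F_{\rm we}(X_{\bfu},\kappa_i,S)=L^2(X_{\bfu},\kappa_i,S)$, so the orthogonality forces $\EE_{\kappa_i}[\pi_0\,|\,\af(X_{\bfu},\kappa_i,S)]=0$ for every $i$. Reassembling through Lemma~\ref{lemma:CC2} (and $\sum_i\ind{E_i}=1$ mod $\kappa$) yields $\EE_\kappa[\pi_0\,|\,\af(X_{\bfu},\kappa,S)]=0$, which is precisely the Veech condition for $\cf$, and \eqref{ve2} finishes the proof.

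The only real obstacle I anticipate is bookkeeping: one must carefully juggle the measures $\kappa$ and the $\kappa_i$'s while keeping the various $\af$-factors compatible, and must check at each step that the ingredient being fed into Remark~\ref{r:module} and Corollary~\ref{c:thi1} satisfies~\eqref{now5} and lies in $L^2_0$. Lemmas~\ref{lemma:CC1}--\ref{lemma:CC2} were designed precisely for this, so the conceptual content is the clean observation that purely atomic ergodic decomposition reduces the orthogonality-to-$F_{\rm we}$ statement for $\kappa$ to the ergodic case, where it automatically upgrades to the full Veech condition.
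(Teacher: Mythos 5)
Your argument is correct, but it takes a genuinely different route from the paper's. The paper proves the nontrivial implication by constructing one explicit self-joining $\la$ of $(X_{\bfu},\kappa,S)$ — diagonal on each ergodic component ($\la|_{q^{-1}(i)\times q^{-1}(i)}=\alpha_i^2\Delta_{\kappa_i}$) and product across distinct components — checking via \eqref{now5} that $\la|_{\ci_S\ot\ci_S}=\kappa|_{\ci_S}\ot\kappa|_{\ci_S}$, and then reading off from Corollary~\ref{c:enfin} that $\int|g|^2\,d\kappa_i=0$ for $g=\EE_\kappa[\pi_0\,|\,\af]$. You instead localize the orthogonality statement to each atom of the ergodic decomposition using the transfer machinery the paper develops for Proposition~\ref{p:thi3} (Remark~\ref{r:module}, Corollary~\ref{c:thi1}, Lemma~\ref{lemma:CC2}) and then exploit that $F_{\rm we}=L^2$ for an ergodic system, so that orthogonality upgrades automatically to vanishing. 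Both proofs are sound; all the ingredients you cite apply as you claim (in particular $\EE_\kappa[\EE_\kappa[\pi_0\,|\,\af]\,|\,\ci_S]=\EE_\kappa[\pi_0\,|\,\ci_S]=0$, so \eqref{now5} is available for Remark~\ref{r:module}, and $d\kappa_i/d\kappa=\ind{E_i}/\alpha_i$ is bounded, so Corollary~\ref{c:thi1} applies). The paper's construction is shorter and stays entirely inside the joining formalism of Corollary~\ref{c:enfin}, whereas yours isolates the conceptual point — purely atomic ergodic decomposition reduces everything to ergodic components, where the Veech condition is automatic — and reuses lemmas already needed elsewhere, at the modest cost of invoking the $\cf$-factor compatibility results of Section~6.3. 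One cosmetic slip: your direction labels are swapped — the observation that ${\rm UE}\cap\cf\subset\mathscr{C}_{\cf}$ proves ``$\Rightarrow$'' (that $\bfu\perp\mathscr{C}_{\cf}$ implies $\bfu\perp{\rm UE}\cap\cf$), and the Veech-condition argument is the ``$\Leftarrow$'' direction; the mathematics attached to each label is nonetheless unambiguous and correct.
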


\begin{proof} We only need to show that if $\bfu\perp {\rm UE}\cap\cf$, then the LHS (Veech condition) of the implication~\eqref{ve2} is satisfied.
Let $\kappa\in V(\bfu)$, so, by assumption,
$$
\mbox{$\ci_S$ is purely atomic}.$$
Let $\af $ stand for the largest $\cf$-factor of $(X_{\bfu},\kappa,S)$.
We set
$$
g=\EE_\kappa[\pi_0|\af]=\proj_{L^2(\af )}(\pi_0).$$
By our assumption, the ergodic decomposition of $\kappa$ is purely atomic, i.e.
$$X_{\bfu}/\ci_S=\textcolor{DarkViolet}{\Xb_{\bfu}}=\{c_1,c_2,\ldots\}=\{1,2,\ldots\}$$
with $q:X_{\bfu}\to X_{\bfu}/\ci_S$ the quotient map and $\kappa=\sum_{i\geq1}\alpha_i\kappa_{i}$   ($\kappa_{i}$ are all ergodic and supported on the fiber over $i$; $\alpha_i>0$, $\sum_i\alpha_i=1$).
We now define a self-joining $\lambda$ of $(S|_{\af},\kappa|_{\af})$ (in fact, of $(S,\kappa)$; the spaces of ergodic components for $\kappa$ and $\kappa|_{\af}$ are the same) by putting
$$\la|_{q^{-1}(i)\times q^{-1}(i)}=\alpha_i^2\Delta_{\kappa_{i}}\text{ and }
\la|_{q^{-1}(i)\times q^{-1}(j)}=\alpha_i\alpha_j(\kappa_{i}\ot\kappa_{j})$$ for $i\neq j$.  Since $\la|_{X_{\bfu}/\ci_S\times
X_{\bfu}/\ci_S}(i,j)=\alpha_i\alpha_j$, it is not hard to see that $\la|_{\ci_S\ot\ci_S}$ equals $\kappa|_{\ci_S}\ot \kappa|_{\ci_S}$. On the other hand, for $i\neq j$, we have
$$
\EE_{\la}[g\ot \ov{g}\,|\,\ci_S\ot\ci_S](i,j)=
\int g\ot\ov{g}d\la_{i,j}=\int g\,d\kappa_i\cdot\ov{\int g\,d\kappa_j}=0$$
because of our \eqref{now5} assumption. Moreover,
\beq\label{perg}
\EE_{\la}(g\ot \ov{g}\,|\,\ci_S\ot\ci_S)(i,i)
=\int|g|^2\,d\kappa_{i}\eeq
But by Corollary~\ref{c:enfin}, it follows that
$\EE_{\la}(g\ot \ov{g}\,|\,\ci_{S\times S})=0$, so the more
$\EE_{\la}(g\ot \ov{g}\,|\,\ci_S\ot\ci_S)=0$, and
finally $g=0$ by \eqref{perg}, whence the Veech condition holds.
\end{proof}
Note that if  all $\kappa\in V(\bfu)$ are ergodic then \eqref{now5} follows immediately whenever the mean of $\bfu$ is zero, see \eqref{ghk2}.

\begin{Remark} A slightly more general form of Frantzikinakis' theorem \cite{Fr}:
\begin{quote}
{\em If all logarithmic Furstenberg systems for $\mob$ have purely atomic ergodic decomposition then the logarithmic Chowla holds}
\end{quote}
follows now from  Frantzikinakis-Host's theorem \cite{Fr-Ho} on the logarithmic M\"obius orthogonality of all zero entropy uniquely ergodic systems which, by Proposition~\ref{p:alaFran}, implies the validity of logarithmic Sarnak's conjecture
and then by the Tao's result about the equivalence of logarithmic versions of Sarnak and Chowla conjectures \cite{Ta}.\end{Remark}

\subsection{Veech and Sarnak's conditions are not equivalent - a counterexample} \label{s:aoma}
Let $\cf$ be a characteristic class.
Given an arithmetic function $\bfu:\N\to\D$, following \cite{Ka-Ku-Le-Ru}, we say that $\bfu$ {\em satisfies the Sarnak condition} (relative to $\cf$) if
$$
\bfu\perp (X,T)\text{ for all }(X,T)\in\mathscr{C}_{\cf}.$$
In \cite{Ka-Ku-Le-Ru}, it has been proved that the Veech condition (which is the LHS of \eqref{ve2}) implies the Sarnak condition for $\cf$, and the two conditions are equivalent whenever $\cf=\cf_{\rm ec}$ (see Section~\ref{sec:cc} for the definition of an ec-class). One of the  problems in \cite{Ka-Ku-Le-Ru} left open was the question whether the Veech and Sarnak conditions are equivalent for an {\bf arbitrary} characteristic class $\cf$. We will now show that it is not the case.\footnote{The result has been obtained jointly with A. Kanigowski.}

Let $\cf={\rm Rig}((q_n))$, that is the class of automorphisms which are $(q_n)$-rigid: $R\in\cf$ if $f\circ R^{q_n}\to f$ for all $f$ in the $L^2$-space of $R$. Similarly as in \cite{Ka-Ku-Le-Ru}, we use the result of Fayad and Kanigowski \cite{Fa-Ka} which gives
\beq\label{rigMa0}
\mbox{$(q_n)$ which is a rigidity time for a weakly mixing automorphism,}\eeq but
\beq\label{rigMa}
\mbox{$(q_n)$ is not a rigidity time for {\bf any} (non-trivial) rotation.}\eeq

Consider ${T}(x,y)=(x,y+x)$ on $\T^2$. We consider this automorphism with invariant measure ${\mu}:=\sigma\ot{\rm Leb}_{\T}$ (cf.\ Section~\ref{e:przyk}), where $\sigma$ is the continuous measure given by the maximal spectral type of the weakly mixing automorphism in~\eqref{rigMa0}. As proved in \cite{Ka-Ku-Le-Ru}, $\cf_{\rm ec}\subsetneq \cf$, but in fact, $({T},{\mu})\in\cf$ while the maximal $\cf_{\rm ec}$-factor of it, in view of~\eqref{rigMa}, is equal to $\ci_{{T}}$ (corresponding to the first coordinate sigma-algebra $(\T,\sigma)$). Let us choose any Borel function $F:\T^2\to\{-1,1\}$, so that $\EE[F\,|\,\ci_{{T}}]=0$ (for example $F$ equals -1 on $\T\times[0,1/2)$ and 1 otherwise). Consider now the stationary process $(F\circ {T}^n)_{n\in\Z}$ denoting by $\kappa$ its distribution. As a measure-preserving system, it is a factor of $({T},{\mu})$ and $F$ is orthogonal to $L^2(\T,\sigma)$ which is the largest $\cf_{\rm ec}$-factor (of ${T}$), so the more $F$ is orthogonal to the $L^2$-space of its largest $\cf_{\rm ec}$-factor. By passing to the shift model, we obtain a stationary $\pm1$-valued process $(\pi_0\circ S^n)$ with distribution $\kappa$. Since we deal now with a shift invariant measure on the full shift, due to \cite{Co-Do-Se}, there is a generic point $\bfu:\N\to\{-1,1\}$ for $\kappa$.
Since $\pi_0$ is orthogonal to the $L^2$-space of the largest $\cf_{\rm ec}$-factor, the Veech condition is satisfied for $\bfu$. By Theorem~B in \cite{Ka-Ku-Le-Ru}, it follows that the Sarnak condition (for $\cf_{\rm ec}$) is satisfied. That is, $\bfu\perp \mathscr{C}_{\cf_{\rm ec}}$. However, the class we deal with satisfies $\cf_{\rm ec}\subset \cf$, so as observed in \cite{Ka-Ku-Le-Ru},
$$
\mathscr{C}_{\cf_{\rm ec}}=\mathscr{C}_{\cf}.$$
It follows that $\bfu$ satisfies the Sarnak condition for $\cf$. But $\bfu$ cannot satisfy the Veech condition for $\cf$, as $({T},{\mu})\in\cf$, so also the factor determined by the process $(\pi_0\circ S^n)$ is in $\cf$, and the largest $\cf$-factor is the whole sigma-algebra. If the Veech condition holds, then $F=0$ which is an absurd.

\begin{Remark} In our reasoning, it was important that the class we have chosen
satisfies
$$\cf_{\rm ec}\subsetneq \cf,$$
since on one hand $\mathscr{C}_{\cf}=\mathscr{C}_{\cf_{\rm ec}}$ and on the other we can play with members which are in $\cf$ but not in $\cf_{\rm ec}$.
Note that the characteristic classes included in Erg$^\perp$ pointed out in \cite{Be-Go-Ru} (there are $R\in{\rm Erg}^\perp$ such that {\bf all} self-joinings remain in Erg$^\perp$)
will satisfy our requirements as each characteristic class $\cf\subset {\rm Erg}^\perp$ satisfies $\cf_{\rm ec}=$ID, so our reasoning applies.

Note finally that the sequence $\bfu$ which we used to obtain the counterexample is generating for a measure which yields the (unique) Furstenberg system and the corresponding measure-preserving system is in Erg$^\perp\cap {\rm Rig}((q_n))$.
\end{Remark}

\subsection{Application: an averaged Chowla property}\label{s:acp} We will now show that an averaged Chowla conjecture for $\bfu$ (defined below) is equivalent to an orthogonality conjecture~\eqref{sarnak10} for topological systems whose all invariant measures yield systems from a special characteristic class. We fix a bounded $\bfu:\N\to\C$ and start with a small extension of Proposition~\ref{p:ju1}.

\begin{Prop}\label{p:dz1} The following conditions are equivalent:\\
(a) $\bfu$ has zero mean on typical short interval (this is equivalent to the fact that the first GHK-norm of $\bfu$ vanishes).\\
(b)  $\EE_\kappa[\pi_0\,|\,\ci_S]=0$ for each Furstenberg system $\kappa\in V(\bfu)$.\\
(c) The spectral measure $\sigma_{\pi_0,\kappa}$ has no atom at~1 for each Furstenberg system $\kappa\in V(\bfu)$.
\end{Prop}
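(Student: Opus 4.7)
The plan is to observe that (a) $\Leftrightarrow$ (b) is already contained in Proposition~\ref{p:ju1}. Indeed, condition~(a) here (``zero mean on typical short interval'') is condition~(b) of Proposition~\ref{p:ju1}, and condition~(b) here ($\EE_\kappa[\pi_0\,|\,\ci_S]=0$ for each $\kappa\in V(\bfu)$) is condition~(c) of Proposition~\ref{p:ju1}. So no new argument is required for this direction.

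The remaining equivalence (b) $\Leftrightarrow$ (c) is a standard application of the spectral theorem. First I would recall that the spectral measure $\sigma_{\pi_0,\kappa}$ of $\pi_0$ with respect to the Koopman operator of $S$ is defined by $\widehat\sigma_{\pi_0,\kappa}(n)=\int_{X_{\bfu}} S^n\pi_0\cdot\ov{\pi_0}\,d\kappa$. Its atom at~$1$ is given by
$$
\sigma_{\pi_0,\kappa}(\{1\})=\bigl\|P_1\pi_0\bigr\|_{L^2(\kappa)}^2,
$$
where $P_1$ is the orthogonal projection onto the eigenspace of the Koopman operator corresponding to the eigenvalue~$1$, that is, onto the space $L^2(\ci_S,\kappa)$ of $S$-invariant functions. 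By the very definition of conditional expectation, $P_1\pi_0=\EE_\kappa[\pi_0\,|\,\ci_S]$, and hence
$$
\sigma_{\pi_0,\kappa}(\{1\})=\bigl\|\EE_\kappa[\pi_0\,|\,\ci_S]\bigr\|_{L^2(\kappa)}^2.
$$
This identity shows immediately that $\sigma_{\pi_0,\kappa}(\{1\})=0$ if and only if $\EE_\kappa[\pi_0\,|\,\ci_S]=0$, giving (b) $\Leftrightarrow$ (c).

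Alternatively, to keep the argument self-contained, one may derive the key identity directly via the von Neumann mean ergodic theorem: $\frac1H\sum_{h\leq H}S^h\pi_0\to \EE_\kappa[\pi_0\,|\,\ci_S]$ in $L^2(\kappa)$, so that
$$
\bigl\|\EE_\kappa[\pi_0\,|\,\ci_S]\bigr\|^2_{L^2(\kappa)}=\lim_{H\to\infty}\frac1{H^2}\sum_{h,h'\leq H}\widehat\sigma_{\pi_0,\kappa}(h-h'),
$$
while Wiener's lemma identifies the right-hand side with $\sigma_{\pi_0,\kappa}(\{1\})$. There is no real obstacle here; the proposition is essentially a restatement of Proposition~\ref{p:ju1} augmented by the spectral reading of the atom at~$1$.
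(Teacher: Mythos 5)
Your proposal is correct and follows exactly the route of the paper: the equivalence (a) $\Leftrightarrow$ (b) is read off from Proposition~\ref{p:ju1}, and (b) $\Leftrightarrow$ (c) is the standard spectral-theory identity $\sigma_{\pi_0,\kappa}(\{1\})=\|\EE_\kappa[\pi_0\,|\,\ci_S]\|^2_{L^2(\kappa)}$, which the paper invokes without spelling out. Your version merely makes explicit the detail the paper leaves to the reader.
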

\begin{proof} The fact that (b) and (c) are equivalent comes from spectral theory.\end{proof}

We now consider $\bfu$ satisfying an averaged Chowla property \cite{Ka-Ku-Le-Ru} based on \cite{Ma-Ra-Ta}: \beq\label{aChp}\lim_{H\to\infty}\frac1H\sum_{h\leq H}\lim_{k\to\infty}\frac1{M_k}\Big|\sum_{m\leq M_k}\bfu(m+h)\bfu(m)\Big|=0\eeq
for each $(M_k)$ defining a Furstenberg system of $\bfu$.

\begin{Remark}As stated, \eqref{aChp} should be called an averaged 2-Chowla property. However, as shown in \cite{Ma-Ra-Ta}, see also Appendix~A in \cite{Ka-Ku-Le-Ru}, the averaged 2-Chowla property implies: $$\lim_{H\to\infty}\frac1{H^k}\sum_{h_1,\ldots,h_k\leq H}\lim_{k\to\infty}\frac1{M_k}\Big|\sum_{n\leq M_k}\bfu(n)\bfu(n+h_1)\ldots\bfu(n+h_k)\Big|=0$$
for each $k\geq1$, $1\leq h_1<\ldots<h_k$. From the ergodic theory point of view, it is the classical result that weak mixing property implies weak mixing of all orders.\end{Remark}
As shown in \cite{Fe-Ku-Le}, \cite{Ka-Ku-Le-Ru}, \eqref{aChp} is equivalent to the fact that $\sigma_{\pi_0,\kappa}$ is continuous for each Furstenberg system $\kappa\in V(\bfu)$. It follows (see Proposition~\ref{p:dz1} above) that the averaged Chowla property implies the zero mean on typical short interval, that is (see Proposition~\ref{p:ju1}), $\|\bfu\|_{u^1}=0$. We aim at proving the following result.

\begin{Th}\label{t:dz2} Assume that $\|\bfu\|_{u^1}=0$. The following conditions are equivalent:\\
(a) $\bfu$ satisfies an averaged Chowla property.\\
(b) $\bfu$ is orthogonal with all uniquely ergodic models of discrete spectrum automorphisms.\\
(c) $\bfu$ is orthogonal to all topological systems whose all invariant measures yield discrete spectrum measure-preserving systems.

Moreover, every of the conditions (a), (b) and (c) implies:\\
(d) For each $\vep>0$ there exists $H_0\geq1$ such that for each $H\geq H_0$ and each $Q\geq1$, we have\footnote{For the Liouville function this assertion has been proved to hold by Sacha Mangerel in 2019 (private communication). The property in (d) gives an averaged uniformity of zero mean on ``intervals'' along arithmetic progressions.}
$$
\frac1Q\sum_{q\leq Q}\limsup_{N\to\infty}\frac1N\sum_{n\leq N}
\left| \frac1H\sum_{h\leq H}\bfu(hq+n)\right|^2<\vep.$$ \end{Th}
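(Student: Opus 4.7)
My plan is to prove $(a)\Leftrightarrow(b)\Leftrightarrow(c)$ by identifying a common intermediate condition---namely, the continuity of every spectral measure $\sigma_{\pi_0,\kappa}$---and then to derive $(d)$ from $(a)$ via a spectral / uniform-continuity argument.

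For $(a)\Leftrightarrow(b)$: under the standing hypothesis $\|\bfu\|_{u^1}=0$, Proposition~\ref{p:dz1} guarantees that $\sigma_{\pi_0,\kappa}(\{1\})=0$ for every Furstenberg system $\kappa\in V(\bfu)$. Combining this with Proposition~\ref{p:scd1a}, which characterises $(b)$ by the property that each $\sigma_{\pi_0,\kappa}$ is absolutely continuous with respect to $\eta+\delta_{\{1\}}$ for some continuous $\eta$, I obtain that $(b)$ holds exactly when every $\sigma_{\pi_0,\kappa}$ is continuous---i.e.\ exactly when $(a)$ holds.

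For $(a)\Leftrightarrow(c)$: the sub-sigma-algebra $\ca_{\rm DISP}(X_{\bfu},\kappa,S)$ is generated by the eigenfunctions of $S$ in $L^2(\kappa)$, so by standard spectral theory $\pi_0\perp L^2(\ca_{\rm DISP})$ is equivalent to $\sigma_{\pi_0,\kappa}$ being continuous (a nonzero spectral projection of $\pi_0$ onto an eigenspace would produce an atom). Thus $(a)$ is equivalent to the Veech condition for the characteristic class DISP, and the general implication Veech $\Rightarrow$ Sarnak of \cite{Ka-Ku-Le-Ru}, recorded in~\eqref{ve2}, yields $(a)\Rightarrow(c)$. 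The converse $(c)\Rightarrow(b)$ is immediate since ${\rm UE}\cap{\rm DISP}\subset\mathscr{C}_{\rm DISP}$.

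For $(a)\Rightarrow(d)$: set $F_H(w):=|\tfrac{1}{H}\sum_{h\le H}w^h|^2$ and $T_q:z\mapsto z^q$. By extracting Furstenberg systems along $\limsup$-realizing subsequences, for each $q\le Q$ there is some $\kappa_q\in V(\bfu)$ such that
\[\limsup_{N\to\infty}\frac{1}{N}\sum_{n\le N}\Bigl|\frac{1}{H}\sum_{h\le H}\bfu(hq+n)\Bigr|^2 = \int F_H(z^q)\,d\sigma_{\pi_0,\kappa_q}(z),\]
so the averaged quantity in $(d)$ equals $\int F_H\,d\tau_Q^{(\kappa_q)}$, where $\tau_Q^{(\kappa_q)}:=\tfrac{1}{Q}\sum_{q\le Q}(T_q)_*\sigma_{\pi_0,\kappa_q}$. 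The key lemma I will prove is the uniform continuity at $1$ of the whole family $\{\tau_Q^{(\kappa_q)}:Q\ge 1,\,(\kappa_q)\in V(\bfu)^Q\}$---that is, $\sup\tau_Q^{(\kappa_q)}(B_\delta(1))\to 0$ as $\delta\to 0$---after which the standard Fej\'er-kernel estimate $\int F_H\,d\tau_Q^{(\kappa_q)}\le \tau_Q^{(\kappa_q)}(B_\delta(1))+O(1/(H\delta)^2)$ yields the required bound by $\vep$ once $H\ge H_0(\vep)$. The uniform continuity itself I intend to extract from a compactness argument: any weak-$*$ limit $\tau$ of a sequence $\tau_{Q_n}^{(\kappa_q^n)}$ must be continuous, either because $Q_n$ stays bounded and $\tau$ is then a convex combination of continuous pushforwards $(T_q)_*\sigma_\kappa$ (using weak-$*$ compactness of $V(\bfu)$ and continuity of $\kappa\mapsto\sigma_{\pi_0,\kappa}$), or because $Q_n\to\infty$ and $\tau$ is identified with Haar measure via Wiener's lemma applied to the continuous pushforwards $(T_k)_*\sigma_\kappa$.

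I expect the main obstacle to lie in the second case of this compactness argument: when $Q_n\to\infty$ and the $\kappa_q^n$ are allowed to depend on $q$. For a fixed $\kappa$ the Wiener decay $\hat\tau_Q^\kappa(k)=\tfrac{1}{Q}\sum_q\hat\sigma_\kappa(qk)\to 0$ for $k\neq 0$ is immediate from continuity of $(T_k)_*\sigma_\kappa$, but promoting this to a statement uniform over sequences $(\kappa_q)$ in the compact family $V(\bfu)$ will require exploiting the joint continuity of $\kappa\mapsto\sigma_{\pi_0,\kappa}$ together with a careful diagonal/subsequence argument.
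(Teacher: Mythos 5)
Your treatment of the equivalences (a)$\Leftrightarrow$(b)$\Leftrightarrow$(c) is correct and essentially the paper's own route: (a)$\Leftrightarrow$(b) is Proposition~\ref{p:dz1} combined with Proposition~\ref{p:scd1a}, and your (a)$\Rightarrow$(c) via ``continuity of all $\sigma_{\pi_0,\kappa}$ is exactly the Veech condition for DISP, then apply Veech$\Rightarrow$Sarnak'' is the same mechanism the paper invokes through \cite{Fe-Ku-Le}; (c)$\Rightarrow$(b) is trivial in both. No objection there.

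The problem is (a)$\Rightarrow$(d), and it sits exactly where you flagged it, but it is not a matter of a ``careful diagonal argument'': the key lemma you need --- that $\sup_{Q}\sup_{(\kappa_q)\in V(\bfu)^Q}\frac1Q\sum_{q\le Q}\sigma_{\pi_0,\kappa_q}\bigl(\{z:|1-z^q|\le\delta\}\bigr)\to0$ as $\delta\to0$ --- is \emph{false} if the only inputs are weak$^*$-compactness of $\{\sigma_{\pi_0,\kappa}:\kappa\in V(\bfu)\}$ and continuity of each member. Indeed, take $\sigma_q:=\tfrac12 m+\tfrac12\rho_q^{(\eta)}$, where $m$ is normalized Lebesgue measure and $\rho_q^{(\eta)}$ is a fixed continuous probability measure rescaled and copied onto the $q$ arcs of angular width $\eta/q$ centred at the $q$-th roots of unity, with the constraint $\eta\ge 1/\log q$. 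The weak$^*$-closure of this family consists of continuous measures (the only new limit point is $m$), yet for any $\delta>0$ and all $q\ge q_0(\delta)$ one may choose $\eta\le c\delta$ and obtain $\sigma_q(\{|1-z^q|\le\delta\})\ge\tfrac12$, so the averages over $q\le Q$ stay $\ge\tfrac14$ for arbitrarily large $Q$. Hence in your ``second case'' ($Q_n\to\infty$, $\kappa_q$ varying with $q$) the limit measure can perfectly well acquire an atom at $1$, and neither Wiener's lemma nor the joint continuity of $\kappa\mapsto\sigma_{\pi_0,\kappa}$ can rule this out: some arithmetic input is indispensable. The paper supplies it in Proposition~\ref{p:p1}: a point $t$ for which $|1-e^{2\pi iqt}|\le 4/(H\sqrt\vep)$ holds for a proportion $\ge\vep/4$ of $q\le Q$ must satisfy $|1-e^{2\pi iqt}|\le 8/(H\sqrt\vep)$ for some $1\le q\le 8/\vep$, so as $H\to\infty$ such $t$ are confined to the finite set of rationals with denominator $\le 8/\vep$, which a continuous spectral measure does not charge. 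Crucially, that argument is a Chebyshev/Fubini argument in $q$ performed against a \emph{single} measure $\sigma_f$ (one integrates the proportion of good $q$'s against one fixed $\sigma_f$); it does not survive letting the measure depend on $q$. So either you abandon the $q$-dependent reduction and justify realizing all the $\limsup_N$'s along a single subsequence $(N_k)$ (hence a single Furstenberg system), which is what the paper does when it ``chooses $N_k$'' to obtain \eqref{dosp100} --- a step that itself requires an argument, since an average of limsups is not in general dominated by a limsup of averages --- or you must find a replacement for Proposition~\ref{p:p1} valid for $q$-dependent spectral measures, which your compactness scheme does not provide.
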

\begin{proof} (a)$\Rightarrow$(b)  Remembering that satisfying the averaged Chowla conjecture yields the continuity of spectral measure of $\pi_0$, the claim follows directly from Proposition~\ref{p:ouesZEgen} and the result in Remark~\ref{f:MarIm} (see also Corollary~\ref{c:scd2}).

(b)$\Rightarrow$(a) Again it follows from the result formulated in Remark~\ref{f:MarIm} (remembering that by Proposition~\ref{p:dz1},  the spectral measure of $\pi_0$ has no atom at~1).

Since (c)$\Rightarrow$(b), it is enough to show that (a)$\Rightarrow$(c) which follows from \cite{Fe-Ku-Le} (see the proof of Corollary 3.20 therein) since (a) implies that the spectral measure of $\pi_0$ is continuous (for each $\kappa\in V(\bfu)$).

As the proof of the second assertion, it is interesting for its own, we postpone it to a separate section,  see Section~\ref{s:dowodMangerel}.
\end{proof}

\begin{Remark} Note that the implication (c)$\Rightarrow$(a) is implicit in \cite{Ka-Ku-Le-Ru}. Indeed, the Veech condition for DISP means that the spectral measure $\sigma_{\pi_0,\kappa}$ has no atoms (for each $\kappa\in V(\bfu)$). Then in Section~5.6 \cite{Ka-Ku-Le-Ru} the Veech condition is obtained for all $\bfu$ satisfying strong $\bfu$-MOMO property for all rotations on the circle. Now, (c) implies (b) and (b) implies that $\bfu$ satisfies the strong $\bfu$-MOMO property for all irrational rotations. On the other hand,  (c) also implies that $\bfu$ is orthogonal to all systems whose all invariant measures yield measure-preserving systems with rational discrete spectrum.  But this class of measure-preserving systems forms so called ec characteristic class. So the claim now follows from Proposition~2.17 in \cite{Ka-Ku-Le-Ru}.\end{Remark}
If in the assumption of Theorem~\ref{t:dz2} we know additionally that $\sigma_{\pi_0}$ cannot have irrational atoms, then (d) is equivalent to all other conditions (a)-(c).
As shown by Frantzikinakis and Host, Theorem 1.5 \cite{Fr-Ho2}, if $\bfu$ is multiplicative (bounded by~1) then for no $\kappa\in V^{\rm log}(\bfu)$, the spectral measure of $\pi_0$ has an irrational atom. Therefore, we obtain the following (cf.\ Remark~\ref{r:gdylog}).

\begin{Cor}\label{c:nasze}Let $\bfu$ be a bounded by 1 multiplicative function satisfying (the logarithmic) $\|\bfu\|_{u^1}=0$. Then, the following conditions are equivalent:\\
(a) $\bfu$ satisfies the averaged log Chowla property: $$\lim_{H\to\infty}\frac1H\sum_{h\leq H}\lim_{k\to\infty}\frac1{\log M_k}\Big|\sum_{m\leq M_k}\frac1m\bfu(m+h)\bfu(m)\Big|=0$$ for each $(M_k)$ defining a logarithmic Furstenberg system of $\bfu$.\\
(b) $\bfu$ is log orthogonal to all uniquely ergodic models of discrete spectrum automorphisms.\\
(c) $\bfu$ is log orthogonal to all topological systems whose all invariant measures yield discrete spectrum measure-preserving systems.\\
(d) For each $\vep>0$ there exists $H_0\geq1$ such that for each $H\geq H_0$ and each $Q\geq1$, we have
$$
\frac1Q\sum_{q\leq Q}\limsup_{N\to\infty}\frac1{\log N}\sum_{n\leq N}
\frac1n\left| \frac1H\sum_{h\leq H}\bfu(hq+n)\right|^2<\vep.$$ \end{Cor}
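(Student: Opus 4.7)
The plan is to derive Corollary~\ref{c:nasze} by combining the logarithmic analogue of Theorem~\ref{t:dz2} with the Frantzikinakis--Host result (Theorem~1.5 of \cite{Fr-Ho2}) that forbids irrational atoms in the spectral measure of $\pi_0$ for any logarithmic Furstenberg system of a bounded multiplicative function. By Remark~\ref{r:gdylog}, the equivalences $(a)\Leftrightarrow(b)\Leftrightarrow(c)$ and the implication $(a)\Rightarrow(d)$ carry over from Theorem~\ref{t:dz2} to the logarithmic setting without any use of multiplicativity. Hence the only new ingredient is $(d)\Rightarrow(a)$, which is precisely the situation anticipated by the comment preceding the corollary (``if $\sigma_{\pi_0}$ cannot have irrational atoms, then $(d)$ is equivalent to the other conditions'').

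To establish $(d)\Rightarrow(a)$, fix an arbitrary $\kappa\in V^{\rm log}(\bfu)$, and let $(M_k)$ be an increasing sequence along which $\bfu$ is logarithmically quasi-generic for $\kappa$. For every pair of integers $H,q\geq 1$, the cylindrical (hence continuous) function
\[ F_{H,q}:=\Bigl|\tfrac{1}{H}\sum_{h\leq H}\pi_0\circ S^{hq}\Bigr|^2 \]
satisfies, by logarithmic genericity,
\[ \lim_{k\to\infty}\frac{1}{\log M_k}\sum_{n\leq M_k}\frac{1}{n}\Bigl|\tfrac{1}{H}\sum_{h\leq H}\bfu(hq+n)\Bigr|^2=\int F_{H,q}\,d\kappa. \]
Averaging over $q\leq Q$ and invoking $(d)$, for every $\vep>0$ there is $H_0$ such that for all $H\geq H_0$ and $Q\geq 1$,
\[ \frac{1}{Q}\sum_{q\leq Q}\int F_{H,q}\,d\kappa<\vep. \]

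By the spectral theorem, $\int F_{H,q}\,d\kappa=\int_{\bs^1}|\tfrac{1}{H}\sum_{h\leq H}z^{hq}|^2\,d\sigma_{\pi_0,\kappa}(z)\geq\sigma_{\pi_0,\kappa}(\{z:z^q=1\})$, since the integrand equals $1$ at every $q$-th root of unity and is nonnegative elsewhere. Therefore
\[ \frac{1}{Q}\sum_{q\leq Q}\sigma_{\pi_0,\kappa}\bigl(\{z\in\bs^1:z^q=1\}\bigr)<\vep\quad\text{for every }Q\geq 1. \]
The left-hand side does not depend on $H$, so it must equal $0$ for every $Q$; since the summands are nonnegative, $\sigma_{\pi_0,\kappa}(\{z:z^q=1\})=0$ for every $q\geq 1$, meaning $\sigma_{\pi_0,\kappa}$ assigns no mass to any root of unity. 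The Frantzikinakis--Host theorem then rules out irrational atoms in $\sigma_{\pi_0,\kappa}$, so $\sigma_{\pi_0,\kappa}$ is continuous for every $\kappa\in V^{\rm log}(\bfu)$. By \cite{Fe-Ku-Le} and the discussion in Section~\ref{s:acp}, this continuity is equivalent to the averaged logarithmic Chowla property $(a)$.

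The delicate step of the plan is the passage from the combinatorial statement $(d)$, averaged over arithmetic progressions of common difference $q$, to the spectral conclusion about roots of unity, which requires carefully tracking the uniformity of $(d)$ in both $H$ and $Q$. Once this translation is secured, the Frantzikinakis--Host input bridges the gap between ``no rational atom'' and ``continuous spectral measure'', and the rest of the corollary is just the logarithmic reading of already established material.
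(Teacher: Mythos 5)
Your proposal is correct and follows essentially the same route as the paper: transfer of the equivalences $(a)\Leftrightarrow(b)\Leftrightarrow(c)$ and $(a)\Rightarrow(d)$ to the logarithmic setting via Remark~\ref{r:gdylog}, plus the observation that $(d)$ rules out rational atoms of $\sigma_{\pi_0,\kappa}$ while the Frantzikinakis--Host theorem rules out irrational ones, yielding continuity and hence $(a)$. Your spectral computation for $(d)\Rightarrow(a)$ is just an explicit version of the "trivial other direction" recorded in the footnote to Proposition~\ref{p:p1}, which is what the paper implicitly invokes.
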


It is reasonable to conjecture that the above is also true for the Ces\`aro averages. For example, recently in \cite{Fr-Le-Ru}, it has been proved that for pretentious multiplicative functions we cannot have irrational eigenvalues for the Furstenberg systems.

\section{Proof of the second assertion in Theorem~\ref{t:dz2}}\label{s:dowodMangerel}
\subsection{Ergodic theory}
\begin{Prop}\label{p:p1}
Let $(X,\mathcal{B}_X,\mu,T)$ be a measure-preserving system. Let $f\in L_0^2\xbm$ and assume that its spectral measure $\sigma_f$ has no atoms at rationals. Then, for each $\vep>0$ there exists $H_0\geq1$ such that for each $H\geq H_0$ and each $Q\geq1$, we have
\beq\label{sacha}
\frac1Q\sum_{q\leq Q}\left\|
\frac1H\sum_{h\leq H}f\circ T^{qh}\right\|_{L^2(\mu)}^2<\vep.\eeq
\end{Prop}
\begin{proof}
By assumption, $\sigma_f(\{0\})=0$ and we can assume that $\sigma_f(\T)=1$ (which is equivalent to $\|f\|_{L^2(\mu)}=1$).

Suppose that \eqref{sacha} does not hold. Therefore,
\begin{multline}
\;\;\;\;\;\;\;\;\;\;\;\;\;\left(\exists \vep>0\right)\left(\forall H_0\geq1\right)\left(\exists H\geq H_0\right)\left(\exists Q\geq 1\right)\\
\frac1Q\sum_{q\leq Q}\left\|
\frac1H\sum_{h\leq H}f\circ T^{qh}\right\|_{L^2(\mu)}^2\geq\vep.\end{multline}
Equivalently (with the same quantifiers),
$$
\int_{\T}\frac1Q\sum_{q\leq Q}\left|
\frac1H\sum_{h\leq H}\left(e^{2\pi i qt}\right)^h\right|^2\,d\sigma_f(t)\geq \vep.$$
For $t\in\T=[0,1)$, denote
$$
\phi^H_q(t):=\left|
\frac1H\sum_{h\leq H}\left(e^{2\pi i qt}\right)^h\right|^2,$$
$$
\Phi_Q^H(t)=\frac1Q\sum_{q\leq Q}\phi^H_q(t).$$
We have $0\leq \phi^H_q(t)\leq1$ and $0\leq \Phi^H_Q(t)\leq1$.
It follows that
$$
\int_{\T} \Phi^H_Q(t)\,d\sigma_f(t)\leq\frac{\vep}{2}\sigma_f\left(\Big\{t\in\T:\:
\Phi^H_Q(t)<\frac{\vep}2\Big\}\right)
+\sigma_{f}\left(\Big\{t\in\T:\:\phi^H_Q(t)\geq \frac{\vep}2\Big\}\right).$$
Since $\int_{\T} \phi^H_Q(t)\,d\sigma_f(t)\geq\vep$ (and $\sigma_f(\T)=1$), we have
$$\sigma_f\left(\Big\{t\in\T:\:\phi^H_Q(t)\geq\frac{\vep}2\Big\}\right)\geq
\frac{\vep}2.$$
The same argument shows that
$$ \Phi^H_Q(t)\leq\frac{\vep}{4}\frac{|\{q\leq  Q:\:\phi^H_q(t)<\vep/4\}|}{Q}+\frac{
|\{q\leq Q:\:\phi^H_q(t)\geq \vep/4\}|}Q,$$
so if $\Phi^H_Q(t)\geq \vep/2$, we obtain
$$
\frac{|\{q\leq Q:\: \phi^H_q(t)\geq\vep/4\}|}Q\geq\frac{\vep}4.$$
On the other hand,
$$
\phi^H_q(t)\leq\frac4{H^2\left|1-e^{2\pi iqt}\right|^2},$$
so if $\phi^H_q(t)\geq\vep/4$, we obtain that
$$
\left|1-e^{2\pi i qt}\right|\leq\frac1H\cdot\frac4{\sqrt\vep}.$$
We hence obtain that:
\beq\label{partielle}
\mbox{for infinitely many $H\geq1$, we have $\sigma_f(A_H)\geq\frac{\vep}2$},\eeq
where $A_H:=\left\{t\in\T:\:\frac1Q\sum_{q\leq Q}\raz_{\left|1-e^{2\pi iqt}\right|\leq\frac1H\cdot\frac4{\sqrt\vep}}\geq\frac{\vep}4\right\}$.
It follows that
$$
\sigma_f\left(\bigcap_{H_0}\bigcup_{H\geq H_0}A_H\right)\geq \frac{\vep}2.$$
Let us take $t\in \bigcap_{H_0}\bigcup_{H\geq H_0}A_H$. Hence, for infinitely many $H$, there is $Q$ (which depends on $H$) such that
\beq\label{th1}
\frac1Q\left|\left\{q\leq Q:\:\left|1-e^{2\pi iqt}\right|\leq\frac1H\cdot\frac4{\sqrt\vep}\right\}\right|
\geq\frac{\vep}4.\eeq
Assume that $H$ and $Q$ satisfy~\eqref{th1}. Let
$$E:=\left\{q\leq Q:\:\left|1-e^{2\pi iqt}\right|\leq\frac1H\cdot\frac4{\sqrt \vep}\right\}\neq\emptyset.$$
Then either
\begin{itemize}
\item there is only one element in $E$ and then (by~\eqref{th1}) $1\leq q\leq Q\leq\frac4{\vep}$ or
\item there are at least two distinct elements in $E$.
\end{itemize}
In the latter case, set $m:=\min\{q_2-q_1:\:q_1<q_2\text{ both in }E\}$. Then, $m\leq Q$ and
$$
|E|\leq \frac Qm+1\leq\frac{2Q}m.$$
But, by \eqref{th1}, we know that $\frac{|E|}Q\geq\frac{\vep}4$, so
$\frac2m\geq\frac{\vep}4$, that is, $m\leq \frac{8}{\vep}$. Hence, in $E$, there are two different integers $q_1,q_2$ with $0<q_2-q_1\leq\frac8{\vep}$. Since
$$
\left|1-e^{2\pi i q_jt}\right|\leq\frac1H\cdot\frac4{\sqrt\vep}$$
for $j=1,2$, we obtain
$$
\left|1-e^{2\pi i (q_2-q_1)t}\right|=\left|e^{2\pi iq_1t}-e^{2\pi i q_2t}\right|\leq\frac1H\cdot\frac8{\sqrt\vep}.$$

We have proved that (in both cases!) whenever $t\in\bigcap_{H_0\geq1}\bigcup_{H\geq H_0}A_H$, then for infinitely many $H$ there is $1\leq q\leq8/\vep$ satisfying
$\left|1-e^{2\pi i qt}\right|\leq\frac1H\cdot\frac8{\sqrt\vep}$. It follows that the set $\bigcap_{H_0\geq1}\bigcup_{H\geq H_0}A_H$ contains only rational numbers with denominators at most $8/\vep$. On the other hand $\sigma_f(\bigcap_{H_0\geq}\bigcup_{H\geq H_0}A_H)>0$, so the spectral measure of $f$ has rational atoms, a contradiction.
\end{proof}

\begin{Remark}The other direction in Proposition~\ref{p:p1} is trivial: Let $F_r\subset L^2_0(\mu)$ be the space of eigenfunctions corresponding to $1/r$, $f=g+h$, $g\in F_r$, $h\in F_r^\perp$ (both $F_r$ and $F_r^\perp$ are $T$-invariant). By the Pythagorean theorem: $$\left\|
\frac1H\sum_{h\leq H}f\circ T^{qh}\right\|_{L^2(\mu)}^2\geq \left\|
\frac1H\sum_{h\leq H}g\circ T^{qh}\right\|_{L^2(\mu)}^2.$$ Since $g\circ T^{qh}=e^{2\pi iqh/r}g,$
$$\left\|
\frac1H\sum_{h\leq H}g\circ T^{qh}\right\|_{L^2(\mu)}^2=\left|
\frac1H\sum_{h\leq H}e^{2\pi iqh/r}\right|\|g\|_{L^2(\mu)}^2.$$ Now, for $q$ which is a multiple of $r$ we obtain the constant value $\|g\|_{L^2(\mu)}$, otherwise, it is zero.
\end{Remark}

\subsection{(a) implies the second assertion}
Our aim is to give an ``ergodic proof'' of the following result:
\begin{Prop}\label{p:ju3} Assume that $\bfu$ is a (bounded) arithmetic  function such that the spectral measure of $\pi_0$ has no rational atoms for all $\kappa\in V(\bfu)$. Then,
for each $\vep>0$ there exists $H_0\geq1$ such that for each $H\geq H_0$ and each $Q\geq1$, we have
$$
\frac1Q\sum_{q\leq Q}\limsup_{N\to\infty}\frac1N\sum_{n\leq N}
\left| \frac1H\sum_{h\leq H}\bfu(hq+n)\right|^2<\vep.$$\end{Prop}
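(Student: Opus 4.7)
The plan is to translate each summand in the statement into a spectral quantity on a Furstenberg system of $\bfu$, and then apply Proposition~\ref{p:p1} with uniform control over $V(\bfu)$. Concretely, introduce the continuous function $g_{H,q}(x):=|\frac1H\sum_{h\leq H}\pi_0(S^{hq}x)|^2\in C(X_{\bfu})$, so that $\frac1N\sum_{n\leq N}|\frac1H\sum_{h\leq H}\bfu(hq+n)|^2=\int g_{H,q}\,d\mu_N^{\bfu}$ with $\mu_N^{\bfu}:=\frac1N\sum_{n\leq N}\delta_{S^n\bfu}$. Since $V(\bfu)$ is weakly-$*$ compact and $g_{H,q}$ is continuous, the subsequential limits of these Ces\`aro averages are exactly $\{\int g_{H,q}\,d\kappa:\kappa\in V(\bfu)\}$; combined with the spectral identity $\int g_{H,q}\,d\kappa=\int_{\T}|F_H(qt)|^2\,d\sigma_{\pi_0,\kappa}(t)$ for $F_H(x)=\frac1H\sum_{h\leq H}e^{2\pi ihx}$, this gives
$$\limsup_{N\to\infty}\frac1N\sum_{n\leq N}\left|\frac1H\sum_{h\leq H}\bfu(hq+n)\right|^2=\sup_{\kappa\in V(\bfu)}\int_{\T}|F_H(qt)|^2\,d\sigma_{\pi_0,\kappa}(t).$$

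For each individual $\kappa\in V(\bfu)$, the hypothesis gives $\sigma_{\pi_0,\kappa}$ without rational atoms, so Proposition~\ref{p:p1} applied to $(X_{\bfu},\kappa,S)$ with $f=\pi_0$ yields some $H_0(\kappa)$ with $\frac1Q\sum_{q\leq Q}\int_{\T}|F_H(qt)|^2\,d\sigma_{\pi_0,\kappa}(t)<\vep$ for all $H\geq H_0(\kappa)$ and all $Q\geq 1$. I would then remove the dependence on $\kappa$ using compactness: the set $\mathcal{M}:=\{\sigma_{\pi_0,\kappa}:\kappa\in V(\bfu)\}$ is the weak-$*$ continuous image of the compact $V(\bfu)$, hence compact; moreover its closed convex hull $\overline{\mathrm{conv}}(\mathcal{M})$ retains absence of rational atoms (by Choquet's barycentric decomposition, since $\mathrm{ext}(\overline{\mathrm{conv}}(\mathcal{M}))\subset\mathcal{M}$ and $\tau\mapsto\tau(\{r\})$ is Borel for rational $r$). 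A contradiction argument paralleling the proof of Proposition~\ref{p:p1} then shows that if the uniform bound failed along sequences $H_k\to\infty$, $Q_k$, $\kappa_k\in V(\bfu)$, some weak-$*$ limit $\sigma^*\in\mathcal{M}$ would carry mass $\geq\vep/2$ on the finite set $\mathcal{R}$ of rationals with denominator $\leq 8/\vep$, contradicting the hypothesis.

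To conclude Proposition~\ref{p:ju3}, the quantity $\frac1Q\sum_{q\leq Q}\sup_\kappa\int_{\T}|F_H(qt)|^2\,d\sigma_{\pi_0,\kappa}(t)$ must be bounded, with the optimizing $\kappa=\kappa_q$ possibly varying with $q$. Picking optimizers $\kappa_q\in V(\bfu)$ for each $q$, a failing scenario yields by pigeonholing a large subset $G\subset\{1,\ldots,Q\}$ of density $\geq\vep/2$ with $\sigma_{\pi_0,\kappa_q}(D_{H,q})\geq\vep/4$ for each $q\in G$, where $D_{H,q}=\{t:\|qt\|_{\T}\leq c/H\}$. Averaging produces the convex combination $|G|^{-1}\sum_{q\in G}\sigma_{\pi_0,\kappa_q}\in\overline{\mathrm{conv}}(\mathcal{M})$, still without rational atoms. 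Passing to a weak-$*$ limit and adapting the gap argument of Proposition~\ref{p:p1} --- which forces the bad $q$'s to have bounded denominator --- together with upper semi-continuity of $\sigma\mapsto\sigma(F)$ on closed sets, yields a rational atom in the limit measure, the desired contradiction.

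The main obstacle is precisely this last transfer from a single spectral measure to a variable family of optimizers. The sup-sum interchange $\sup_\kappa\frac1Q\sum_q(\cdot)\leq\frac1Q\sum_q\sup_\kappa(\cdot)$ goes in the wrong direction, so Proposition~\ref{p:p1} (even in its uniform form) only directly controls the smaller left-hand quantity. Bridging to the right-hand variable-optimizer bound demands careful use of both weak-$*$ compactness of $V(\bfu)$ and the closed-convex-hull structure of $\mathcal{M}$, so that the denominator-gap argument of Proposition~\ref{p:p1} can be run against a family of varying optimizing spectral measures rather than a single fixed one.
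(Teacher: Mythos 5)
You have correctly reduced the statement to a spectral one, and you have put your finger on the real difficulty: since the $\limsup_N$ sits \emph{inside} the average over $q$, one has
$$\limsup_{N\to\infty}\frac1N\sum_{n\leq N}\Big|\frac1H\sum_{h\leq H}\bfu(hq+n)\Big|^2=\max_{\kappa\in V(\bfu)}\int_{\T}\Big|\frac1H\sum_{h\leq H}e^{2\pi iqht}\Big|^2\,d\sigma_{\pi_0,\kappa}(t),$$
with an optimizing $\kappa=\kappa_{q,H}$ that may depend on $q$, whereas Proposition~\ref{p:p1} applied to one Furstenberg system only bounds $\frac1Q\sum_{q\leq Q}\int(\cdots)\,d\sigma_{\pi_0,\kappa}$ for that single $\kappa$. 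For comparison, the paper's own proof does not confront this at all: it argues by contradiction, ``chooses $N_k$'' so that the single empirical average $\frac1{Q_k}\sum_{q\leq Q_k}\frac1{N_k}\sum_{n\leq N_k}|\cdots|^2\geq\vep_0/2$, and then applies Proposition~\ref{p:p1} to the one Furstenberg system obtained along $(N_k)$ --- that is, it implicitly performs exactly the interchange of $\limsup_N$ with $\frac1Q\sum_q$ that you worry about. Your diagnosis of where the work lies is therefore sharper than the written proof.

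The problem is that your proposed repair does not close this gap. The Choquet/convex-hull step is fine (a barycenter of measures without rational atoms has none, since $\tau\mapsto\tau(\{r\})$ is a decreasing limit of integrals of continuous functions), but the ``adapted gap argument'' cannot be run on the family $(\sigma_{\pi_0,\kappa_q})_{q\in G}$. After pigeonholing you only know $\sigma_{\pi_0,\kappa_q}(D_{H,q})\geq\vep/4$ for each $q\in G$, a statement involving a \emph{different measure for each $q$}. The gap argument of Proposition~\ref{p:p1} needs one measure $\sigma$ and a set of $t$'s of $\sigma$-measure $\geq\vep/2$ such that for each such $t$ the set $E(t)=\{q\leq Q:\ |1-e^{2\pi iqt}|\leq c/H\}$ has density $\geq\vep/4$; only then do two nearby elements of $E(t)$ force $t$ to lie within $O(1/H)$ of a rational of denominator $\leq 8/\vep$. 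Averaging the measures over $q\in G$ does not recover this: you control the diagonal pairing $\frac1{|G|}\sum_{q\in G}\int\ind{D_{H,q}}\,d\sigma_{\pi_0,\kappa_q}$, not $\int\bigl(\frac1{|G|}\sum_{q\in G}\ind{D_{H,q}}\bigr)\,d\bar\sigma$, and no common $t$ lying in many $D_{H,q}$ is produced. The danger is not merely technical: there exist compact families $\mathcal M$ of probability measures on $\T$, each without rational atoms, for which $\frac1Q\sum_{q\leq Q}\max_{\sigma\in\mathcal M}\int|\frac1H\sum_{h\leq H}e^{2\pi iqht}|^2\,d\sigma(t)\geq\frac12$ for every $H$ and suitable $Q=Q(H)$ (take $\sigma_q=\delta_{\beta_q}$ with $\beta_q$ irrational at distance about $2^{-q}/q$ from $j_q/q$, with $\beta_q$ converging to an irrational). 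Hence no argument using only compactness of $V(\bfu)$ and absence of rational atoms of each $\sigma_{\pi_0,\kappa}$ can succeed. To complete a proof you must either justify the limsup/average interchange (i.e.\ show the limsups over $N$ can be simultaneously nearly realized for all $q\leq Q$ along one subsequence, which is what the paper's ``choose $N_k$'' step tacitly asserts), or identify an additional structural property of the family $\{\sigma_{\pi_0,\kappa}:\kappa\in V(\bfu)\}$; as written, your sketch supplies neither.
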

\begin{proof} Suppose the result does not hold. So there exists $\vep_0>0$ such that for each $H_0\geq1$ there exist $H\geq H_0$ and $Q$ (which will depend on $H$) such that
$$
\frac1Q\sum_{q\leq Q}\limsup_{N\to\infty}\frac1N\sum_{n\leq N}
\left| \frac1H\sum_{h\leq H}\bfu(hq+n)\right|^2\geq\vep_0.$$
By taking $H_0=k$, we obtain infinitely many $H_k$ (with the corresponding choice of $Q_k$) such that
$$
\frac1{Q_k}\sum_{q\leq Q_k}\limsup_{N\to\infty}\frac1N\sum_{n\leq N}
\left| \frac1{H_k}\sum_{h\leq H_k}\bfu(hq+n)\right|^2\geq\vep_0.$$
Then choose $N_k$ so that
\beq\label{dosp100}
\frac1{Q_k}\sum_{q\leq Q_k}\frac1{N_k}\sum_{n\leq N_k}
\left| \frac1{H_k}\sum_{h\leq H_k}\bfu(hq+n)\right|^2\geq\vep_0/2.\eeq
By passing to a subsequence if necessary, we can  assume that $\frac1{N_k}\sum_{n\leq N_k}\delta_{S^n\bfu}\to\kappa$. Due to our assumption on $\bfu$, we can apply Proposition~\ref{p:p1} (for $f=\pi_0\in L^2(X_{\bfu},\kappa)$)  to obtain: for some $H_0\geq1$, all $H\geq H_0$ and all $Q\geq1$, we have
$$
\frac1Q\sum_{q\leq Q}\int_{X_{\bfu}}\left|\frac1H\sum_{h\leq H}\pi_0\circ S^{qh}\right|^2\,d\kappa<\vep_0/4,$$
or, equivalently (by the definition of $\kappa$)
$$
\frac1Q\sum_{q\leq Q}\lim_{k\to\infty}\frac1{N_k}\sum_{n\leq N_k}\left|\frac1H\sum_{h\leq H}\bfu(qh+n)\right|^2<\vep_0/4,$$
which because of quantifiers on $H$ and $Q$ is in conflict with~\eqref{dosp100}.
\end{proof}

\begin{Remark}
For $\bfu$ equal to the Liouville function, Proposition~\ref{p:ju3} has been proved by S. Mangerel using purely number theoretic tools in 2019 (private communication).
\end{Remark}

\section{Empirical approximations of self-joinings of Furstenberg systems}\label{s:osiem}

Given a bounded arithmetic function $\bfu:\N\to\D$, we know concretely how to approach the Furstenberg systems of $\bfu$: they are (by definition) all weak$^*$-limits of empirical measures, that is, of the form
\begin{equation}
 \label{eq:FS}
 \kappa = \lim_{k\to\infty} \frac{1}{N_k} \sum_{1\le n\le N_k}\delta_{S^{n}\bfu},
\end{equation}
where $S$ is the shift map on $\D^\N$, and $(N_k)$ is any increasing sequence of natural integers such that the limit exists.

The purpose of this section is to get a similar description of {\bf any 2-fold self-joining of Furstenberg systems} of $\bfu$. Since the subshift context is irrelevant, we consider the more general setting: $X$ is a compact metric space, $S$ is a homeomorphism of $X$, and $\bfu\in X$. We continue to refer to any weak$^*$-limit of the form~\eqref{eq:FS} as a Furstenberg system of $\bfu$. We need the following definition.

\begin{Def}[Locally orbital sequence of permutations]
 Let $(N_k)_{k\ge1}$ be an increasing sequence of natural integers, and for each $k$, let $\phi_k$ be a permutation of $\{1,\ldots,N_k\}$. The sequence $(\phi_k)_{k\ge1}$ is said to be \emph{locally orbital} if
 \[
  \frac{1}{N_k} \Bigl| \Bigl\{ n\in\{1,\ldots,N_k-1\}: \phi_k(n+1)=\phi_k(n)+1 \Bigr\} \Bigr| \tend{k}{\infty} 1.
 \]
\end{Def}

Here is the explanation for the terminology: given such a sequence of permutations, we consider on the Cartesian square $X\times X$ the sequence of ``empirical measures'' of the form
\begin{equation}
 \label{eq:empirical_joining_of_FS}
 \frac{1}{N_k} \sum_{1\le n\le N_k} \delta_{(S^{n}\bfu,S^{\phi_k(n)}\bfu)}.
\end{equation}
When $(\phi_k)$ is locally orbital, for large $k$ these empirical measures are mostly supported on long pieces of orbits for $S\times S$.

Observe that any empirical measure of the form~\eqref{eq:empirical_joining_of_FS} has both marginals equal to $\frac{1}{N_k} \sum_{1\le n\le N_k}\delta_{S^{n}\bfu}$.
Therefore, a necessary condition for such a sequence to converge is that $\bfu$ be quasi-generic for some $S$-invariant measure $\kappa$ along $(N_k)$. Moreover, if $\lambda$ is the weak$^*$-limit of such a sequence, then the locally-orbital condition implies the $S\times S$-invariance of $\lambda$, hence under this assumption $\lambda$ is a 2-fold self-joining of a Furstenberg system $\kappa$ of $\bfu$. Our goal now is to prove the reciprocal:

\begin{Prop}
\label{prop:selfjoiningsofFS}
 Let $\bfu$ be quasi-generic along $(N_k)$ for some Furstenberg system $\kappa$, and let $\lambda$ be a 2-fold self-joining of $\kappa$. Then there exists a locally orbital sequence $(\phi_k)$, where each $\phi_k$ is a permutation of $\{1,\ldots,N_k\}$, such that
 \begin{equation}
  \label{eq:selfjoinings_of_FS}
  \lambda = \lim_{k\to\infty} \frac{1}{N_k} \sum_{1\le n\le N_k} \delta_{(S^{n}\bfu,S^{\phi_k(n)}\bfu)}.
 \end{equation}
 \end{Prop}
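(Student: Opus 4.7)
The strategy is to apply the lifting lemma (Theorem~\ref{t:lifting}) to realize $\lambda$ as a weak-$*$ limit of empirical couplings of $\bfu$ with an auxiliary sequence $(x_n)\subset X$, and then to convert $(x_n)$ into a permutation of orbit indices of $\bfu$ using the density of the $\bfu$-orbit in $\mathrm{supp}(\kappa)$.

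Applying Theorem~\ref{t:lifting} with $(Y,S)=(X,S)$, $u=\bfu$, $\nu=\kappa$, $T=S$, and $\rho=\lambda$ produces a sequence $(x_n)_{n\ge 0}\subset X$ and a subsequence $(N_{m_\ell})$ of $(N_k)$ such that
$$\frac{1}{N_{m_\ell}}\sum_{1\le n\le N_{m_\ell}}\delta_{(S^n\bfu,\,x_n)}\tend{\ell}{\infty}\lambda,$$
and the jump set $\{n\ge 0 : x_{n+1}\ne Sx_n\}=\{b_1<b_2<\cdots\}$ has $b_{j+1}-b_j\to\infty$. On each block $[b_j,b_{j+1}-1]$, $(x_n)$ is a piece of $S$-orbit starting at $x_{b_j}$. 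Since $\bfu$ is quasi-generic for $\kappa$ along $(N_k)$ and each $x_{b_j}\in\mathrm{supp}(\kappa)$, one can select $c_j\in\{1,\ldots,N_{m_\ell}\}$ so that $S^{c_j}\bfu$ is as close as we wish to $x_{b_j}$. I then define
$$\phi_{m_\ell}(n):=c_j+(n-b_j),\qquad n\in[b_j,b_{j+1}-1].$$

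The $c_j$'s must be chosen so that the images $[c_j,\,c_j+(b_{j+1}-b_j)-1]$ tile $\{1,\ldots,N_{m_\ell}\}$, i.e.\ so that $\phi_{m_\ell}$ is a permutation; this is a finite transportation problem which I would solve by grouping blocks according to their length and matching within each length class, tolerating an $o(N_{m_\ell})$ adjustment that does not affect the weak-$*$ limit. By $S$-equivariance, the approximation $S^{c_j}\bfu\approx x_{b_j}$ propagates as $S^{c_j+m}\bfu\approx x_{b_j+m}$ for $0\le m<b_{j+1}-b_j$, so the empirical joining built from $\phi_{m_\ell}$ is close in the weak-$*$ sense to the one built from $(x_n)$, and hence still converges to $\lambda$ along $(N_{m_\ell})$. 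Local orbitality is then automatic: $\phi_{m_\ell}(n+1)\ne\phi_{m_\ell}(n)+1$ only when $n+1=b_j$ for some $j$, and the number of such $n$ in $\{1,\ldots,N_{m_\ell}\}$ is $o(N_{m_\ell})$ because $b_{j+1}-b_j\to\infty$.

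The main obstacle I foresee is that Theorem~\ref{t:lifting} gives convergence only along the subsequence $(N_{m_\ell})$, whereas the proposition requires convergence along the full sequence $(N_k)$. To resolve this, I would carry out the construction separately for each sufficiently large $k$: using quasi-genericity of $\bfu$ along the full $(N_k)$ together with a direct combinatorial matching on a finite partition of $X\times X$ of small diameter, one builds a locally orbital permutation $\phi_k$ of $\{1,\ldots,N_k\}$ whose joint empirical approximates $\lambda$ on a prescribed finite collection of continuous test functions within tolerance $\varepsilon_k$. A diagonal selection over an increasing countable family dense in $C(X\times X)$ and $\varepsilon_k\downarrow 0$ then produces the desired sequence $(\phi_k)$ with weak-$*$ convergence along the full $(N_k)$.
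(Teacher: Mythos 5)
There is a genuine gap, and it sits exactly where you wave your hands: the ``finite transportation problem.'' After the lifting lemma gives you the blocks $[b_j,b_{j+1})$ of lengths $L_j\to\infty$, you must choose starting indices $c_j\in\{1,\ldots,N\}$ such that (i) the orbit segment $S^{c_j}\bfu,\ldots,S^{c_j+L_j-1}\bfu$ shadows $x_{b_j},\ldots,x_{b_{j+1}-1}$ within $\eps$, and (ii) the target intervals $[c_j,c_j+L_j)$ are pairwise disjoint and cover all but $o(N)$ of $\{1,\ldots,N\}$. Your proposed fix --- ``grouping blocks according to their length and matching within each length class'' --- does not work: two blocks of the same length generally have completely different orbit-segment names, so matching by length alone destroys condition (i); and matching by name requires showing that, for each $(\eps,L)$-name, the number of source blocks carrying that name is (up to $o(N)$) at most the number of disjoint good target intervals available inside $\{1,\ldots,N\}$. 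That counting statement is precisely the content of Lemma~\ref{lemma:sharing} and Corollary~\ref{cor:construction} in the paper (conditions \eqref{eq:C1}--\eqref{eq:C4}), and it is further complicated in your setup by the fact that the $L_j$ are unbounded, whereas the paper deliberately works with a Rokhlin tower of \emph{fixed} height $h_\ell$ at each scale so that the names live in a fixed finite alphabet and the visit counts $V(P)$, $V(P\times P')$ can be compared via quasi-genericity (with partitions whose atoms have $\kappa$-null boundary --- another point you do not address). A secondary but real issue: shadowing over a block of length $L_j$ requires $S^{c_j}\bfu$ to lie in a Bowen-type neighbourhood of $x_{b_j}$, not merely to be close at time $b_j$; density of the orbit gives such a $c_j$ somewhere, but not with the abundance inside $\{1,\ldots,N\}$ needed for the disjointness in (ii) --- again this is exactly what the counting argument supplies.

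Your last paragraph then concedes that the lifting-lemma route only yields convergence along a sub-subsequence and proposes, for the full sequence $(N_k)$, ``a direct combinatorial matching on a finite partition of small diameter.'' But that direct matching, together with the mechanism that makes the resulting permutation locally orbital (in the paper, the shift-equivariant, level-by-level construction inside the product Rokhlin towers, condition~\eqref{eq:TI} of Lemma~\ref{lemma:dynamics}), \emph{is} the proof of the proposition; as written you have deferred the entire difficulty to one sentence, and the lifting-lemma construction that precedes it becomes superfluous. The skeleton (permutations preserve the second marginal; block structure with gaps $\to\infty$ gives local orbitality) is sound, but the two load-bearing steps --- the existence of the tiling by shadowing intervals, and the passage from a subsequence to the full sequence $(N_k)$ --- are asserted rather than proved.
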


\subsection{Without dynamics}

We first describe a strategy to construct appropriate permutations without taking the dynamics into account. We only use for now that $\kappa$ is a probability measure on $X$ and that $\lambda$ is a 2-fold coupling of $\kappa$, that is, a probability measure on $X\times X$ with both marginals equal to $\kappa$. We fix a finite partition $\P$ of $X$, each atom $P$ of which satisfies $\kappa(P)>0$. We consider a finite number of points $x_1,\ldots,x_N\in X$, and for each atom $P\in\P$, we define the number of visits in $P$ by
\begin{equation}
 \label{eq:defV}
 V(P):=\sum_{1\le n\le N} \ind{P}(x_n).
\end{equation}
Let us assume that the empirical measure $(1/N)\sum_{1\le n\le N} \delta_{x_n}$ is a good approximation of $\kappa$ on $\P$ in the following sense: for some (small) real number $\eps>0$, we have
\begin{equation}
 \label{eq:approximation}
 \forall P\in\P,\quad \left| \frac{V(P)}{N} - \kappa(P) \right| \le \eps\kappa(P).
\end{equation}

\begin{Lemma}
 \label{lemma:sharing}
 Under the above hypotheses, for $N$ large enough (depending on $\P$, $\eps$, $\kappa$ and $\lambda$), we can define a family of nonnegative integers
 \[ \bigl(V(P\times P')\bigr)_{P,P'\in\P} \]
 such that
 \begin{align}
  \forall P,P'\in\P,&\quad \left| \frac{V(P\times P')}{N} - \lambda(P\times P') \right| \le 2\eps\lambda(P\times P'),\tag{C1}\label{eq:C1} \\
  \forall P\in\P,&\quad \sum_{P'\in\P}V(P\times P') \le V(P), \tag{C2}\label{eq:C2}\\
  \forall P'\in\P,&\quad \sum_{P\in\P}V(P\times P') \le V(P'). \tag{C3}\label{eq:C3}
 \end{align}

 Moreover, these numbers are such that the following implication holds: for all $P_1$, $P'_1$, $P_2$, $P'_2$ atoms of $\P$,
\begin{equation}
 \left.
 \begin{aligned}
   \kappa(P_1) &=\kappa(P_2),\\
   V(P_1)&=V(P_2),\\
   \kappa(P'_1)&=\kappa(P'_2),\\
   V(P'_1)&=V(P'_2),\\
   \lambda(P_1\times P'_1)&=\lambda(P_2\times P'_2),\\
 \end{aligned}
 \right\}
\Longrightarrow V(P_1\times P'_1)=V(P_2\times P'_2)
\tag{C4}\label{eq:C4}
\end{equation}

\end{Lemma}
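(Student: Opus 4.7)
The plan is to write down the simplest possible closed-form choice for $V(P\times P')$, namely one that depends only on $\lambda(P\times P')$ and the scale $N$, so that (C4) is automatic. Specifically, I would set
\[ V(P\times P') := \lfloor (1-\eps)\, N\, \lambda(P\times P') \rfloor. \]
The deflation factor $(1-\eps)$ is chosen precisely to leave enough slack to meet the marginal bounds (C2) and (C3) under the hypothesis~\eqref{eq:approximation}, while the floor is there to make the quantities integer. Since $V(P\times P')$ is a function of $\lambda(P\times P')$ alone (and of $N,\eps$, which are fixed), the implication~\eqref{eq:C4} holds trivially, so (C4) never costs anything.

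The routine verifications I would then carry out are: for (C2), sum over $P'$ to get
\[ \sum_{P'\in\P} V(P\times P') \le (1-\eps) N \sum_{P'\in\P} \lambda(P\times P') = (1-\eps) N \kappa(P) \le V(P), \]
where the last inequality is exactly the lower bound in~\eqref{eq:approximation}; (C3) is symmetric. For (C1), the deflation contributes an additive deviation of $\eps\lambda(P\times P')$, and the floor adds at most $1/N$, giving
\[ \left| \frac{V(P\times P')}{N} - \lambda(P\times P') \right| \le \eps\lambda(P\times P') + \frac{1}{N}. \]

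The only place where $N$ enters quantitatively is in absorbing the $1/N$ rounding term into the $2\eps\lambda(P\times P')$ tolerance of (C1). This is the main (and essentially the sole) subtlety: when $\lambda(P\times P')>0$ is very small, one needs
\[ N \ge \frac{1}{\eps\cdot \min\{\lambda(P\times P'): P,P'\in\P,\ \lambda(P\times P')>0\}}, \]
which is a finite lower bound depending only on $\P$, $\eps$, and $\lambda$, as allowed by the statement. When $\lambda(P\times P')=0$ the definition forces $V(P\times P')=0$ and (C1) is trivial. So the lemma reduces entirely to recording the right rounding constant and invoking the approximation hypothesis on the marginals.
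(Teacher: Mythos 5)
Your proof is correct, and it takes a slightly different, in fact cleaner, route than the paper. The paper defines two candidates $V_1(P\times P'):=\lfloor V(P)\lambda(P\times P')/\kappa(P)\rfloor$ and $V_2(P\times P'):=\lfloor V(P')\lambda(P\times P')/\kappa(P')\rfloor$, each tailored so that one of the two marginal bounds \eqref{eq:C2}, \eqref{eq:C3} holds exactly by construction, and then takes their minimum to get both; condition \eqref{eq:C4} then holds because each candidate is a function of the data listed in its hypothesis. Your single formula $\lfloor (1-\eps)N\lambda(P\times P')\rfloor$ instead deflates uniformly by $(1-\eps)$ and uses the lower bound $V(P)\ge(1-\eps)N\kappa(P)$ from \eqref{eq:approximation} to get \emph{both} marginal inequalities at once, and it makes \eqref{eq:C4} trivially true (indeed in the stronger form where only $\lambda(P_1\times P'_1)=\lambda(P_2\times P'_2)$ is needed), which is all that the later application to $V(SP\times SP')=V(P\times P')$ in the dynamical step requires. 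The threshold on $N$ you isolate, $N\ge 1/(\eps\min\{\lambda(P\times P')>0\})$, is the same as the paper's \eqref{eq:N_large}. What the paper's version buys is that its candidates track $\lambda$ without a deliberate $\eps$-deflation, but since \eqref{eq:C1} tolerates relative error $2\eps$ anyway, your deflation costs nothing; what yours buys is the elimination of the min trick and of any argument for \eqref{eq:C4}.
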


\begin{proof}
 For $P,P'$ atoms of $\P$, we first define
 \[
    V_1(P\times P') := \left\lfloor \frac{V(P)\, \lambda(P\times P')}{\kappa(P)}\right\rfloor.
 \]
 Note that, if $\lambda(P\times P')=0$, then Condition \eqref{eq:C1} automatically holds for $V_1(P\times P')$.
 Now, for atoms $P,P'$ of $\P$ such that $\lambda(P\times P')>0$, using~\eqref{eq:approximation} we get
 \begin{align*}
  &\left| \frac{V_1(P\times P')}{N} - \lambda (P\times P')  \right| \\
  & \leq \frac{1}{N} \left| V_1(P\times P') - \frac{V(P)\, \lambda(P\times P')}{\kappa(P)} \right|
    + \frac{ \lambda(P\times P')}{\kappa(P)}  \left| \frac{V(P)}{N} - \kappa(P) \right| \\
  & \leq \frac{1}{N} + \eps \lambda(P\times P').
 \end{align*}
 Hence, we also have \eqref{eq:C1} for $V_1(P\times P')$ provided
 \begin{equation}
  \label{eq:N_large}
  N \ge \max_{P,P'\in\P:\lambda(P\times P')>0}\ \frac{1}{\eps \lambda(P\times P')}.
 \end{equation}

Since $V_1(P\times P') \le \frac{V(P)\, \lambda(P\times P')}{\kappa(P)}$, and using the fact that the first marginal of $\lambda$ is $\kappa$, summing this inequality over $P'\in\P$ we get Condition~\eqref{eq:C2} for $V_1$. But there is no obvious reason why \eqref{eq:C3} should hold for $V_1$, which is why we also introduce, for atoms $P,P'$ of $\P$,
 \[
    V_2(P\times P') := \left\lfloor \frac{V(P')\, \lambda(P\times P')}{\kappa(P')}\right\rfloor.
 \]
By the same arguments, \eqref{eq:C1} holds for $V_2(P\times P')$ if $N$ satisfies~\eqref{eq:N_large}, and this time \eqref{eq:C3} is valid for $V_2$. And finally we set
\[
 \forall P,P'\in\P,\quad V(P\times P'):= \min \bigl\{ V_1(P\times P'),V_2(P\times P')\bigr\}.
\]
Then $V$ clearly satisfies all required conditions.
\end{proof}

 \begin{Cor}
 \label{cor:construction}
  With the same assumptions as in Lemma~\ref{lemma:sharing}, and if $N$ is large enough to satisfy~\eqref{eq:N_large}, we can construct a permutation $\phi$ of $\{1,\ldots,N\}$ such that
  \begin{equation}
   \label{eq:phi_good}
   \forall P,P'\in\P,\ \left| \frac{1}{N} \sum_{1\le n\le N} \ind{P\times P'}(x_n,x_{\phi(n)}) - \lambda(P\times P') \right|
   \le 4 \eps.
  \end{equation}
 \end{Cor}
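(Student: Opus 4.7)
The plan is to build $\phi$ by a bipartite-matching construction on the level sets of $\P$ in $\{1,\ldots,N\}$, using the integers $V(P\times P')$ from Lemma~\ref{lemma:sharing} as the prescribed matching sizes. For each atom $P\in\P$, let $I(P):=\{n\in\{1,\ldots,N\}: x_n\in P\}$, so that $|I(P)|=V(P)$. For every pair $(P,P')\in\P\times\P$, I want to designate a ``source'' set $A_{P,P'}\subset I(P)$ and a ``target'' set $B_{P,P'}\subset I(P')$, both of cardinality $V(P\times P')$, with the sources $\{A_{P,P'}\}_{P'}$ pairwise disjoint inside $I(P)$ and the targets $\{B_{P,P'}\}_{P}$ pairwise disjoint inside $I(P')$. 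This partitioning step is feasible exactly because of conditions~\eqref{eq:C2} and~\eqref{eq:C3}, which guarantee enough room inside each $I(P)$ and each $I(P')$.

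Next, on each pair of matched blocks $A_{P,P'}$ and $B_{P,P'}$ I choose an arbitrary bijection and declare it to be $\phi$ restricted to $A_{P,P'}$. What remains is a leftover subset $A^{\rm rest}\subset\{1,\ldots,N\}$ of indices not yet in the domain of $\phi$, and a leftover subset $B^{\rm rest}$ of indices not yet in the image; by construction they have the same cardinality, so I extend $\phi$ to a bijection between them by an arbitrary pairing. This yields a well-defined permutation of $\{1,\ldots,N\}$.

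For the estimate~\eqref{eq:phi_good}, I count for each pair $(P,P')$ the number of $n$ such that $(x_n,x_{\phi(n)})\in P\times P'$. On one hand, this count is at least $V(P\times P')$ (the designed contribution from $A_{P,P'}$). On the other hand, it exceeds $V(P\times P')$ only by leftover matchings landing in $(P,P')$, hence by at most $|A^{\rm rest}|=N-\sum_{Q,Q'}V(Q\times Q')$. Summing condition~\eqref{eq:C1} over $Q,Q'$ gives
\[
\Big| \sum_{Q,Q'} V(Q\times Q') - N\Big| \;\le\; 2\eps N \sum_{Q,Q'}\lambda(Q\times Q') \;=\; 2\eps N,
\]
so the leftover size is at most $2\eps N$. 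Combining with~\eqref{eq:C1}, for every pair $(P,P')$
\[
\left| \frac{1}{N} \sum_{n\le N} \ind{P\times P'}(x_n,x_{\phi(n)}) - \lambda(P\times P') \right|
\le \frac{|V(P\times P')-N\lambda(P\times P')|}{N} + \frac{2\eps N}{N}
\le 2\eps\lambda(P\times P') + 2\eps \le 4\eps,
\]
which is exactly~\eqref{eq:phi_good}.

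The construction above is essentially routine once Lemma~\ref{lemma:sharing} is in hand; the only subtle point is noticing that the deficit in the designed matchings is controlled globally by summing~\eqref{eq:C1}, rather than pair-by-pair, which explains why the final bound $4\eps$ does not depend on $|\P|$. No use of~\eqref{eq:C4} is needed at this stage; that invariance property will serve later when enforcing the locally orbital condition in Proposition~\ref{prop:selfjoiningsofFS}.
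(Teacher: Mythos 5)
Your proposal is correct and follows essentially the same route as the paper: the same bipartite allocation of source sets inside $I(P)$ via~\eqref{eq:C2} and target sets inside $I(P')$ via~\eqref{eq:C3}, arbitrary bijections on the matched blocks plus an arbitrary bijection on the leftovers, and the same final estimate obtained by bounding the leftover set by $2\eps N$ through summing~\eqref{eq:C1} (the paper sums the one-sided lower bound $V(P\times P')\ge N\lambda(P\times P')(1-2\eps)$, which is the same computation). Your observation that~\eqref{eq:C4} is not needed here and only enters later for the locally orbital refinement also matches the paper.
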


 \begin{proof}
  We use the family of numbers $\bigl(V(P\times P')\bigr)_{P\in\P,P'\in\P}$ provided by Lemma~\ref{lemma:sharing}.
  For a fixed atom $P\in\P$, we consider
  \begin{equation}
   \label{eq:defA}
     A(P):= \Bigl\{ n\in\{1,\ldots,N\}: x_n\in P \Bigr\}.
  \end{equation}
  Note that $|A(P)|=V(P)$.  Using \eqref{eq:C2}, we can find disjoint subsets
  \[ A(P\times P')\subset A(P),\quad P'\in \P, \]
  with $|A(P\times P')|=V(P\times P')$ for all $P'\in\P$. We denote
  \[ A:=\bigsqcup_{P,P'\in\P} A(P\times P'). \]

  Likewise, for any fixed $P'\in\P$, using \eqref{eq:C3} we can find disjoint subsets
  \[ A'(P\times P')\subset A(P'),\quad P\in \P, \]
  with $|A'(P\times P')|=V(P\times P')$ for all $P\in\P$.

  Then we can build a permutation $\phi$ of $\{1,\ldots,N\}$ as follows:
  \begin{itemize}
   \item For all $P,P'\in\P$, we define $\phi|_{A(P\times P')}$ as an arbitrary bijection from $A(P\times P')$ to $A'(P\times P')$.
   \item Then, we define $\phi|_{\{1,\ldots,N\}\setminus A}$ as an arbitrary bijection from the complement of $A$ to the complement of $\bigsqcup_{P,P'\in\mathscr{P}} A'(P\times P')$.
  \end{itemize}
  We observe that, with this choice of $\phi$, for all $n\in A$ and $P,P'\in\P$, we have
  \[  (x_n,x_{\phi(n)}) \in P\times P'\Longleftrightarrow n\in A(P\times P'). \]
  Therefore, for all $P,P'$ in $\P$,
  \begin{equation}
  \label{eq:sum_A}
   \sum_{n\in A} \ind{P\times P'}(x_n,x_{\phi(n)}) = |A(P\times P')| = V(P\times P').
  \end{equation}
  By \eqref{eq:C1}, for all $P,P'\in\P$, we have the inequality
  \[
     |A(P\times P')| = V(P\times P') \ge N\lambda(P\times P')(1-2\eps).
  \]
  Summing over all $P,P'$, we get
  \begin{equation}
  \label{eq:bound_A}
      |A| = \sum_{P,P'\in\P} V(P\times P') \ge N(1-2\eps).
  \end{equation}
Using~\eqref{eq:sum_A},
\eqref{eq:C1}  and \eqref{eq:bound_A}, we get
\begin{align*}
 &\left| \frac{1}{N} \sum_{1\le n\le N} \ind{P\times P'}(x_n,x_{\phi(n)}) - \lambda(P\times P') \right| \\
  \le\ &\left| \frac{1}{N} \sum_{n\in A} \ind{P\times P'}(x_n,x_{\phi(n)}) - \lambda(P\times P') \right| + \frac{N-|A|}{N}\\
  =\ &\left| \frac{V(P,P')}{N} - \lambda(P\times P') \right| + 1 - \frac{|A|}{N} \\
  \le\ & 2\eps \lambda(P\times P') + 2 \eps \le 4\eps,
\end{align*}
which is the announced inequality.
 \end{proof}

\subsection{Taking dynamics into account}
\label{sec:dynamics}
We now want to improve Corollary~\ref{cor:construction}, by taking into account the action of $S$ on $(X,\kappa)$. We will use the same strategy and the same notations as in the preceding section, but with some additional ingredients that make it possible to obtain the ``locally orbital'' condition on the permutations.

Here is the new setting. The measure $\lambda$ is a self-joining of $(X,\kappa,S)$ (compared to the previous hypotheses, we further assume that $\lambda$ is $(S\times S)$-invariant). We fix a finite partition $\Q$ of $X$, which at the end serves to estimate the gap between the empirical measure and the joining $\lambda$. But first we repeat the arguments of the preceding section with another partition $\P$, obtained as follows:
suppose that for some (large) integer $h$, we have a Rokhlin tower $\bigl(B,SB,\ldots,S^{h-1}B\bigr)$ in $(X,\kappa,S)$, which means that the subsets $B,SB,\ldots,S^{h-1}B$ are disjoint. We denote $F:=\bigsqcup_{0\le j\le h-1} S^j B$, and we assume that for some real number $\eps>0$, we have $\kappa(F)>1-\eps$. We consider the partition $\P$ whose atoms are $X\setminus F$, and all subsets of the form
\[ S^jB \cap \bigcap_{-j\le r\le h-1-j} S^{-r}Q_r, \]
for $0\le j\le h-1$, and $Q_j,\ldots,Q_{h-1-j}$ arbitrary atoms of $\Q$ (provided such a subset has positive $\kappa$-measure). In other words, the information provided by $\P$ is exactly: in which level of the Rokhlin tower the point is (or if it is in $X\setminus F$) and, when the point is in $F$, what is the $\Q$-name read on the piece of the orbit corresponding to the ascent in the Rokhlin tower. Note that for each $\P$-atom contained in $S^jB$ for some $0\le j\le h-2$, $SP$ is a $\P$-atom  contained in $S^{j+1}B$.

For some $N$ large enough to satisfy~\eqref{eq:N_large}, we have $N$ points $x_1,\ldots,x_N\in X$ which are successive images by $S$: $x_n=S^{n-1}x_1$ for all $2\le n\le N$. For each atom $P\in\P$ we define $V(P)$ and $A(P)$ as before (see~\eqref{eq:defV} and~\eqref{eq:defA}), and we assume that for the same number $\eps>0$, \eqref{eq:approximation} is satisfied. For technical reasons, we also assume that
\begin{equation}
 \label{eq:bord}
 x_1\notin\bigsqcup_{j=1}^{h-1}S^jB,\text{ and }x_N\notin\bigsqcup_{j=0}^{h-2}S^jB,
\end{equation}
so that for each $\P$-atom $P\subset\bigsqcup_{j=0}^{h-2}S^jB$,
\begin{equation}
 \label{eq:conservationV}
 V(SP) = V(P),
\end{equation}
and
\begin{equation}
 \label{eq:shiftA}
 A(SP) = A(P)+1.
\end{equation}

%

\begin{Lemma}
 \label{lemma:dynamics}
 Under the above hypotheses, we can construct a permutation $\phi$ of $\{1,\ldots,N\}$ satisfying
 \begin{equation}
  \label{eq:good_approximation_on_Q}
  \forall Q,Q'\in\Q,\quad \left| \frac{1}{N} \sum_{1\le n\le N} \ind{Q\times Q'}(x_n,x_{\phi(n)}) - \lambda(Q\times Q') \right|
   \le 8 \eps,
 \end{equation}
 and
 \begin{equation}
  \label{eq:locally_orbital_condition}
  \frac{1}{N} \Bigl| \Bigl\{ n\in\{1,\ldots,N-1\}: \phi(n+1) \neq \phi(n)+1 \Bigr\} \Bigr| \le 4\eps + \frac{2}{h}+ \frac{2}{N}.
 \end{equation}
\end{Lemma}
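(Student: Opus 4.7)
The strategy is to run the Corollary~\ref{cor:construction} construction with the partition $\P$, but to exploit the Rokhlin tower structure to make the bijections on the fiber sets shift-coherent, so that on each tower block of length $h$ the resulting map $\phi$ is a translation.

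The key algebraic input is a column shift-invariance of the sharing numbers. First I would apply Lemma~\ref{lemma:sharing} to $\P$ to obtain numbers $V(P\times P')$ satisfying \eqref{eq:C1}--\eqref{eq:C4}. When both $P$ and $P'$ lie below the top of their columns, the $(S\times S)$-invariance of $\lambda$ gives $\lambda(P\times P')=\lambda(SP\times SP')$, while \eqref{eq:conservationV} and the $S$-invariance of $\kappa$ supply the other equalities needed to invoke \eqref{eq:C4}. This forces $V(P\times P')=V(SP\times SP')$, and hence, iterating along both columns, for any two base atoms $P_0,P'_0\subset B$ and any offset $d\in\{-(h-1),\ldots,h-1\}$, the value $V(S^jP_0\times S^{j+d}P'_0)$ depends only on the triple $(P_0,P'_0,d)$, not on the level $j$ inside its valid range; call it $v(P_0,P'_0,d)$.

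I would then construct $\phi$ level by level within each column. At each level $j$ of each source column $P_0\subset B$, I decompose $A(S^jP_0)$ into disjoint pieces $A_j(P_0,P'_0,d)$ of size $v(P_0,P'_0,d)$, one for each $(P'_0,d)$ with $j+d\in[0,h-1]$. The decomposition is built inductively on $j$: every triple valid at both $j$ and $j+1$ inherits $A_{j+1}(P_0,P'_0,d):=A_j(P_0,P'_0,d)+1$, while the newly appearing triples ($d=-j-1$) are packed arbitrarily into what remains of $A(S^{j+1}P_0)$. The size budget is respected at every level thanks to \eqref{eq:C2}, which gives $\sum_{(P'_0,d):\,j+d\in[0,h-1]}v(P_0,P'_0,d)=\sum_{P'\subset F}V(S^jP_0\times P')\le V(P_0)$. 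An analogous target decomposition of each $A(S^{j'}P'_0)$ is built using \eqref{eq:C3}. For every triple $(P_0,P'_0,d)$ I then fix an arbitrary bijection at the lowest level at which it is valid and propagate it upward by translation; the orbital identity $\phi(n+1)=\phi(n)+1$ then holds automatically whenever $n$ and $n+1$ belong to the same triple. Finally, $\phi$ is extended arbitrarily to a full permutation on the unmatched positions.

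The verification of both conclusions is then essentially counting. For \eqref{eq:good_approximation_on_Q}, note that $\P$ refines $\Q$ on $F$, so each matched pair contributes to the $(\Q\times\Q)$-cell determined by its $\P$-pair; summing the prescribed $V$-counts against \eqref{eq:C1} and absorbing the $O(\eps N)$ uncontrolled contribution from the unmatched and out-of-tower positions yields the announced error $8\eps$. For \eqref{eq:locally_orbital_condition}, the orbital defect at $n$ can only come from one of: the triple of $n$ disappearing or the triple of $n+1$ being new (both forced to occur near the top or the base of a column, and bounded by $\kappa(B)+\kappa(S^{h-1}B)\approx 2/h$ via \eqref{eq:approximation}); $n$ or $n+1$ being unmatched (total density $\le 4\eps$ after accounting for the two-sided effect); and the two boundary cases $n=1,N-1$. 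The main obstacle I foresee is carrying out the level-by-level source and target decompositions simultaneously, so that both the inherited shift-consistency and the mutual disjointness constraints from \eqref{eq:C2}--\eqref{eq:C3} are respected at every step; once the column shift-invariance $v(P_0,P'_0,d)=V(S^jP_0\times S^{j+d}P'_0)$ is in hand, the rest is bookkeeping.
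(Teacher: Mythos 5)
Your proposal is correct and follows essentially the same route as the paper: your triples $(P_0,P'_0,d)$ are exactly the paper's decomposition of $X\times X$ into product Rokhlin towers indexed by the level offset, your shift-invariance $v(P_0,P'_0,d)=V(S^jP_0\times S^{j+d}P'_0)$ is the paper's application of \eqref{eq:C4} via \eqref{eq:conservationV} and the $(S\times S)$-invariance of $\lambda$, and your level-by-level inheritance $A_{j+1}=A_j+1$ with translation of the bijections is the paper's condition~\eqref{eq:TI}. The error accounting (unmatched mass $4\eps$ from \eqref{eq:C1} and $\kappa(F)>1-\eps$, plus the $2/h$ contribution from tower tops) also matches the paper's verification.
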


\begin{proof}
 As all the assumptions of Lemma~\ref{lemma:sharing} hold, we have at our disposal the numbers $V(P\times P')$ provided by this lemma. We will follow the same strategy as in the proof of Corollary~\ref{cor:construction} to get the permutation $\phi$, that is, we will construct the subsets $A(P\times P')$ and $A'(P\times P')$ satisfying all the above-mentioned properties, and define $\phi$ by its restriction to each $A(P\times P')$ as a bijection to $A'(P\times P')$.
 In the non-dynamical context of Corollary~\ref{cor:construction}, we have a significant flexibility to choose the subsets and the bijection. Here, we will also use the specific structure of the partition $\P$ to impose additional conditions leading to the ``locally orbital'' condition. To do so, we observe that the partition of $X$ defined by the Rokhlin tower $\bigl(B,SB,\ldots,S^{h-1}B\bigr)$ induces a partition of $X\times X$ into a family of disjoint Rokhlin towers for $S\times S$. These Rokhlin towers are of the form
 \[
  \Bigl( B\times S^jB, SB\times S^{j+1}B,\ldots,S^{h-1-j}B\times S^{h-1}B \Bigr)\quad (0\le j\le h-1),
 \]
and
 \[
  \Bigl( S^jB\times B, S^{j+1}B\times SB,\ldots,S^{h-1}B\times S^{h-1-j}B \Bigr)\quad (1\le j\le h-1).
 \]
 (See Figure~\ref{fig:towers}.)
 \begin{figure}\begin{center}
  \includegraphics[width=9cm]{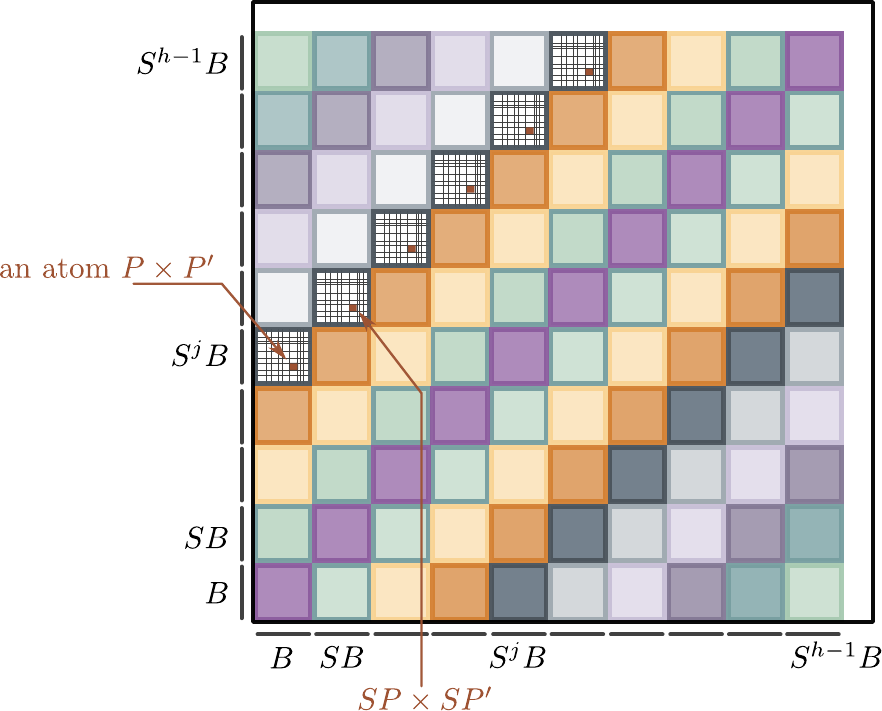}                                                          
  \end{center}
\caption{The partition of $X\times X$ into Rokhlin towers. We represented the atoms of $\P\times \P$ inside only one of these Rokhlin towers.}
 \label{fig:towers}
 \end{figure}

 We use this structure to define the subsets $A(P\times P')$ and $A'(P\times P')$, and the restriction of the desired permutation $\phi$ to $A(P\times P')$, in a specific order, so that the following condition holds: for all atoms $P,P'\in\P$, if for some $j,j'\in\{0,\ldots,h-2\}$ we have $P\subset S^jB$ and $P'\subset S^{j'}B$ (that is: $P\times P'$ is contained in a level of one of the $(S\times S)$-Rokhlin towers which is not the top one), then
 \begin{align}
 \label{eq:TI}
  \begin{split}
   A(SP\times SP') &= A(P\times P')+1,\\
   A'(SP\times SP') &= A'(P\times P')+1,\\
   \text{and }\forall n\in A(P\times P'),\ \phi(n+1) &= \phi(n)+1.
  \end{split}
 \end{align}
 Note that in the above requirements, the first two equalities are likely to be achieved since, by $S$-invariance of $\kappa$, $(S\times S)$-invariance of $\lambda$ and~\eqref{eq:conservationV},
 Condition \eqref{eq:C4} ensures that $V(SP\times SP')=V(P\times P')$.

 Here is how we proceed for the construction.
 \begin{itemize}
  \item First, we define the subsets $A(P\times P')$ and $A'(P\times P')$ for all atoms $P\times P'$ contained in the basis $B\times B$ of the highest Rokhlin tower, and we define the restriction of $\phi$ to any of these $A(P\times P')$ as an arbitrary bijection between $A(P\times P')$ and $A'(P\times P')$.
  \item Next, for the same  atoms $P\times P'$, we use Conditions~\eqref{eq:TI} to define inductively the subsets $A(S^jP\times S^jP')$ and $A'(S^jP\times S^jP')$, $(1\le j\le h-1)$, and the restrictions of $\phi$ to all of these $A(S^jP\times S^jP')$. At this point we have processed all atoms of $\P\times\P$ contained in the Rokhlin tower whose basis is $B\times B$.
  \item We proceed in the same way for the second Rokhlin tower (the one whose basis is $B\times SB$): for all $\P$-atoms $P\subset B$, $P'\subset SB$, we start by choosing the subsets $A(P\times P')$ and $A'(P\times P')$, ensuring that $A(P\times P')$ is disjoint from all $A(P\times P'_1)$, $P'_1\subset B$, previously chosen (this is always possible by~\eqref{eq:C2}), and that $A'(P\times P')$ is disjoint from all $A'(P_1\times P')$, $P_1\subset SB$, previously chosen (again, this is always possible by~\eqref{eq:C3}). Then we use Conditions~\eqref{eq:TI} to define inductively the subsets $A(S^jP\times S^jP')$ and $A'(S^jP\times S^jP')$, $(1\le j\le h-2)$, and the restrictions of $\phi$ to all of these $A(S^jP\times S^jP')$. Note that the disjunction at the base level implies disjunction at higher levels.
  \item We treat in this way all the Rokhlin towers one by one. For each tower successively, we first choose the subsets $A(P\times P')$ and $A'(P\times P')$ in the basis of the tower, taking care of ensuring all necessary disjunctions which is possible by~\eqref{eq:C2}  and~\eqref{eq:C3}, then we choose an arbitrary bijection between these two subsets. Once the basis is processed, we extend to all higher levels in the tower by~\eqref{eq:TI}.
 \end{itemize}
With this procedure, we choose all subsets $A(P\times P')$ and $A'(P\times P')$ and all bijections
\[
 \phi|_{A(P\times P')}: A(P\times P')\to A'(P\times P'),
\]
for all $\P$-atoms $P,P'\subset F$, ensuring that~\eqref{eq:TI} holds. Let us denote
  \[ A:=\bigsqcup_{\substack{P,P'\in\P \\ P\subset F,P'\subset F}} A(P\times P'). \]
Observe that, by~\eqref{eq:C1}, and using the assumption $\kappa(F)\ge 1-\eps$, we have
\begin{align}
\label{eq:bound_A2}
\begin{split}
 \frac{|A|}{N} &= \frac{1}{N} \sum_{\substack{P,P'\in\P \\ P\subset F,P'\subset F}} V(P\times P') \\
                &\ge (1-2\eps)\sum_{\substack{P,P'\in\P \\ P\subset F,P'\subset F}} \lambda(P\times P') \\
                &= (1-2\eps) \lambda(F\times F) \\
                &\ge (1-2\eps)^2 \ge 1-4\eps.
\end{split}
\end{align}

  To complete the picture, we define $\phi$ arbitrarily on $\{1,\ldots,N\}\setminus A$ to get a permutation of $\{1,\ldots,N\}$.

  With this choice of $\phi$, let us check the validity of~\eqref{eq:good_approximation_on_Q}. For this, we have to estimate the sum
  \[
    \sum_{1\le n\le N} \ind{Q\times Q'}(x_n,x_{\phi(n)}),
  \]
  in which we distinguish two types of $n$'s: those who are in $A$ and those who are not. By~\eqref{eq:bound_A2}, the contribution to the sum of $\{1,\ldots,N\}\setminus A$ is bounded by $4\eps N$. Now, for $n$ in $A$, there exist (unique) $\P$-atoms $P,P'\subset F$ such that $n\in A(P\times P')$, and then by the construction of $\phi$ we know that $(x_n,x_{\phi(n)})\in P\times P'$. Therefore, the contribution of this $n$ is 1 if and only if $P\subset Q$ and $P'\subset Q'$. It follows that the total contribution of $n$'s in $A$ to the sum amounts to
\[ \sum_{\substack{P\subset Q\cap F\\P'\subset Q'\cap F}} V(P\times P'). \]
Thus, using~\eqref{eq:C1}, we get
\begin{align*}
&\left| \frac{1}{N} \sum_{1\le n\le N} \ind{Q\times Q'}(x_n,x_{\phi(n)}) - \lambda(Q\times Q') \right|\\
\le &\frac{1}{N} \sum_{n \notin A} \ind{Q\times Q'}(x_n,x_{\phi(n)}) + \left| \frac{1}{N} \sum_{\substack{P\subset Q\cap F\\P'\subset Q'\cap F}} V(P\times P') - \lambda(Q\times Q') \right|\\
\le & 4\eps + \left|  \sum_{\substack{P\subset Q\cap F\\P'\subset Q'\cap F}} \left(\frac{V(P\times P')}{N} - \lambda(P\times P')\right) \right| + \lambda\bigl((Q\times Q')\setminus (F\times F)\bigr).\\
\end{align*}
By~\eqref{eq:C1}, the second term in the above sum is bounded by $2\eps$, and the same for the last term since $\kappa(F)\ge 1-\eps$. Thus,~\eqref{eq:good_approximation_on_Q} is established and it remains now to prove~\eqref{eq:locally_orbital_condition}.
By~\eqref{eq:TI}, the set
\[
 \Bigl\{ n\in\{1,\ldots,N-1\}: \phi(n+1) \neq \phi(n)+1 \Bigr\}
\]
is contained in the union of the following three subsets of $\{1,\ldots,N\}$:
\begin{itemize}
 \item $\{1,\ldots,N\}\setminus A$, whose cardinality is bounded by $4\eps N$ by~\eqref{eq:bound_A2};
 \item $\{n:x_n\in S^{h-1}B\}$, but since the $x_n$'s are successive points on some $S$-orbit, the gap between 2 integers in this set is always at most $h$, therefore the cardinality of this set is at most $\frac{N}{h}+1$;
 \item $\{n:x_{\phi(n)}\in S^{h-1}B\}$, which has the same cardinality as the preceding one because $\phi$ is a bijection.
\end{itemize}
The union of these 3 subsets has cardinality bounded by $4\eps N+\frac{2N}{h}+2$, which proves~\eqref{eq:locally_orbital_condition}.
\end{proof}

\subsection{Proof of Proposition~\ref{prop:selfjoiningsofFS}}

We assume now that~\eqref{eq:FS} holds: $\bfu\in X$ is quasi-generic along some increasing sequence $(N_k)$ for some Furstenberg system $\kappa$, $\lambda$ is a 2-fold self-joining of $\kappa$, and we explain the strategy to construct the locally orbital sequence $(\phi_k)$ announced in the statement of the proposition.

First, we point out that, without loss of generality, we may always assume that the measure-preserving system $(X,\kappa, S)$ is aperiodic. Indeed, if it is not the case, we fix some irrational number $\alpha$ such that for all $n\in \Z\setminus\{0\}$, $e^{2\pi in\alpha}$ is not an eigenvalue of the Koopman operator $f\mapsto f\circ S$ on $L^2(X,\kappa)$. We consider the auxilliary measure-preserving system $(Y,\nu,T_\alpha)$ where $Y:=\R/\Z$, $\nu$ is the normalized Haar measure and $T_\alpha:y\mapsto y+\alpha\mod 1$. As $(Y,T_\alpha)$ is uniquely ergodic, any point $y\in Y$ is generic for $\nu$ (we take for example $y=0$). Since $(X,\kappa, S)$ is disjoint from $(Y,\nu,T_\alpha)$, in the system $(X\times Y, S\times T_\alpha)$, the point $(\bfu,0)$ is quasi-generic along the same sequence $(N_k)$ for the product measure $\kappa\otimes\nu$. So, we can consider $(\bfu,0)\in X\times Y$ instead of $\bfu\in X$, and then the measure-preserving system $(X\times Y, \kappa\otimes \nu, S\times T_\alpha)$ is aperiodic. In this new setting we can extend the self-joining $\lambda$ to $\nu\otimes\lambda\otimes \nu$ on $(X\times Y)\times(X\times Y)$.

We fix a so-called ``good sequence of partitions'' $(\Q_\ell)_{\ell\ge1}$ for $(X,\kappa)$, which is a sequence of finite partitions satisfying:
\begin{itemize}
\item For all $\ell\ge1$, $\Q_{\ell+1}$ refines $\Q_\ell$,
\item $\max_{Q\in\Q_\ell} \diam(Q) \tend{\ell}{\infty} 0$,
\item $\forall \ell\ge1,\forall Q\in \Q_\ell$, $\kappa(\partial Q)=0$.
\end{itemize}
It is proved in~\cite{Ka-Ku-Le-Ru} that such a sequence always exists and that it satisfies the additional property: $(\Q_\ell\times\Q_\ell)$ is also a good sequence of partitions for $(X\times X,\lambda)$. In particular, weak$^*$-convergence of a sequence $(\lambda_n)$ of probability measures on $X\times X$ to $\lambda$ is equivalent to
\begin{equation}
 \label{eq:caract-wstar}
 \forall Q,Q'\in\Q,\quad \lambda_n(Q\times Q')\tend{n}{\infty} \lambda (Q\times Q').
\end{equation}

We also fix a sequence $(\eps_\ell)$ of positive numbers decreasing to $0$, and a sequence $(h_\ell)$ of integers such that $\frac{1}{h_\ell}<\eps_\ell$ for all $\ell$. It is also explained in~\cite{Ka-Ku-Le-Ru} that, for each $\ell$, we can find a Rokhlin tower $(B_\ell, SB_\ell,\ldots,S^{h_\ell-1}B_\ell)$ such that
\begin{itemize}
 \item $\kappa\left(\bigsqcup_{0\le j\le h_\ell-1}S^jB\right)>1-\eps_\ell$,
 \item for all $j$, $\kappa\bigl(\partial(S^jB)\bigr)=0$.
\end{itemize}
Then, for each $\ell$, we construct the partition $\P_\ell$ from $\Q_\ell$ and the above Rokhlin tower, in the same way as $\P$ is constructed from $\Q$ and the Rokhlin tower $(B,\ldots,S^{h-1}B)$ in the beginning of Section~\ref{sec:dynamics}. Since all atoms $P$ of these partitions $\P_\ell$ satisfy $\kappa(\partial P)=0$, \eqref{eq:FS} ensures that
\[
 \forall\ell,\forall P\in\P_\ell,\quad \frac{1}{N_k} \sum_{1\le n\le N_k} \ind{P}(S^n\bfu) \tend{k}{\infty} \kappa(P).
\]
This enables us to define
\begin{multline*}
 k_1 := \min\Biggl\{
                K\ge1 : \forall k\ge K, \\
                \eqref{eq:N_large}\text{ holds for }N\ge N_k-2h_1,\ \eps=\eps_1\text{ and }\P=\P_1,\\
                \text{ and }\forall P\in\P_1,\
               \frac{4h_1}{N_k}  + \left| \frac{1}{N_k}\sum_{1\le n\le N_k} \ind{P}(S^n\bfu) - \kappa(P) \right|\le \eps_1
                \Biggr\},
\end{multline*}
and inductively, for all $\ell\ge2$,
\begin{multline*}
 k_{\ell} := \min\Biggl\{
                K\ge k_{\ell-1}+1 : \forall k\ge K, \\
                \eqref{eq:N_large}\text{ holds for }N\ge N_k-2h_\ell,\ \eps=\eps_\ell\text{ and }\P=\P_\ell,\\
                \text{ and }\forall P\in\P_\ell,\
               \frac{4h_{\ell}}{N_k} + \left| \frac{1}{N_k}\sum_{1\le n\le N_k} \ind{P}(S^n\bfu) - \kappa(P) \right|  \le \eps_{\ell}
                \Biggr\}.
\end{multline*}

Consider an integer $k$ with $k_\ell\le k<k_{\ell+1}$ for some $\ell\ge1$. We want to use Lemma~\ref{lemma:dynamics} to construct a permutation $\phi_k$ of $\{1,\ldots,N_k\}$, and for this we have to precise with which points $x_n$ we apply the lemma. With the goal of fulfilling Property~\eqref{eq:bord}, we set
\[ x_1:=S^{i_1}\bfu,\text{ where }i_1:=\min\left\{i\ge1:S^i\bfu\notin\bigsqcup_{1\le j\le h_\ell-1}S^j B_\ell\right\}. \]
We have $i_1\le h_\ell$. Then, we also consider
\[
 i_2:=\max \left\{i\le N_k:S^i\bfu\notin\bigsqcup_{0\le j\le h_\ell-2}S^j B_\ell\right\}.
\]
We also have $N_k-i_2\le h_\ell-1$. Thus, setting $N:=i_2-i_1+1$, we have $N_k-2h_\ell\le N\le N_k$. Now, the points $x_n=S^{n+i_1-1}\bfu$, $1\le n\le N$, satisfy for each atom $P\in\P_\ell$,
\begin{align*}
  &\left| \frac{1}{N}\sum_{1\le n\le N} \ind{P}(x_n) - \kappa(P) \right| \\
  \le &  \left| \frac{1}{N}\sum_{1\le n\le N} \ind{P}(x_n)  - \frac{1}{N_k}\sum_{1\le n\le N_k} \ind{P}(S^n\bfu) \right|
  + \left| \frac{1}{N_k}\sum_{1\le n\le N_k} \ind{P}(S^n\bfu) - \kappa(P) \right|\\
  \le & \frac{4h_\ell}{N_k} + \left| \frac{1}{N_k}\sum_{1\le n\le N_k} \ind{P}(S^n\bfu) - \kappa(P) \right| \le \eps_\ell.
\end{align*}
All assumptions of Lemma~\ref{lemma:dynamics} are satisfied, so we get a permutation $\phi$ of $\{i_1,\ldots,i_2\}$ such that
\begin{equation*}
  \forall Q,Q'\in\Q_\ell,\quad \left| \frac{1}{i_2-i_1+1} \sum_{i_1\le n\le i_2} \ind{Q\times Q'}(S^n\bfu,S^{\phi(n)}\bfu) - \lambda(Q\times Q') \right|
   \le 8 \eps_\ell,
 \end{equation*}
 and
 \begin{equation}
  \label{eq:locorb}
  \frac{1}{N} \Bigl| \Bigl\{ n\in\{i_1,\ldots,i_2\}: \phi(n+1) \neq \phi(n)+1 \Bigr\} \Bigr| \le 4\eps_\ell + \frac{2}{h_\ell}+ \frac{2}{N_k}.
 \end{equation}
Then, we can extend $\phi$ to a permutation $\phi_k$ of $\{1,\ldots,N_k\}$, for example by setting $\phi_k|_{\{1,\ldots,i_1-1\}\cup\{i_2+1,\ldots,N_k\}}:=\Id$.

It is clear that the sequence $(\phi_k)$ constructed in this way satisfies for all $\ell\ge1$,
\[
 \forall Q,Q'\in\Q_\ell,\quad \left| \frac{1}{N_k} \sum_{1\le n\le N_k} \ind{Q\times Q'}(S^n\bfu,S^{\phi(n)}\bfu) - \lambda(Q\times Q') \right| \tend{k}{\infty} 0.
\]
Thanks to~\eqref{eq:caract-wstar}, this gives~\eqref{eq:selfjoinings_of_FS}.
Moreover, \eqref{eq:locorb} ensures that the sequence of permutations $(\phi_k)$ is locally orbital.
This concludes the proof of Proposition~\ref{prop:selfjoiningsofFS}.

\subsection{Question}\label{s:question}

What could be an analog of Proposition~\ref{prop:selfjoiningsofFS} for the {\bf logarithmic} Furstenberg systems? An important difficulty seems to be that, in this case, all points in the orbit of $\bfu$ do not have the same weights, hence the permutation of indices changes the second marginal.

\section{Cross-sections of ergodic components}\label{s:RelErg}

The purpose of this section is to provide a proof of Proposition~\ref{p:tim1}. The arguments presented below were communicated to us by Tim Austin.

\subsection{Measure theory}

%
%
%

Consider a diagram of measurable spaces
\begin{center}
$\phantom{i}$\xymatrix{ & X \ar_\phi[dl] \ar^\psi[dr]\\
Y && Z.
}
\end{center}
Let $\mu$ be a probability measure on $X$, and $\mu' :=\phi_\ast\mu$ be its image on $Y$.  Let $\ca$ be the sigma-algebra of $Y$, and assume that $Z$ is standard.  Let $m$ be Lebesgue measure restricted to $\cb_{[0,1]}$.

\begin{Lemma}\label{lem:rep}
There is a measurable map $\pi$ from $Y\times [0,1]$ to $Z$ that represents the joint distribution of $\phi$ and $\psi$ in the following sense:
\[\int_X (f\circ \phi)(g\circ \psi)\,d\mu = \int_Y f(y) \int_0^1 g(\pi(y,t))\ d t\ d\mu'(y)\]
for any bounded measurable functions $f$ on $Y$ and $g$ on $Z$.

Moreover, $\pi$ can be taken measurable with respect to $\ca_0\times \cb_{[0,1]}$, where $\ca_0$ is some countably generated sigma-subalgebra of $\ca$.
\end{Lemma}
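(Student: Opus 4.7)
The plan is to reduce the lemma to a statement about disintegrating the pushforward of $\mu$ under the pair $(\phi,\psi)$ and then realizing each fibre measure as the law of a random variable on $([0,1],m)$.

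First, I would push $\mu$ forward under the map $(\phi,\psi):X\to Y\times Z$ to obtain a probability measure $\rho$ on $Y\times Z$ whose first marginal is $\mu'$ and whose second marginal is $\psi_\ast\mu$. The requested identity
\[
\int_X (f\circ \phi)(g\circ \psi)\,d\mu = \int_{Y\times Z} f(y)g(z)\,d\rho(y,z)
\]
then reduces the lemma to constructing a measurable $\pi:Y\times[0,1]\to Z$ such that, denoting the second coordinate projection by $p_Z$, one has $(p_Z)_\ast\bigl(\delta_y\otimes m\circ\pi^{-1}\bigr)=\rho_y$ for $\mu'$-a.e.\ $y$, where $(\rho_y)_{y\in Y}$ is the disintegration of $\rho$ over its first marginal $\mu'$. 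Since $Z$ is standard, such a disintegration exists: $y\mapsto \rho_y\in M(Z)$ is a probability kernel from $Y$ to $Z$.

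Second, I would use that $Z$, being standard, admits a Borel isomorphism $\Psi:Z\to Z'$ where $Z'\subset [0,1]$ is Borel. Set $\widetilde{\rho}_y:=\Psi_\ast\rho_y\in M([0,1])$ and consider the conditional c.d.f.\ $F_y(s):=\widetilde{\rho}_y\bigl([0,s]\bigr)$ together with its generalised inverse $F_y^{-1}(t):=\inf\{s\in[0,1]\colon F_y(s)\geq t\}$. A standard verification shows that the map $(y,t)\mapsto F_y^{-1}(t)$ is jointly Borel measurable (for each $s,t$, $\{(y,t)\colon F_y^{-1}(t)\leq s\}=\{(y,t)\colon F_y(s)\geq t\}$, which is measurable because $y\mapsto F_y(s)$ is). Setting
\[
\pi(y,t):=\Psi^{-1}\bigl(F_y^{-1}(t)\bigr)
\]
then gives a measurable map $Y\times[0,1]\to Z$, and because pushing $m$ forward by $F_y^{-1}$ yields $\widetilde{\rho}_y$, we obtain $\pi(y,\cdot)_\ast m=\rho_y$, hence the desired integral identity by Fubini.

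Finally, for the ``moreover'' statement: pick a countable algebra $\cd$ generating $\cb_Z$ (possible since $Z$ is standard), and let $\cd'\subset\cb_{[0,1]}$ be the corresponding countable algebra obtained through $\Psi$ together with a countable subalgebra generating $\cb_{[0,1]}$; then the values $F_y(s)$ for $s$ ranging in a fixed countable dense set (and more generally $\rho_y(D)$ for $D\in\cd$) are countably many real-valued measurable functions of $y$. Let $\ca_0\subset\ca$ be the countably generated sub-sigma-algebra they generate. The construction of $\pi$ above uses only these countably many functions of $y$ together with the Borel structure of $[0,1]$, so $\pi$ is automatically $\ca_0\otimes\cb_{[0,1]}$-measurable.

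The main technical point, and the one I would be most careful about, is the joint measurability of $(y,t)\mapsto F_y^{-1}(t)$ and the verification that all measurability statements localise to a countably generated sub-sigma-algebra of $\ca$; everything else is a routine application of disintegration (available because $Z$ is standard) and the isomorphism theorem for standard Borel spaces.
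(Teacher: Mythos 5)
Your proposal is correct and follows essentially the same route as the paper: disintegrate the joint law of $(\phi,\psi)$ over $Y$ (using that $Z$ is standard), realize each fibre measure as the image of Lebesgue measure on $[0,1]$, and take $\ca_0$ to be the countably generated sub-sigma-algebra of $\ca$ determined by the kernel. The only difference is that the paper cites Kallenberg's randomization lemma for the second step where you prove it by hand via the Borel embedding of $Z$ into $[0,1]$ and the quantile transform; the one point to tidy up there is that $F_y^{-1}(t)$ lies in $Z'$ only for $m$-a.e.\ $t$, so $\pi$ must be assigned a fixed default value in $Z$ on the measurable exceptional set, which changes nothing in the integral identity.
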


This follows directly from two classical results in the measure theory of standard Borel spaces; we refer the reader to Kallenberg's book~\cite{Kal}.

\begin{proof}
First, since $Z$ is standard, we may represent the conditional distribution of $\psi$ given $\phi$ using a probability kernel, say
\[Y\mapsto M(Z):y\mapsto \nu_y.\]
This is the disintegration theorem, as in~\cite[Theorem 8.5]{Kal}.

Since $M(Z)$ is standard, its sigma-algebra is countably generated.  The pullback of this sigma-algebra to $Y$ is a countably generated sigma-subalgebra of $\ca$.  Let this be $\ca_0$.

Again since $Z$ is standard, the kernel $\nu_\bullet$ has a representation using an independent random variable distributed uniformly in $[0,1]$: see~\cite[Lemma 4.22]{Kal}.  This result gives a map $\pi:Y\times [0,1]\to Z$ that is measurable with respect to $\ca_0\times \cb_{[0,1]}$ and satisfies
\[\nu_y(C) = m\{t:\ \pi(y,t) \in C\} \quad\forall y \in Y,\ C \in \cb_Z.\]

Putting these two representations together, we obtain
\begin{align*}
\int_X (f\circ \phi)(g\circ \psi)\,d\mu &= \int_Y f(y)\int g\ d\nu_y\ d\mu'(y) \\ &= \int_Y f(y) \int_0^1 g(\pi(y,t))\ d t\ d\mu'(y),
\end{align*}
as required.
\end{proof}

\subsection{Ergodic theory}
Now let $T$ be an automorphism of a standard probability space $(X,\cb_X,\mu)$. We recall that $\ci_T$ stands for the sub-sigma-algebra of $T$-invariant sets .  Let
\[\mu = \int \mu_x\ d\mu(x)\]
be the ergodic decomposition of $(X,\mu,T)$: that is, the disintegration of $\mu$ over $\ci_T$.

Let $Z:=M^e(X,T)$.  This is a Borel subset of $M(X)$, so the induced measurable structure on $Z$ is still standard.  The map $x\mapsto \mu_x$ is $T$-invariant and measurable from $X$ to $Z$.

Now let $(Y,\ca)$ be another measurable space, not necessarily standard.  Let $\phi:X\to Y$ be a measurable map with the property that $\phi^{-1}\ca \subseteq \ci_T$.  (In case $\ca$ separates the points of $Y$, this implies that $\phi$ is $T$-invariant, but we do not need this additional assumption.)  In applications it could be that $Y$ is the same set as $X$, $\ca$ is a proper sub-sigma-algebra of $\ci_T$, and $\phi(x) = x$. However, the explanation below seems clearer if we give $Y$ its own name.  Let $\mu' := \phi_\ast\mu$ be the image measure of $\mu$ under $\phi$.

We are now ready to prove Proposition~\ref{p:tim1}.

\noindent
{\em Proof of Proposition~\ref{p:tim1}}.
Begin by applying Lemma~\ref{lem:rep} to the maps $\phi:X\to Y$ and $\mu_\bullet:X\to Z$.  The result is a map from $Y\times [0,1]$ to $Z$ satisfying the equation promised in that lemma, and measurable with respect to $\ca_0\times \cb_{[0,1]}$ for some countably generated sigma-subalgebra $\ca_0$ of $\ca$.  Since $Z$ is itself a space of measures, this new map is actually a probability kernel, say $\theta_{(y,t)}$.

Let us consider the conclusion of Lemma~\ref{lem:rep} in case $f$ is a bounded measurable function on $Y$ and $g$ is a function on $Z$ of the form
\[g(\nu) := \int G\,d\nu\]
for some bounded Borel function $G$ on $X$.  Then that conclusion becomes
\begin{equation}\label{eq:repagain}
\int f(\phi(x))\Big(\int G\,d\mu_x\Big)\ d\mu(x) = \int_Y f(y)\int_0^1 \Big(\int G\,d\theta_{(y,t)}\Big)\ d t\ d\mu'(y).
\end{equation}
We make use of~\eqref{eq:repagain} through two special cases:
\begin{enumerate}
\item Taking $f = 1$ but allowing arbitrary $G$, equation~\eqref{eq:repagain} shows that
\begin{equation}\label{eq:disint}
\mu = \int \mu_x\ d\mu(x) = \int_Y \int_0^1 \theta_{(y,t)}\ d t\ d\mu'(y).
\end{equation}
\item Take $f = \ind{A}$ for some $A \in \ca$, and take $G = \ind{B}$ where $B$ is either $\phi^{-1}A$ or $X\setminus \phi^{-1}A$.  Equation~\eqref{eq:repagain} becomes
\begin{equation}\label{eq:repagain2}
\int_{\phi^{-1}A} \mu_x(B)\,d\mu(x) = \int_A \int_0^1 \theta_{(y,t)}(B)\ d t\ d\mu'(y).
\end{equation}
Since $B$ is $T$-invariant, the properties of the ergodic decomposition give
\[\mu_x(B) = \ind{B}(x) \quad \hbox{for}\ \mu\hbox{-a.a.}\ x,\]
so the left-hand side of~\eqref{eq:repagain2} is either $\mu(\phi^{-1}A) = \mu'(A)$ (in case $B = \phi^{-1}A$) or $0$ (in case $B = X\setminus \phi^{-1}A$).  Allowing $A$ to vary and looking at the right-hand side of~\eqref{eq:repagain2}, these outcomes are possible only if we have
\begin{equation}\label{eq:const}
\theta_{(y,t)}(\phi^{-1}A) = \ind{A}(y) \quad \hbox{for}\ (\mu'\times m)\hbox{-a.a.}\ (y,t).
\end{equation}
Equation~\eqref{eq:const} plays an important role below, but we need something slightly stronger.  Since $\ca_0$ is countably generated, it contains a countable generating subalgebra of sets ${\cal{S}}$.  Since ${\cal{S}}$ is countable, there is a Borel set $F \subseteq [0,1]$ with $m(F) = 1$ and such that~\eqref{eq:const} holds for $\mu'$-a.a.\ $y$ whenever $t \in F$ and $A \in {\cal{S}}$: that is, the set of `good' values of $t$ does not depend on the choice of $A$ from ${\cal{S}}$.  This conclusion now extends from ${\cal{S}}$ to $\ca_0$.  Indeed, let $t \in F$ and let $Y_t \in \ca_0$ be the set of all $y \in Y$ such that
\[\theta_{(y,t)}(\phi^{-1}A) = \ind{A}(y)\quad \forall A\in{\cal{S}}.\]
If $A \in \ca_0$ and $y \in Y_t\cap A$, then
\beq\label{TAdow}A \supseteq \bigcap_{A' \in {\cal{S}}:\ A' \ni y}A'\eeq
Indeed, let us say that two points $u,v$ in $Y$ are ``the same according to ${\cal S}$'' if every set in ${\cal S}$ either contains both $u$ and $v$, or neither of them.  It is easy to see that if $u,v$ are the same according to ${\cal S}$, then they are the same according to the sigma-algebra $\ca_0$ generated by  ${\cal S}$ (indeed: the collection of all sets according to which $u$ and $v$ are the same is a sigma-algebra containing ${\cal S}$).
Now, the set on the right is the set of all points in $Y$ that are the same as $y$ according to ${\cal S}$.  By the observation above, this is the same as ``all points in $Y$ that are the same as $y$ according to $\ca_0$''.  Since $A$ is one particular member of $\ca_0$, and $y$ lies in $A$, all of these other points must also lie in $A$.

Since in~\eqref{TAdow} we deal with a countable intersection, we have
\[\theta_{(y,t)}(\phi^{-1}A) \ge \theta_{(y,t)}\Big(\bigcap_{A' \in {\cal{S}}:\ A' \ni y}A'\Big) = 1.\]
A symmetrical argument shows that $\theta_{(y,t)}(\phi^{-1}A) = 0$ if $y \in Y_t\setminus A$.  Thus, for $t \in F$, we now know that
\begin{equation}\label{eq:const2}
\theta_{(y,t)}(\phi^{-1}A) = \ind{A}(y) \quad \forall A \in \ca_0,\ \hbox{for}\ \mu'\hbox{-a.a.}\ y.
\end{equation}
\end{enumerate}

For each $t \in F$, let
\[\nu_t := \int_Y \theta_{(y,t)}\ d\mu'(y).\]
For $t \in [0,1]\setminus F$, take $\nu_t$ to equal $\nu_s$ for some $s \in F$; the choice does not matter because the set of these $t$ is negligible.

Now, we can show properties (a--c). The first two are simple: (a) holds because $\nu_t$ is a mixture of $T$-invariant measures; and (b) follows from~\eqref{eq:disint} and Fubini's theorem.

We have $\ci_T \supseteq \phi^{-1}\cal{A}$ by assumption, so property (c) requires only the reverse inclusion modulo $\nu_t$.  Suppose that $B \in \ci_T$, and let
\[A := \{y \in Y:\ \theta_{(y,t)}(B) = 1\}.\]
Then $A \in \ca_0$, by the measurability of $\theta$ and because we are holding $t$ fixed.  Moreover, the value $\theta_{(y,t)}(B)$ equals $0$ or $1$ for every $(y,t)$, since $\theta_{(y,t)}$ is ergodic, and therefore
\begin{equation}\label{eq:complement}
Y\setminus A = \{y \in Y:\ \theta_{(y,t)}(B) = 0\}.
\end{equation}

Finally, we have
\begin{align*}
\nu_t(B\setminus \phi^{-1}A) &= \int_Y \theta_{(y,t)}(B\setminus \phi^{-1}A)\ d\mu'(y)\\
&= \int_A \theta_{(y,t)}(B\setminus \phi^{-1}A)\ d\mu'(y) + \int_{Y\setminus A} \theta_{(y,t)}(B\setminus A)\ d\mu'(y).
\end{align*}
The first of these integrals is zero because~\eqref{eq:const2} gives
\[\theta_{(y,t)}(\phi^{-1}A) = 1 \quad \hbox{for}\ \mu'\hbox{-a.a.}\ y \in A,\]
and the second integral is zero because
\[\theta_{(y,t)}(B) = 0 \quad \hbox{for}\ \mu'\hbox{-a.a.}\ y \in Y\setminus A.\]
Similarly, $\nu_t(\phi^{-1}A\setminus B) = 0$, and so $B = \phi^{-1}A$ modulo $\nu_t$.
\bez

\section{Pinsker factor in ergodic components}\label{s:PpEC}
 Let $(X,\mu,T)$ be a measure-preserving system, where $(X,\mu)$ is a standard Borel probability space and $T:X\to X$ is invertible and preserves $\mu$. We recall that $\ci_T$ denotes the factor sigma-algebra of $T$-invariant subsets. Recalling that $M^e(X,T)$ stands for the set of ergodic $T$-invariant probability measures on $(X,\cb_X)$, we can view the ergodic decomposition of $\mu$ as a measurable map $x\mapsto \mu_x$ from $X$ to $M^e(X,T)$ such that, for all $f\in L^1(\mu)$,
\begin{equation}
 \label{eq:ergdec}
 \EE_\mu [f\,|\,\ci_T] = \int_X f\,d\mu_x \quad\text{($\mu$-a.e.)}
 \end{equation}

 We denote by $\Pi(X,\mu,T)$ the Pinsker factor sigma-algebra of $(X,\mu,T)$, that is the sub-sigma-algebra of all Borel subsets $A$ of $X$ such that $h_\mu(\P_A,T)=0$, where $\P_A$ is the partition of $X$ into $A$ and $X\setminus A$. For each ergodic component $\mu_x$, we also have an associated Pinsker factor sigma-algebra $\Pi(X,\mu_x,T)$. The following proposition clarifies the relationship between these Pinsker factors.

 \begin{Prop}
  \label{prop:Pinsker}
  There exists a countably generated sub-sigma-algebra $\cc\subset\cb_X$ such that
  \begin{itemize}
   \item $\Pi(X,\mu,T)=\cc \mod \mu$;
   \item for almost every ergodic component $\mu_x$, $\Pi(X,\mu_x,T)=\cc \mod \mu_x$.
  \end{itemize}
  Moreover, for all $f\in L^1(\mu)$, we have $\mu$-almost surely
\begin{equation}
 \label{eq:equalityPinsker}
 \EE_\mu [f\,|\,\cc](x) = \EE_{\mu_x}[f\,|\,\cc](x).
\end{equation}
\end{Prop}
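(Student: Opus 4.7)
The plan is to realize the Pinsker factor as an explicit tail sigma-algebra via Rokhlin--Sinai, and then extract a countably generated sub-sigma-algebra that preserves the identifications modulo every ergodic component.

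First, I would fix a countable Borel partition $\alpha=\{A_i\}_{i\geq 1}$ of $X$ that generates the full Borel sigma-algebra $\cb_X$ and satisfies $H_\mu(\alpha)<+\infty$; this can be arranged on any standard Borel space. Since $H_\mu(\alpha)=\int H_{\mu_x}(\alpha)\,d\mu(x)$, one has $H_{\mu_x}(\alpha)<+\infty$ for $\mu$-almost every $x$, and because $\alpha$ generates $\cb_X$ at the Borel level it is automatically a generator modulo $\mu_x$ for every ergodic component. Next, introduce the Borel tail sigma-algebra
\[\ct_\alpha:=\bigcap_{n\geq 1}\sigma\bigl(T^{-k}\alpha:k\geq n\bigr).\]
Applying the Rokhlin--Sinai theorem to each ergodic component $(X,\mu_x,T)$ yields $\ct_\alpha=\Pi(X,\mu_x,T)$ modulo $\mu_x$ for $\mu$-a.e.\ $x$. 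Combined with the identity $h_\mu(T,\P)=\int h_{\mu_x}(T,\P)\,d\mu(x)$, valid because $T$ acts trivially on $\ci_T$, this upgrades to $\ct_\alpha=\Pi(X,\mu,T)$ modulo $\mu$.

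Second, I would extract a countably generated $\cc\subset\ct_\alpha$ preserving both identifications. Choose $\{B_k\}_{k\geq 1}\subset\ct_\alpha$ so that $\sigma\{B_k\}=\ct_\alpha$ modulo $\mu$, which is possible because the factor associated with $\ct_\alpha$ is standard, and set $\cc:=\sigma\{B_k\}$. The containment $\cc\subset\Pi(X,\mu_x,T)$ modulo $\mu_x$ for $\mu$-a.e.\ $x$ is automatic: each $B_k$ belongs to $\ct_\alpha=\Pi(X,\mu_x,T)$ modulo $\mu_x$, so by countability the inclusion of sigma-algebras survives after removing a single $\mu$-null set of $x$'s.

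The main obstacle is the reverse inclusion $\Pi(X,\mu_x,T)\subset\cc$ modulo $\mu_x$ for $\mu$-a.e.\ $x$, because a family that generates $\ct_\alpha$ modulo $\mu$ need not generate it modulo each $\mu_x$. I plan to overcome this by enriching $\{B_k\}$ through a measurable-selection argument on the standard Borel space $M^e(X,T)$ of ergodic components: since $x\mapsto\mu_x$ is Borel and each $\Pi(X,\mu_x,T)$ is countably generated modulo $\mu_x$, Kallman's theorem (Theorem~\ref{kallman}) applied to an appropriate Borel set of $(x,\text{generating sequence})$-pairs will produce a single countable family inside $\ct_\alpha$ that generates $\Pi(X,\mu_x,T)$ modulo $\mu_x$ for $\mu$-a.e.\ $x$ simultaneously, and also generates $\Pi(X,\mu,T)$ modulo $\mu$. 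Finally, the conditional-expectation identity \eqref{eq:equalityPinsker} follows by testing against bounded $\cc$-measurable $h$: since $\ci_T\subset\cc$, Fubini applied to $\mu=\int\mu_x\,d\mu(x)$ yields
\[\int\EE_\mu[f\,|\,\cc]\,h\,d\mu=\int fh\,d\mu=\int\Bigl(\EE_{\mu_x}[f\,|\,\cc](x)\Bigr)\,h(x)\,d\mu(x),\]
so the two $\cc$-measurable functions coincide $\mu$-a.e.
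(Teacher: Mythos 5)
Your outline correctly isolates the crux of the proposition, but the step you yourself flag as ``the main obstacle'' is exactly the one that remains unresolved, and the proposed fix does not work. Measurable selection (Theorem~\ref{kallman}) can at best produce, for $\mu$-a.e.\ $x$, a countable family $\{B^x_k\}_k$ generating $\Pi(X,\mu_x,T)$ mod $\mu_x$; these families vary with $x$, and there is no mechanism for collapsing the resulting uncountable union into a single countable family inside $\ct_\alpha$. It is also unclear in which standard Borel space the ``generating sequences'' are supposed to live, and the $\sigma$-compactness hypothesis of Kallman's theorem is not addressed. The same difficulty (``mod $\mu_x$ for a.e.\ $x$'' versus ``mod $\mu$'') already appears, unacknowledged, in your upgrade from $\ct_\alpha=\Pi(X,\mu_x,T)$ mod $\mu_x$ to $\ct_\alpha=\Pi(X,\mu,T)$ mod $\mu$. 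The missing idea is to generate $\cc$ not by sets but by \emph{fixed Borel versions of conditional expectations}: for a countable family $\Phi$ of bounded Borel functions dense in every $L^2(X,\nu)$, one shows --- via the reverse-martingale/martingale double limit $\EE_\mu[\phi\,|\,\Pi]=\lim_m\lim_\ell \EE_\mu[\phi\,|\,\ci_T\vee\Q_{\ell,m}]$ combined with the pointwise identity $\EE_\mu[\phi\,|\,\ci_T\vee\Q](x)=\EE_{\mu_x}[\phi\,|\,\Q](x)$ of Lemma~\ref{lemma:c_exp} --- that a single Borel function is simultaneously a version of $\EE_\mu[\phi\,|\,\Pi(X,\mu,T)]$ and of $\EE_{\mu_x}[\phi\,|\,\Pi(X,\mu_x,T)]$ at $\mu$-a.e.\ point. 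The sigma-algebra generated by these countably many functions then generates the Pinsker factor modulo $\mu$ and modulo a.e.\ $\mu_x$ at once, and \eqref{eq:equalityPinsker} falls out of the same computation; this is how the paper proceeds.

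Two further problems. First, your starting point --- a countable Borel partition $\alpha$ generating $\cb_X$ under the $T$-action with $H_\mu(\alpha)<\infty$ --- cannot exist when $h_\mu(T)=\infty$, since a generator satisfies $h_\mu(T)=h_\mu(T,\alpha)\le H_\mu(\alpha)$; the proposition carries no entropy restriction, so one must work instead with an increasing sequence of finite partitions and pass to the limit (this is what Lemmas~\ref{lemma:pinskersup} and~\ref{lemma:pinskerlimit} are for). Second, the concluding Fubini argument for \eqref{eq:equalityPinsker} presupposes that $x\mapsto\EE_{\mu_x}[f\,|\,\cc](x)$ is a well-defined, $\cc$-measurable function of $x$; choosing versions of these conditional expectations measurably in the ergodic component is part of what has to be proved, and is delivered precisely by the explicit construction sketched above, not by an abstract testing identity.
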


For a factor sub-sigma-algebra $\ca\subset\cb_X$, we can also consider the corresponding Pinsker factor sub-sigma-algebra  $\Pi(X,\ca,\mu,T)$ which obviously satisfies:
$$ \Pi(X,\ca,\mu,T)= \Pi(X,\mu,T)\cap \ca.$$

 We recall the following classical result: if $\P$ is a \textbf{finite} partition of $X$, and if
 $\ca=\bigvee_{n\in\ZZ}T^{-n}\P$,
 then
 \begin{equation}
  \label{eq:pinsker}
  \Pi(X,\ca,\mu,T)=\bigcap_{m\in\ZZ^-}\bigvee_{n\le m}T^{-n}\P \mod \mu.
 \end{equation}

 We will also need two other facts, which we state as lemmas below.

 \begin{Lemma}
 \label{lemma:pinskersup}
  Let $\ca$ and $\cb$ be two factor sub-sigma-algebras, and assume that $\cb\subset \Pi(X,\mu,T)$. Then
  $$ \Pi\bigl(X,\ca\vee\cb,\mu,T\bigr) = \Pi(X,\ca,\mu,T)\vee\Pi(X,\cb,\mu,T) = \Pi(X,\ca,\mu,T)\vee\cb\mod\mu. $$
 \end{Lemma}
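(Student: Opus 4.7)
The second equality is immediate: the hypothesis $\cb\subseteq\Pi(X,\mu,T)$ means every set in $\cb$ has zero entropy, so $\Pi(X,\cb,\mu,T)=\cb$.

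For the first equality, the inclusion $\Pi(X,\ca,\mu,T)\vee\cb\subseteq\Pi(X,\ca\vee\cb,\mu,T)$ is easy: both $\Pi(X,\ca,\mu,T)$ and $\cb$ are sub-sigma-algebras of $\ca\vee\cb$ with zero entropy (the former by definition, the latter by hypothesis), so their join is a zero-entropy sub-sigma-algebra of $\ca\vee\cb$, hence contained in the largest such. For the reverse inclusion, I would combine the Abramov--Rokhlin addition formula $h(T,\cf_1\vee\cf_2)=h(T,\cf_1)+h(T,\cf_2\,|\,\cf_1)$ with the tail characterization~\eqref{eq:pinsker}. Choose countable partitions $\Q$, $\R$ with $\ca=\bigvee_{n\in\ZZ} T^{-n}\Q$ and $\cb=\bigvee_{n\in\ZZ} T^{-n}\R$, and set $\ca_m:=\bigvee_{n\le m}T^{-n}\Q$, $\cb_m:=\bigvee_{n\le m}T^{-n}\R$. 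Formula~\eqref{eq:pinsker} applied to $\cb$ yields $\bigcap_m\cb_m=\Pi(X,\cb,\mu,T)=\cb$; as $(\cb_m)$ is decreasing and each $\cb_m\subseteq\cb$, this forces $\cb_m=\cb$ for every $m$. Applied to $\ca\vee\cb$ it then gives
$$
\Pi(X,\ca\vee\cb,\mu,T)=\bigcap_m(\ca_m\vee\cb_m)=\bigcap_m(\ca_m\vee\cb),
$$
while $\bigcap_m\ca_m=\Pi(X,\ca,\mu,T)$.

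The main obstacle becomes the commutation
$$
\bigcap_m(\ca_m\vee\cb)=\Bigl(\bigcap_m\ca_m\Bigr)\vee\cb,
$$
which fails for general decreasing families of sub-sigma-algebras and must therefore be extracted from the dynamical structure: the $\ca_m$ are strict pasts of a stationary process under $T$, and $\cb$ is a $T$-invariant zero-entropy factor. I would tackle this via reverse martingale convergence in $L^2(\mu)$. For $f\in L^2(\ca\vee\cb,\mu)$ one has $\EE_\mu[f\,|\,\ca_m\vee\cb]\to\EE_\mu[f\,|\,\bigcap_m(\ca_m\vee\cb)]$ in $L^2$. By density one may reduce to $f=g\cdot h$ with $g\in L^2(\ca,\mu)$ and $h\in L^\infty(\cb,\mu)$; since $h$ is $\cb$-measurable and $\cb\subseteq\ca_m\vee\cb$, the conditional expectation factors as $\EE_\mu[gh\,|\,\ca_m\vee\cb]=h\cdot\EE_\mu[g\,|\,\ca_m\vee\cb]$. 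Combined with the classical reverse martingale limit $\EE_\mu[g\,|\,\ca_m]\to\EE_\mu[g\,|\,\Pi(X,\ca,\mu,T)]$ and the $T$-invariance plus zero-entropy of $\cb$, which allows one to control the discrepancy between $\EE_\mu[g\,|\,\ca_m\vee\cb]$ and $\EE_\mu[g\,|\,\ca_m]$ by terms measurable with respect to $\cb$, this identifies the limit as $\EE_\mu[f\,|\,\Pi(X,\ca,\mu,T)\vee\cb]$, proving the equality of sub-sigma-algebras.
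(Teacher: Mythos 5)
Your treatment of the second equality and of the inclusion $\Pi(X,\ca,\mu,T)\vee\cb\subseteq\Pi(X,\ca\vee\cb,\mu,T)$ is fine and matches the paper. For the reverse inclusion, however, your argument has a genuine gap exactly at the point you yourself flag as "the main obstacle'': the commutation $\bigcap_m(\ca_m\vee\cb)=\bigl(\bigcap_m\ca_m\bigr)\vee\cb$. Everything before that is essentially a reformulation of the problem (the tail characterization of $\Pi(X,\ca\vee\cb,\mu,T)$ plus the fact that a zero-entropy process is deterministic), and everything after it is asserted rather than proved. The sentence claiming that "$T$-invariance plus zero-entropy of $\cb$ \ldots allows one to control the discrepancy between $\EE_\mu[g\,|\,\ca_m\vee\cb]$ and $\EE_\mu[g\,|\,\ca_m]$ by terms measurable with respect to $\cb$'' is not an argument: there is no general mechanism by which conditioning on the enlarged algebra $\ca_m\vee\cb$ differs from conditioning on $\ca_m$ only by $\cb$-measurable corrections. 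What is really needed at this point is a conditional independence statement — that $\ca$ and $\cb$ are relatively independent over $\Pi(X,\ca,\mu,T)$ (equivalently, that any joining of the $\ca$-system with a zero-entropy system is relatively independent over the Pinsker factor of the $\ca$-system). One can try to extract this from entropy identities (e.g.\ $h(T,\Q\,|\,\cb)=h(T,\Q)$ forces $\Q\perp\cb$ conditionally on the strict past), but turning that into the identification of the decreasing limit again requires an exchange of limits of the same nature as the one you are trying to justify; it is not a routine reverse-martingale step. There is also a secondary issue: the tail formula~\eqref{eq:pinsker} is stated for \emph{finite} partitions, while you invoke it for countable generating partitions of $\ca$, $\cb$ and $\ca\vee\cb$; this can be repaired by exhausting with finite partitions (as in Lemma~\ref{lemma:pinskerlimit}), but it is an additional reduction you would need to carry out.

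For comparison, the paper avoids the tail-field route entirely. It takes $f\in L^2(X,\ca,\mu)$, $g\in L^2(X,\cb,\mu)$ and $h$ bounded and $\Pi(X,\ca\vee\cb,\mu,T)$-measurable, observes that the factor generated by $g$ and $h$ has zero entropy, and applies Theorem~\ref{t:largest} (relative independence of any joining with a ZE system over the largest ZE-factor) to get
$$\EE_\mu[fgh]=\EE_\mu\bigl[\EE_\mu[f\,|\,\Pi(X,\ca,\mu,T)]\,gh\bigr],$$
whence $\EE_\mu[fg\,|\,\Pi(X,\ca\vee\cb,\mu,T)]=\EE_\mu[f\,|\,\Pi(X,\ca,\mu,T)]\,g$ and, by density of linear combinations of such products $fg$, the measurability of $h$ with respect to $\Pi(X,\ca,\mu,T)\vee\cb$. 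The relative independence encapsulated in Theorem~\ref{t:largest} is precisely the ingredient your "discrepancy control'' step is missing; if you want to salvage your approach, you should either prove that conditional independence directly or cite it, at which point the martingale machinery becomes unnecessary.
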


 (Note that the result is not true in general if we do not assume that one of the two factors has zero entropy:  any zero entropy system can be seen as a factor of a joining of two Bernoulli shifts, cf.\ \cite{Sm-Th}.)

 \begin{proof}
  The inclusion $\Pi(X,\ca,\mu,T)\vee\Pi(X,\cb,\mu,T) \subset \Pi\bigl(X,\ca\vee\cb,\mu,T\bigr)$ is clear, and does not require that one of the two factors be of zero entropy.
  For the reverse inclusion, let us consider $f\in L^2(X,\ca,\mu)$, $g\in L^2(X,\cb,\mu)$, and let $h$ be bounded and measurable with respect to $\Pi(X,\ca\vee\cb,\mu,T)$. Since the factor generated by $g$ and $h$ has zero entropy, using Theorem~\ref{t:largest}, we have
  $$ \EE_\mu [ f g h ] = \EE_\mu \bigl[ \EE_\mu[f\,|\,\Pi(X,\ca,\mu,T)] \, g h \bigr], $$
  and this shows that
  $$ \EE_\mu \bigl[ fg \,|\, \Pi(X,\ca\vee\cb,\mu,T) \bigr] =  \EE_\mu[f\,|\,\Pi(X,\ca,\mu,T)] \, g \quad \mu\text{-a.e.} $$
  In particular, $\EE_\mu \bigl[ fg \,|\, \Pi(X,\ca\vee\cb,\mu,T) \bigr]$ is measurable with respect to $$\Pi(X,\ca,\mu,T)\vee\Pi(X,\cb,\mu,T).$$ This remains true if we replace $fg$ by a finite linear combination of $f_ig_i$'s, where each $f_i\in L^2(X,\ca,\mu)$ and each $g_i\in L^2(X,\cb,\mu)$. Then by the density of these linear combinations we get that $\EE_\mu \bigl[ h \,|\, \Pi(X,\ca\vee\cb,\mu,T) \bigr]$ is measurable with respect to $\Pi(X,\ca,\mu,T)\vee\Pi(X,\cb,\mu,T)$ for any $h\in L^2(X,\ca\vee\cb,\mu)$. Now, we conclude by taking $h$ measurable with respect to $\Pi(X,\ca\vee\cb,\mu,T)$.
 \end{proof}

 \begin{Lemma}
 \label{lemma:pinskerlimit}
  Let $(\ca_k)_{k\in\NN}$ be an increasing sequence of factor sub-sigma-algebras, and let $\ca:=\bigvee_{k\in\NN}\ca_k$.
  Then
  $$ \Pi(X,\ca,\mu,T) = \bigvee_{k\in\NN} \Pi(X,\ca_k,\mu,T) \mod \mu .$$
 \end{Lemma}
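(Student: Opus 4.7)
The inclusion $\bigvee_k \Pi(X,\ca_k,\mu,T) \subset \Pi(X,\ca,\mu,T)$ follows at once from the identity $\Pi(X,\cb,\mu,T) = \cb \cap \Pi(X,\mu,T)$ (valid for any factor $\cb$) together with $\ca_k \subset \ca$. For the converse, I will set $\cp := \bigvee_k \Pi(X,\ca_k,\mu,T)$; since every $\Pi(X,\ca_k,\mu,T) \subset \Pi(X,\mu,T)$, the factor $\cp$ is itself contained in $\Pi(X,\mu,T)$ and is therefore zero-entropy. The goal will be to show that every $B \in \Pi(X,\ca,\mu,T)$ belongs to $\cp$ modulo $\mu$.

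The key idea is to replace $\ca_k$ by the slightly enlarged factor $\cb_k := \ca_k \vee \cp$, whose Pinsker factor can be computed exactly. Since $\cp \subset \Pi(X,\mu,T)$ has zero entropy, Lemma~\ref{lemma:pinskersup} applies and yields
$$\Pi(X,\cb_k,\mu,T) = \Pi(X,\ca_k,\mu,T) \vee \cp = \cp.$$
Moreover $\cb_k \subset \ca$ and $\bigvee_k \cb_k \supset \bigvee_k \ca_k = \ca$, so $\cb_k \nearrow \ca$. The martingale convergence theorem therefore gives $\EE_\mu[\ind{B}|\cb_k] \to \ind{B}$ in $L^2(\mu)$ for every $B \in \ca$. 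The conclusion will be immediate once I show that, when $B \in \cf := \Pi(X,\ca,\mu,T)$, the conditional expectation $\EE_\mu[\ind{B}|\cb_k]$ is already $\cp$-measurable, and in fact equals $\EE_\mu[\ind{B}|\cp]$ independently of $k$.

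To establish this last fact I plan to apply Theorem~\ref{t:largest} to the characteristic class $\mathrm{ZE}$ of zero-entropy systems. Consider the diagonal joining $\rho$ on $X \times X$ defined by $\rho(A \times A') := \mu(A \cap A')$, viewed as a joining between the abstract systems $R := (X,\cb_k,\mu,T)$ and $T' := (X,\cf,\mu,T)$. Since $\cf \subset \Pi(X,\mu,T)$, we have $T' \in \mathrm{ZE}$, and the largest $\mathrm{ZE}$-factor of $R$ is $\ca_{\mathrm{ZE}}(R) = \Pi(X,\cb_k,\mu,T) = \cp$. Theorem~\ref{t:largest} then gives, for every $f \in L^2(\cb_k)$ and $g \in L^2(\cf)$,
$$\int_{X\times X} f(z)\, g(x)\, d\rho(z,x) = \int_{X\times X} \EE_\mu[f|\cp](z)\, g(x)\, d\rho(z,x),$$
which, since $\rho$ is supported on the diagonal, rewrites as $\EE_\mu[fg] = \EE_\mu\bigl[\EE_\mu[f|\cp]\,g\bigr]$. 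Using the tower property this is equivalent to the relative independence of $\cb_k$ and $\cf$ over $\cp$, which in turn forces $\EE_\mu[g|\cb_k] = \EE_\mu[g|\cp]$ for every $g \in L^2(\cf)$.

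Applying this with $g = \ind{B}$ gives $\EE_\mu[\ind{B}|\cb_k] = \EE_\mu[\ind{B}|\cp]$ for every $k$, and combining with the martingale convergence forces $\ind{B} = \EE_\mu[\ind{B}|\cp]$ in $L^2(\mu)$, so $B \in \cp \mod \mu$. The main obstacle I expect is the careful book-keeping in the third paragraph: one has to identify the abstract measure-preserving systems carried by the factors $\cb_k$ and $\cf$, recognise the diagonal coupling as a genuine joining of these abstract systems, and verify that the largest $\mathrm{ZE}$-factor of $R$ in that picture is really the zero-entropy sub-sigma-algebra $\cp$ of the ambient $\ca$, so that Theorem~\ref{t:largest} can be invoked as stated.
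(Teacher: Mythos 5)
Your proof is correct and rests on the same core mechanism as the paper's: the martingale convergence theorem combined with the relative-independence property of Theorem~\ref{t:largest} applied to the class ZE, which converts conditioning on a factor into conditioning on its Pinsker factor. The only difference is your preliminary enlargement of $\ca_k$ to $\cb_k=\ca_k\vee\cp$ via Lemma~\ref{lemma:pinskersup}, which makes $\EE_\mu[\ind{B}\,|\,\cb_k]=\EE_\mu[\ind{B}\,|\,\cp]$ constant in $k$; the paper avoids this detour by working directly with $\ca_k$, showing $\EE_\mu[f\,|\,\ca_k]=\EE_\mu[f\,|\,\Pi(X,\ca_k,\mu,T)]$ and concluding that $f$ is an $L^2$-limit of $\Pi(X,\ca_k,\mu,T)$-measurable functions.
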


\begin{proof}
 Again, the inclusion $\bigvee_{k\in\NN} \Pi(X,\ca_k,\mu,T) \subset \Pi(X,\ca,\mu,T)$ is clear.
 For the converse, let us take $f\in L^2(X,\Pi(X,\ca,\mu,T),\mu)$. Since $f$ is $\ca$-measurable, the martingale theorem tells us that, in $L^2(\mu)$,
 $$ f = \lim_{k\to\infty} \EE_\mu[f\,|\,\ca_k]. $$
 But $f$ is measurable with respect to a zero-entropy factor, thus, in view of Theorem~\ref{t:largest}, for any $k\in\NN$ and any $g\in L^2(X,\ca_k,\mu)$, we have
 $$ \EE_\mu [  fg  ] = \EE_\mu \bigl[  f\,\EE[g\,|\,\Pi(X,\ca_k,\mu)]  \bigr]
, $$
 and it follows that
 $$
 \EE_\mu[f\,|\,\ca_k] = \EE_\mu[f\,|\,\Pi(X,\ca_k,\mu)]\quad \mu\text{-a.e.}
 $$
 Therefore, $f$ is the $L^2$-limit of a sequence $(f_k)$, where each $f_k$ is $L^2(\Pi(X,\ca_k,\mu))$-measurable.
\end{proof}

Of course, the preceding results are also true if we replace $\mu$ by any ergodic component $\mu_x$.

\begin{Remark}
 Lemmas~\ref{lemma:pinskerlimit} and~\ref{lemma:pinskersup} remain valid if we replace everywhere the Pinsker factor by the largest $\cf$-factor for an arbitrary characteristic class $\cf$. Indeed, their proofs only use the fact that ZE is a characteristic class, and that the Pinsker factor is the largest ZE-factor.
\end{Remark}

We will also need the following result about conditional expectations.

\begin{Lemma}
\label{lemma:c_exp}
 For any bounded measurable function $\phi$ and any finite partition $\Q$ of $X$, we have for $\mu$-almost every $x\in X$
 \begin{equation}
  \label{eq:mux}
  \EE_\mu\left[ \phi \, \left|\, \ci_T \vee \Q \right.\right](x) =  \EE_{\mu_x}\left[ \phi \, \left|\, \Q \right.\right](x).
 \end{equation}
 \end{Lemma}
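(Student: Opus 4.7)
The plan is to reduce both sides of \eqref{eq:mux} to the same explicit formula via the classical identity for conditional expectation under enlargement by a finite partition. Specifically, for each atom $Q \in \Q$, a standard computation gives, $\mu$-a.s.\ on $Q$,
$$
\EE_\mu\bigl[\phi \,\bigl|\, \ci_T \vee \Q\bigr] \;=\; \frac{\EE_\mu\bigl[\phi\ind_Q \,\bigl|\, \ci_T\bigr]}{\EE_\mu\bigl[\ind_Q \,\bigl|\, \ci_T\bigr]},
$$
interpreting the ratio as $0$ where the denominator vanishes. This is because on $Q$ the sigma-algebra $\ci_T \vee \Q$ coincides (up to $\mu$-null sets) with $\ci_T$ restricted to $Q$, and both sides integrate to the same value against every $\ci_T$-measurable bounded function times $\ind_Q$.

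Next I would apply the ergodic decomposition identity~\eqref{eq:ergdec} to compute the numerator and denominator explicitly:
$$
\EE_\mu\bigl[\phi\ind_Q \,\bigl|\, \ci_T\bigr](x) \;=\; \int_Q \phi\,d\mu_x,
\qquad
\EE_\mu\bigl[\ind_Q \,\bigl|\, \ci_T\bigr](x) \;=\; \mu_x(Q).
$$
On the other hand, since $\Q$ is a finite partition, the definition of conditional expectation w.r.t.\ $\Q$ in the probability space $(X,\mu_x)$ gives, for $x \in Q$ with $\mu_x(Q)>0$,
$$
\EE_{\mu_x}\bigl[\phi \,\bigl|\, \Q\bigr](x) \;=\; \frac{1}{\mu_x(Q)}\int_Q \phi\,d\mu_x.
$$
Comparing the two expressions gives~\eqref{eq:mux} $\mu$-a.s.\ on $Q$, and summing over the finitely many atoms $Q \in \Q$ yields the result on all of $X$. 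The set $\{x \in Q : \mu_x(Q)=0\}$ has $\mu_x(Q) = 0$ and hence is $\mu$-negligible.

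The only point requiring a brief justification is that the right-hand side of~\eqref{eq:mux} is genuinely $\ci_T \vee \Q$-measurable, so that the equality makes sense. But this is clear from the formula
$$
x \;\longmapsto\; \EE_{\mu_x}[\phi\,|\,\Q](x) \;=\; \sum_{Q \in \Q} \ind_Q(x)\, \ind_{\{\mu_x(Q)>0\}}(x)\, \frac{\int_Q \phi\,d\mu_x}{\mu_x(Q)},
$$
since $x \mapsto \mu_x$ is $\ci_T$-measurable by construction of the ergodic decomposition, so both $x \mapsto \mu_x(Q)$ and $x \mapsto \int_Q \phi\,d\mu_x$ are $\ci_T$-measurable. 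No real obstacle is expected here — the argument is essentially a bookkeeping manipulation of the ergodic decomposition combined with the standard conditional-expectation-under-enlargement formula, and the whole proof should fit in a few lines.
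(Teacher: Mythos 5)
Your proof is correct and follows essentially the same route as the paper: both arguments reduce the two sides of~\eqref{eq:mux} to the common explicit expression $\sum_{Q\in\Q}\ind{Q}(x)\,\mu_x(Q)^{-1}\int_Q\phi\,d\mu_x$ by means of the ergodic decomposition identity~\eqref{eq:ergdec}, together with the observation that $x\mapsto\mu_x$ is $\ci_T$-measurable. The only (cosmetic) difference is that the paper verifies directly that this expression satisfies the defining property of $\EE_\mu[\phi\,|\,\ci_T\vee\Q]$ by integrating against test functions $\ind{C}\,h$ with $h$ bounded $\ci_T$-measurable, whereas you invoke the standard enlargement-by-a-finite-partition formula and then compute both sides; the underlying computation is the same.
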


 \begin{proof}
Since $\Q$ is a finite partition, the RHS is
\beq\label{mux1} \sum_{\substack{B\text{ atom of }\Q,\\ \mu_x(B)>0}} \ind{B}(x) \frac{1}{\mu_x(B)} \int_B \phi\,d\mu_x. \eeq
As $x\mapsto \mu_x$ is $\ci_T$-measurable, the above function of $x$ is $\ci_T\vee\Q$-measurable. To verify that it corresponds to the conditional expectation on the LHS of~\eqref{eq:mux}, we have to multiply it (i.e.\ \eqref{mux1}) by $\ind{C}\, h$ for some atom $C$ of $\Q$ and some bounded $\ci_T$-measurable function $h$, and integrate with respect to $\mu$. We get
\begin{align*}
 \int_X \ind{C}(x)\, h(x) \, \EE_{\mu_x}\left[ \phi \, \left|\, \Q \right.\right](x)\, d\mu(x)
   &= \int_X  \left(\ind{C}(x) \,  h(x) \, \frac{1}{\mu_x(C)} \int_C \phi\,d\mu_x\right)\, d\mu(x)
\end{align*}
But $h$ and $x\mapsto \mu_x$ are both $\ci_T$-measurable, therefore in the last integral we can replace $\ind{C}(x)$
by $\EE_\mu[\ind{C}\,|\,\ci_T](x)$, which is equal to $\mu_x(C)$ for $\mu$-almost every $x$. After cancellation with the denominator, we are left with
$\int_X  \left( h(x) \int_C \phi\,d\mu_x\right)\, d\mu(x)$, which is equal to
$$ \int_X h(x)\, \EE_\mu[\ind{C}\, \phi\,|\,\ci_T](x)\, d\mu(x) =  \int_X h(x)\, \ind{C}(x)\, \phi(x)\,d\mu(x). $$
This achieves the proof of the lemma.
\end{proof}

\begin{proof}[Proof of Proposition~\ref{prop:Pinsker}]
 We fix a countable family $(A_k)_{k\in\NN}$ of Borel subsets of $X$ that separates points, and we call $\P_k$ be the finite partition of $X$ generated by $A_1,\ldots,A_k$. It follows that for any probability measure $\nu$ on $X$, we have
 $$\cb_X=\bigvee_{k\in\NN}\P_k\mod\nu.$$
 We define an increasing sequence $(\ca_k)_{k\in\NN}$ of factor sub-sigma-algebras by setting
 \beq\label{dodc} \ca_k := \ci_T\vee \bigvee_{n\in\ZZ}T^{-n}\P_k. \eeq

 We also fix a countable set $\Phi=(\phi_j)_{j\in\NN}$ of bounded, Borel-measurable functions on $X$ satisfying the following:
 for any probability measure $\nu$ on $X$, $\Phi$ is dense in $L^2(X,\nu)$. There are several ways to get such a set: if $X$ is compact, we can take a countable dense set in $C(X)$. We can also take $\Phi$ as the set of all finite linear combinations with rational coefficients of $\ind{A_i}$ where the $A_i$'s are atoms of a partition $\P_k$ for some $k$.

 We first establish the conclusion of the proposition for $\Pi(X,\ca_k,\mu,T)$ and $\Pi(X,\ca_k,\mu_x,T)$.
 As $\Phi$ is dense in $L^2(X,\mu)$, the Pinsker factor $\Pi(X,\ca_k,\mu,T)$ coincides modulo $\mu$ with the sigma-algebra generated by all conditional expectations
 of the form $\EE_\mu[\phi\,|\,\Pi(X,\ca_k,\mu,T)]$ ($\phi\in\Phi$).
 From~\eqref{eq:pinsker},~\eqref{dodc} and Lemma~\ref{lemma:pinskersup}, we get
 $$ \Pi(X,\ca_k,\mu,T) = \ci_T \vee \bigcap_{m\in\ZZ^-}\bigvee_{n\le m}T^{-n}\P_k \mod\mu. $$
 For integers $\ell\le m\le 0$, let us denote $\Q_{\ell,m}$ the finite partition $\bigvee_{\ell\le n\le m}T^{-n}\P_k$.
 By application of the reverse martingale theorem and the martingale theorem, we have
 \begin{equation}
  \label{eq:convpinsker}
  \EE_\mu\bigl[\phi\,|\,\Pi(X,\ca_k,\mu,T)\bigr] = \lim_{m\to -\infty} \lim_{\ell\to -\infty} \EE_\mu\left[ \phi \ \left|\ \ci_T \vee \Q_{\ell,m}\right.\right],
 \end{equation}
 where all limits are pointwise, and exist for $\mu$-a.a.\ $x\in X$.

 We can make the same analysis in the ergodic system $(X,\nu ,T)$ for any $T$-invariant ergodic measure $\nu $: the Pinsker factor $\Pi(X,\ca_k,\nu ,T)$ coincides modulo $\nu $ with the sigma-algebra generated by all conditional expectations
 of the form $\EE_{\nu }[\phi\,|\,\Pi(X,\ca_k,\nu ,T)]$ when $\phi$ runs over $\Phi$. Again, from~\eqref{eq:pinsker} and Lemma~\ref{lemma:pinskersup}, we get
 $$ \Pi(X,\ca_k,\nu ,T) = \ci_T \vee \bigcap_{m\in\ZZ^-}\bigvee_{n\le m}T^{-n}\P_k = \bigcap_{m\in\ZZ^-}\bigvee_{n\le m}T^{-n}\P_k\mod\nu . $$
 (For the last equality, we used the ergodicity of $\nu $ which ensures that $\ci_T$ is trivial under $\nu $.) For any $\phi\in\Phi$, we then have $\nu $-a.e.
 $$
 \EE_{\nu }\bigl[\phi\,|\,\Pi(X,\ca_k,\nu ,T)\bigr] = \lim_{m\to -\infty} \lim_{\ell\to -\infty} \EE_{\nu }\left[ \phi \ \left|\ \Q_{\ell,m}\right.\right].
 $$
 In particular, the above is true for $\nu=\mu_y$, for $\mu$-almost all $y$, and we can write this by making the variables explicit: for $\mu$-almost all $y$, $\mu_y$ gives full measure to the set of $x\in X$ satisfying
 $$
 \EE_{\mu_y }\bigl[\phi\,|\,\Pi(X,\ca_k,\mu_y ,T)\bigr] (x) = \lim_{m\to -\infty} \lim_{\ell\to -\infty} \EE_{\mu_y }\left[ \phi \ \left|\ \Q_{\ell,m}\right.\right](x).
 $$
 But we can also observe that,  for $\mu$-almost all $y$, $\mu_y$ gives full measure to the set of $x\in X$ satisfying $\mu_x=\mu_y$. We get: for $\mu$-almost all $y$, $\mu_y(A)=1$, where
 $$ A:= \Bigl\{x\in X: \EE_{\mu_x }\bigl[\phi\,|\,\Pi(X,\ca_k,\mu_x ,T)\bigr] (x) = \lim_{m\to -\infty} \lim_{\ell\to -\infty} \EE_{\mu_x }\left[ \phi \ \left|\ \Q_{\ell,m}\right.\right](x)\Bigr\}. $$
It follows that the above set $A$ satisfies $\mu(A)=1$.
Now, take $x\in A$ such that the conclusion of Lemma~\ref{lemma:c_exp} is true at $x$ for all partitions $\Q_{\ell,m}$, and for which~\eqref{eq:convpinsker} holds (the set of such $x$ has $\mu$-measure 1). Then, we have
\begin{align*}
 \EE_{\mu_x }&\bigl[\phi\,|\,\Pi(X,\ca_k,\mu_x ,T)\bigr] (x) \\
 & = \lim_{m\to -\infty} \lim_{\ell\to -\infty} \EE_{\mu_x }\left[ \phi \ \left|\ \Q_{\ell,m}\right.\right](x)\quad(\text{as $x\in A$})\\
 & = \lim_{m\to -\infty} \lim_{\ell\to -\infty} \EE_{\mu}\left[ \phi \ \left|\ \ci_T\vee\Q_{\ell,m}\right.\right](x)\quad (\text{by Lemma~\ref{lemma:c_exp}}) \\
 & = \EE_\mu\bigl[\phi\,|\,\Pi(X,\ca_k,\mu,T)\bigr] (x) \quad (\text{by~\eqref{eq:convpinsker}}).
\end{align*}

Now, for each $\phi\in\Phi$, we can fix a Borel-measurable (defined everywhere) version $c_k^\phi$ of the conditional expectation  $\EE_\mu\bigl[\phi\,|\,\Pi(X,\ca_k,\mu,T)\bigr]$. The above equalities ensure that for $\mu$-almost every $x$, we have for all $\phi\in\Phi$,
\begin{equation}
\label{eq:equalityPinskerInPhi}
 c_k^\phi(x) = \EE_{\mu_x}\bigl[\phi\,|\,\Pi(X,\ca_k,\mu_x ,T)\bigr] (x) = \EE_\mu\bigl[\phi\,|\,\Pi(X,\ca_k,\mu,T)\bigr] (x).
\end{equation}
%
%
To conclude the proof for the factor $\ca_k$, all that remains is to define $\cc_k$ as the sub-sigma-algebra generated by the countable family of functions $c_k^\phi$, $\phi\in\Phi$.

The general case follows from a straightforward application of Lemma~\ref{lemma:pinskerlimit}. We can obtain the required sub-sigma-algebra $\cc$ by setting
$$ \cc := \bigvee_{k\ge1}\cc_k = \sigma\Bigl(\bigl(c_k^\phi\bigl)_{k\ge1,\phi\in\Phi}\Bigr). $$

Now, it remains to prove~\eqref{eq:equalityPinsker}. By Lemma~\ref{lemma:pinskerlimit} together with an application of the martingale convergence theorem, we can pass to the limit as $k\to\infty$ in~\eqref{eq:equalityPinskerInPhi} to get the validity of~\eqref{eq:equalityPinsker} for $f\in\Phi$. Let us consider the case $f\in L^\infty(\mu)$: then there exists a sequence $(\phi_\ell)$ of elements of $\Phi$ converging in $L^1$ to $f$, and it can be arranged so that $\|\phi_\ell\|_\infty\leq \|f\|_\infty+1$ for all $\ell$. Then, we also have
$$
\EE_\mu[\phi_\ell\,|\,\cc] \tend{\ell}{\infty} \EE_\mu[f\,|\,\cc] \quad\text{ in }L^1(\mu).
$$
Passing to a subsequence if necessary, we can also assume that the convergence of $\phi_\ell$ to $f$ and the convergence of $\EE_\mu[\phi_\ell\,|\,\cc]$ to $\EE_\mu[f\,|\,\cc]$ holds pointwise, $\mu$-almost-surely. But since we already know that~\eqref{eq:equalityPinsker} is valid in $\Phi$, we get that for $\mu$-almost every $x$,
$$
\EE_{\mu_x}[\phi_\ell\,|\,\cc](x) \tend{\ell}{\infty} \EE_\mu[f\,|\,\cc](x).
$$
Then, for $\mu$-almost every $y\in X$, we have $\mu_y(B_y)=1$, where $B_y$ is the set of $x\in X$ satisfying:
\begin{itemize}
 \item $\mu_x=\mu_y$,
 \item $\phi_\ell(x)  \tend{\ell}{\infty} f(x)$,
 \item $\EE_{\mu_x}[\phi_\ell\,|\,\cc](x) \tend{\ell}{\infty} \EE_\mu[f\,|\,\cc](x)$.
\end{itemize}
But, for any bounded $\cc$-measurable function $g$, we have
$$
\EE_{\mu_y}[\phi_\ell g] = \EE_{\mu_y}\Bigl[\EE_{\mu_y}[\phi_\ell\,|\,\cc]\, g\Bigr].
$$
If $\mu_y(B_y)=1$, we can pass to the limit as $\ell\to\infty$ and we get by the dominated convergence theorem that
$$ \EE_{\mu_y}[f g] = \EE_{\mu_y}\Bigl[\EE_{\mu}[f\,|\,\cc]\, g\Bigr],$$
which ensures that for $\mu_y$-almost every $x$, we have
$$
\EE_{\mu_x}[f\,|\,\cc](x) = \EE_{\mu_y}[f\,|\,\cc](x) = \EE_{\mu}[f\,|\,\cc] (x).
$$
This proves the validity of~\eqref{eq:equalityPinsker} for $f\in L^\infty(\mu)$. For a general $f\in L^1(\mu)$, we can assume w.l.o.g. that $f$ takes its values in $\RR_+$, and then $f$ is the pointwise limit of the non-decreasing sequence $(f_n)_{n\ge 1}$ where for all $x$, $f_n(x):=\min\{n,f(x)\}$. Then, since each $f_n$ satisfy~\eqref{eq:equalityPinsker}, we get as an easy consequence of the monotone convergence theorem that, for $\mu$-almost every $x$, we have
$$
\EE_{\mu_x}[f\,|\,\cc](x) = \lim_n \EE_{\mu_x}[f_n\,|\,\cc](x) = \lim_n \EE_{\mu}[f_n\,|\,\cc] (x) =  \EE_{\mu}[f\,|\,\cc] (x).
$$
This concludes the proof.
\end{proof}

{\bf Acknowledgments}: Our special thanks go to Tim Austin for his interest and contribution at different stages of writing this article  (Section~\ref{s:RelErg} is entirely due to him). We also thank Adam Kanigowski for discussions on the subject and his contribution (the counterexample in Section~\ref{s:aoma} has been obtained jointly with him). We also thank Florian Richter for some discussions concerning the class Erg$^\perp$. We want to express our gratitude to the referee for a contribution to the enhancement of the present work.

\vspace{2ex}

\noindent
Faculty of Mathematics and Computer Science\\
Nicolaus Copernicus University, \\
Chopin street 12/18, 87-100 Toru\'n, Poland\\
gorska@mat.umk.pl, mlem@mat.umk.pl

\vspace{2ex}

\noindent
Laboratoire de Math\'ematiques Raphael Salem, CNRS – Universit\'e de Rouen Normandie\\
Avenue de l’Universit\'e – 76801 Saint \'Etienne du Rouvray, France\\
Thierry.de-la-Rue@univ-rouen.fr


\begin{thebibliography}{99}
\bibitem{Ab-Le-Ru}H.\ El Abdalaoui,  M.\ Lema\'nczyk, T.\ de la Rue, {\em Automorphisms with quasi-discrete spectrum, multiplicative functions and average orthogonality along short intervals}, Int.\ Math.\ Res.\ Not.\ IMRN 2017, no.\ 14, 4350-4368.

\bibitem{Ab-Ku-Le-Ru} H. El Abdalaoui, J.\ Ku\l aga-Przymus, M. Lema\'nczyk, T. de la Rue,  {\em M\"obius disjointness for models of an ergodic system and beyond},  Israel J.\ Math.\ {\bf 228} (2018), 707-751.
\bibitem{Au-Gr-Ha} L.\ Auslander, L.\ Green, F.\ Hahn, {\em Flows on homogeneous spaces}, with the assistance
of L. Markus and W. Massey, and an appendix by L. Greenberg. Annals of Mathematics
Studies, No. 53. Princeton University Press, Princeton, N.J., 1963, pp. vii+107
\bibitem{Be}S. Benzoni, {\em Confined extensions and non-standard dynamical filtrations},  Studia Math.\ {\bf 276} (2024), 233-270.
\bibitem{Be-Go-Ru}P. Berk, M. G\'orska, T.\ de la Rue, {\em Joining properties of automorphisms disjoint with all ergodic systems}, to appear in Ergodic Theory Dynam.\ Systems, arXiv:2312.13153.
\bibitem{Co-Do-Se} J.-P.\ Conze, T.\ Downarowicz, J.\ Serafin, {\em Correlation of sequences and of measures, generic points for joinings and ergodicity of certain cocycles}, Trans. Amer.\ Math.\ Soc.\ {\bf 369} (2017), 3421-3441.
\bibitem{Be-Ku-Le-Ri} V.\ Bergelson, J.\ Ku\l aga-Przymus, M.\ Lema\'nczyk, F.\ Richter, {\em   A generalization of K{\'a}tai's orthogonality criterion with applications}, Discrete Contin. Dyn. Syst. {\bf 39} (2019),  2581-2612.

\bibitem{Fa-Ka}B. Fayad, A. Kanigowski, {\em Rigidity times for a weakly mixing dynamical {system} which
are not rigidity times for any irrational rotation}, Ergodic Theory Dynam.\ Systems
{\bf 35} (2015), 2529–2534.

\bibitem{Fe-Ku-Le}S.\ Ferenczi, J.\ Ku\l aga-Przymus, M.\ Lema\'nczyk, {\em Sarnak's Conjecture -- what's new}, in:
Ergodic Theory and Dynamical Systems in their Interactions with Arithmetics and Combinatorics,  CIRM Jean-Morlet Chair, Fall 2016,
Editors: S. Ferenczi, J. Ku\l aga-Przymus, M. Lema\'nczyk,
    Lecture Notes in Mathematics {\bf 2213},  Springer International Publishing, pp. 418.
\bibitem{Fr}N. Frantzikinakis, {\em Ergodicity of the Liouville system implies the Chowla conjecture}, Discrete Anal. 2017, paper no 19, 41 pp.

\bibitem{Fr-Ho}N. Frantzikinakis, B. Host, {\em The logarithmic Sarnak conjecture for ergodic weights}, Annals of Math.\ (2) {\bf 187} (2018), no. 3, 869–931.
\bibitem{Fr-Ho2}N. Frantzikinakis, B. Host, {\em Furstenberg systems of bounded multiplicative functions and applications}, Int. Math. Res. Not. (IMRN) 2021, no. 8, (2021), 6077–6107.

\bibitem{Fr-Le-Ru}N.\ Frantzikinakis, M.\ Lema\'nczyk, T.\ de la Rue, {\em Furstenberg systems of pretentious and MRT multiplicative functions}, to appear in Ergodic Theory Dynam.\ Systems, arXix 2304.03121

\bibitem{Fr-Le} K. Fr\c{a}czek, M. Lema\'nczyk,
{\em On the self-similarity problem for flows}, Proc.\ London
Math.\ Soc.\ {\bf 99} (2009), 658-696.


\bibitem{Fu}H.\ Furstenberg, {\em Disjointness in ergodic theory, minimal sets, and a problem in Diophantine approximation}, Math. Systems Theory {\bf 1} (1967), 1-49.
\bibitem{Gl}E. Glasner, {\em Ergodic Theory via Joinings},  American Mathematical Society, Providence, 2003.
\bibitem{Go-Le-Ru} A.\ Gomilko, M.\ Lema\'nczyk, T.\ de la Rue, {\em
On Furstenberg systems of aperiodic multiplicative functions of Matom\"aki, Radziwi\l\l \ and Tao}, J.\ Modern Dynamics {\bf 17} (2021), 529-555.
\bibitem{Ju}A.\ del Junco, {\em Disjointness of measure-preserving transformations, minimal self-joinings and category}, Ergodic theory and {dynamical} systems, I (College Park, Md., 1979-80), pp. 81-89.
Progr. Math., 10,
Birkh\"auser, Boston, MA, 1981.

\bibitem{Ju-Ru}A.\ del Junco, D.\ Rudolph, {\em On ergodic actions whose self-joinings are graphs}, Ergodic Theorey Dynam.\ Systems {\bf 7} (1987),
531-557.

\bibitem{Kal}
O.\ Kallenberg,
{\em Foundations of Modern Probability}, 3rd. Ed., Springer, 2021.
\bibitem{Kall}R.\ Kallman, {\em Certain quotient spaces are countably separated, III}, J. Functional Analysis {\bf 22} (1976), 225-241.

\bibitem{Ka}T.\ Kamae, {\em Subsequences of normal sequences}, Israel J. Math.\ {\bf 16} (1973), 121–149.


\bibitem{Ka-Ku-Le-Ru}
 A.\ Kanigowski, J.\ Ku\l aga-Przymus, M.\ Lema\'nczyk, T.\ de la Rue, {\em On arithmetic functions orthogonal to deterministic sequences},
    Advances Math. {\bf 428} (2023), 68 pp., \\
\noindent \url{https://doi.org/10.1016/j.aim.2023.109138}.

\bibitem{Ka-Le-Ri-Te} A.\ Kanigowski, M.\ Lema\'nczyk, F. Richter, J. Ter\"av\"ainen, {\em On the local Fourier uniformity problem for small sets}, International Math. Research Notices  IMRN 2024, no 15, 11488-11512.

\bibitem{Ka-Le-Ul}  A.\ Kanigowski, M.\ Lema\'nczyk, C.\ Ulcigrai, {\em On disjointness of some parabolic flows},  Inventiones Math. {\bf 221} vol. 1 (2020),   1-111.

\bibitem{Kl}O.\ Klurman, {\em Correlations of multiplicative functions and applications}, Compositio Math.\ {\bf 153} (2017), 1622–1657.

\bibitem{Le-Le} M.\ Lema\'nczyk, E. Lesigne, {\em Ergodicity of Rokhlin cocycles}, J. d’Analyse Math.\ {\bf 85} (2001),
43-86.
\bibitem{Le-Pa} M.\ Lema\'nczyk, F. Parreau, {\em Rokhlin extensions and lifting disjointness}, Ergodic Theory Dynam.\
Systems {\bf 23} (2003), 1525-1550.
\bibitem{Ma-Ra} K. Matom\"aki, M. Radziwi\l\l, {\em Multiplicative functions in short intervals}, Annals Math.\ (2) \textbf{183} (2016),  1015-1056.

\bibitem{Ma-Ra-Ta} K.\ Matom\"aki, M.\ Radziwi\l\l, T.\ Tao,
{\em  An averaged form of Chowla’s conjecture},  Algebra Number Theory {\bf 9} (2015),  2167–2196.
\bibitem{Mo-Ri-Ro} J. Moreira, F.K.\ Richter, D.\ Robertson, {\em Disjointness of measurably distal group actions and applications}, Ergodic Theory Dynam.\ Systems {\bf 43} (2023), 1952-1979.
\bibitem{Ru} T.\ de la Rue, {\em Notes on Austin’s multiple ergodic theorem}, \\
\noindent \url{http://hal.archives-ouvertes.fr/hal-00400975}.
\bibitem{Sa} P. Sarnak, {\em Three lectures on the M\"obius function, randomness and dynamics},\\
\noindent \url{https://publications.ias.edu/sites/default/files/MobiusFunctionsLectures(2).pdf}.
\bibitem{Sm-Th}M.\ Smorodinsky, J.-P.\ Thouvenot, {\em
Bernoulli factors that span a transformation},
Israel J.\ Math.\ {\bf 32} (1979),  39–43.

\bibitem{Sr} S.M.\ Srivastava, {\em A Course on Borel Sets}, Graduate texts in mathmatics, Springer 1998.
\bibitem{Ta0}T.\ Tao, {\em The Chowla and the Sarnak conjecture}, What’s new,\\
\noindent \url{http://terrytao.wordpress.com/2012/10/14/the-chowla-conjecture-and-the-sarnak-conjecture/}.
\bibitem{Ta} T.\ Tao, \emph{Equivalence of the logarithmically averaged {C}howla and
{S}arnak conjectures}, Number Theory -- {D}iophantine Problems, Uniform
Distribution and Applications: Festschrift in Honour of Robert F. Tichy's
60th Birthday (C.~Elsholtz and P.~Grabner, eds.), Springer International
 Publishing, Cham, 2017, pp.~391--421.
\bibitem{Ve} W.A. Veech, {\em M\"obius Dynamics}, unpublished lecture notes, Spring semester 2016.
\end{thebibliography}
\end{document}